\newcounter{dummy}
\newcommand\myitem[1][]{\item[#1]\refstepcounter{dummy}\def\@currentlabel{#1}}
\newsavebox{\measure@tikzpicture}
	\def\tikz@width{#1}%
\DeclareSymbolFontAlphabet{\mathbb}{AMSb}
\newcommand{\thistheoremname}{}
\newtheorem*{genericthm*}{\thistheoremname}
\newenvironment{namedthm*}[1]
{\renewcommand{\thistheoremname}{#1}%
	\begin{genericthm*}}
	{\end{genericthm*}}
\newcommand{\Bairespace}[1][]{
	\ifthenelse{\equal{#1}{}}{\functions{\N}{\N}}{\functions{#1}{\N}}
}
\newcommand{\bbL}{\mathbb{L}}
\newcommand{\bbX}{\mathbb{X}}
\newcommand{\Cantorspace}[1][]{
	\ifthenelse{\equal{#1}{}}{\functions{\N}{2}}{\functions{#1}{2}}
}
\newcommandx{\concatenation}[2][1 = undefined, 2 = undefined]{
	\ifthenelse{\equal{#1}{undefined}}{{}\smallfrown}{
		\ifthenelse{\equal{#2}{undefined}}{\bigoplus #1}{\bigoplus_{#1} #2}
	}
}
\newcommandx{\functions}[3][3 =]{
	\ifthenelse{\equal{#3}{}}{#2^{#1}}{#2_{#3}^{#1}}
}
\newcommand{\Gzero}[1][]{
	\ifthenelse{\equal{#1}{}}
	{\mathbb{G}_0}
	{\mathbb{G}_{0,n}}
}
\newcommandx{\Hzero}[2][2 = undefined]{
	\ifthenelse{\equal{#2}{undefined}}
	{\mathbb{H}_{#1}}
	{\mathbb{H}_{#1, #2}}
}
\newcommandx{\intersection}[2][1 =, 2 =]{
	\ifthenelse{\equal{#1}{}}{\cap}{
		\ifthenelse{\equal{#2}{}}{\bigcap #1}{{\bigcap_{#1} #2}}
	}
}
\newcommand{\Lzero}[1][]{\ifthenelse{\equal{#1}{}}{\bbL_0}{L_{0, #1}}}
\newcommand{\Lzerospace}[1][]{\ifthenelse{\equal{#1}{}}{\bbX_0}{X_{0, #1}}}
\newcommand{\modulo}[1]{\ (\text{mod } 2)}
\newcommand{\N}{\mathbb{N}}
\newcommandx{\product}[2][1 =, 2 =]{
	\ifthenelse{\equal{#1}{}}{\times}{
		\ifthenelse{\equal{#2}{}}{\prod #1}{{\prod_{#1} #2}}
	}
}
\newcommandx{\sequence}[2][2 = undefined]{
	\ifthenelse{\equal{#2}{undefined}}{(#1)}{
		(#1)_{#2}
	}
}
\newcommandx{\set}[2][2 = undefined]{
	\ifthenelse{\equal{#2}{undefined}}{\{ #1 \}}{
		\{ #1 \suchthat #2 \}
	}
}
\newcommandx{\sets}[3][3 =]{
	\ifthenelse{\equal{#3}{}}{[#2]^{#1}}{[#2]^{#1}_{#3}}
}
\newcommand{\suchthat}{\mid}
\newcommand{\R}{\mathbb{R}}
\newcommandx{\union}[2][1 =, 2 =]{
	\ifthenelse{\equal{#1}{}}{\cup}{
		\ifthenelse{\equal{#2}{}}{\bigcup #1}{{\bigcup_{#1} #2}}
	}
}
\newtheorem{theorem}{Theorem}[section]
\newtheorem{lemma}[theorem]{Lemma}
\newtheorem{claim}[theorem]{Claim}
\newtheorem{corollary}[theorem]{Corollary}
\newtheorem{proposition}[theorem]{Proposition}
\newtheorem{question}[theorem]{Question}
\newtheorem{definition}[theorem]{Definition}
\newtheorem{remark}[theorem]{Remark}
\numberwithin{equation}{section}
\newcommand{\eps}{\varepsilon}
\newcommand{\bd}{\begin{definition}}
	\newcommand{\ed}{\end{definition}}
\DeclareMathOperator{\dist}{dist}
\DeclareMathOperator{\didistance}{didist}
\newcommand{\distance}[3]{\ifthenelse{\isempty{#3}}{\dist(#1,#2)}{\dist^{#3}(#1,#2)}}
\newcommand{\didist}[3]{\ifthenelse{\isempty{#3}}{\didistance(#1,#2)}{\didistance^{#3}(#1,#2)}}
\newcommand{\digraph}[3]{\ifthenelse{\equal{#1}{b}}{\mathbb{#2}_{#3}}
	{{#2}_{#3}}}
\newcommand{\linegraph}[3]{\ifthenelse{\equal{#1}{b}}{\mathbb{#2}_{#3}}
	{#2_{#3}}}
\newcommand{\underlyingspace}[3]{\ifthenelse{\equal{#1}{b}}{\mathbb{#2}_{#3}}
	{#2_{#3}}}
\newcommand{\distanceset}[2]{\ifthenelse{\isempty{#2}}{D(#1)}{D^{#2}(#1)}}
\pgfplotsset{soldot/.style={color=blue,only marks,mark=*}}
\definecolor{pastelred}{rgb}{1.0, 0.41, 0.38}
\definecolor{pastelblue}{rgb}{0.52, 0.63, 0.94}
\definecolor{pastelyellow}{rgb}{0.99, 0.99, 0.59}
\definecolor{pastelgreen}{rgb}{0.47, 0.87, 0.47}
\definecolor{pastelorange}{rgb}{1.0, 0.7, 0.28}
\definecolor{shadecolor}{named}{lightgray}
\title{Poisson-Voronoi percolation in higher rank}
\author{Jan Greb\'{i}k}
\address{JG: Universität Leipzig, Mathematisches Institut, D-04009, Leipzig, Germany, E-mail: \href{mailto:grebikj@gmail.com}{grebikj@gmail.com}}
\author{Konstantin Recke}
\address{KR: Universit{\"a}t M{\"u}nster, Orleans-Ring 10, 48149 M{\"u}nster, Germany, E-mail: \href{mailto:konstantin.recke@uni-muenster.de}{konstantin.recke@uni-muenster.de}}
\begin{document}

\maketitle

\newcommand{\group}{\operatorname{SL}_n(\mathbb{R})}
\newcommand{\vol}{\operatorname{vol}}

\begin{quote}{\small {\bf Abstract:} We show that the uniqueness thresholds for Poisson-Voronoi percolation in symmetric spaces of connected higher rank semisimple Lie groups with property~(T) converge to zero in the low-intensity limit. This phenomenon is fundamentally different from situations in which Poisson-Voronoi percolation has previously been studied.

Our approach builds on a recent breakthrough of Fraczyk, Mellick and Wilkens (\url{https://arxiv.org/abs/2307.01194}) and provides an alternative proof strategy for Gaboriau's fixed price problem. As a further application of our result, we give a new class of examples of non-amenable Cayley graphs that admit factor of iid bond percolations with a unique infinite cluster and arbitrarily small expected degree, answering a question inspired by Hutchcroft--Pete ({\em Invent. math.} {\bf 221} (2020)).}
\end{quote}

\section{Introduction}

\emph{Poisson-Voronoi percolation} is a continuum percolation model that can be defined on any metric space $(M,d)$ with an infinite Radon measure $\mu$ as follows.
For $\lambda>0$, consider a Poisson point process of intensity $\lambda\mu$ and associate to each point of the process its Voronoi cell, that is the set of all points in $M$ closer to this point than to any other point of the process. For $p\in(0,1)$, color each cell black with probability $p$ and white with probability $1-p$, independently of the colors of all other cells, and let $\omega_p^{(\lambda)}$ denote the union of black cells.

Poisson-Voronoi percolation has been extensively studied in the probabilistic literature, see e.g.~\cite{Z96,BS98,BS00,BR06,BR06b,DRT19,HM24} and the references therein. In addition, the underlying Poisson-Voronoi tessellation  is a central object studied in stochastic geometry, see e.g.~\cite{SW08,BBK20}. Recently, low-intensity limits of such tessellations on hyperbolic spaces and, more generally, on Riemannian symmetric spaces have emerged as fascinating probabilistic objects with powerful applications \cite{B19, BCP22,DCELU23,FMW23,D24} (see Remark \ref{rm:convergence} for details). In this paper, we build on these works to prove new statements at low, but non-zero, intensity $\lambda>0$.

The quantity of interest will be the {\em uniqueness threshold}
\vspace{1mm}
\begin{equation*}
p_u(\lambda) := \inf \big\{ p\in(0,1) : \mathbb P\big( \mathcal \omega_p^{(\lambda)} \, \, \text{has a unique unbounded cluster} \big)>0 \big\},
\vspace{1mm}
\end{equation*}
where {\em cluster} refers to a path connected component of $\omega_p^{(\lambda)}$. The following is the main result of this paper (see Theorem \ref {thm:vanishing}).

\begin{theorem}[Vanishing uniqueness thresholds] \label{maintheorem}
Let $G$ be a connected higher rank semisimple real Lie group with property (T) and let $(X,d_X)$ be its symmetric space. Then 
    \begin{equation*}
        \lim_{\lambda\to0} \, p_u(\lambda)=0.
    \end{equation*}
\end{theorem}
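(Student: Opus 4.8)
\emph{Proof proposal.}
The plan is to transfer a degenerate ``$\lambda=0$'' statement, about the \emph{ideal Poisson--Voronoi tessellation} (IPVT) of $X$, to the finite-intensity regime. First a reduction: the number of unbounded clusters of $\omega^{(\lambda)}_p$ is a $G$-invariant random variable taking values in $\{0,1,\infty\}$, so by ergodicity of the Poisson point process under $G$ it is almost surely constant; hence $\mathbb P(\omega^{(\lambda)}_p\text{ has a unique unbounded cluster})\in\{0,1\}$, and it suffices to show that for every sufficiently small $\varepsilon>0$ there is $\lambda_0>0$ such that for all $\lambda\in(0,\lambda_0)$ the configuration $\omega^{(\lambda)}_\varepsilon$ almost surely has a unique unbounded cluster. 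This would give $p_u(\lambda)\le\varepsilon$ for $\lambda<\lambda_0$, and then $\lim_{\lambda\to0}p_u(\lambda)=0$.

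The decisive input is the structure of the IPVT in higher rank established by Fraczyk--Mellick--Wilkens. Rescaling the metric of $X$ by the appropriate factor $r(\lambda)\to\infty$, the intensity-$\lambda$ Poisson--Voronoi tessellations converge locally weakly to the IPVT, which — in stark contrast with rank one — is \emph{degenerate} in higher rank under property~(T): around a typical point the limit consists of a single cell. Concretely, at small $\lambda$ there is a strongly heterogeneous split of the cells into ``normal'' cells (rescaled diameter $O(1)$) and a few ``giant'' cells whose union covers all but a $\delta(\lambda)$-fraction of the volume with $\delta(\lambda)\to0$, and the map sending a point to the giant cell containing it is, with probability tending to $1$, constant on every rescaled ball of radius $O(1)$. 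At $\lambda=0$ itself the assertion is trivial: there is one cell, black with probability $\varepsilon>0$, and on that event the unbounded cluster is unique.

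\emph{Existence of an unbounded black cluster for small $\varepsilon$.} Work at scale $r(\lambda)$ and consider the adjacency graph $\Gamma_\lambda$ of giant cells. Since $X$ has exponential volume growth, a rescaled ball of radius $O(1)$ contains $\gg1$ giant cells once $\lambda$ is small, and the degrees of $\Gamma_\lambda$ diverge as $\lambda\to0$, so its site-percolation threshold $p_c(\Gamma_\lambda)$ tends to $0$. Coloring a giant cell black has probability $\varepsilon$, and giant cells meeting a common rescaled $O(1)$-ball coincide, so the black giant cells stochastically dominate Bernoulli($\varepsilon$) site percolation on $\Gamma_\lambda$ up to a small error controlled by the finite range of dependence and the exponential decorrelation inherent in the Voronoi construction. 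For $\lambda$ small enough that $p_c(\Gamma_\lambda)<\varepsilon/2$, the black giant cells percolate, yielding an unbounded black cluster; in particular $p_c(\lambda)\to0$.

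\emph{Uniqueness, and the main obstacle.} The heterogeneity is also what should force multiplicity to fail: one expects the normal cells to form a structure of bounded local complexity at scale $O(1)$, uniformly in $\lambda$, so that there is a fixed $p_*>0$ (a crossing threshold for the normal-cell graph) with the property that, for $\varepsilon<p_*$, no black cluster can cross a large annulus through normal cells alone; hence every unbounded black cluster meets giant cells infinitely often and the problem reduces to uniqueness of the unbounded black cluster \emph{within $\Gamma_\lambda$}. This is the crux: $\Gamma_\lambda$ is itself non-amenable and could a priori carry infinitely many unbounded clusters. The natural way to beat this is to iterate the Fraczyk--Mellick--Wilkens degeneracy — the giant cells are themselves degenerate at a coarser scale, and so on up the hierarchy, forcing an all-or-nothing dichotomy at the coarsest levels — or to import their finite-intensity machinery directly. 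The remaining, genuinely technical, difficulties are quantitative and concern uniformity: an effective rate for the local weak convergence to the single-cell limit on balls of fixed rescaled size (uniform mixing/decorrelation of the Poisson process on $X$); a choice of renormalized ``good'' events that is finitely-range-dependent yet still forces crossing clusters to merge; and a comparison of metric balls with the Weyl-chamber geometry of $X$ to certify that the coarse-grained graphs become highly connected uniformly in $\lambda$. Property~(T) enters only through the Fraczyk--Mellick--Wilkens degeneracy; the exponential growth powering the renormalization is automatic for symmetric spaces of noncompact type. Letting $\varepsilon\to0$ then gives $\lim_{\lambda\to0}p_u(\lambda)=0$.
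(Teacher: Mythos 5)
Your reduction to showing that $\omega^{(\lambda)}_\varepsilon$ has a unique unbounded cluster a.s.\ for small $\lambda$ is correct, and the paper does the same. However, from there your proposal has three substantive problems, and the most important one is a gap you yourself flag.

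First, the characterization of the Fraczyk--Mellick--Wilkens input is off. Their result on the IPVT in higher rank is not that ``around a typical point the limit consists of a single cell'': the IPVT still has infinitely many cells, but \emph{every pair of cells shares an unbounded boundary}. The consequence for percolation is that any $p>0$ coloring of the IPVT yields a single cluster, but because cells still tile, the ``giant cell/normal cell'' dichotomy and the associated adjacency graph $\Gamma_\lambda$ you build on are not features of FMW23 and would need to be justified from scratch. What the paper actually extracts from FMW23 is a \emph{finitary} touching statement (Theorem~\ref{thm:A2}): as $\lambda\to0$, with probability tending to $1$, all Voronoi cells meeting a fixed ball pairwise share a boundary.

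Second, the role of property~(T) is misassigned. You say property~(T) ``enters only through the Fraczyk--Mellick--Wilkens degeneracy,'' but FMW23's touching result does \emph{not} assume property~(T); only higher rank is needed there, and the paper explicitly remarks on this. Property~(T) is used for a completely separate ingredient: a long-range-order threshold (Theorem~\ref{thm:threshold}), the continuum analogue of the Lyons--Schramm/Mukherjee--Recke statement that for group-invariant percolations under property~(T) there is $p^*<1$ such that local two-point density above $p^*$ forces a uniform lower bound on the two-point function globally. Without this ingredient you have no mechanism to convert local information into long-range order.

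Third, and decisively, the uniqueness step is not there. You acknowledge that the black cluster structure inside your coarse graph $\Gamma_\lambda$ ``could a priori carry infinitely many unbounded clusters,'' and you propose to ``iterate the degeneracy'' or ``import their finite-intensity machinery directly.'' Neither is carried out, and neither is what the paper does. The paper proves a separate characterization $p_u(\lambda)=p_{\mathrm{LRO}}(\lambda)$ (Theorem~\ref{thm:LRO}, via the Delaunay graph made into an extremal unimodular random graph plus a cluster-frequency argument) reducing uniqueness to showing $\inf_{x,y}\mathbb P_\varepsilon^{(\lambda)}(x\leftrightarrow y)>0$. To verify this, one cannot apply the long-range-order threshold directly to $\omega^{(\lambda)}_\varepsilon$ because its density is small; instead the paper constructs an auxiliary $G$-equivariant thickening $\mathcal Z^{(\lambda)}$ of $\omega^{(\lambda)}_\varepsilon$ (Theorem~\ref{thm:AuxiliaryPercolation}), uses the finitary non-triviality (Theorem~\ref{thm:A1}) and finitary touching (Theorem~\ref{thm:A2}) to force $\mathcal B_1(o)\subset\mathcal Z^{(\lambda)}$ with high probability, applies Theorem~\ref{thm:threshold} to $\mathcal Z^{(\lambda)}$, and then transfers the long-range order back to $\omega^{(\lambda)}_\varepsilon$ via FKG. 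Your proposal contains none of this; the uniqueness step is precisely where a genuinely new argument is required, and it is left as a heuristic.
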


The behavior in Theorem \ref{maintheorem} is in striking contrast to situations in which Poisson-Voronoi percolation has previously been studied. More precisely, it was shown in a seminal paper by Benjamini and Schramm \cite{BS00} that
\begin{equation*}
\lim_{\lambda\to0} \, p_u(\lambda) = 1
\end{equation*}  
for the hyperbolic plane $\mathbb H^2$ equipped with its volume measure. On the other hand, in the Euclidean plane (and $\R^d$, $d\ge2$), $p_u(\lambda)$ is equal to a constant $p_u\in(0,1)$ irrespective of the intensity, see e.g.~\cite{BR06b}.

The main ideas of the proof of Theorem \ref{maintheorem} are outlined in Section \ref{sec:IdeasProof}. There we describe in particular how our approach builds on the spectacular recent result of Fraczyk, Mellick and Wilkens~\cite{FMW23}. Here, let us highlight only the following important remark.

\begin{remark}[Continuity of $p_u$]\label{rm:Continuity} In the setting of Theorem \ref{maintheorem}, there is a natural candidate tessellation for a low-intensity limit of Poisson-Voronoi tessellations on $X$, namely the {\em ideal Poisson-Voronoi tessellation (IPVT)} in the sense of \cite{FMW23}. In \cite{FMW23}, this object was constructed and shown to have the remarkable property that every pair of cells shares an unbounded boundary. Let us in particular mention the inspiring earlier works~\cite{B19,BCP22,DCELU23} which provide a different treatment of ideal Poisson-Voronoi tessellations on hyperbolic spaces and certain Cayley graphs. 

Given the ideal Poisson-Voronoi tessellation on $X$, let us denote by $p_u(0)$ the uniqueness threshold for percolation. Then $p_u(0)=0$ because percolation with any $p>0$ actually yields a {\em single} cluster. With this notation, Theorem~\ref{maintheorem} entails that $p_u(\lambda)$ is continuous at $\lambda=0$. It may thus be tempting to think of our result as a consequence of continuity of the parameter $p_u$. Let us emphasize that this is not the way we prove Theorem~\ref{maintheorem} (cf.~Remark \ref{rm:convergence}). Setting aside the technical obstacle that convergence of the Voronoi tessellations to the ideal Poisson-Voronoi tessellation is not known, we are, more importantly, not aware of such a continuity result. In particular, $p_u(\lambda)$ is {\em not} continuous at $\lambda=0$ in certain special cases, for instance $\mathbb R^2$, where $p_u(\lambda)=1/2$ for all $\lambda>0$ by \cite{Z96,BR06}, but $p_u(0)=0$. We also note that there is no obvious monotonicity,  of $p_u(\lambda)$ and Poisson-Voronoi percolation more generally, in $\lambda$.
\end{remark}

\subsection{Applications to the sparse FIID unique infinite cluster property} \label{sec:applications}
As an application of our main result, we construct a factor of iid (FIID) \emph{sparse unique infinite cluster} for a certain class of non-amenable countable groups, see Theorem~\ref{main:FIIDSUICP}.

\begin{definition}
    A countable group~$\Gamma$ has the {\em FIID sparse unique infinite cluster property} if there exists a Cayley graph ${\rm Cay}(\Gamma)$ of~$\Gamma$ such that
    \begin{equation} \label{def-FIIDsparseUIC}
    \inf \bigg\{ \int_{\omega\in \mathcal U(G)} {\rm deg}_\omega(1_{\Gamma}) \, d \mu(\omega) \colon \mu\in F_{{\rm IID}}(\Gamma,\mathcal U({\rm Cay}(\Gamma))) \bigg\} = 0,
    \end{equation}
    where $\mathcal U({\rm Cay}(\Gamma))$ is the set of subgraphs of ${\rm Cay}(\Gamma)$ with a unique infinite cluster, $F_{{\rm IID}}(\Gamma,\mathcal S(\Gamma))$ is the set of $\Gamma$-invariant probability measures on $\mathcal U({\rm Cay}(\Gamma))$ which are factors of iid~processes on $\Gamma$ and $1_\Gamma$ denotes the identity of $\Gamma$.
\end{definition}

Prior to the present work, the FIID sparse unique infinite cluster property was known for certain amenable Cayley graphs by combining~\cite{TT13,BRR23}, see also \cite{L13}. It was pointed to us by Hutchcroft that the property extends from any $wq$-normal subgroup to the ambient group, cf.~\cite{HP24} and see also \cite[Theorem 7.18]{FMW23} and \cite[VI.24.(3)]{G00}. In particular, there exist non-amenable, resp.~property~(T), examples such as $\mathbb F_2\times\mathbb Z^3$, resp.~${\rm SL}_3(\mathbb Z)\ltimes \mathbb Z^3$, with this property. Note that these constructions are based on the FIID sparse unique infinite cluster in the underlying amenable building blocks ($\mathbb Z^3$ in the above examples). In contrast, our construction in Theorem~\ref{main:FIIDSUICP} is different and builds on Theorem \ref{maintheorem}.

Beyond constituting a perplexing property of intrinsic interest, the relevance of the FIID sparse unique infinite cluster property is due to the fact that it implies that $\Gamma$ has \emph{fixed price~$1$} (see below).
As an invariant random process and not as FIID, the property was established for all Cayley graphs of groups with property~(T) in groundbreaking work of Hutchcroft and Pete~\cite{HP20}, and allowed them to show that every group with property~(T) has \emph{cost $1$}.
The question about the FIID version was left open, cf.~\cite[Remark 4.4]{HP20}. It was posed explicitly by Pete and Rokob \cite[Question 1.5]{PR25}.

\begin{question}[{\cite[Question 1.5]{PR25}}] \label{q:FIIDSparseUnique}
Give examples of non-amenable Cayley graphs with FIID sparse unique infinite clusters. 
\end{question}

As the promised main application of Theorem \ref{maintheorem}, we give an answer to Question~\ref{q:FIIDSparseUnique}.

\begin{theorem}[Cayley graphs with the FIID sparse unique infinite cluster property] \label{main:FIIDSUICP}
Let $\Gamma\subset G$ be a co-compact lattice in a connected higher rank semisimple real Lie group $G$ with property~(T). Let ${\rm Cay}(\Gamma,S)$ be the Cayley graph of $\Gamma$ with respect to a finite symmetric generating set $S$. Then, for every $\eps>0$, there is a $\Gamma$-equivariant FIID bond percolation $\omega$ on ${\rm Cay}(\Gamma,S)$ with a unique infinite cluster and $\mathbb E\big[ {\rm deg}_{\omega}(1_G) \big]\le\eps$.
\end{theorem}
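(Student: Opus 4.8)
The plan is to transfer Theorem \ref{maintheorem} from the symmetric space $X$ to the Cayley graph ${\rm Cay}(\Gamma,S)$ via the co-compact lattice $\Gamma \subset G$, using a coarse geometric comparison together with a standard ``seeding'' of a factor of iid process on the group from a factor of iid process on the space. Fix $\eps>0$. By Theorem \ref{maintheorem}, choose $\lambda>0$ small enough that $p_u(\lambda) < \eps/(2C)$, where $C$ will be an explicit constant coming from the comparison below, and fix $p \in (p_u(\lambda), \eps/(2C))$ so that $\omega_p^{(\lambda)}$ has a unique unbounded cluster with positive probability; by ergodicity of the Poisson-Voronoi percolation under the $G$-action (the point process, the coloring, and hence $\omega_p^{(\lambda)}$ are all $G$-invariant and the $G$-action is mixing on the relevant probability space), this event in fact has probability one. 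The black set $\omega_p^{(\lambda)}$ is a factor of the Poisson point process on $X$, which itself is a factor of iid over $G$ via a measurable $G$-equivariant map (pick a fundamental domain and read off the iid labels); restricting the $G$-action to $\Gamma$, everything in sight becomes $\Gamma$-equivariant and a factor of iid over $\Gamma$.

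Next I would convert the continuum percolation $\omega_p^{(\lambda)}$ into a bond percolation on ${\rm Cay}(\Gamma,S)$. Since $\Gamma$ is co-compact in $G$ and acts properly discontinuously, there is a $\Gamma$-equivariant quasi-isometry $q\from \Gamma \to X$ (the orbit map $\gamma \mapsto \gamma \cdot x_0$), say with constants $(L,A)$, and a radius $R>0$ such that the $R$-balls around the orbit $\Gamma \cdot x_0$ cover $X$. Declare a bond $\{\gamma,\gamma s\}$ of ${\rm Cay}(\Gamma,S)$ to be \emph{open} iff a fixed short path in $X$ from $\gamma \cdot x_0$ to $\gamma s\cdot x_0$ (chosen $\Gamma$-equivariantly, of length bounded by some $D=D(L,A,S)$) lies entirely inside $\omega_p^{(\lambda)}$. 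This defines a $\Gamma$-equivariant bond percolation $\omega$ on ${\rm Cay}(\Gamma,S)$, still a factor of iid over $\Gamma$. The expected degree is controlled by a union bound: $\mathbb E[{\rm deg}_\omega(1_\Gamma)] \le |S| \cdot \sup \mathbb P(\text{a fixed path of length} \le D \text{ lies in } \omega_p^{(\lambda)})$, and since $\omega_p^{(\lambda)}$ is a union of cells each independently black with probability $p$, this probability is at most $p$ times the expected number of cells a length-$D$ path meets, which is bounded (in $\lambda$, for $\lambda\le 1$ say) by a constant $C=C(D,\lambda_0)$ depending only on $D$ and an upper bound $\lambda_0$ on the intensity. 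Hence $\mathbb E[{\rm deg}_\omega(1_\Gamma)] \le |S| \cdot C \cdot p < \eps$ after choosing $p$ small enough (absorb $|S|$ into $C$).

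It remains to check that $\omega$ has a unique infinite cluster almost surely. For this one uses the quasi-isometry in both directions: an infinite open cluster of $\omega$ traces out, via the short paths, an unbounded connected subset of $\omega_p^{(\lambda)}$, so the number of infinite clusters of $\omega$ is at most the number of unbounded clusters of $\omega_p^{(\lambda)}$, which is one. Conversely, to see that $\omega$ has \emph{at least} one infinite cluster, one observes that the unique unbounded cluster of $\omega_p^{(\lambda)}$ has (with the chosen $\lambda,p$) positive density and, being coarsely connected and unbounded, must intersect infinitely many orbit balls $B(\gamma x_0, R)$ in a way that links them up along open bonds — this requires a short argument that the unbounded black cluster is not only connected but ``coarsely densely'' connected, i.e.\ consecutive orbit points it visits can be joined within $\omega_p^{(\lambda)}$ by a path of bounded length; this follows because the cluster is a path-connected open subset of $X$, so any two of its points are joined by a path inside it, and one subdivides that path at scale $R$. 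Then a Borel--Cantelli / ergodicity argument shows a nonempty infinite open cluster exists a.s. Uniqueness then follows either from the transfer above (at most one) combined with existence, or directly from the Lyons--Schramm indistinguishability / Gaboriau--Lyons type uniqueness principle for $\Gamma$-invariant percolations.

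\textbf{Main obstacle.} The delicate point is the lower bound on the number of infinite clusters of $\omega$: one must ensure that the unique unbounded black cluster in $X$ actually induces an \emph{infinite connected} subgraph of ${\rm Cay}(\Gamma,S)$, rather than a disconnected net of orbit points. This is where the path-connectedness of Voronoi clusters (as subsets of $X$) is essential and must be combined carefully with the co-compactness of $\Gamma$; the coloring being defined cell-wise (so that whole cells, of positive diameter, are black) is what makes the bounded-length connecting paths exist. A secondary subtlety is making every choice — fundamental domain, short paths in $X$, the quasi-isometry — simultaneously Borel and $\Gamma$-equivariant so that the end product is genuinely a factor of iid; this is routine but must be stated with care.
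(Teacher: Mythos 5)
Your construction is genuinely different from the paper's, and it has a real gap in the uniqueness-of-infinite-cluster step. The paper (Lemma~\ref{lm:CocompactLattice} and Lemma~\ref{lm:sparseLattice}) builds a \emph{deterministic} $\Gamma$-equivariant Voronoi diagram $\{V_\gamma\}_{\gamma\in\Gamma}$ of the orbit $\Gamma\cdot o$ in $X$, and places an edge of an intermediate graph $\mathcal H$ between $g,h\in\Gamma$ exactly when $V_g,V_h$ are neighbors and both cells \emph{intersect} the unique unbounded cluster $C_{p,\infty}^{(\lambda)}$. Because the $V_\gamma$ tile all of $X$, any path inside $C_{p,\infty}^{(\lambda)}$ crosses a chain of pairwise-adjacent $V_\gamma$'s all touching $C_{p,\infty}^{(\lambda)}$, so connectivity of $\mathcal H$ on its non-isolated vertices is automatic; the degree bound comes from Proposition~\ref{pr:UpperBoundBall} controlling how many low-intensity Poisson cells can meet the fixed ball containing $V_{1_G}$. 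You instead open a bond $\{\gamma,\gamma s\}$ when a \emph{fixed} $\Gamma$-equivariant short path from $\gamma o$ to $\gamma s o$ lies entirely in $\omega_p^{(\lambda)}$. Your degree bound is actually easier (the probability that a fixed path lies entirely in the black set is $\mathbb E[p^N]\le p$ since the path meets at least one cell), but your connectivity argument does not close.

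Concretely, two things break. First, for ``at least one infinite cluster'': the fact that a path $\sigma\subset C_{p,\infty}^{(\lambda)}$ exists from $\gamma o$ to some far $\gamma' o$ and can be subdivided at scale $R$ does not imply that the \emph{fixed} short paths between consecutive nearby orbit points lie in $\omega_p^{(\lambda)}$; those are different paths, and the subdivision controls neither their location nor the color of the cells they traverse. Second, your ``at most one'' argument is also broken: two disjoint infinite clusters $A_1,A_2$ of your bond percolation $\omega$ can both trace out unbounded subsets of the \emph{same} unbounded cluster of $\omega_p^{(\lambda)}$, because $\omega$-connectivity between them would require fixed short paths --- not merely orbit points --- to be black. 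So neither direction of your uniqueness claim is established. The paper's Voronoi-diagram-mediated construction avoids both issues precisely because its edge condition is ``both cells touch $C_{p,\infty}^{(\lambda)}$'' rather than ``a fixed path lies inside $\omega_p^{(\lambda)}$''; the former is monotone under enlarging the set of cells touching the cluster and is exactly what a path in $C_{p,\infty}^{(\lambda)}$ certifies when tracked through the deterministic tiling. If you want to salvage your route, you would essentially have to prove that the lattice orbit points whose $R$-balls lie in a single black cell inside $C_{p,\infty}^{(\lambda)}$ form an infinite connected subset of ${\rm Cay}(\Gamma,S)$ --- a nontrivial geometric fact that is not argued and is what the paper's cell-touching criterion makes unnecessary.
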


Let us now discuss in more details the connections with fixed price and cost, and the novel strategy that our approach introduces.
The {\em cost} of a free p.m.p.~action of a countable group is an orbit-equivalence invariant introduced in~\cite{L95}. In particular, it was studied in seminal work of Gaboriau \cite{G00,G02}.  The {\em cost} of a countable group is defined to be the infimal cost of its free, ergodic p.m.p.~actions. The group has {\em fixed price} if all its free, ergodic p.m.p.~actions have the same cost. The following famous question is due to Gaboriau \cite[Question 6.3]{G10}.

\begin{question}[Fixed price problem]\label{q:FixedPrice} Does every countable group have fixed price?
\end{question}

We refer to \cite{KM04,G10,HP20,FMW23} and the references therein for background and instead focus on a recent probabilistic approach due to \cite{HP20}, where it was used to show that groups with property~(T) have cost $1$, answering another well-known question of Gaboriau.

\medskip

{\noindent\bf Factor of iid sparse unique infinite clusters.} The maximal cost over all free ergodic p.m.p. actions of $\Gamma$ is given by the following probabilistic formula:
\begin{equation} \label{def:maxcost}
{\rm cost}^*(\Gamma) := \frac{1}{2} \inf \bigg\{ \int_{\omega\in\mathcal S(\Gamma)} \deg_\omega(1_{\Gamma}) \, d \mu(\omega) \colon \mu\in F_{{\rm IID}}(\Gamma,\mathcal S(\Gamma)) \bigg\},
\end{equation}
where $\mathcal S(\Gamma)$ denotes the set of connected spanning graphs on $\Gamma$,  $F_{{\rm IID}}(\Gamma,\mathcal S(\Gamma))$ is the set of $\Gamma$-invariant probability measures on $\mathcal S(\Gamma)$ which are factors of iid~processes on $\Gamma$ and $1_\Gamma$ denotes the identity of $\Gamma$, see \cite{HP20}. This formula is based on a similar representation of the cost of $\Gamma$, see \cite[Proposition 29.5]{KM04}, and the fact that Bernoulli actions have maximal cost among the free ergodic p.m.p.~actions by a result of Ab{é}rt and Weiss \cite{AW13}. The following reduction step was observed in \cite{HP20}:
\begin{equation} \label{equ:HPReduction}
{\rm cost}^*(\Gamma) \leq 1 + \frac{1}{2} \inf \bigg\{ \int_{\omega\in\mathcal U(\Gamma)} {\rm deg}_\omega(1_\Gamma) \, d \mu(\omega) : \mu\in F_{{\rm IID}}(\Gamma,\mathcal U(\Gamma)) \bigg\},
\end{equation}
where $\mathcal U(\Gamma)\subset\{0,1\}^{\Gamma\times\Gamma}$ is the set of graphs on $\Gamma$ with a unique infinite cluster. In particular, the FIID sparse unique infinite cluster property defined in \eqref{def-FIIDsparseUIC} implies that $\Gamma$ has fixed price~$1$.

In \cite{HP20}, a similar reduction was applied to study the cost of $\Gamma$. Using an ingenious construction \cite[Section 2.2]{HP20}, it was shown there that Cayley graphs of countable groups with property (T) have the {\em sparse unique infinite cluster property} defined as in~\eqref{def-FIIDsparseUIC} using $\Gamma$-invariant instead of FIID processes, which established that these groups have cost~$1$. This construction is very far from being a factor of iid, see \cite[Remark 4.4]{HP20}, and is hence quite different from the construction used in our proof of Theorem~\ref{main:FIIDSUICP}.

\begin{remark} Let us discuss an instructive link between Poisson-Voronoi percolation and the FIID sparse unique infinite cluster property. Note that we may similarly consider Poisson-Voronoi percolation (or Bernoulli-Voronoi percolation) on Cayley graphs. Denote the corresponding uniqueness threshold by $p_u(\lambda)$. An important observation is that every Cayley graph such that $p_u(\lambda)\to0$ as $\lambda\to0$ has the FIID sparse unique infinite cluster property. Indeed, let $\eps>0$, then by assumption there exists $\lambda_0>0$ with $p_u(\lambda_0)<\eps/d$, where $d$ is the degree in the graph. By keeping all edges between black vertices, we obtain an FIID bond percolation with expected degree at most $\eps$ and a unique infinite cluster. We emphasize that our proof of Theorem \ref{main:FIIDSUICP} is not based on this observation, i.e.~vanishing uniqueness thresholds for the discrete model, but instead uses the continuum percolation model on the associated symmetric space to construct the desired FIID processes.
\end{remark}

We refer to Section \ref{sec:closing} for a discussion of the discrete case including several open questions.

\medskip

{\bf\noindent Sparse factor graphs of Poisson point processes with a unique infinite cluster.} The notion of cost, and the fixed price problem, were extended to unimodular locally compact second countable groups via connected (equivariant) factor graphs of free invariant point processes on the group $G$ by {Á}bert and Mellick \cite{AM22}. For our purposes, it suffices to recall that Poisson point processes then have maximal cost among all free invariant point processes \cite[Theorem 1.2]{AM22}. Thus
\begin{equation*}
{\rm cost}^*(G) = \frac{1}{2} \inf \Big\{ \mathbb E\big[ {\rm deg}_{\mathcal G(Y_0)}(1_G) \big] : \mathcal G \, \, \text{connected factor graph of} \, \, Y  \Big\},
\end{equation*}
where $Y$ is a Poisson point process of intensity $1$ on $G$ and $Y_0:=Y\cup\{1_G\}$ \cite[Definition 4.1]{AM22}. We may also assume that $Y$ is equipped with iid ${\rm Unif}[0,1]$ marks, see \cite[Theorem 1.7 \& 1.8]{AM22}.

In the setting of a non-compact connected semisimple real Lie group $G$ acting on its symmetric space $X$, it is possible to transfer the picture from $X$ to $G$. This perspective was used in \cite[Theorem~C]{FMW23} to prove that higher rank semisimple real Lie groups have fixed price~$1$. Assuming that $G$ has property~(T), fixed price $1$ alternatively follows from a reduction analogous to \eqref{equ:HPReduction} and the following application of Theorem \ref{maintheorem} (see Corollary \ref{cor:FIIDsparseUSectionEight}).

\begin{corollary}[FIID sparse unique infinite clusters] \label{cor:FIIDsparseU} Let $G$ be a connected higher rank semisimple real Lie group with property (T). Let~$\Pi$ be the Poisson point process on $G$ of intensity $1$ equipped with iid ${\rm Unif}[0,1]$ marks. Then, for every $\eps>0$, there is a $G$-equivariant factor graph $\mathcal H$ of $\Pi$ with a unique infinite cluster and $\mathbb E\big[ {\rm deg}_{\mathcal H(\Pi_0)}(1_G) \big]\le\eps$ for $\Pi_0=\Pi\cup\{1_G\}$.
\end{corollary}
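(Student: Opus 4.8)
The plan is to deduce Corollary~\ref{cor:FIIDsparseU} from Theorem~\ref{main:FIIDSUICP} by transferring the FIID bond percolation on a co-compact lattice $\Gamma\subset G$ to a factor graph of the Poisson point process $\Pi$ on $G$. Since $G$ is a connected higher rank semisimple real Lie group with property~(T), it is in particular non-compact and admits a co-compact lattice $\Gamma$ (for instance by Borel--Harish-Chandra, or one may invoke the existence of co-compact lattices in semisimple Lie groups directly); fix a finite symmetric generating set $S$ of $\Gamma$. By Theorem~\ref{main:FIIDSUICP}, for each $\eps>0$ there is a $\Gamma$-equivariant FIID bond percolation $\omega$ on ${\rm Cay}(\Gamma,S)$ with a unique infinite cluster and $\mathbb E[{\rm deg}_\omega(1_\Gamma)]\le\eps'$, where $\eps'>0$ is to be chosen below.

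The key step is to realize this FIID process as a factor graph of $\Pi$. First, I would use the standard fact that the Poisson point process $\Pi$ on $G$, equipped with iid ${\rm Unif}[0,1]$ marks, can be used to produce an iid field indexed by $\Gamma$ in a $\Gamma$-equivariant way: choose a Borel fundamental domain $F\subset G$ for the right $\Gamma$-action (so $G=\bigsqcup_{\gamma\in\Gamma}F\gamma$ and $F$ is pre-compact since $\Gamma$ is co-compact), and observe that the restrictions $(\Pi\cap F\gamma,\text{marks})$ for $\gamma\in\Gamma$ are iid, giving a factor iid process on $\Gamma$ with values in a standard Borel space; a measurable isomorphism with $[0,1]$ then yields an iid ${\rm Unif}[0,1]$-field $(U_\gamma)_{\gamma\in\Gamma}$ which is $\Gamma$-equivariant as a factor of $\Pi$. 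Feeding $(U_\gamma)_\gamma$ into the factor map producing $\omega$ gives a $\Gamma$-invariant random subgraph of ${\rm Cay}(\Gamma,S)$, equivariant as a factor of $\Pi$. Next, turn this graph on $\Gamma$ into a graph on (a subset of) the points of $\Pi$: assign to each point $x\in\Pi$ the unique $\gamma=\gamma(x)\in\Gamma$ with $x\in F\gamma$, and connect $x,x'\in\Pi$ if $\gamma(x)$ and $\gamma(x')$ are adjacent in $\omega$. Since each coset $F\gamma$ contains a.s.\ a Poisson number of points, one must be slightly careful: restrict attention to cosets containing at least one point and use the marks to select one representative point per non-empty coset, or more robustly connect \emph{all} points in $F\gamma$ to \emph{all} points in $F\gamma'$ when $\gamma\sim_\omega\gamma'$ and additionally span each non-empty coset's point cloud by a path; this only changes the expected degree by a bounded multiplicative factor depending on the (finite) intensity of $\Pi$ and $\vol(F)$, and preserves the unique-infinite-cluster property because the combinatorial structure at the level of $\Gamma$ is unchanged up to finite bounded-size modifications. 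Finally incorporate $1_G$: add $\Pi_0=\Pi\cup\{1_G\}$ and connect $1_G$ to the points in its own coset (or to the nearest point of $\Pi$), contributing $O(1)$ to the expected degree at $1_G$.

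Putting this together, the resulting factor graph $\mathcal H$ of $\Pi$ is $G$-equivariant (the construction only used the $\Gamma$-structure via a $G$-equivariant procedure — the fundamental domain trick commutes with the $G$-action in the appropriate sense once one works with the Palm/factor-graph formalism of \cite{AM22}), has a unique infinite cluster, and satisfies $\mathbb E[{\rm deg}_{\mathcal H(\Pi_0)}(1_G)]\le C\cdot\eps'$ for a constant $C=C(G,\Gamma,S)$; choosing $\eps'=\eps/C$ finishes the proof. The main obstacle is the bookkeeping in the transfer step: one must ensure that the passage from $\Gamma$ to $\Pi$ is genuinely $G$-equivariant and is a factor of $\Pi$ (not of $\Pi$ together with extra randomness), and that replacing a single vertex of $\Gamma$ by a random finite cloud of Poisson points does not inflate the expected degree in an uncontrolled way nor destroy uniqueness of the infinite cluster. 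This is where the co-compactness of $\Gamma$ (pre-compactness of $F$, hence a.s.\ finiteness and bounded expected size of $\Pi\cap F\gamma$) is essential, and where the marks on $\Pi$ are used to break ties and select representatives measurably. Once these points are handled, everything else is routine.
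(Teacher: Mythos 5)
Your approach runs in the opposite direction from the paper's. The paper proves this corollary directly from a sparse unique infinite cluster factor graph on the symmetric space $X$ (Lemma~\ref{lm:sparseX}): it starts from the Poisson process $\Pi$ on $G$, splits off an independent low-intensity Poisson process $\Pi^{(\lambda)}$ as a $G$-equivariant factor, projects to $X$ via $p\colon G\to G/K$, applies Lemma~\ref{lm:sparseX} to produce an isometry-equivariant graph $\mathcal H'$ on $Y=p(\Pi)$, and pulls $\mathcal H'$ back to $\Pi$ along $p$, which is a $G$-map. The lattice statement (Theorem~\ref{main:FIIDSUICP}) is then proved separately in Section~\ref{sec:Lattices} using the analogous Lemma~\ref{lm:sparseLattice}, and is not used to prove this corollary.

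The gap in your argument is the $G$-equivariance claim, and it is not a bookkeeping issue. By definition, a factor graph of $\Pi$ must be a $G$-equivariant rule defined purely in terms of the marked configuration and the $G$-invariant structure on $G$ (the left-invariant metric). Your construction fixes a co-compact lattice $\Gamma$ and a Borel fundamental domain $F$; these are not $G$-invariant objects, and the map $x\mapsto\gamma(x)$ assigning each Poisson point to its coset $F\gamma(x)$ manifestly fails to commute with the left $G$-action on configurations (acting by $g$ moves points from $F\gamma$ to $gF\gamma$, which is not a coset of $F$). The resulting graph is at best a $\Gamma$-equivariant function of $\Pi$, which is a strictly weaker conclusion than what the statement asserts. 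Your parenthetical that the fundamental domain trick ``commutes with the $G$-action in the appropriate sense once one works with the Palm/factor-graph formalism of~\cite{AM22}'' is not correct: the construction in \cite[Proposition 5.1]{AM22} that the paper cites achieves $G$-equivariance precisely by \emph{avoiding} any fixed external reference set such as a lattice or its fundamental domain, and instead using cells determined by the Poisson points themselves. There is no way to repair your transfer step without abandoning the fixed $\Gamma$ and $F$, at which point the argument collapses back to something like the paper's symmetric-space approach.
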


Let us conclude this section with the following overview.
\begin{itemize}
\item[(i)] We present a new strategy for proving fixed price $1$ and, in fact, the FIID sparse unique infinite cluster property based on the phenomenon ''$p_u(\lambda)\to0$ as $\lambda\to0$'' for the well-known Poisson-Voronoi percolation model.
\item[(ii)] To establish this phenomenon, we need to use both the infinite touching result of \cite{FMW23} and property (T) (which, for instance, is well-known for connected higher rank simple real Lie groups  with finite center~\cite{BHV08}).
\item[(iii)] This approach has the following main advantages:
\begin{itemize}
    \item[1.] It uses the natural FIID process $\omega_p^{(\lambda)}$. In this way, we avoid passing to a limit. In particular, cells are compact and thus trivially hyperfinite. This avoids certain technicalities needed in the case of a limiting tessellation, cf.~\cite[Section 7.1]{FMW23}.
    \item[2.] It establishes the phenomenon $p_u(\lambda)\to0$ as $\lambda\to0$ as a viable strategy for proving the FIID sparse unique infinite cluster property and fixed price~$1$.
\end{itemize}
\end{itemize}

\subsection{Strategy of proof of the main result, Theorem~\ref{maintheorem}}\label{sec:IdeasProof}

The main difficulty in estimating $p_u(\lambda)$ is that the existence of a {\em unique} unbounded cluster is a {\em non-local} phenomenon. Note that we are in a non-amenable setting and thus the co-existence of infinitely many unbounded clusters is possible and, in fact, expected. In particular, estimating $p_u(\lambda)$ via the critical probability $p_c(\lambda)$ does not seem possible. Hence the approaches to estimating $p_u$ are essentially limited to establishing versions of long-range order, see e.g.~\cite[Theorem 7.50]{LP16}. This is in fact the starting point of our analysis.

\medskip

{\noindent\bf The long-range order approach.} Our starting point is the following characterization of $p_u$ (see Theorem \ref{thm:LRO}). Note that this result does not assume higher rank or property~(T).

\begin{theorem}[Long-range order implies uniqueness] \label{main:LRO}
    Let $G$ be a non-compact connected semisimple real Lie group and $(X,d_X)$ be its symmetric space. Then 
    \begin{equation*}
        p_u(\lambda) = \inf \Big\{ p\in(0,1) \, : \, \inf_{x,y\in X} \mathbb P\Big( x \overset{\omega_p^{(\lambda)}}{\longleftrightarrow} y \Big)>0 \Big\}.
    \end{equation*}
\end{theorem}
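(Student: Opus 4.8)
The plan is to prove the two inequalities between $p_u(\lambda)$ and $q:=\inf\{p : \inf_{x,y} \mathbb{P}(x \overset{\omega_p^{(\lambda)}}{\longleftrightarrow} y)>0\}$ separately. The inequality $p_u(\lambda)\le q$ is the easier direction and should follow from a standard argument: if for some $p$ we have $\inf_{x,y\in X}\mathbb{P}(x \overset{\omega_p^{(\lambda)}}{\longleftrightarrow} y) = c > 0$, then one deduces that $\omega_p^{(\lambda)}$ has at most one unbounded cluster with positive probability (in fact almost surely, conditioned on having one). The classical way to see this is: if there were two (or more) distinct unbounded clusters with positive probability, then by the ergodicity of the Poisson-Voronoi process under the (transitive, noncompact) $G$-action on $X$ one gets infinitely many unbounded clusters; a Burton--Keane / mass-transport type argument, or more directly the fact that two unbounded clusters $C_1, C_2$ can be ``joined from far away'' — pick points $x\in C_1$, $y\in C_2$ far apart and use the uniform connection lower bound together with insertion tolerance of Voronoi percolation (adding finitely many Poisson points, or flipping finitely many cell colors, has positive conditional probability) to merge them — yields a contradiction with the assumption that they were distinct. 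So $p_u(\lambda)\le q$.

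The reverse inequality $q\le p_u(\lambda)$ is where the content lies. Fix $p>p_u(\lambda)$, so with positive probability there is a unique unbounded cluster; by ergodicity this holds almost surely, and moreover for a.e.\ configuration the unbounded cluster $\mathcal{C}$ is ``everywhere dense at large scales'' — more precisely $\mathbb{P}(x\in\mathcal C)$ is a positive constant $\theta=\theta(p,\lambda)$ independent of $x$ by $G$-invariance (after adding $x$ as a process point if necessary, or by a suitable local modification; one must be a little careful whether $x$ lies in a cell or on a cell boundary, but this is a null-set issue). The goal is to upgrade ``with positive probability $x$ and $y$ are both in the unique unbounded cluster, hence connected'' into a uniform-in-$x,y$ bound. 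The first attempt $\mathbb P(x\overset{\omega_p^{(\lambda)}}\longleftrightarrow y)\ge \mathbb P(x\in\mathcal C,\ y\in\mathcal C)$ does not obviously give a uniform lower bound because the events $\{x\in\mathcal C\}$ and $\{y\in\mathcal C\}$ could in principle be anti-correlated. The standard fix (cf.\ the long-range order / uniqueness-monotonicity machinery, e.g.\ \cite[Theorem 7.50]{LP16} and the Harris/FKG-free arguments of Lyons--Schramm, Häggström--Peres--Schonmann) is to run a local-modification + second-moment argument: one shows that the probability that $x$ is connected within $\omega_p^{(\lambda)}$ to ``level $n$'' (some large ball or a majority of cells meeting a large sphere) is bounded below uniformly, and then that two such ``large'' connected sets must intersect (again using uniqueness of the infinite cluster: if $x$ and $y$ each connect to the infinite cluster they connect to each other). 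Concretely: $\{x\overset{\omega}\longleftrightarrow y\}\supseteq \{x\in\mathcal C\}\cap\{y\in\mathcal C\}$, and by a sprinkling/finite-energy argument the event $\{x\in\mathcal C\}$ has probability bounded below by a constant depending only on $\lambda,p$ (not on $x$), and similarly $\{y\in\mathcal C\mid \mathcal F_{\text{near }x}\}$ — but to combine them one passes to an auxiliary percolation with parameter $p'\in(p_u,p)$ on a sparser intensity or uses the standard trick of first connecting $x$ to infinity using the configuration outside a ball and $y$ to infinity using a disjoint part, then merging. I would phrase this via: pick $p_u(\lambda)<p'<p$; with probability $\ge\delta>0$ (uniform in $x$) the point $x$ is in the unbounded $p'$-cluster using only cells within distance $R$ of $x$ failing, and then the extra ``$p$ versus $p'$'' freedom plus uniqueness glues everything together, giving $\mathbb P(x\overset{\omega_p^{(\lambda)}}\longleftrightarrow y)\ge\delta^2$ or so, independent of $x,y$.

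The main obstacle I anticipate is precisely this last gluing step — extracting a genuinely \emph{uniform} lower bound on $\mathbb P(x\overset{\omega_p^{(\lambda)}}\longleftrightarrow y)$ rather than just ``$>0$ for each fixed pair'', in a setting where FKG is not available for the colored Voronoi model in full strength and where the space is noncompact and non-amenable so there is no averaging. The clean route is to isolate a ``finite-energy'' or insertion-tolerance property of Poisson-Voronoi percolation (adding a bounded number of Poisson points in a region, together with prescribing their colors, has conditional density bounded below on bounded regions) and a corresponding ``local surgery connects two unbounded clusters'' lemma, then combine with ergodicity of the $G$-action on the Poisson process to promote a positive probability to a positive constant. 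I would also need to make sure the infimum over $x,y\in X$ genuinely has no escape-to-infinity pathology; here transitivity of $G$ on $X$ and the $G$-invariance of the whole picture should reduce the two-point function to depend only on $d_X(x,y)$, and then the uniqueness of the infinite cluster forces $\inf_r \mathbb P(x\overset{\omega}\longleftrightarrow y) > 0$ exactly when there is long-range order. Modulo these lemmas (insertion tolerance, ergodicity, two-point function depends only on distance), the equivalence falls out.
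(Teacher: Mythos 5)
You have the direction of difficulty reversed, and this miscalibration is responsible for the actual gaps. The inequality you call easy, namely $p_u(\lambda)\le q$ (long-range order implies uniqueness), is the one that requires genuine work; the inequality you call hard, $q\le p_u(\lambda)$ (uniqueness implies long-range order), is the standard one.

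For the latter, your worry that $\{x\in\mathcal C\}$ and $\{y\in\mathcal C\}$ ``could in principle be anti-correlated'' is unfounded: the Harris--FKG inequality \emph{does} hold for Poisson--Voronoi percolation (this is Lemma~\ref{lm:FKG}, obtained from the Last--Penrose chaos-expansion criterion), and both events are increasing, so
$\mathbb P(x\in\mathcal C,\,y\in\mathcal C)\ge \mathbb P(x\in\mathcal C)\,\mathbb P(y\in\mathcal C)=\theta^2$
with $\theta>0$ by transitivity and $G$-invariance. That is the entire argument; the sprinkling/second-moment construction you sketch is not needed.

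For the former, your sketch does not close. You invoke Burton--Keane to exclude ``infinitely many unbounded clusters,'' but Burton--Keane is an amenable-space tool; here $X$ is a symmetric space of a non-compact semisimple Lie group, hence non-amenable, and infinitely many unbounded clusters is a genuine regime. Your alternative --- merging two unbounded clusters by insertion tolerance --- produces no contradiction: local surgery shows that a merged configuration has positive probability, not that the un-merged one has probability zero. The argument that actually works is a Lyons--Schramm style cluster-frequency / indistinguishability argument: long-range order forces the existence of a cluster with positive frequency with respect to an auxiliary random walk, while infinitely many \emph{indistinguishable} infinite clusters would each have frequency zero. The subtlety your proposal does not see --- and which the paper has to work hard to circumvent --- is that the natural random walk on the Delaunay graph and the Poisson--Voronoi coloring both depend on the same underlying Poisson process, so the Aldous--Lyons indistinguishability theorem for unimodular random graphs does not apply directly. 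The paper instead defines cluster frequencies via a $G$-equivariant random walk on $X$ itself (in the spirit of H\"aggstr\"om--Jonasson), proves their isometry invariance, and combines this with a Mass Transport Principle for the embedded, labeled Delaunay graph to show that in the presence of long-range order the number of infinite Bernoulli clusters on the (extremal unimodular) Delaunay graph cannot be infinite. Without some replacement for this machinery, the hard direction is not established.
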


Theorem \ref{main:LRO} is expected as versions of it are well-known in percolation theory \cite{LS99,AL07} and go back to seminal work of Lyons and Schramm \cite{LS99}. Since we could not find a suitable reference, we include the proof. This proof is longer than one might expect, which is due to a technical subtlety in applying the method of \cite{LS99}. We refer to Section~\ref{sec:LRO} for a detailed discussion.

In order to use Theorem \ref{main:LRO}, we have to verify a uniform lower bound on the two-point function. We deduce such a lower bound by combining two ingredients.

\medskip

{\noindent\bf Higher rank and property (T).} The first ingredient is a {\em finitary} (meaning that $\lambda>0$ is fixed and sufficiently small, instead of passing to the low intensity limit) analogue of the {\em infinite touching} phenomenon which was obtained for the ``ideal Poisson-Voronoi tessellation'' in higher rank in \cite{FMW23} (see Theorem \ref{thm:A2}).

\begin{theorem}[Intersection of cells at small intensities] \label{main:FinitaryTouching}
    Let $G$ be a connected higher rank semisimple real Lie group and $(X,d_X)$ be its symmetric space.
    Then for every $\eps>0$ and $R>0$ there is $\lambda_0\in (0,1]$ such that 
    \begin{equation*}
        \inf_{0<\lambda\le \lambda_0} \mathbb{P} \Big( \text{all cells at intensity } \lambda, \text{ which intersect } \mathcal B_R(o), \text{pairwise share a boundary}  \Big)>1-\eps.
    \end{equation*}
\end{theorem}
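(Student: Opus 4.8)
The plan is to transfer the "infinite touching" statement for the ideal Poisson-Voronoi tessellation (IPVT) obtained in \cite{FMW23} to a statement about genuine Poisson-Voronoi tessellations at small but positive intensity. The conceptual point is that, in higher rank, the IPVT is (in the appropriate sense) a distributional limit of the Poisson-Voronoi tessellations as $\lambda\to 0$ \emph{after rescaling}, so the almost-sure property that every pair of cells shares an unbounded boundary should manifest, for $\lambda$ small, as a high-probability event that cells meeting a fixed ball $\mathcal{B}_R(o)$ already share a boundary. There are two things to be careful about: first, making precise in what topology the convergence holds and checking it is strong enough to push through the relevant event; second, dealing with the fact that "sharing a boundary" for the limit object is a statement about unbounded sets, whereas at intensity $\lambda>0$ we only need the finitary (bounded) version, which is what makes the theorem provable even without knowing full convergence.

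First I would recall the precise local picture from \cite{FMW23}: by rescaling the metric on $X$ by a factor depending on $\lambda$ (the "zoom-out" that sends the intensity-$\lambda$ process to intensity $1$ and, as $\lambda\to 0$, stretches distances so that the nearest points recede to infinity), the Poisson-Voronoi tessellation converges in distribution — with respect to a suitable local/Fell-type topology on tessellations — to the IPVT. Property (T) enters exactly here: it is what guarantees, via the Fraczyk--Mellick--Wilkens analysis, that the limiting object is non-degenerate and in fact has the infinite-touching property (this is their Theorem, relying on higher rank plus (T); see also the discussion around Theorem \ref{thm:A2}). Then I would fix $\eps>0$ and $R>0$ and argue by contradiction: if the conclusion fails, there is a sequence $\lambda_k\to 0$ along which, with probability at least $\eps$, some pair of cells meeting $\mathcal{B}_R(o)$ fails to share a boundary. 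I would like to upgrade "fails to share a boundary" to a \emph{local} witness — two cells that come within distance $R$ of $o$ but are separated by a third cell within some bounded region — so that it becomes an event depending only on the configuration in a fixed compact set after rescaling, hence passes to the limit. Taking a weak limit of the rescaled configurations along $\lambda_k$, one obtains that with probability at least $\eps$ the IPVT has two cells meeting the (rescaled) origin region that are separated, contradicting infinite touching in the limit.

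The main obstacle, and where I expect the real work to be, is the passage from a bounded witness of "not sharing a boundary" back to the actual statement, and dually, extracting from the limiting infinite-touching property a genuinely local (compactly supported) consequence that survives the weak limit. Infinite touching says two cells share an \emph{unbounded} boundary; its contrapositive, "two cells do not share a boundary," is a priori not an open or closed condition in a local topology, so one cannot naively apply the portmanteau lemma. I would handle this by introducing, for each scale $\rho$, the event $A_\rho$ that every two cells meeting $\mathcal{B}_R(o)$ share a boundary \emph{inside} $\mathcal{B}_\rho(o)$; this is a local event (it depends on the process restricted to a bounded region, since Voronoi cells are determined locally up to controllable tail events, the cells here being compact), and infinite touching for the IPVT implies $\mathbb{P}_{\mathrm{IPVT}}(A_\rho)\to 1$ as $\rho\to\infty$. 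Fix $\rho$ with $\mathbb{P}_{\mathrm{IPVT}}(A_\rho)>1-\eps/2$; if $A_\rho$ were additionally a continuity event for the weak convergence (or at least contained in an open set of nearly the same measure, which follows from inner/outer regularity of the local topology and the fact that the boundary of $A_\rho$ has measure zero generically — a point that needs the absence of "degenerate" coincidences, itself a consequence of the Poisson nature of the process), then $\mathbb{P}_{\lambda_k}(A_\rho)>1-\eps$ for all large $k$, which is exactly the desired conclusion since $A_\rho$ implies the event in the statement. The delicate verification is thus: (a) the rescaled Poisson-Voronoi tessellations converge weakly to the IPVT in a topology fine enough that $A_\rho$-type events are (almost) continuity events — this is essentially the convergence result underlying \cite{FMW23} and the works \cite{B19,BCP22,DCELU23}, so I would cite it and spell out only the local-continuity part; and (b) controlling the (small) probability that a cell meeting $\mathcal{B}_R(o)$ "escapes" the region $\mathcal{B}_\rho(o)$ relevant for determining the boundaries, which uses uniform (in small $\lambda$) tail bounds on cell diameters coming from the Poisson point process — a standard but necessary estimate.
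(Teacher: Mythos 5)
Your proposal rests on two claims that don't hold up against the paper.

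First, you write that property (T) is what guarantees that the IPVT has the infinite-touching property, citing higher rank plus (T). This is incorrect: the infinite-touching result of Fraczyk--Mellick--Wilkens (recalled as Theorem \ref{thm:InfiniteTouching}, from \cite[Theorem 6.1]{FMW23}) requires only higher rank, and correspondingly the statement you are asked to prove (Theorem \ref{main:FinitaryTouching}) does \emph{not} assume property (T) — the paper even includes a remark stressing this and pointing out examples of higher rank semisimple groups without (T). Property (T) is used elsewhere in the paper (for the long-range-order threshold via positive definite functions, Theorem \ref{main:threshold}), not here.

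Second, and more seriously, the core of your argument is a weak/distributional convergence of the rescaled Poisson-Voronoi tessellations to the IPVT in a topology fine enough that your localized events $A_\rho$ are (nearly) continuity sets, and you propose to "cite" this convergence from \cite{FMW23,B19,BCP22,DCELU23}. But this convergence is precisely what is \emph{not} available in the generality needed: tessellation-level convergence to the IPVT is established only for trees, hyperbolic spaces, and the $L^1$-product of hyperbolic planes (cf.~the paper's Remark \ref{rm:convergence}), and neither \cite{FMW23} nor the present paper proves it for general higher rank symmetric spaces. You flag this as the "delicate verification," but treating it as a citation rather than something to be proved leaves the crux of the argument unsupported. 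The paper avoids the issue by working with the point processes rather than the tessellations: it pushes $Y^{(\lambda)}$ into the corona space $\mathbf D$, passes to a subsequential vague limit of the intensity measures, and constructs a Skorokhod-type coupling (Theorem \ref{thm:Coupling}) in which the $i$-th point of $\Upsilon_{t_n}$ converges a.s.~to the $i$-th point of $\Upsilon$. It then combines this pointwise a.s.~convergence with the $D$-wall version of infinite touching and an elementary geodesic lemma to extract, by hand, a bounded witness of touching that persists for large $n$ — sidestepping any continuity-set or portmanteau argument. Without this (or some other substitute for the missing convergence of tessellations), the chain from infinite touching in the limit to high-probability finitary touching at small $\lambda$ does not close.
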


\begin{remark}
     Note that Theorem \ref{main:FinitaryTouching} does not assume property (T). While property (T) is well-known in many examples, most notably for all connected higher rank simple real Lie groups with finite center~\cite{BHV08}, there are connected higher rank semisimple real Lie groups which fail to have it. One such example is the isometry group of $\mathbb H^2\times\mathbb H^2$ endowed with the $L^2$-metric and its natural Riemannian structure; see Section~\ref{sec:closing} for more on this particular example.
\end{remark}

Theorem \ref{main:FinitaryTouching} is derived from a (formally) weaker version of a result in \cite{FMW23} which we recall in Theorem~\ref{thm:InfiniteTouching} and Remark \ref{rem:InfiniteTouching}. The proof of the result suitable for our purposes is again longer than one might expect. This is due to the fact that convergence of the Voronoi tessellations to the object appearing in Theorem \ref{thm:InfiniteTouching} is proven neither in \cite{FMW23} nor here. Instead, we provide the details to derive Theorem \ref{main:FinitaryTouching} directly from Theorem \ref{thm:InfiniteTouching}, see Section \ref{sec:FinitaryConditionsProofs} for details.

The second ingredient is the following criterion for long-range order which holds for $G$-invariant normal (this technical notion is defined in Section~\ref{sec:threshold}) random closed subsets in the presence of a group action by a group with property (T) (see Theorem \ref{thm:threshold}). This theorem does not use the assumption of higher rank.

\begin{theorem}[Long-range order threshold] \label{main:threshold}
    Let $G$ be a lcsc group acting continuously and transitively by isometries on a geodesic lcsc metric space $(M,d)$ and fix some $o\in M$. Suppose that $G$ has property (T). Then for every $R>0$, there exists $p^*<1$ such that every $G$-invariant normal random closed subset $\mathcal Z$ of $M$ with $\mathbb P(o \in \partial \mathcal Z)=0$ satisfies
    \begin{equation*}
        \inf_{x,y\in \mathcal B_R(o)} \mathbb P \big( x\overset{\mathcal Z}{\longleftrightarrow} y \big)>p^* \quad \Rightarrow \quad \inf_{x,y\in M} \mathbb P \big( x\overset{\mathcal Z}{\longleftrightarrow} y \big)>0.
    \end{equation*}
\end{theorem}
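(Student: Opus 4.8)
The plan is to exploit property~(T) through its characterization in terms of a spectral gap / Poincaré-type inequality for unitary representations, applied to a suitable representation built from the random set $\mathcal Z$. First I would set up the relevant Hilbert space: consider the space $L^2$ of square-integrable functions on the probability space carrying $\mathcal Z$, or more precisely consider, for a fixed pair of points, the indicator of the connectivity event and its $G$-translates. The key object is the function $x\mapsto \mathbf 1[x \overset{\mathcal Z}{\longleftrightarrow} y]$ and one wants to compare the ``local'' two-point function $\mathbb P(x\overset{\mathcal Z}{\longleftrightarrow} y)$ for $x,y$ close to $o$ with the ``global'' one for arbitrary $x,y$. The normality hypothesis on $\mathcal Z$ (together with $\mathbb P(o\in\partial\mathcal Z)=0$) is what guarantees that connectivity is, up to null sets, an open/measurable condition behaving well under the $G$-action, so that these functions genuinely live in $L^2$ of the relevant orbit-equivalence-type space and the $G$-action on them is by a continuous unitary (or orthogonal) representation.

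The heart of the argument is the following dichotomy coming from property~(T): the unitary representation of $G$ on the orthogonal complement of the invariant vectors has no almost-invariant vectors, i.e.\ there is a Kazhdan constant $\kappa>0$ and a compact $Q\subset G$ such that $\max_{g\in Q}\|\pi(g)\xi-\xi\|\ge\kappa\|\xi\|$ for all $\xi$ orthogonal to the invariants. Applied to the (mean-zero part of the) connectivity function, this forces: either the function is essentially constant (which, by ergodicity-type considerations or by a $0$-$1$ law for the tail of $\mathcal Z$, means the global connection probability is bounded below), or there must be substantial ``fluctuation'' under the action of the compact generating set $Q$. One then argues contrapositively: if the global infimum $\inf_{x,y\in M}\mathbb P(x\overset{\mathcal Z}{\longleftrightarrow}y)$ were $0$, the connectivity function would be far from the constant case, so by property~(T) it would fluctuate by at least $\kappa$ under $Q$; but fluctuation under $Q$ translates, via the transitivity of the $G$-action on $M$ and the fact that $Q$ moves $o$ only within some ball $\mathcal B_R(o)$ (choosing $R$ as the radius such that $Qo\subseteq\mathcal B_R(o)$), into a quantitative gap between $\mathbb P(x\overset{\mathcal Z}{\longleftrightarrow}y)$ for nearby $x,y$ and some far-apart pair — which bounds the local two-point function away from $1$ by an explicit $p^*=p^*(\kappa,R)<1$. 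Chaining local connections along a geodesic (using that $M$ is geodesic and lcsc, so any two points are joined by a chain of overlapping balls of radius $R$) then propagates a strong-enough local lower bound to a positive global lower bound, completing the contrapositive.

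The main obstacle I expect is making precise the passage from ``$\mathcal Z$ is a $G$-invariant normal random closed set'' to ``the connectivity functions generate an honest continuous unitary representation of $G$ with the right invariant vectors,'' and in particular identifying the space of invariant vectors with (a.s.\ constant) global connectivity data — this is where the normality notion and the hypothesis $\mathbb P(o\in\partial\mathcal Z)=0$ must be used carefully to avoid measurability pathologies at the boundary of $\mathcal Z$, and where one needs a tail/ergodic $0$-$1$ law to conclude that the invariant part corresponds to a genuine number $\inf_{x,y}\mathbb P(x\overset{\mathcal Z}{\longleftrightarrow}y)>0$ rather than something degenerate. A secondary technical point is the chaining/propagation step: one must quantify how a local two-point lower bound close to $1$ on $\mathcal B_R(o)$ yields a uniform positive global bound, using a union bound over a geodesic chain whose length grows with $d(x,y)$; this requires the local bound to be close enough to $1$ that the error terms along the chain remain summably small, which is precisely why the threshold $p^*$ must be taken strictly less than $1$ and depending on $R$.
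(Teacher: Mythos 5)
Your instinct to use property~(T) through a spectral gap for a representation built from~$\mathcal Z$ and to exploit a chaining argument is in the right spirit, but there are two genuine gaps.

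First, the crucial algebraic fact you do not identify is that the two-point function $\tau(x,y)=\mathbb P(x\overset{\mathcal Z}{\longleftrightarrow}y)$ is already a \emph{positive definite kernel}: writing $\mathcal C$ for the set of clusters of~$\mathcal Z$,
\[
\sum_{i,j}\overline{a_i}a_j\,\tau(x_i,x_j)=\mathbb E\Bigl[\ \sum_{C\in\mathcal C}\ \Bigl|\sum_{x_i\in C}a_i\Bigr|^2\Bigr]\ge 0,
\]
so $\varphi(g):=\tau(o,go)$ is a positive definite function on $G$ (continuity is exactly where normality of $\mathcal Z$ and $\mathbb P(o\in\partial\mathcal Z)=0$ are used). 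The paper then invokes the characterization of property~(T) in terms of positive definite functions directly — no representation needs to be constructed, no invariant vectors identified, and no $0$-$1$ law is needed. You instead propose to build a unitary representation from the connectivity indicators and apply a Kazhdan constant, and you flag precisely this construction and the identification of its invariant subspace as ``the main obstacle.'' That obstacle is real, and the positive-definiteness observation is what removes it: the spectral-gap formulation you want is equivalent to the positive-definite-functions formulation, but without the kernel observation your route does not get off the ground. Your proposal thus postpones rather than resolves the heart of the proof.

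Second, and more seriously, your chaining step is misdirected. You claim that chaining ``propagates a strong-enough local lower bound to a positive global lower bound'' provided the local bound is close enough to~$1$ that ``the error terms along the chain remain summably small.'' This cannot work: a chain from $x$ to $y$ consisting of steps inside $R$-balls has length on the order of $d(x,y)/R$, so the union bound is $1-N(1-p)$ with $N\to\infty$ as $d(x,y)\to\infty$; there is no $p<1$ for which this stays bounded away from zero uniformly in $d(x,y)$. In the paper's argument, chaining is used only to prove that the normalized positive definite functions $\varphi_n'$ converge to~$1$ \emph{uniformly on compact sets}, where chain lengths are bounded because $Ho\subset\mathcal B_S(o)$ for $H\subset G$ compact. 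Property~(T) is then precisely what upgrades uniform convergence on compacts to uniform convergence on all of~$G$, which is what fails to hold if the global infimum were zero; that is the contradiction. You have the causality reversed: chaining alone is fundamentally incapable of yielding the global bound, and it is property~(T) — not a quantitative chaining estimate — that crosses from the compact to the global regime.
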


Theorem \ref{main:threshold} is inspired by a similar result about group-invariant percolation on Cayley graphs \cite{LS99, MR23} and can be proven along similar lines. We refer to Section \ref{sec:threshold} for details. We also point out that this result and its application in this paper fit into a broader recent approach to the interplay between group-invariant percolation and geometric properties of groups developed by Mukherjee and the second author in \cite{MR22,MR23,MR24}.

\medskip

{\noindent\bf Proof of long-range order.} Theorem \ref{main:threshold} allows to establish the global phenomenon of long-range order, from the local information about large density. This is precisely how we use property (T) in our proof. However, it is clearly not possible to apply this result directly to Poisson-Voronoi percolation with parameters $(\lambda,p)$ for $p$ arbitrarily small because this model has arbitrarily small, instead of large, density.

Instead, we use a coupling argument, which is inspired by the proof of non-uniqueness at the uniqueness threshold of Bernoulli percolation on groups with property (T) in~\cite[Theorem 4.4]{MR23}. The idea of this coupling argument is to define auxiliary percolations which artificially increase the density of the percolation under consideration while keeping the number of unbounded clusters constant. For this step, we use the non-triviality of limit points of Poisson-Voronoi tessellations (see Theorem~\ref{thm:A1}) and use the higher rank assumption in the form of Theorem~\ref{main:FinitaryTouching}. The details are provided in Section~\ref{sec:vanishing}.

\subsection{Organization} The rest of this paper is organized as follows. In Section~\ref{sec:Preliminaries}, we recall the relevant background about symmetric spaces and Poisson-Voronoi percolation. The aforementioned properties of Poisson-Voronoi tessellations at low intensities (including Theorem~\ref{main:FinitaryTouching}) are stated in Section~\ref{sec:FinitaryConditions} and proved in Section~\ref{sec:FinitaryConditionsProofs}. Section~\ref{sec:threshold} contains the statement and proof of the  long-range order criterion (Theorem~\ref{main:threshold}). In Section~\ref{sec:LRO}, we prove the characterization of the uniqueness phase in terms of long-range order (Theorem~\ref{main:LRO}). The proof of our main result (Theorem~\ref{maintheorem}) is given in Section~\ref{sec:vanishing}.
The main two applications Corollary~\ref{cor:FIIDsparseU} and Theorem~\ref{main:FIIDSUICP} are proved in Section~\ref{sec:sparse} and Section \ref{sec:Lattices} respectively. In Section~\ref{sec:closing}, we pose open questions raised by our results.

\medskip

{\noindent\bf Acknowledgments.} We thank Mikołaj Fraczyk, Tom Hutchcroft, Chiranjib Mukherjee, Gabor Pete and Amanda Wilkens for valuable discussions and comments. We also thank Itai Benjamini, Matteo d'Achille, Tim de Laat, Russ Lyons, Sam Mellick and Elliot Paquette for comments on a preliminary version of this paper.
JG was supported by MSCA Postdoctoral Fellowships 2022 HORIZON-MSCA-2022-PF-01-01 project BORCA grant agreement number 101105722.
The research of KR is funded by the Deutsche Forschungsgemeinschaft (DFG) under Germany's Excellence Strategy EXC 2044-390685587, Mathematics M\"unster: Dynamics-Geometry-Structure.

\section{Poisson-Voronoi percolation on symmetric spaces} \label{sec:Preliminaries}

In this section, we first introduce the necessary background regarding symmetric spaces and recall some fundamental properties important for our analysis, see Section \ref{section symmetric spaces}. We then define the Poisson-Voronoi percolation model, see Section \ref{section PVP}.

\subsection{Lie groups and symmetric spaces} \label{section symmetric spaces}

We introduce the background following \cite{FMW23}, to which we refer for more details.

Let $G$ be a \emph{non-compact connected semisimple real Lie group} and $X$ be its \emph{symmetric space}, our main reference here is \cite[Chapter VI]{helgason2024differential}, in particular, both $G$ and $X$ are locally compact second countable (lcsc) and $X=G/K$, where $K$ is a compact subgroup.
We write $p:G\to G/K$ for the canonical projection that sends $g\mapsto p(g)=gK$.
We denote by $m_G$ the left-invariant Haar measure on $G$ and by $\vol$ the push-forward of $m_G$ via the projection $p$.
The canonical left action of $G\curvearrowright G/K$ defined as $(g,hK)\mapsto ghK$ then preserves $\vol$.
We endow the quotient space $X = G/K$ with the canonical left $G$-invariant Riemannian metric induced by the Killing form on the Lie algebra of $G$. We denote the metric on $X$ induced by this Riemannian metric by $d_X$. This Riemannian metric induces the $G$-invariant volume measure ${\rm vol}$ defined above. In particular, ${\rm vol}$ is additionally invariant under all Riemannian isometries of $X$, equivalently isometries of $(X,d_X)$.
We denote the group of isometries of $(X,d_X)$ as ${\rm Isom}(X)$.
We also fix the base point $o=K\in X$.

\begin{remark}
For studying questions about the uniqueness threshold of Poisson-Voronoi percolation, the assumptions that $G$ is connected and non-compact are clearly appropriate.
For our main result, we will need to additionally assume that $G$ has \emph{higher rank} and \emph{property (T)}.
Well-known examples are provided by $G=\group$ for $n\ge 3$ and $X=\group/\operatorname{SO}(n)$. Recall that the {\em rank} of a real Lie group can be defined as the dimension of a maximal \emph{flat} in its symmetric space~$X$, i.e. the maximal dimension of an isometrically embedded Euclidean space in~$X$.
It follows that a semisimple real Lie group has rank $0$ if and only if it is compact -- we will thus omit the non-compactness assumption whenever we require $G$ to have {\em higher} rank, meaning that its rank is greater than or equal to $2$. We also remark that if $G$ is additionally simply connected, then it is a direct product of simple real Lie groups.
Hence it has higher rank and property (T) if and only if the sum of ranks of the factors is greater than or equal to $2$ and each factor has property (T).
\end{remark}

\medskip

Let $(M,d)$ be a metric space.
For $x\in M$, $r\ge 0$ and $A\subseteq M$, we define $\mathcal{B}_r(x)$ to be the open ball of radius $r$ around $x$, $\overline{\mathcal{B}}_r(x)$ to be the closed ball of radius $r$ around $x$ and $d(x,A)=\inf_{a\in A}d(x,a)$. 
Recall that a \emph{geodesic} from $x\in M$ to $y\in M$ is a map $\gamma:[0,\ell]\to M$ such that $\gamma(x)=0$, $\gamma(\ell)=y$ and $d(\gamma(t),\gamma(t'))=|t-t'|$ for every $t,t'\in [0,\ell]$.
In particular, $\ell=d(x,y)$.
We say that $(M,d)$ is a \emph{geodesic metric space} if every two points in $X$ are joined by a geodesic. We say that $(M,d)$ is {\em proper} if closed and bounded subsets of $M$ are compact.

We collect all the properties we need regarding symmetric spaces in the following theorem.
Recall that a continuous action of a lcsc group $G$ on a lcsc metric space $X$ is \emph{proper} if the subset $\{g\in G:g\cdot K\cap K\not=\emptyset\}$ of $G$ is compact for every compact $K\subset X$.

\begin{theorem}\label{thm:BasicSymmetricSpace}
    Let $G$ be a non-compact connected semisimple real Lie group and $X$ be its symmetric space endowed with the canonical $G$-invariant metric $d_X$, $G$-invariant measure $\vol$ and the base point $o\in X$.
    Then 
    \begin{enumerate}
        \item[{\rm(1)}] $G$ is non-amenable and unimodular. The canonical action of $G$ on $X$ is continuous, proper, transitive and by measure-preserving isometries. Moreover, the measure ${\rm vol}$ is invariant under all isometries of $X$, 
        \item[{\rm(2)}] $(X,d_X)$ is a proper geodesic metric space,
        \item[{\rm(3)}] $\vol(C)<\infty$ for every compact or $d_X$-bounded set $C\subseteq X$ and $\vol(X)=\infty$,
        \item[{\rm(4)}] $\vol(\{x\in X:d_X(o,x)=r\})=0$ for every $r\in [0,\infty)$,
        \item[{\rm(5)}] the map $t\mapsto \vol(\mathcal{B}_t(o))$ is a continuous bijection from $[0,\infty)$ to $[0,\infty)$,
        \item[{\rm(6)}] there exist $a,b,c>0$ such that 
        \begin{equation*}
            \vol(\mathcal{B}_t(o))=ce^{at}t^{b}(1+o(1)),
        \end{equation*} 
        where the quantity $o(1)$ vanishes as $t\to \infty$.
    \end{enumerate}
\end{theorem}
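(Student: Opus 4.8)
The six items are, in essence, classical facts about semisimple Lie groups and their symmetric spaces, and the plan is to assemble them in order; I expect only item~(6) to require more than bookkeeping. Throughout I would use freely that $X = G/K$ is a symmetric space of non-compact type, hence a homogeneous Hadamard manifold, and that $\vol$ is (a scalar multiple of) the Riemannian volume of the $G$-invariant metric $d_X$. For~(1): unimodularity of $G$ follows from semisimplicity, since $\operatorname{tr}\operatorname{ad}(Z)=0$ for every element $Z$ of a semisimple Lie algebra, so the modular character is trivial; non-amenability holds because a connected semisimple Lie group is amenable if and only if it is compact. Continuity and transitivity of the action $G \curvearrowright G/K$ are immediate, and the action is proper because $K$ is compact (for compact $C \subseteq G/K$ one lifts to a compact $\widetilde C \subseteq G$ and observes that $\{g : gC \cap C \neq \emptyset\} \subseteq \widetilde C K \widetilde C^{-1}$ is compact); it is isometric by the definition of $d_X$ and measure-preserving because $\vol$ is the Riemannian volume of a $G$-invariant metric. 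Finally, Riemannian volume is determined by the Riemannian metric, and every metric isometry of $(X,d_X)$ is a Riemannian isometry by the Myers--Steenrod theorem, so $\vol$ is invariant under all of $\operatorname{Isom}(X)$. For~(2): $X$ is a homogeneous Riemannian manifold, hence geodesically complete, hence a proper geodesic metric space by Hopf--Rinow.

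For~(3) and~(4): Riemannian volume is finite on compact sets, and by~(2) a $d_X$-bounded set has compact closure, so $\vol(C) < \infty$ for every such $C$. For $\vol(X) = \infty$ I would use homogeneity: $X$ is non-compact and complete, hence not totally bounded, so a greedy selection yields an infinite $\delta$-separated set $\{x_i\}_{i \in \N}$ for some $\delta > 0$; then the balls $\mathcal{B}_{\delta/2}(x_i)$ are pairwise disjoint, each of positive volume $\vol(\mathcal{B}_{\delta/2}(o))$ by homogeneity, so $\vol(X) = \infty$ (this also follows a posteriori from~(6)). For~(4): as a Hadamard manifold $X$ has the property that $x \mapsto d_X(o,x)$ is smooth on $X \setminus \{o\}$ with gradient of constant norm $1$; hence for $r > 0$ the level set $\{x : d_X(o,x) = r\}$ is a smooth embedded hypersurface and so $\vol$-null, while $\{x : d_X(o,x) = 0\} = \{o\}$ is trivially $\vol$-null.

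For~(5): set $\varphi(t) = \vol(\mathcal{B}_t(o))$, which is non-decreasing with $\varphi(0) = \vol(\emptyset) = 0$. Continuity from below of $\vol$ applied to $\mathcal{B}_t(o) = \bigcup_{s < t} \mathcal{B}_s(o)$ gives left-continuity of $\varphi$, and continuity from above applied to $\overline{\mathcal{B}}_t(o) = \bigcap_{s > t} \mathcal{B}_s(o)$ (a set of finite $\vol$-measure by~(3)), combined with $\vol(\overline{\mathcal{B}}_t(o)) = \vol(\mathcal{B}_t(o))$ from~(4), gives right-continuity. Strict monotonicity holds because for $t_1 < t_2$ the annulus $\mathcal{B}_{t_2}(o) \setminus \overline{\mathcal{B}}_{t_1}(o)$ is a non-empty open set---non-empty since a geodesic ray emanating from $o$ meets every distance level---and hence has positive volume. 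Since moreover $\varphi(t) \uparrow \vol(X) = \infty$ as $t \to \infty$, the map $\varphi$ is a continuous, strictly increasing surjection onto $[0,\infty)$, that is, a continuous bijection, which is~(5).

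For~(6): this is the classical volume-growth asymptotics of symmetric spaces of non-compact type. I would derive it from the polar-coordinate integration formula
\[
\vol(\mathcal{B}_t(o)) \;=\; c' \int_{\{H \in \overline{\mathfrak{a}^+}\, :\, |H| \le t\}} \ \prod_{\alpha \in \Sigma^+} \big(\sinh \langle \alpha, H \rangle\big)^{m_\alpha}\, dH,
\]
where $\overline{\mathfrak{a}^+}$ is the closed positive Weyl chamber, $\Sigma^+$ the positive restricted roots and $m_\alpha$ their multiplicities. Replacing each $\sinh$ by $\tfrac12 e^{(\cdot)}$, the integrand becomes comparable, deep in the chamber, to $e^{\langle 2\rho, H\rangle}$ with $2\rho = \sum_{\alpha \in \Sigma^+} m_\alpha \alpha$ dominant; a Laplace-type analysis of $\int_0^t s^{\ell-1} \int_{S^{\ell-1} \cap \overline{\mathfrak{a}^+}} e^{s \langle 2\rho, u \rangle}\, d\sigma(u)\, ds$, with $\ell = \operatorname{rank}(G)$, then yields the asserted form $\vol(\mathcal{B}_t(o)) = c\, e^{at}\, t^b (1 + o(1))$ with $a = \|2\rho\| > 0$ and $c > 0$, the polynomial exponent $b$ being determined by the geometry of the chamber near the maximising direction and strictly positive in the higher-rank case that concerns us. Alternatively one simply cites this formula from \cite{helgason2024differential} (see also \cite{FMW23} and the references therein). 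Making this last asymptotic rigorous up to the walls of the Weyl chamber is the only step I expect to require genuine care; everything else is an assembly of standard facts.
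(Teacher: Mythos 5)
Correct, and essentially the same approach: this theorem is an assembly of classical facts, and both you and the paper assemble them in the same order with the same underlying facts. The only differences are cosmetic—where the paper cites references for items (4) and (6), you sketch arguments (smoothness of $d_X(o,\cdot)$ on $X\setminus\{o\}$ in a Hadamard manifold gives the sphere as a measure-zero hypersurface; the $KAK$ polar-coordinate density $\prod_{\alpha\in\Sigma^+}(\sinh\alpha(H))^{m_\alpha}$ and a Laplace estimate give (6))—and for (5) you explicitly supply the strict-monotonicity step (positive volume of the annulus via a geodesic ray), which the paper's terse ``it suffices to show that $\alpha$ is continuous'' glosses over; both choices are fine. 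One small observation worth flagging: the paper states $b>0$ uniformly, but in rank one (e.g.\ $\mathbb H^n$) the polynomial correction is trivial, $b=0$; your remark that $b>0$ holds ``in the higher-rank case that concerns us'' correctly skirts this, and it does not affect anything downstream since only the exponential growth is used.
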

\begin{proof} {\noindent\em Proof of (1):} The first part follows from the classical fact that a connected semisimple Lie group is non-compact if and only if it is non-amenable, see e.g.~\cite[Theorem 3.8]{P88}. Unimodularity follows from the more general fact that a connected semisimple Lie group does not admit any non-trivial continuous homomorphism into an abelian group, see e.g.~\cite[Remark A3.8]{M15}.
The properties of the action $G\curvearrowright X$ are well-known.
The fact that $\vol$ is invariant under all isometries of $X$ follows from the construction, as $\vol$ can be defined from the Riemannian metric $d_X$.

    \medskip
    
    {\noindent\em Proof of (2) and (3)}: It is a well-known fact that $(X,d_X)$ is a complete, connected Riemannian manifold, see e.g.~\cite[Part IV]{M63}, hence (2) follows from the Hopf-Rinow theorem and (3) follows from standard properties of Haar measure on $G$.
    
    \medskip
    
    {\noindent\em Proof of (4):} See \cite[Proposition 2.4.6]{gangolli2012harmonic}.

    \medskip
    
    {\noindent\em Proof of (5):} Since $m_G(K)=0$ and $m_G(G)=\infty$  by (3) and (4), it suffices to show that $\alpha$ is continuous.
    This follows from writing $\mathcal{B}_t(o)=\bigcup_{s< t}\mathcal{B}_s(o)=\bigcap_{s>t}\mathcal{B}_s(o)\setminus \{x\in X:d_X(o,x)=t\}$ for $t\in[0,\infty)$ and using (4).

    \medskip
    
    {\noindent\em Proof of (6):} See \cite[Lemma 4.4]{FMW23}. 
\end{proof}

\subsection{The Poisson-Voronoi percolation model} \label{section PVP}

Let $(M,d,o,\mu)$ be a proper geodesic metric space with some fixed origin $o\in M$ and infinite Radon measure $\mu$ such that the spheres centered at the origin, i.e., the sets of the form $\{x\in X:d(o,x)=r\}$ for $r\ge 0$, have $\mu$-measure zero. The continuum percolation model we now define has two parameters, an {\em intensity} $\lambda>0$ and {\em survival probability} $p\in(0,1]$. Let
\begin{equation}
\mathbf Y^{(\lambda)}=\big\{(Y_1^{(\lambda)},Z_1^{(\lambda)}),(Y_2^{(\lambda)},Z_2^{(\lambda)}),\ldots\big\}
\end{equation}
be such that 
\begin{itemize}
\item[--] the sequence $Y^{(\lambda)}:=\big\{Y_1^{(\lambda)},Y_2^{(\lambda)},\ldots\big\}$ is a Poisson point process on $M$ with intensity $\lambda \cdot \mu$ ordered according to increasing distance from the origin.
\item[--] the sequence $Z^{(\lambda)}:=\{Z_1^{(\lambda)},Z_2^{(\lambda)},\ldots\}$ consists of iid uniform $[0,1]$-labels and is independent of $Y^{(\lambda)}$.
\end{itemize}
The associated {\em Voronoi diagram} is defined to be 
\begin{equation} \label{equ-Voronoi}
\mathrm{Vor}(\mathbf{Y}^{(\lambda)})=\big\{C_1^{(\lambda)},C_2^{(\lambda)},\ldots\big\},
\end{equation}
where $C_i^{(\lambda)}\subset V$ consists of all points $x\in M$ for which $d(x,Y^{(\lambda)})=d(x,Y^{(\lambda)}_i)$. We refer to the elements of this collection as {\em (Voronoi) cells}. Let
\begin{equation}
\mathrm{Vor}(\mathbf{Y}^{(\lambda)})_p = \big\{ B_1^{(\lambda)},B_2^{(\lambda)},\ldots \big\}
\end{equation}
be obtained from $\mathrm{Vor}(\mathbf{Y}^{(\lambda)})$ by independently keeping or deleting each cell with retention probability $p$ according to whether $0\le Z_i^{(\lambda)}\le p$ or not. We interpret this procedure as an independent black-and-white coloring followed by retaining the black cells $B_i^{(\lambda)}$. Finally, let 
\begin{equation}
\omega^{(\lambda)}_{p} = \bigcup_{i=1,2,\ldots} B_i^{(\lambda)}
\end{equation}
denote the random closed (see Lemma \ref{lm:closed} below) subset of $M$ that consists of all points which belong to black cells. We refer to this continuum percolation model as $(\lambda,p)$-{\em Poisson-Voronoi percolation} or simply as {\em Poisson-Voronoi percolation} on $M$. We will refer to the path connected components of $\omega^{(\lambda)}_p$ as {\em clusters}.

\begin{lemma} \label{lm:closed}
    Poisson-Voronoi percolation $\omega_p^{(\lambda)}$ with parameters $\lambda>0$ and $p\in(0,1)$ defines a random closed set.
    Moreover, the following hold: 
    \begin{itemize}
        \item[{\rm(i)}] every cell is path connected and closed a.s. 
        \item[{\rm(ii)}] every cluster is closed a.s.
        \item[{\rm(iii)}] every bounded subset of $M$ is covered by finitely many cells a.s.
        \item[{\rm(iv)}] every bounded subset of $M$ is intersected by finitely many clusters a.s.
    \end{itemize}
\end{lemma}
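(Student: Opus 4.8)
The plan is to verify each assertion by exploiting the local finiteness of the Poisson point process $Y^{(\lambda)}$ together with the metric geometry guaranteed by Theorem~\ref{thm:BasicSymmetricSpace}. First I would establish (iii), as (i), (ii), (iv) and the closedness of $\omega_p^{(\lambda)}$ will all be built on top of it. Fix a bounded set $A\subseteq M$, say $A\subseteq \mathcal B_R(o)$. The key observation is that a cell $C_i^{(\lambda)}$ intersects $\mathcal B_R(o)$ only if $Y_i^{(\lambda)}$ is not much farther from $o$ than the nearest process point: concretely, if $x\in C_i^{(\lambda)}\cap\mathcal B_R(o)$ then $d(x,Y_i^{(\lambda)})=d(x,Y^{(\lambda)})\le d(x,Y_1^{(\lambda)})\le R + d(o,Y_1^{(\lambda)})$, so $d(o,Y_i^{(\lambda)})\le 2R + 2d(o,Y_1^{(\lambda)}) =: \rho$. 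Since $Y^{(\lambda)}$ is a Poisson point process with intensity $\lambda\cdot\mu$ and $\mu(\mathcal B_\rho(o))<\infty$ by Theorem~\ref{thm:BasicSymmetricSpace}(3) (using that $M$ is proper so closed balls are bounded), there are a.s.\ only finitely many process points in $\mathcal B_\rho(o)$, and hence only finitely many cells meeting $\mathcal B_R(o)$. (The random radius $\rho$ is a.s.\ finite because $Y^{(\lambda)}$ is a.s.\ nonempty and locally finite.) This gives (iii), and since $\omega_p^{(\lambda)}$ restricted to any ball is a union of finitely many of the sets $B_i^{(\lambda)}$, assertion (iii) combined with (i) will immediately yield that $\omega_p^{(\lambda)}$ is closed (a locally finite union of closed sets is closed) and measurability as a random closed set follows from the standard Fell-space considerations.

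Next I would prove (i). Closedness of each cell $C_i^{(\lambda)}$ is routine: the functions $x\mapsto d(x,Y_j^{(\lambda)})$ are continuous, and $C_i^{(\lambda)}=\{x : d(x,Y_i^{(\lambda)})\le d(x,Y_j^{(\lambda)})\text{ for all }j\}$ is an intersection of closed sets; the "for all $j$" is harmless since by the argument above, for $x$ in any fixed ball only finitely many $j$ are relevant. For path connectedness, I would use that $(M,d)$ is geodesic: given $x\in C_i^{(\lambda)}$, let $\gamma$ be a geodesic from $Y_i^{(\lambda)}$ to $x$; for any $t$, the point $\gamma(t)$ satisfies $d(\gamma(t),Y_i^{(\lambda)})=t$ while for any other $j$, $d(\gamma(t),Y_j^{(\lambda)})\ge d(x,Y_j^{(\lambda)})-d(x,\gamma(t))\ge d(x,Y_i^{(\lambda)}) - (d(x,Y_i^{(\lambda)})-t) = t$, so $\gamma(t)\in C_i^{(\lambda)}$. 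Hence each cell is star-shaped with respect to its nucleus $Y_i^{(\lambda)}$ and in particular path connected.

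Assertions (ii) and (iv) then follow quickly. For (ii): a cluster is a path connected component of $\omega_p^{(\lambda)}$; in a fixed ball $\mathcal B_R(o)$ it is a union of some of the finitely many cells meeting that ball by (iii), so a cluster is locally a finite union of closed cells, hence closed (again using that a locally finite union of closed sets is closed, and that path components of a locally path connected closed set coincide with connected components, which are closed). For (iv): any bounded set $A$ is covered by finitely many cells by (iii), and distinct clusters are disjoint, so $A$ can meet at most finitely many clusters. I expect the main obstacle to be the care needed in (iii) to make the "only finitely many cells are relevant near any point" argument fully rigorous — in particular handling the random but a.s.\ finite radius $\rho$ and justifying that the competing points beyond $\mathcal B_\rho(o)$ genuinely play no role — and, relatedly, cleanly deducing that $\omega_p^{(\lambda)}$ is a \emph{random} closed set (measurability in the Fell topology), which requires observing that the map sending the marked point configuration to $\omega_p^{(\lambda)}$ is measurable, e.g.\ because $\{ \omega_p^{(\lambda)}\cap C\neq\emptyset\}$ for compact $C$ is determined by the finitely many points in a large enough ball.
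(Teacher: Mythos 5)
Your proof is correct and follows essentially the same approach as the paper: local finiteness of the Poisson process yields (iii), geodesics from the nucleus (star-shapedness) yield path-connectedness in (i), and the remaining assertions follow by the locally-finite-union-of-closed-sets argument. One cosmetic point: Lemma~\ref{lm:closed} is stated in the general setting of a proper geodesic metric space $(M,d,o,\mu)$ with $\mu$ Radon, so the invocation of Theorem~\ref{thm:BasicSymmetricSpace}(3) is unnecessary — the finiteness of $\mu$ on bounded sets you need is already part of the standing hypotheses of Section~\ref{section PVP}.
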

\begin{proof}
    The assumption that $\mu$ is a Radon measure, hence locally finite, implies that every $x\in M$ belongs to some cell and as a consequence, every cell is closed a.s.
    Every cell is also path connected a.s.~because it contains the geodesic from its nucleus to each of its members.
    This shows (i). Local finiteness of $\mu$ implies that $\mu$ is finite on compact subsets of $M$. Using the fact that $M$ is proper, it is not difficult to see that every ball is split into finitely many cells a.s. In particular, every bounded subset of $M$ is covered by finitely many cells a.s., which shows (iii). Since every cell is path-connected, it also follows that every bounded subset of $M$ intersects finitely many clusters a.s., i.e.~item (iv). 
    Now, if $x_n\in \omega^{(\lambda)}_p$ and $x\in M$ with $x_n\to x$, then, by (iii), there is a subsequence $(x_{n_k})_{k=1}^\infty$ which lies inside some fixed black cell $B^{(\lambda)}_i$. Since every cell is closed, it follows that $x\in B_i^{(\lambda)}\subset\omega^{(\lambda)}_p$, i.e.~$\omega_p^{(\lambda)}$ is a closed set a.s. The same argument together with the fact that every cell is path-connected shows that every cluster is a closed set a.s., i.e.~item (ii). 
    
    Let us also briefly comment on measurability: It is a standard fact that $\big\{Y_1^{(\lambda)},Y_2^{(\lambda)},\ldots\big\}$ together with the iid labels $Z^{(\lambda)}$ is a well-defined random closed set with marks.
    It is then routine to check that, for every $i\in \mathbb{N}$, the assignment $Y^{(\lambda)}_i\mapsto C^{(\lambda)}_i$ is measurable, see for instance \cite[Example 9.2.5]{BBK20} for the case of $\mathbb R^d$.
    Consequently, $\omega^{(\lambda)}_{p}$, as the countable union of the cells, is a well-defined random closed set.
    Similarly, the cells and clusters are well-defined random closed sets.
\end{proof}

Note that Lemma \ref{lm:closed} entails that we may consider the clusters as random closed sets. To lighten notation, we will denote by $\mathbb P_p^{(\lambda)}$, resp.~$\mathbb P^{(\lambda)}$, the law of $\omega_p^{(\lambda)}$, resp.~$\mathbf Y^{(\lambda)}$.

\medskip

{\noindent\bf Phase transition and the uniqueness threshold.} Poisson-Voronoi percolation typically undergoes a {\em phase transition} where the geometry of clusters changes drastically if $\lambda$ is fixed and $p$ varies through a critical value.

Fix $\lambda>0$. The {\em critical probability} is 
\begin{equation} \label{equ-pc}
p_c := p_c(\lambda) := \inf \big\{p\in(0,1) : \mathbb P\big( \omega_p^{(\lambda)} \, \, \text{has an unbounded cluster} \big)>0 \big\}.
\end{equation}
The {\em uniqueness threshold} is
\begin{equation} \label{equ-pu}
p_u := p_u(\lambda) := \inf \big\{p\in(0,1) : \mathbb P\big( \omega_p^{(\lambda)} \, \, \text{has a unique unbounded cluster} \big)>0 \big\}.
\end{equation}
In the setting of our main results, and more generally when there is a non-compact group acting continuously and properly on the metric measure space $(M,d,\mu)$, ergodicity (see Lemma \ref{lm:Ergodicity} below) implies that one may equivalently require in \eqref{equ-pc}, resp.~\eqref{equ-pu}, that the probabilities are equal to $1$.

\medskip

{\noindent\bf Basic properties.} We recall two useful properties of Poisson-Voronoi percolation, ergodicity and the Harris-FKG-Inequality.

\begin{lemma}[Ergodicity] \label{lm:Ergodicity}
    In the above settting, assume that $G$ is a non-compact lcsc group acting continuously and properly by measure-preserving isometries on $(M,d,\mu)$. Then $\omega_p^{(\lambda)}$ is $G$-invariant and ergodic.
\end{lemma}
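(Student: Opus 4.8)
The plan is to exhibit $\omega_p^{(\lambda)}$ as a factor of a $G$-invariant Poisson point process on $M$ with iid marks, and then to invoke the well-known fact that such Poisson processes are ergodic under any non-compact subgroup of measure-preserving transformations, so that the factor is ergodic as well. First I would record $G$-invariance: since $G$ acts by measure-preserving isometries on $(M,d,\mu)$, the process $Y^{(\lambda)}$ (as an unordered point configuration) is $G$-invariant in law, the independent uniform marks $Z^{(\lambda)}$ are unaffected by the $G$-action, and the Voronoi cell construction as well as the retention rule are equivariant for isometries; hence the law $\mathbb P_p^{(\lambda)}$ of $\omega_p^{(\lambda)}$, viewed as a measure on the space of closed subsets of $M$ with the $G$-action induced by $g\cdot A = gA$, is $G$-invariant. (Care is needed that the \emph{ordering} of $Y^{(\lambda)}$ by distance from $o$ is not $G$-invariant, but $\omega_p^{(\lambda)}$ is a function of the underlying marked configuration only, so the ordering plays no role in the distribution of $\omega_p^{(\lambda)}$.)

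For ergodicity, I would argue that $Y^{(\lambda)}$ together with its iid marks is a $G$-invariant, hence $\mu$-stationary, marked Poisson point process on $M$, and that the $G$-action on its law is ergodic. The standard route is: a Poisson point process on a $\sigma$-finite space with a measure-preserving group action is ergodic (indeed mixing) with respect to that action as soon as the action is ``aperiodic'' in the appropriate sense — here this follows because $G$ is non-compact and acts properly on $M$, so that for any fixed bounded Borel $B\subseteq M$ one can find $g_n\in G$ with $g_nB\cap B=\emptyset$, and the independence properties of the Poisson process force asymptotic independence of $\{Y^{(\lambda)}\cap B\}$ and $\{Y^{(\lambda)}\cap g_nB\}$; a monotone-class / approximation argument upgrades this to mixing on the full $\sigma$-algebra, and mixing implies ergodicity. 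The marks, being iid and independent of $Y^{(\lambda)}$, do not disturb this. Finally, $\omega_p^{(\lambda)}$ is a $G$-equivariant measurable factor of this ergodic system (the factor map being the deterministic assignment of the marked configuration to its black-cell union, measurable by Lemma \ref{lm:closed}), and an equivariant factor of an ergodic action is ergodic. This gives that $\mathbb P_p^{(\lambda)}$ is $G$-ergodic, as claimed.

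The main obstacle is the mixing/ergodicity statement for the marked Poisson process under the $G$-action: one must be slightly careful because $M$ is only a locally compact metric measure space rather than a vector space, so the usual proofs phrased for $\mathbb R^d$ need to be run using properness of the action (to produce translates of a bounded set that eventually leave the set) in place of translations going to infinity. Everything else — $G$-invariance, equivariance of the Voronoi and coloring constructions, and the ``factor of an ergodic system is ergodic'' step — is routine. I would cite a standard reference for ergodicity of Poisson processes under non-compact group actions (e.g.\ in the spirit of \cite{AM22}) rather than reprove it, and spend the bulk of the written proof making the equivariance of the construction and the reduction to the marked point process precise.
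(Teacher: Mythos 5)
Your proposal follows exactly the same strategy as the paper's (very brief) proof sketch: show the marked Poisson point process is $G$-invariant and mixing (hence ergodic) using non-compactness and properness of the action, and then observe that $\omega_p^{(\lambda)}$ is a $G$-equivariant factor of it, so inherits invariance and ergodicity. The paper states this in two sentences; your write-up fills in the same outline with more detail (the ordering caveat, the monotone-class upgrade to mixing) but is not a different argument.
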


\begin{proof}[Proof sketch]
    The assumptions guarantee that the marked Poisson point process $\mathbf Y^{(\lambda)}$ is $G$-invariant and mixing, hence ergodic. Therefore $\omega_p^{(\lambda)}$,  as a $G$-equivariant factor of $\mathbf Y^{(\lambda)}$, is $G$-invariant and ergodic.
\end{proof}

Let $\Omega$ denote the set of configurations of  $\omega_p^{(\lambda)}$. An event $A\subset\Omega$ is called {\em increasing}, if it is preserved under adding black points and under erasing white points, see \cite[Chapter 8]{BR06}.

\begin{lemma}[Harris--FKG--Inequality] \label{lm:FKG}
    In the above setting, $\omega_p^{(\lambda)}$ satisfies the Harris-FKG-Inequality, i.e.~
    \begin{equation*}
        \mathbb P_p^{(\lambda)} (A_1 \cap A_2) \geq \mathbb P_p^{(\lambda)} (A_1) \mathbb P_p^{(\lambda)} (A_2)
    \end{equation*}
    for every increasing events $A_1$ and $A_2$.
\end{lemma}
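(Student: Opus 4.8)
The plan is to reduce the Harris–FKG inequality for $\omega_p^{(\lambda)}$ to the standard positive-association (Harris–FKG) inequality for a Poisson point process with independent marks, which is a classical fact (see e.g.~\cite{BR06}). The key structural observation is that $\omega_p^{(\lambda)}$ is a \emph{monotone} measurable factor of the marked process $\mathbf Y^{(\lambda)}$: each point of the Poisson process $Y^{(\lambda)}$ carries a label $Z_i^{(\lambda)}\in[0,1]$, and enlarging the configuration either by adding a point with a small label (i.e.~a ``black'' nucleus, $Z\le p$) or by raising an existing label from $>p$ to $\le p$ can only enlarge $\omega_p^{(\lambda)}$, since such operations only turn white cells black and never remove black cells. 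Hence an increasing event $A\subset\Omega$ pulls back to an increasing event $\widehat A$ on the space of marked configurations, where the partial order is: $(\eta,z)\le(\eta',z')$ iff $\eta\subseteq\eta'$ and the ``black part'' of the marks is monotone in the obvious sense.

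First I would make precise the state space of $\mathbf Y^{(\lambda)}$ as a marked point process on $M\times[0,1]$ and equip it with the natural coordinatewise partial order, noting that the realization map $\Phi\colon\mathbf Y^{(\lambda)}\mapsto\omega_p^{(\lambda)}$ is measurable (this is already recorded in Lemma \ref{lm:closed}) and monotone in the sense above. Then I would verify that $\Phi^{-1}(A)$ is an increasing event whenever $A$ is increasing, which is immediate from the monotonicity of $\Phi$. Next, I would invoke the Harris–FKG inequality for Poisson processes with i.i.d.~marks: a Poisson point process with independent marks satisfies positive association for increasing events (this follows, e.g., from the fact that it is a limit of product measures over a fine partition of $M$ into bounded Borel pieces together with the i.i.d.~mark structure, or directly from \cite[Chapter 8]{BR06}). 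Applying this to $\widehat A_1=\Phi^{-1}(A_1)$ and $\widehat A_2=\Phi^{-1}(A_2)$ and pushing forward through $\Phi$ yields
\begin{equation*}
\mathbb P_p^{(\lambda)}(A_1\cap A_2)=\mathbb P^{(\lambda)}(\widehat A_1\cap\widehat A_2)\ge \mathbb P^{(\lambda)}(\widehat A_1)\,\mathbb P^{(\lambda)}(\widehat A_2)=\mathbb P_p^{(\lambda)}(A_1)\,\mathbb P_p^{(\lambda)}(A_2),
\end{equation*}
which is the claim.

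The main obstacle, and the only point requiring genuine care, is establishing positive association for the underlying marked Poisson process in the generality of a proper geodesic metric space $(M,d,\mu)$ rather than $\mathbb R^d$. The cleanest route is a discretization/approximation argument: partition $M$ into countably many bounded Borel sets of finite $\mu$-measure, observe that the restriction of the Poisson process to each piece, together with its marks, gives mutually independent random variables, so that the joint law is a product measure on a product of (Polish) spaces; apply the FKG inequality for product measures (Harris' lemma) on each finite sub-product and pass to the limit using that increasing events depending on boundedly many pieces are dense and the finite-dimensional distributions determine the law. One should double-check that the partial order interacts well with this partition — since both ``adding points'' and ``raising marks into $[0,p]$'' act pieceweise, this is routine — and that the monotone factor map $\Phi$ is well-defined despite the a.s.~caveats in Lemma \ref{lm:closed} (which is fine since all statements are up to null sets). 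I expect no surprises beyond this bookkeeping, so I would keep the write-up brief, citing \cite{BR06} for the discrete FKG input and Lemma \ref{lm:closed} for measurability and monotonicity of $\Phi$.
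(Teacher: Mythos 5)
Your strategy — show that $\omega_p^{(\lambda)}$ is a monotone factor of the underlying marked Poisson process and then invoke a Harris--FKG inequality for that process — is precisely what the paper's one-line citation to \cite[Theorem~1.4]{LP11} is implicitly doing, so the underlying idea is correct and coincides with the paper's route; you just propose to prove the Poisson FKG input rather than cite it.

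There is, however, a genuine slip in your formalization of the partial order on marked configurations. You write $(\eta,z)\le(\eta',z')$ iff $\eta\subseteq\eta'$ plus a condition on the marks, but inclusion of point configurations is \emph{not} the right notion: if $\eta'\setminus\eta$ contains a point with mark $>p$ (a white nucleus), then passing from $\eta$ to $\eta'$ carves out a new white cell and can turn previously black points white, so $\Phi$ is \emph{not} monotone along such an enlargement. The admissible moves are exactly the two you list correctly in prose (add a black nucleus, or recolor white to black), plus a third you omit, namely deleting a white nucleus — and deleting points is incompatible with $\eta\subseteq\eta'$. The clean fix is to decompose $\mathbf Y^{(\lambda)}$ into the two independent Poisson processes $Y_{\rm black}$ (intensity $p\lambda\mu$) and $Y_{\rm white}$ (intensity $(1-p)\lambda\mu$) via the Marking Theorem and thinning, and observe that $\omega_p^{(\lambda)}$ is increasing in $Y_{\rm black}$ and decreasing in $Y_{\rm white}$. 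Then either reverse the order on the white factor and run your discretization, or run a two-step argument: condition on $Y_{\rm white}$, apply Poisson FKG to the increasing functionals of $Y_{\rm black}$, note that the resulting conditional expectations are decreasing in $Y_{\rm white}$, and apply Poisson FKG once more. Either version is correct; but as stated, the partial order $\eta\subseteq\eta'$ would make the claimed monotonicity of $\Phi$ false.

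Separately, your discretization step is more delicate than ``Harris for product measures'': the per-piece state space (a marked finite configuration in a bounded Borel set) is not totally ordered, so one cannot apply Harris' lemma directly to a countable product of these. The standard remedy is to refine the partition so that each piece carries at most one point with high probability, whence the local state lies in a totally ordered three-element chain (white $<$ empty $<$ black, together with a mark/location that is conditionally independent and does not affect the order), or simply to cite \cite[Theorem~1.4]{LP11} or \cite[Theorem~20.4]{LP17} for the Poisson FKG input, as the paper does. This is the bookkeeping you flagged as routine; it is routine, but your sketch as written glosses over the point where the routine work actually happens.
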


\begin{proof}[Proof]
This is well-known, see e.g.~the general result \cite[Theorem 1.4]{LP11}. See also \cite{MR96,BR06}.
\end{proof}

{\noindent\bf Poisson-Voronoi percolation on symmetric spaces.} We will be interested in the setting where $(M,d,o,\mu)=(X,d_X,o,\vol)$ is a symmetric space of a non-compact connected semisimple real Lie group $G$ equipped with the canonical metric $d_X$, volume measure and origin.

For this setting, we collect the basic properties needed later in the following lemma. Let $\partial Z := Z\setminus {\rm interior}(Z)$ denote the boundary of a closed set $Z$.

\begin{lemma}[Properties of Poisson-Voronoi percolation]\label{lm:BasicVoronoi}
    Let $G$ be a non-compact connected semisimple real Lie group, $(X,d_X)$ be its symmetric space, $\lambda>0$ and $p\in (0,1)$.
    Then $\omega_p^{(\lambda)}$ defines a random closed subset of $X$ which
    \begin{itemize}
        \item[{\rm (i)}] is $G$-invariant and ergodic, and, moreover, is invariant under all isometries of $(X,d_X)$.
        \item[{\rm (ii)}] satisfies the Harris-FKG-inequality.
        \item[{\rm (iii)}] satisfies $\mathbb P\big(o\in \partial \omega_p^{(\lambda)}\big)=0$.
    \end{itemize}
    Moreover, the Voronoi diagram $\mathrm{Vor}(\mathbf{Y}^{(\lambda)})$ consists of compact subsets of $X$.
\end{lemma}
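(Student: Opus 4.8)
The plan is to verify each of the four items of Lemma~\ref{lm:BasicVoronoi} by combining the general results already established in this section with the specific structural properties of symmetric spaces collected in Theorem~\ref{thm:BasicSymmetricSpace}. None of this should be deep; the content of the lemma is that the abstract framework of Section~\ref{section PVP} applies to $(X,d_X,o,\vol)$.

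\textbf{Verifying the hypotheses.} First I would check that $(M,d,o,\mu)=(X,d_X,o,\vol)$ satisfies the standing assumptions under which Poisson-Voronoi percolation was defined: $X$ is a proper geodesic metric space by Theorem~\ref{thm:BasicSymmetricSpace}(2), $\vol$ is an infinite Radon measure by Theorem~\ref{thm:BasicSymmetricSpace}(3) (local finiteness on compacts together with $\vol(X)=\infty$, and Radonness follows since $X$ is a locally compact second countable space so that locally finite Borel measures are Radon), and the spheres $\{x:d_X(o,x)=r\}$ are $\vol$-null by Theorem~\ref{thm:BasicSymmetricSpace}(4). Hence $\omega_p^{(\lambda)}$ is well-defined and, by Lemma~\ref{lm:closed}, is a random closed subset of $X$; moreover the cells, being closed and bounded (any Voronoi cell at positive intensity is a.s.\ bounded since the point process is a.s.\ locally finite and, by the Poisson property and Theorem~\ref{thm:BasicSymmetricSpace}(6), the points are a.s.\ $d_X$-dense enough that no cell can be unbounded), are compact by properness; this gives the final assertion. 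The one point that deserves a sentence is the a.s.\ boundedness of cells: because $\vol(\mathcal B_t(o))\to\infty$, for any nucleus $Y_i^{(\lambda)}$ and any direction the competing nuclei eventually surround it, so the cell is contained in some ball.

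\textbf{Items (i)–(iii).} For (i), apply Lemma~\ref{lm:Ergodicity} with the group $G$ acting on $(X,d_X,\vol)$: by Theorem~\ref{thm:BasicSymmetricSpace}(1) this action is continuous, proper, transitive and by measure-preserving isometries, and $G$ is non-compact, so $\omega_p^{(\lambda)}$ is $G$-invariant and ergodic. Invariance under the full isometry group ${\rm Isom}(X)$ follows because the construction of $\omega_p^{(\lambda)}$ depends only on the metric $d_X$ and the measure $\vol$, and $\vol$ is ${\rm Isom}(X)$-invariant by Theorem~\ref{thm:BasicSymmetricSpace}(1); equivalently, one may run Lemma~\ref{lm:Ergodicity} with ${\rm Isom}(X)$ in place of $G$, which also acts continuously, properly, transitively and by measure-preserving isometries, so one even gets ${\rm Isom}(X)$-ergodicity. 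For (ii), this is immediate from Lemma~\ref{lm:FKG}, whose hypotheses are met in the present setting. For (iii), note $\partial\omega_p^{(\lambda)}\subseteq\bigcup_i\partial C_i^{(\lambda)}$, and a point $x$ lies on the boundary of the Voronoi diagram only if it is equidistant from two distinct nuclei; conditionally on the nuclei $Y^{(\lambda)}$, the set of such $x$ has $\vol$-measure zero (it is a countable union of "bisector" hypersurfaces, each $\vol$-null because, by Theorem~\ref{thm:BasicSymmetricSpace}(4) applied at each nucleus, or more robustly because bisectors are proper lower-dimensional submanifolds of the Riemannian manifold $X$, hence $\vol$-null). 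Since $o$ is, after an isometry, "a generic point" — more precisely, integrate over the randomness: $\mathbb P(o\in\partial\omega_p^{(\lambda)})=\mathbb E\big[\mathbf 1\{o\in\partial\omega_p^{(\lambda)}\}\big]$ and, conditioning on $Y^{(\lambda)}$, this boundary is a fixed $\vol$-null set while $o$ is a deterministic point — one must instead use that the law of $\omega_p^{(\lambda)}$ is ${\rm Isom}(X)$-invariant and $X$ carries no atoms for $\vol$: by a Fubini/mass-transport argument, $\int_{\mathcal B_1(o)}\mathbb P(x\in\partial\omega_p^{(\lambda)})\,d\vol(x)=\mathbb E\big[\vol(\partial\omega_p^{(\lambda)}\cap\mathcal B_1(o))\big]=0$ since $\partial\omega_p^{(\lambda)}$ is a.s.\ $\vol$-null, and by ${\rm Isom}(X)$-invariance the integrand is constant in $x$, hence equals $0$ for every $x$, in particular for $x=o$.

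\textbf{Main obstacle.} I expect items (i) and (ii) to be one-line deductions and the only genuinely non-trivial point to be (iii): the subtlety is that "$o\in\partial\omega_p^{(\lambda)}$" is an event about a \emph{deterministic} point, so one cannot simply say "boundaries are null"; the clean fix is the mass-transport/Fubini argument above, using that $\partial\omega_p^{(\lambda)}$ is a.s.\ Lebesgue-null (which itself reduces to the fact that a point is a.s.\ not equidistant from its two nearest nuclei, i.e.\ the Voronoi bisectors are $\vol$-null, a consequence of Theorem~\ref{thm:BasicSymmetricSpace}(4) together with the smooth structure of $X$) and that the law of $\omega_p^{(\lambda)}$ is isometry-invariant with $\vol$ non-atomic. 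A secondary, purely bookkeeping obstacle is making precise the a.s.\ compactness of cells, which is where Theorem~\ref{thm:BasicSymmetricSpace}(6) (exponential volume growth, hence a.s.\ "cocompact" point configurations) enters.
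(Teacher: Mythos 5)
Your proposal is essentially correct and tracks the paper closely for items (i) and (ii), so let me focus on the two places where you diverge or where precision is missing.

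\medskip

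\emph{Item (iii).} The paper's main proof argues directly at the point $o$: if $o\in\partial\omega_p^{(\lambda)}$ then there exist $i\ne j$ with $d_X(o,Y_i^{(\lambda)})=d_X(o,Y_j^{(\lambda)})$, and this event is shown to have probability zero via Theorem~\ref{thm:BasicSymmetricSpace}(4) together with the multivariate Mecke equation. You instead go through the statement that $\vol(\partial\omega_p^{(\lambda)})=0$ a.s.\ and then use Fubini and invariance. This is a valid alternative, and the paper in fact acknowledges it: Lemma~\ref{lm:boundary} together with Remark~\ref{rem:BoundaryVolume} is precisely your argument. Two cautionary remarks about how you justified the a.s.\ nullity of the boundary. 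First, Theorem~\ref{thm:BasicSymmetricSpace}(4) says that spheres centered at $o$ (equivalently, at any point, by isometry-invariance of $\vol$) are $\vol$-null, but a bisector $\{x: d_X(x,v)=d_X(x,w)\}$ is not a sphere, so item (4) "applied at each nucleus" does not directly give what you want. Your backup argument — that bisectors are hypersurfaces in the Riemannian manifold $X$, hence $\vol$-null — is correct and is essentially what the paper references (Remark~\ref{rem:BoundaryVolume} cites~\cite{AM22}). Second, note that the paper's Mecke-equation route and your Fubini route are not interchangeable without comment: the Mecke route stays at the single deterministic point $o$, whereas the Fubini route needs invariance of the law to transfer from "a.e.\ $x$" to "the particular $x=o$." You explicitly made this last point, so you are aware of the subtlety; I am just flagging that the paper's primary proof sidesteps it.

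\medskip

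\emph{Compactness of cells.} Here your argument is heuristic where the paper's is quantitative. You write that "the points are a.s.\ $d_X$-dense enough" and later speak of "a.s.\ cocompact point configurations"; both phrases are misleading as stated, since a Poisson process with locally finite intensity has arbitrarily large empty balls almost surely, hence is neither uniformly dense nor cocompact. What is true, and what the paper actually proves, is the pointwise statement: for a nucleus inserted at a fixed point (say $o$, after reducing via ergodicity, $G$-invariance and the Mecke equation to the Palm picture $Y^{(\lambda)}\cup\{o\}$), the probability that the cell of $o$ escapes $\mathcal B_R(o)$ tends to zero as $R\to\infty$. The paper gets this from a packing of the sphere of radius $r$ by balls of radius $r/2$, a union bound, and the asymptotic $\vol(\mathcal B_t(o))=ce^{at}t^b(1+o(1))$ from Theorem~\ref{thm:BasicSymmetricSpace}(6) to control the competition between the number of packing balls and the probability that each is empty. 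You identify the right ingredient (Theorem~\ref{thm:BasicSymmetricSpace}(6)) but skip the reduction to a single cell via the Mecke/Palm picture, which is needed because the labeled nuclei $Y_i^{(\lambda)}$ (ordered by distance from $o$) are not identically distributed and one cannot treat a generic cell as "the cell of a typical nucleus" without that reduction. These are the two genuine gaps that would need to be filled to turn your sketch into a proof.
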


\begin{proof}
    The model is well-defined by Lemma~\ref{lm:closed} together with Theorem~\ref{thm:BasicSymmetricSpace} (2), (3) and (4). Item (i) follows from Lemma \ref{lm:Ergodicity} and Theorem \ref{thm:BasicSymmetricSpace} (1). Item (ii) was proved in Lemma~\ref{lm:FKG}.

\medskip

    {\noindent\em Proof of {\rm (iii)}.} This follows from a standard argument, which we include for the convenience of the reader. By Theorem~\ref{thm:BasicSymmetricSpace} (4),  
    \begin{equation} \label{equ:vol sphere}
        \vol(\{x\in X:d_X(o,x)=r\})=0
    \end{equation} 
    for every $r\in [0,\infty)$. Now if $o\in \partial \omega_p^{(\lambda)}$, then there exist $i\neq j$ with $d(o,Y_i^{(\lambda)})=d(o,Y_j^{(\lambda)})$. The claim follows because the latter event has probability zero by \eqref{equ:vol sphere} and the multivariate Mecke equation, see e.g.~\cite[Theorem 4.4]{LP17}. 
    
\medskip

    {\noindent\em Proof of compactness.}
    By Lemma \ref{lm:closed}, it suffices to show that every cell is bounded. By ergodicity, $G$-invariance and a routine application of the Mecke equation \cite[Theorem 4.1]{LP17}, it is enough to show that if we insert the origin to $\mathbf Y^{(\lambda)}$, that is, if we consider
    $$\{Y^{(\lambda)}_1,Y^{(\lambda)}_2,\dots\}\cup \{o\},$$
    then the cell $C^{(\lambda)}_o$ of the origin is bounded.

    Set $f(t)=\vol(\mathcal{B}_t(o))$ for every $t\in [0,\infty)$.
    Let $r>0$ and $y\in X$ be such that $d_X(o,y)=r$.
    Then $y\not \in C^{(\lambda)}_o$ whenever there is $i\in \mathbb{N}$ and $z\in X$ such that $d_X(o,z)=r$ and $\{y,Y^{(\lambda)}_i\}\subseteq \mathcal{B}_{r/2}(z)$.
    Since $Y^{(\lambda)}$ is a Poisson point process with intensity $\lambda{\rm vol}$, we have
    $$\mathbb{P} \left(Y^{(\lambda)}\cap \mathcal{B}_{r/2}(z)=\emptyset\right)= e^{-\lambda f(r/2)}.$$
    A standard packing argument guarantees that there is a set $Z$ of size at most $\frac{f(3r/2)}{f(r/4)}$ such that 
    $$Z\subseteq \{y\in X:d_X(x,o)=r\}\subseteq \bigcup _{z\in Z} \mathcal{B}_{r/2}(z).$$
    Consequently, the union bound gives that 
    $$\mathbb{P}\Big(C^{(\lambda)}_o\not \subseteq \mathcal B_R(o) \Big)\le \frac{f(3r/2)}{f(r/4)}e^{-\lambda f(r/2)},$$
    which goes to $0$ as $r\to\infty$ by Theorem~\ref{thm:BasicSymmetricSpace}~(6).
\end{proof}

\begin{remark}[Boundary volume]\label{rem:BoundaryVolume}
    Recall that hyperplanes have zero volume, i.e.
    \begin{equation*} 
    {\rm vol} \big( \big\{ x\in X \colon d(x,v)=d(x,w) \big\}\big) = 0,
    \end{equation*}
    for every pair $v\neq w \in X$, see e.g.~\cite[Section 3.3]{AM22}. Since the boundary of Poisson-Voronoi percolation is contained in a countable union of such sets, we have that $\vol(\partial \omega_p^{(\lambda)})=0$ a.s.
\end{remark}

Let us also observe the following more general lemma (which, when combined with Remark~\ref{rem:BoundaryVolume}, gives an alternative proof of Lemma~\ref{lm:BasicVoronoi}~(iii)).

\begin{lemma}\label{lm:boundary}
Let $\mathcal Z$ be a $G$-invariant random closed subset of $X$. Then 
$$
\mathbb P(o\in\partial \mathcal Z)=0 \qquad \Longleftrightarrow \qquad {\rm vol}(\partial \mathcal Z) = 0 \, \, \mbox{a.s.}
$$
\end{lemma}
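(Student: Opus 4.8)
The plan is a routine Fubini / mass-transport argument exploiting transitivity of the $G$-action. First I would record that $\partial \mathcal Z$ is again a random closed set: since $\mathcal Z$ is closed, $\partial\mathcal Z = \mathcal Z \setminus {\rm interior}(\mathcal Z)$ is closed, and the operations ${\rm interior}(\cdot)$ and $\partial(\cdot)$ on the hyperspace of closed subsets of the proper metric space $X$ are Borel with respect to the Effros structure; consequently $(x,\omega)\mapsto \mathbf 1[x\in\partial\mathcal Z(\omega)]$ is jointly measurable on $X\times\Omega$.

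Next I would show that the function $\phi(x) := \mathbb P(x\in\partial\mathcal Z)$ is constant on $X$. Every isometry $g\in G$ is a homeomorphism, hence $g\cdot\partial\mathcal Z = \partial(g\cdot\mathcal Z)$; combined with the distributional $G$-invariance of $\mathcal Z$ this gives $\mathbb P(x\in\partial\mathcal Z) = \mathbb P(g^{-1}\cdot x\in\partial\mathcal Z)$ for every $g\in G$, and since $G\curvearrowright X$ is transitive (Theorem \ref{thm:BasicSymmetricSpace} (1)) we conclude $\phi\equiv c$ with $c=\mathbb P(o\in\partial\mathcal Z)$. Then, for a Borel set $A\subseteq X$ with ${\rm vol}(A)<\infty$, Tonelli's theorem (applicable since ${\rm vol}$ is $\sigma$-finite, $X$ being $\sigma$-compact with ${\rm vol}$ finite on compacta by Theorem \ref{thm:BasicSymmetricSpace} (3)) yields
\begin{equation*}
\mathbb E\big[{\rm vol}(\partial\mathcal Z\cap A)\big] = \int_A \mathbb P(x\in\partial\mathcal Z)\, d{\rm vol}(x) = c\cdot{\rm vol}(A).
\end{equation*}

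From this identity both implications are immediate. For the forward direction, if $c=0$ then ${\rm vol}(\partial\mathcal Z\cap A)=0$ a.s.\ for every bounded Borel $A$; taking $A=\mathcal B_n(o)$ and letting $n\to\infty$ via monotone convergence gives ${\rm vol}(\partial\mathcal Z)=0$ a.s. For the reverse direction, if ${\rm vol}(\partial\mathcal Z)=0$ a.s.\ then the left-hand side above vanishes for $A=\mathcal B_1(o)$, which satisfies $0<{\rm vol}(A)<\infty$ by Theorem \ref{thm:BasicSymmetricSpace} (3) and (5), forcing $c=0$, i.e.\ $\mathbb P(o\in\partial\mathcal Z)=0$.

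The only genuinely nontrivial point is the measurability in the first step — that $\omega\mapsto\partial\mathcal Z(\omega)$ is a random closed set and that set membership is jointly measurable; everything else is Tonelli together with transitivity of the action. This can be handled by invoking the standard Effros-measurability of the maps $\mathcal Z\mapsto{\rm interior}(\mathcal Z)$ and $\mathcal Z\mapsto\partial\mathcal Z$ on the space of closed subsets of a proper metric space, together with the fact that $\{(x,\mathcal C): x\in\mathcal C\}$ is Borel in $X$ times the hyperspace of closed sets, as in the measurable-selection literature for random closed sets.
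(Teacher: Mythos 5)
Your proof is correct and follows essentially the same route as the paper: transitivity plus $G$-invariance give that $x\mapsto\mathbb P(x\in\partial\mathcal Z)$ is constant, and Fubini--Tonelli turns $\mathbb E[{\rm vol}(\partial\mathcal Z)]$ into an integral of that constant. The paper's version integrates directly over all of $X$ (so it implicitly uses that $c\cdot{\rm vol}(X)$ with ${\rm vol}(X)=\infty$ is zero iff $c=0$), whereas you restrict to a finite-volume set $A$ and then exhaust $X$ by balls; you also spell out the measurability of $\omega\mapsto\partial\mathcal Z(\omega)$, which the paper takes for granted. These are minor refinements of the same argument, not a different approach.
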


\begin{proof}
     By transitivity and $G$-invariance, $\mathbb P(x\in\partial \mathcal Z)=\mathbb P(o\in\partial \mathcal Z)$ for every $x\in X$. By the Fubini-Tonelli theorem,
 \begin{equation*}
     \mathbb E\big[ {\rm vol}(\partial \mathcal Z) \big] = \int \int \mathbf 1_{\{x\in\partial\mathcal Z\}} \, {\rm vol}(dx) \, d \mathbb P^{\mathcal Z} = \int \mathbb P(o\in\partial\mathcal Z) \, {\rm vol}(dx). 
 \end{equation*}
 Hence $\mathbb P(o\in\partial \mathcal Z)=0$ if and only if $\mathbb E\big[ {\rm vol}(\partial \mathcal Z) \big]=0$, which proves the lemma.
\end{proof}

We point out that, in the setting of Lemma \ref{lm:boundary}, the same argument shows that $\mathbb P(o\in\mathcal Z)$ equals the {\em volume fraction} $\mathbb E[{\rm vol}(B\cap \mathcal Z)]/{\rm vol}(B)$, where $B$ is measurable with $0<{\rm vol}(B)<\infty$.

\section{Finitary conditions: statements} \label{sec:FinitaryConditions}

In this section, we consider the setting where $G$ is higher rank and formulate a finitary analogue of the {\em infinite touching phenomenon} which was obtained for the IPVT in \cite{FMW23}. 

\medskip

We start with the intuitive fact that for every $N\in \mathbb{N}$, there exists some sufficiently large $R>0$ such that the $R$-ball around the root $o$ is split into at least $N$ Voronoi cells with high probability uniformly in small $\lambda>0$. This is closely related to the fact that subsequential weak limits of Poisson Voronoi tessellations are non-trivial. Thus the result below presumably holds more generally -- we focus on the case relevant for our purposes.

\begin{theorem}\label{thm:A1}
    Let $G$ be a connected higher rank semisimple real Lie group and $(X,d_X)$ be its symmetric space.
    Then for every $\eps>0$ and $N\in \mathbb{N}$ there is $\lambda_0\in (0,1]$ and $R>0$ such that 
    $$\mathbb{P}^{(\lambda)}\left(\forall 1\le i\le N \ \mathcal{B}_R(o)\cap C^{(\lambda)}_i\not=\emptyset\right)>1-\eps$$
    for every $0<\lambda\le \lambda_0$.    
\end{theorem}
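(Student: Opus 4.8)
The plan is to reduce the statement to the corresponding ``infinite touching'' / non-triviality result of \cite{FMW23} for the ideal Poisson-Voronoi tessellation (which the excerpt anticipates via Theorem~\ref{thm:InfiniteTouching}), exploiting that subsequential weak limits of $\mathrm{Vor}(\mathbf{Y}^{(\lambda)})$ as $\lambda\to 0$ are non-trivial tessellations, and that the ideal object has infinitely many cells, each of which meets every ball. The key point is that the event ``$\mathcal{B}_R(o)$ meets at least $N$ distinct cells'' is, for fixed $R$, an open condition on the tessellation (adding a point of the process can only refine the tessellation, and in a weak limit cells do not disappear from a fixed ball), so it is stable under weak convergence.

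\emph{Step 1: Compactness of the family of tessellations.} First I would recall (from Section~\ref{sec:Preliminaries}, or the relevant setup borrowed from \cite{FMW23}) that the laws $\mathbb{P}^{(\lambda)}$ of the marked processes $\mathbf{Y}^{(\lambda)}$, viewed as laws of random tessellations of $X$ in an appropriate Fell-type / vague topology on closed-set configurations, form a tight family as $\lambda\to 0$; hence any sequence $\lambda_k\to 0$ has a subsequence along which $\mathrm{Vor}(\mathbf{Y}^{(\lambda_k)})$ converges in distribution to some random tessellation $\mathcal{T}_0$. This is exactly the ``non-triviality of limit points'' referenced as Theorem~\ref{thm:A1}'s motivation and as an input to the later arguments; I would cite \cite{FMW23} for the existence of such limits together with their basic properties. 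The convergence is chosen so that it is compatible with counting, within a fixed ball, how many distinct cells appear.

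\emph{Step 2: The limit object has many cells in every ball.} Next I would invoke the result of \cite{FMW23} that in higher rank the ideal Poisson-Voronoi tessellation (hence any such subsequential limit $\mathcal{T}_0$ of the right kind) consists of infinitely many cells, each of which has unbounded boundary and in particular meets every open ball $\mathcal{B}_R(o)$; a fortiori, for every $N$, almost surely $\mathcal{B}_R(o)$ intersects at least $N$ distinct cells of $\mathcal{T}_0$ once $R$ is large enough — in fact for \emph{every} $R>0$ by the infinite-touching phenomenon. Thus, fixing $N$ and $\eps$, there is $R>0$ and a subsequential limit $\mathcal{T}_0$ with $\mathbb{P}(\mathcal{B}_R(o)$ meets $\ge N$ cells of $\mathcal{T}_0)>1-\eps/2$. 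Here I should be slightly careful: the cleanest route is to first get the bound for $\mathcal{T}_0$ with room to spare, using a slightly smaller ball, so that the Portmanteau step below has slack.

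\emph{Step 3: Transfer back to small $\lambda$ via Portmanteau.} The event $A_{R,N} := \{$there exist $N$ distinct indices $i_1,\dots,i_N$ with $\mathcal{B}_R(o)\cap C_{i_j}\neq\emptyset\}$ is open in the topology of Step~1 (if a ball meets $N$ cells, a small perturbation of the tessellation still has $N$ cells meeting a slightly larger ball; replacing $R$ by $R'>R$ absorbs this), so by the Portmanteau theorem $\liminf_{k}\mathbb{P}^{(\lambda_k)}(A_{R',N})\ge \mathbb{P}(\mathcal{T}_0\in A_{R,N})>1-\eps/2$. Hence along the chosen subsequence, $\mathbb{P}^{(\lambda_k)}(A_{R',N})>1-\eps$ for all large $k$. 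To get the uniform statement ``for all $0<\lambda\le\lambda_0$'' rather than just along a sequence, I would argue by contradiction: if there were $\lambda_k\to 0$ with $\mathbb{P}^{(\lambda_k)}(A_{R',N})\le 1-\eps$ for a common $R'$ failing the conclusion, pass to a further convergent subsequence and apply the above to derive a contradiction — but since $R'$ is not fixed in advance this needs the monotonicity remark that $A_{R',N}$ is increasing in $R'$, so one first fixes a large enough $R'$ working for the limit and then runs the contradiction argument with that $R'$. This yields $\lambda_0$ and $R=R'$ as required.

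\emph{Main obstacle.} The genuine difficulty is Step~3's topological bookkeeping: making precise a topology on random tessellations of the non-compact space $X$ in which (a) the family $\{\mathbb{P}^{(\lambda)}\}_{\lambda\le 1}$ is tight, (b) the ``$\ge N$ cells meet $\mathcal{B}_R(o)$'' event is open (or at least a continuity set after the harmless enlargement $R\to R'$), and (c) no mass escapes to infinity or collapses cells in the limit. The excerpt itself flags that convergence of $\mathrm{Vor}(\mathbf{Y}^{(\lambda)})$ to the IPVT is \emph{not} established, so the proof must avoid claiming full convergence and instead work only with a convenient weak-limit notion sufficient for this open-set comparison — presumably by encoding the tessellation through the finite point configuration in a large ball and the induced cell-intersection pattern there, for which tightness is immediate from local finiteness of $\lambda\,\vol$ and the packing estimates already used in Lemma~\ref{lm:BasicVoronoi}. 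Everything else (Steps~1–2) is a direct appeal to \cite{FMW23}.
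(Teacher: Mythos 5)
Your high-level strategy — invoke the Fraczyk--Mellick--Wilkens limit result and transfer back to positive $\lambda$ via weak convergence — is the same reduction the paper uses, but there is a genuine gap exactly where you flag the ``main obstacle,'' and it is not a bookkeeping issue. The problem is that there is no convenient topology on random tessellations of $X$ in which the event ``$\mathcal B_R(o)$ meets at least $N$ cells'' is open and the family $\{\mathrm{Vor}(\mathbf Y^{(\lambda)})\}_\lambda$ is tight. Your suggested fix (``encode the tessellation through the finite point configuration in a large ball'') cannot work as stated: at low intensity the Voronoi cell of a point in a fixed ball is typically determined by nuclei far outside that ball, so no restriction to a bounded window captures the cell structure. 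Moreover, in a naive Fell/Hausdorff limit two distinct cells can merge, so ``$\ge N$ distinct cells in $\mathcal B_R(o)$'' is not stable; you would need some quantitative separation that your proposal does not provide. Finally, your appeal in Step~2 that each cell of the ideal tessellation ``meets every open ball $\mathcal B_R(o)$ — in fact for every $R>0$ by the infinite-touching phenomenon'' over-claims: the touching theorem says any two cells share a boundary, not that each cell meets every fixed ball, and the statement being proved asks only that some $R$ works, precisely because more is generally false.

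The paper sidesteps all of this by never working with a topology on tessellations. Instead it pushes the point processes to the corona space $\mathbf D$ via $\iota_t$, establishes relative compactness of the normalized intensity measures $\mu_t$ there (Proposition~\ref{pr:BasicIdealMeasure}), and then proves a bespoke Skorokhod-type coupling (Theorem~\ref{thm:Coupling}) giving, along a subsequence, a.s.\ pointwise convergence $X_i^{(t_n)}\to X_i$ of the ordered Poisson points in $\mathbf D$. This coupling itself needs care — one must rule out escape of $X_1^{(t_n)}(o)$ to $-\infty$, which is handled by compactifying $\mathbf D$ at $-\infty$ — another subtlety your proposal does not address. With the coupling in hand, the paper uses Theorem~\ref{thm:InfiniteTouching} in the form of a $D$-wall (applied with $f_1=f_2$): for each $i\le N$ there is a point $z_i$ at which $X_i$ strictly beats all other points by a fixed margin. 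A strict inequality with a quantitative margin \emph{is} robust under pointwise convergence of the coupled processes, and after pushing back to $Y^{(\lambda_n)}$ in $X$ one obtains the desired cells in a fixed ball. So the quantitative ``wall'' plays the role your ``openness of the event'' would need to play, and the coupling on $\mathbf D$ replaces your tessellation-level weak convergence. To close your gap you would essentially have to reconstruct this machinery.
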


Next, we formulate a finitary analogue of the infinite touching phenomenon proved in \cite{FMW23}. More precisely, the following result shows that, as $\lambda\to 0$, with high probability all pairs of Voronoi cells that touch a ball of fixed radius share a boundary. 

\begin{theorem}[Intersection of cells at small intensitites]\label{thm:A2}
    Let $G$ be a connected higher rank semisimple real Lie group and $(X,d_X)$ be its symmetric space.
    Then for every $\eps>0$ and $R>0$ there is $\lambda_0\in (0,1]$ such that 
    \begin{equation}\label{eq:FinitaryTouching}
        \mathbb{P}^{(\lambda)}\left(\forall i,j \in \mathbb{N} \  \left[C^{(\lambda)}_i\cap \mathcal{B}_R(o)\not=\emptyset\not= C^{(\lambda)}_j\cap \mathcal{B}_R(o) \  \Rightarrow \  C^{(\lambda)}_i\cap C^{(\lambda)}_j\not=\emptyset \right]\right)>1-\eps
    \end{equation}
    for every $0<\lambda\le \lambda_0$.
\end{theorem}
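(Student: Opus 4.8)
\textbf{Overall strategy.} The plan is to deduce the finitary statement of Theorem~\ref{thm:A2} from the corresponding statement for the ideal Poisson-Voronoi tessellation (IPVT) in \cite{FMW23}, stated in this paper as Theorem~\ref{thm:InfiniteTouching} together with Remark~\ref{rem:InfiniteTouching}. The key point is that the IPVT arises as a limit of the Poisson-Voronoi tessellations $\mathrm{Vor}(\mathbf Y^{(\lambda)})$ as $\lambda\to 0$ in a suitable sense, and the ``all cells touching $\mathcal B_R(o)$ pairwise share a boundary'' event should be (almost) open in the relevant topology, so that the high-probability property transfers along the limit. The main technical obstacle, flagged already by the authors in the discussion preceding the theorem, is that the convergence of $\mathrm{Vor}(\mathbf Y^{(\lambda)})$ to the IPVT is \emph{not} established in \cite{FMW23} nor in this paper, so one cannot simply quote a weak-convergence statement and push forward an open event. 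Instead, the argument must produce the finitary conclusion directly from the infinite-touching statement, presumably via a compactness/contradiction argument along a sequence $\lambda_n\to 0$.

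\textbf{Key steps.} First, I would fix $\eps>0$ and $R>0$ and argue by contradiction: suppose there is a sequence $\lambda_n\to 0$ such that the probability in \eqref{eq:FinitaryTouching} stays $\le 1-\eps$ for all $n$. Second, I would pass to subsequential weak limits of the marked point processes $\mathbf Y^{(\lambda_n)}$ (after the rescaling/normalization used in \cite{FMW23} to obtain the IPVT — this is where one records that the nuclei escape to infinity but the induced tessellation of any fixed bounded region stabilizes). Using Theorem~\ref{thm:A1} one knows that $\mathcal B_R(o)$ is split into many cells with high probability uniformly in small $\lambda$, so the limiting object restricted to $\mathcal B_R(o)$ is a genuine nontrivial tessellation and matches the IPVT's law. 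Third, I would show that the ``bad'' event — some pair of cells meeting $\overline{\mathcal B}_R(o)$ fails to share a boundary — is, up to a null set, upper semicontinuous along this limit: if cells $C_i^{(\lambda_n)}, C_j^{(\lambda_n)}$ both meet $\overline{\mathcal B}_R(o)$ but are disjoint, then in the limit the corresponding cells still meet $\overline{\mathcal B}_R(o)$ and are disjoint (here one uses properness of $X$, compactness of cells from Lemma~\ref{lm:BasicVoronoi}, and that boundary-sharing is a closed condition that cannot be destroyed in the limit; the a.s.\ absence of equidistant nuclei, Lemma~\ref{lm:BasicVoronoi}(iii) and Remark~\ref{rem:BoundaryVolume}, handles degeneracies). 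This would force the limiting IPVT to have, with probability $\ge\eps$, two cells meeting $\overline{\mathcal B}_R(o)$ that do not share a boundary, contradicting Theorem~\ref{thm:InfiniteTouching}/Remark~\ref{rem:InfiniteTouching} (infinite touching: every pair of cells shares an \emph{unbounded} boundary, a fortiori a boundary).

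\textbf{Where the difficulty lies.} The main obstacle is making the ``transfer along the limit'' rigorous without a convergence theorem at hand: one must carefully set up the topology on locally finite tessellations of $X$ in which (i) the relevant subsequential limits exist, (ii) they are identified with the IPVT law of \cite{FMW23} on bounded windows, and (iii) the event in \eqref{eq:FinitaryTouching}, intersected with the (probability-one) event that no two nuclei are equidistant from any point, is open — or at least that its complement is closed under limits. Quantifying ``$C_i^{(\lambda)}$ meets $\mathcal B_R(o)$'' versus ``meets $\overline{\mathcal B}_R(o)$'' and controlling cells whose nuclei are far but which nonetheless reach into $\mathcal B_R(o)$ (ruled out with high probability by the compactness estimate in the proof of Lemma~\ref{lm:BasicVoronoi}) is the routine but somewhat delicate part. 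Once the topological framework is in place, the contradiction argument itself is short. I would organize the write-up so that all the genuinely new work sits in Section~\ref{sec:FinitaryConditionsProofs}, exactly as the authors indicate.
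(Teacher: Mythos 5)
Your high-level plan is the same as the paper's: fix $\eps, R$, argue by contradiction along a sequence $\lambda_n\to 0$, pass to a subsequential limit of the (rescaled) point processes using vague convergence on the corona space, and play that limit off against the infinite-touching property of \cite{FMW23}. In the paper this is implemented via the a.s.\ coupling of Theorem~\ref{thm:Coupling}, and the remark that every escaping sequence has a vaguely convergent subsequence via Proposition~\ref{pr:BasicIdealMeasure}.

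However, the mechanism you propose for the transfer step is in the wrong direction and genuinely does not work. You want to show the bad event is ``upper semicontinuous along this limit'': if $C_i^{(\lambda_n)}$ and $C_j^{(\lambda_n)}$ both meet $\overline{\mathcal B}_R(o)$ and are disjoint, then in the limit the cells are still disjoint. Disjointness of closed sets is \emph{not} preserved under limits: the gap between the two $\lambda_n$-cells can shrink to zero, producing touching cells in the limiting IPVT even though each $\lambda_n$-pair was disjoint (think of $[0,1-1/n]$ and $[1+1/n,2]$). Your own parenthetical, ``boundary-sharing is a closed condition'', if anything says that the good event is closed and hence the bad event is open, which is exactly the wrong semicontinuity for your contradiction. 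As a result, the event you need — disjoint cells in the IPVT with probability $\ge\eps$ — does not follow from your hypothesis on the $\lambda_n$'s.

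The paper circumvents this by transferring in the opposite direction and, crucially, by using the \emph{quantitative} form of touching supplied by Theorem~\ref{thm:InfiniteTouching}: the $D$-wall $W^{\Upsilon}_{D}(f_1,f_2)$ is nonempty, i.e.\ there is a point $z$ with $f_1(z)=f_2(z)$ while every other nucleus function exceeds this value by at least $D$. This gap $D$ is what survives the a.s.\ convergence $X^{(t_n)}_i\to X_i$ of Theorem~\ref{thm:Coupling}: for large $n$, one still has $|X^{(n)}_i(z)-X^{(n)}_j(z)|<2D$ and $X^{(n)}_k(z)>X^{(n)}_i(z)+4D$ for $k\notin\{i,j\}$, and then Lemma~\ref{lm:BasicGeodesic} turns this into an actual point in $C^{(\lambda_n)}_i\cap C^{(\lambda_n)}_j$ (in fact with a whole ball in the union, which is the content of the stronger Theorem~\ref{thm:StrongerA2}). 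There is also a non-trivial bookkeeping step you elide, namely showing that the set of $\lambda_n$-cells meeting $\mathcal B_R(o)$ is eventually a subset of the corresponding set for the IPVT (Claim~\ref{cl:RandomTime}); this is needed so that one may choose wall points for all the relevant pairs. The upshot: your outline would need to be reorganized around pushing a quantitatively robust version of touching \emph{from} the limit \emph{to} the finite-$\lambda$ tessellations, rather than pushing disjointness forward into the limit.
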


The proofs of Theorem \ref{thm:A1} and Theorem \ref{thm:A2} will be given in Section \ref{sec:FinitaryConditionsProofs}.

\section{Finitary conditions: proofs of Theorems~\ref{thm:A1} and Theorem~\ref{thm:A2}} \label{sec:FinitaryConditionsProofs}

This section is devoted to the proof of the finitary conditions formulated in Theorem~\ref{thm:A1} and Theorem~\ref{thm:A2}.

We start by recalling a general result about coupling of Poisson point processes in Section~\ref{sec:Skohorod}. In Section~\ref{sec:Corona}, we recall the definition and basic properties of the corona space, denoted ${\bf D}$, of the action $G\curvearrowright X$ from~\cite{FMW23}.
We use these preliminaries in Section~\ref{sec:CouplingOnCorona} to couple a sequence of Poisson point processes on $X$ with vanishing intensity measures with an ``ideal'' Poisson point process on ${\bf D}$. 
This result, together with the infinite touching phenomenon from \cite{FMW23}, which is recalled in Section~\ref{sec:VoronoiDiagram}, and an elementary fact about geodesic spaces, which is proven in Section~\ref{sec:Geodesic}, is then used in Sections~\ref{sec:ProofA2} and ~\ref{sec:ProofA1} to derive Theorems~\ref{thm:A2} and~\ref{thm:A1} respectively.

\subsection{A lemma about convergence of Poisson processes}\label{sec:Skohorod}

Let $S$ be a complete, separable metric space and let $\mathbf M(S)$ denote the set of Borel measures $\mu$ on $S$ which are locally finite in the sense that $\mu(A)<\infty$ for every bounded subset $A\subset S$. Let $\mathcal M(S)$ denote the $\sigma$-field generated by the evaluation mappings $\mu\mapsto\mu(A)$, where $A$ ranges over Borel subsets of $S$. We may identify a point process $\xi$ with  a random variable taking values in $\mathbf M(S)$ by identifying it with the induced counting measure. The distribution of a random measure $\xi$ is determined by its {\bf Laplace functional} 
\begin{equation}
    L_\xi(u) := \mathbb E \bigg[ \exp\bigg( - \int u(x) \xi(dx) \bigg) \bigg],
\end{equation}
for $u\in\R_+(S)$, the set of non-negative measurable functions, see e.g. \cite[Proposition~2.10]{LP17}. We equip $\mathbf M(S)$ with the topology of {\bf vague convergence}, denoted $\mu_n\overset{v}{\to}\mu$ and meaning that
\begin{equation} \label{def:vague}
    \lim_{n\to\infty} \int f(x) \mu_n(dx) \to \int f(x) \mu(dx)
\end{equation}
for every continuous function $f: S \to \R$  
with bounded support. We endow $\mathbf M(S)$ with a compatible metric making it a complete, separable metric space, see \cite[Proposition 9.1.IV]{DV08}.

A sequence $\xi_n$ of random measures {\bf converges weakly} to a random measure $\xi$ if the distributions $\mathbb P^{\xi_n}$ tend to $\mathbb P^{\xi}$ in the corresponding weak$^*$-topology, i.e.~
\begin{equation}
    \lim_{n\to\infty} \int h(\mu) \, \mathbb P^{\xi_n}(d\mu) = \int h(\mu) \, \mathbb P^{\xi}(d\mu) 
\end{equation}
for every continuous (in the vague toplogy) and bounded function $h: \mathbf M(S)\to\R $. This is equivalent to pointwise convergence of the Laplace functionals in the sense that
\begin{equation}
    \lim_{n\to\infty} L_{\xi_n}(u) = L_\xi(u)
\end{equation}
for every continuous function $u: S \to [0,\infty)$ with bounded support, see \cite[Prop.~11.1.VIII]{DV08}.

For Poisson processes, it is not difficult to see that vague convergence of the intensity measures suffices to guarantee a.s.~vague convergence of the realizations in a suitable coupling. We start with the following elementary observation.

\begin{lemma} \label{lm-WeakConvPPP}
    Let $S$ be a complete, separable metric space. Let $\eta,\eta_1,\eta_2,\ldots$ be Poisson processes with intensity measures $\mu,\mu_1,\mu_2,\ldots \in \mathbf M(S)$ such that $\mu_n\overset{v}{\to}\mu$. Then $\eta_n$ converges weakly to~$\eta$.  
\end{lemma}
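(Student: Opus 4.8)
The plan is to verify pointwise convergence of the Laplace functionals, which by the cited result \cite[Prop.~11.1.VIII]{DV08} is equivalent to weak convergence of the point processes. Recall the explicit formula for the Laplace functional of a Poisson process: for any $u\in \R_+(S)$ with bounded support,
\begin{equation*}
L_\eta(u) = \exp\bigg( -\int_S \big(1 - e^{-u(x)}\big)\, \mu(dx) \bigg),
\end{equation*}
and likewise $L_{\eta_n}(u) = \exp\big( -\int_S (1 - e^{-u(x)})\, \mu_n(dx)\big)$; see e.g.~\cite[Theorem~3.9]{LP17}. So it suffices to show that for every continuous $u\colon S\to[0,\infty)$ with bounded support we have $\int_S (1-e^{-u(x)})\,\mu_n(dx) \to \int_S (1-e^{-u(x)})\,\mu(dx)$.

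The key observation is that the integrand $f_u(x) := 1 - e^{-u(x)}$ is continuous (as $u$ is continuous) and has bounded support (since $\supp f_u \subseteq \supp u$, which is bounded by assumption). Therefore the convergence $\int f_u \, d\mu_n \to \int f_u \, d\mu$ is exactly an instance of the defining property of vague convergence $\mu_n \overset{v}{\to}\mu$ as stated in \eqref{def:vague}. This gives $L_{\eta_n}(u) \to L_\eta(u)$ for all such $u$, and hence $\eta_n$ converges weakly to $\eta$.

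There is essentially no obstacle here; the only minor point requiring care is that the definition of vague convergence in \eqref{def:vague} is phrased for continuous functions with bounded support, and one should confirm $f_u$ falls into this class, which it does since $0 \le f_u \le 1$ and $f_u$ vanishes outside $\supp u$. One could alternatively note that $f_u$ is bounded and continuous with bounded support, so no integrability issue arises against the locally finite measures $\mu_n, \mu \in \mathbf{M}(S)$. This completes the argument.

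\begin{proof}
    For a Poisson process $\eta$ with intensity measure $\nu\in\mathbf M(S)$, the Laplace functional is given by
    \begin{equation*}
        L_\eta(u) = \exp\bigg( -\int_S \big(1-e^{-u(x)}\big)\,\nu(dx)\bigg), \qquad u\in\R_+(S),
    \end{equation*}
    see e.g.~\cite[Theorem 3.9]{LP17}. Now let $u\colon S\to[0,\infty)$ be continuous with bounded support. Then $f_u(x):=1-e^{-u(x)}$ is continuous, satisfies $0\le f_u\le 1$, and vanishes outside $\supp u$, so $f_u$ is a continuous function with bounded support. By the definition of vague convergence \eqref{def:vague} applied to $f_u$ and the assumption $\mu_n\overset{v}{\to}\mu$,
    \begin{equation*}
        \lim_{n\to\infty}\int_S f_u(x)\,\mu_n(dx) = \int_S f_u(x)\,\mu(dx).
    \end{equation*}
    Exponentiating, $\lim_{n\to\infty} L_{\eta_n}(u) = L_\eta(u)$ for every continuous $u\colon S\to[0,\infty)$ with bounded support. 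By \cite[Prop.~11.1.VIII]{DV08}, this is equivalent to weak convergence of $\eta_n$ to $\eta$.
\end{proof}
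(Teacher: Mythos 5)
Your proof is correct and follows essentially the same route as the paper's: compute the Laplace functional of the Poisson process, observe that $1-e^{-u}$ is continuous with bounded support whenever $u$ is, and conclude from the definition of vague convergence together with the equivalence between pointwise Laplace-functional convergence and weak convergence.
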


\begin{proof}
    The Laplace functional of a Poisson process $\xi$ with locally finite intensity measure $\lambda$ is given by
    \begin{equation*}
        L_\xi(u) = \exp \bigg( - \int (1-e^{-u(x)}) \lambda(dx) \bigg), \quad u \in \R_+(S),
    \end{equation*}
    see \cite[Theorem 3.9]{LP17}. For every $u: S \to [0,\infty)$ continuous with bounded support, $f=1-e^{-u}$ is continuous with bounded support and hence $\mu_n\overset{v}{\to}\mu$ implies $L_{\eta_n}(\mu)\to L_\eta(u)$.
\end{proof}

We now describe the aforementioned suitable coupling.

\begin{theorem} \label{theorem:ConvergencePPP}
    Let $S$ be a complete, separable metric space. Let $\mu,\mu_1,\mu_2,\ldots \in \mathbf M(S)$ be such that $\mu_n\overset{v}{\to}\mu$. Then there exist Poisson processes $\eta,\eta_1,\eta_2,\ldots$ coupled on the same probability space such that for all $n\geq0$, the intensity measure of $\eta_n$ is $\mu_n$ and such that $\eta_n\overset{v}{\to}\eta$ a.s.
\end{theorem}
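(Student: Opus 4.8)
The plan is to reduce the statement to a standard Skorokhod-type coupling for a single sequence of Poisson processes, using the weak convergence established in Lemma~\ref{lm-WeakConvPPP} together with the fact that $\mathbf M(S)$, equipped with the vague topology, is a Polish space (as recalled just before Lemma~\ref{lm-WeakConvPPP}, via \cite[Proposition 9.1.IV]{DV08}). First I would consider the product space $\mathbf M(S)^{\mathbb N_0}$ with its product topology, which is again Polish. By Lemma~\ref{lm-WeakConvPPP}, the law of $\eta_n$ converges weakly to the law of $\eta$ on $\mathbf M(S)$. The key point is to upgrade this to joint convergence: I claim the laws of the $\mathbf M(S)^{\mathbb N_0}$-valued random variables $(\eta, \eta_n, \eta_n, \eta_n, \dots)$ — or more carefully, a diagonal-type arrangement — converge weakly to the law of the constant-in-tail sequence $(\eta,\eta,\eta,\dots)$. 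Actually the cleanest route is: build, for each $n$, an independent pair on one space is not enough, so instead I would directly invoke the Skorokhod representation theorem on the sequence space.

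Here is the concrete execution. For each $n\ge 0$ let $P_n$ denote the Poisson law on $\mathbf M(S)$ with intensity $\mu_n$, and $P$ the Poisson law with intensity $\mu$ (so $P_0 = P$ after relabeling, or just treat $\eta=\eta_0$ with $\mu_0=\mu$ — I will set $\mu_0 := \mu$ and $\eta_0 := \eta$ for uniformity). Define the probability measure $Q_n$ on $\prod_{k\ge 0}\mathbf M(S)$ to be the law of the independent sequence whose $k$-th coordinate has law $P_k$ for $k< n$ — this does not converge to anything useful. The right object: since only the $n$-th marginal is varying in the statement we want to prove ("for all $n\ge 0$... $\eta_n \overset v\to \eta$"), I instead argue as follows. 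It suffices to exhibit a single probability space carrying $(\eta_n)_{n\ge 0}$ with the prescribed marginals and $\eta_n\overset v\to\eta$ a.s. Consider the sequence of laws $\nu_n$ on $\mathbf M(S)\times\mathbf M(S)$ given by $\nu_n = P_n\otimes P$ (product), and note $\nu_n \Rightarrow P\otimes P$ weakly by Lemma~\ref{lm-WeakConvPPP}; but $P\otimes P$ is not supported on the diagonal, so this still does not give the a.s.\ statement. The genuinely correct and standard fact is the following: \emph{if $P_n\Rightarrow P$ on a Polish space, then there is a coupling of all the $P_n$ and $P$ on one space with a.s.\ convergence}; this is precisely Skorokhod's representation theorem applied to the sequence of laws $P_0, P_1, P_2, \dots$ together with a limit law $P$, which yields random variables $\eta_n$ with $\eta_n\to\eta$ a.s.\ where $\eta$ has law $P$ and $\eta_n$ has law $P_n$. (One standard statement, e.g.\ \cite[Theorem 11.7.2]{Dudley} or Billingsley, gives exactly this: a convergent sequence of laws on a separable metric space can be realized by a.s.-convergent random elements on $[0,1]$ with Lebesgue measure.) Since $\mathbf M(S)$ with the vague topology is Polish and since by Lemma~\ref{lm-WeakConvPPP} the laws $P_n$ converge weakly to $P$, this applies verbatim, producing the desired coupling; the resulting $\eta_n$ are automatically Poisson with intensity $\mu_n$ (respectively $\eta$ Poisson with intensity $\mu$) because they have the prescribed laws, and $\eta_n\overset v\to\eta$ a.s.\ is exactly the Skorokhod conclusion.

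The main obstacle, and the only point requiring care, is verifying that the vague topology on $\mathbf M(S)$ is Polish and that Poisson laws are well-defined Borel probability measures on it — but both are quoted in the excerpt (\cite[Proposition 9.1.IV]{DV08} for the Polish structure, and the Laplace-functional characterization for the Poisson laws), so there is nothing new to prove. A secondary point is that one should make sure ``Poisson process with intensity $\mu_n$'' is meaningful, i.e.\ that the $\mu_n$ are locally finite, which holds since $\mu_n\in\mathbf M(S)$ by hypothesis. I would therefore keep the write-up short: state that $(\mathbf M(S),\overset v\to)$ is Polish, invoke Lemma~\ref{lm-WeakConvPPP} for weak convergence $P_n\Rightarrow P$, apply Skorokhod's representation theorem to obtain $\eta,\eta_1,\eta_2,\dots$ on a common space with the right marginals and $\eta_n\overset v\to\eta$ a.s., and observe that having the Poisson law with intensity $\mu_n$ is precisely the assertion that $\eta_n$ is a Poisson process with intensity $\mu_n$. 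This completes the proof.
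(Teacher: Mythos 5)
Your proposal is correct and follows exactly the same route as the paper's proof, which consists of one line: combine Lemma~\ref{lm-WeakConvPPP} (weak convergence of the Poisson laws) with Skorokhod's representation theorem \cite[Theorem 6.7]{B99}, using the Polishness of $(\mathbf M(S),\overset{v}{\to})$ recorded earlier. The meandering discussion of product couplings in the middle is a detour you correctly abandon; the final argument is exactly the intended one.
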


\begin{proof}
     This follows from Lemma \ref{lm-WeakConvPPP} and Skohorod's representation theorem \cite[Theorem 6.7]{B99}. 
\end{proof}

\begin{remark}\label{rem:LCSCvague}
    In our application of Theorem~\ref{theorem:ConvergencePPP}, we work exclusively with lcsc spaces.
    It is easy to see that in this context, vague convergence of locally finite measures with respect to any compatible complete proper metric can be equivalently defined as $\mu_n\overset{v}{\to}\mu$ if
    \begin{equation}\label{eq:TopVague}
        \lim_{n\to\infty} \int f(x) \mu_n(dx) \to \int f(x) \mu(dx)
    \end{equation}
    for every continuous function $f: S \to \R$ with compact support, cf.~\cite[Appendix A2.6]{DV08}.

    The way how we define the convergence in \eqref{def:vague} is the so-called $w^{\#}$-convergence of \cite{DV08} and refers to a metric.
    As it is a well-known fact that every lcsc space admits a compatible complete proper metric, we use the terminology of topological vague convergence, that is, convergence satisfying \eqref{eq:TopVague}, interchangeably with the $w^{\#}$-convergence with respect to any compatible complete proper metric.
    Importantly, we also remark that this topology is referred to as a \emph{weak-$^*$ convergence of measures} in \cite{FMW23}.
\end{remark}

\subsection{The corona space}\label{sec:Corona}
We start by recalling the setting from \cite[Section~3]{FMW23}.
Note that the results in \cite[Section~3]{FMW23} hold for any non-amenable lcsc group $G$ acting continuously, properly and transitively by isometries on a lcsc metric space $X$.
In particular, they apply to the setting of a non-compact connected semisimple real Lie group $G$ acting on its symmetric space $(X,d_X)$ by Theorem~\ref{thm:BasicSymmetricSpace}~(1).
Fix an origin $o\in X$.

\medskip

The \emph{corona space ${\bf D}$ of $X$} is defined as the minimal closed subspace of the space of continuous functions $C(X)$, endowed with the topology of uniform convergence on compact sets, that contains the set
\begin{equation}
    \{d_X(x,{-})+t:x\in X, t\in \mathbb{R}\}.
\end{equation}
In particular, any $f\in {\bf D}$ is a $1$-Lipschitz function and it is easy to see that ${\bf D}$ is a lcsc space see \cite[Section~3.1]{FMW23}.
The group $G$ acts continuously on ${\bf D}$ by left translations via $gf(x)=f(g^{-1}x)$ for every $g\in G$ and $x\in X$.

For each $t\in \mathbb{R}$, define a $G$-equivariant embedding $\iota_t:X\to {\bf D}$ as
\begin{equation}
\iota_t(x)(y)=d_X(x,y)-t,
\end{equation}
and for $t\ge 0$, define $\mu_t$ to be the push-forward of $\vol$ under $\iota_t$ normalized such that 
\begin{equation}
\mu_t(\{f\in {\bf D}:f(o)\le 0\})=1.
\end{equation}
Note that $\mu_t$ coincides with the push-forward of $c_t\vol$, where $\vol(\mathcal{B}_t(o))=c^{-1}_t$, under $\iota_t$. For the purposes of this paper, we will work with the vague limit points of $(\mu_t)_{t}$ as $t\to\infty$. The fundamental properties of such limit points are collected in the next result. Recall that $\mathbf M(\mathbf D)$ denotes the space of locally finite Borel measures on $\mathbf D$ endowed with the topology of vague convergence.

\begin{proposition}\label{pr:BasicIdealMeasure}
    Let $G$ be a non-compact connected semisimple real Lie group, $X$ be its symmetric space and ${\bf D}$ be the corona space of $X$.
    Then the sequence $\{\mu_t\}_{t\ge 1}$ is relatively compact in $\mathbf M(\mathbf D)\setminus \{0\}$.
    
    Moreover, any subsequential limit $\mu_{t_n}\overset{v}{\to}\mu\in \mathbf M({\bf D})$, $t_n\to \infty$, satisfies the following:
    \begin{enumerate}
        \item[{\rm(1)}] $\mu$ is $G$-invariant,
        \item[{\rm(2)}] $\mu(\{f\in {\bf D}: f(o)\le r\})<\infty$ for every $r\in \mathbb{R}$,
        \item[{\rm(3)}] $\mu(\{f\in {\bf D}: f(o)\le 0\})=1$,
        \item[{\rm(4)}] $\mu({\bf D})=\infty$,
        \item[{\rm(5)}] $\mu(C)<\infty$ on every compact set $C\subseteq {\bf D}$,
        \item[{\rm(6)}] $\mu(\{f\in {\bf D}: f(o)= r\})=0$ for every $r\in \mathbb{R}$.
    \end{enumerate}
\end{proposition}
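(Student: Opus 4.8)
The plan is to push everything forward through the (continuous) evaluation map $\pi\colon\mathbf D\to\mathbb R$, $\pi(f)=f(o)$, and reduce all the assertions to properties of a single measure on the line. The crucial structural point is that $\pi$ is \emph{proper}: for $a\le b$ the set $\pi^{-1}([a,b])=\{f\in\mathbf D:a\le f(o)\le b\}$ is closed in $C(X)$ (as $\mathbf D$ is closed and evaluation at $o$ is continuous), and since every $f\in\mathbf D$ is $1$-Lipschitz with $|f(x)|\le|f(o)|+d_X(o,x)$, this set is equicontinuous and pointwise bounded, hence relatively compact by Arzel\`a--Ascoli; being closed, it is compact. I would record two consequences at once: every compact $C\subseteq\mathbf D$ is contained in $F_r:=\{f\in\mathbf D:f(o)\le r\}$ for $r$ with $\pi(C)\subseteq[-r,r]$, and $\pi$ transports vague convergence on $\mathbf D$ to vague convergence on $\mathbb R$ (since $h\in C_c(\mathbb R)$ implies $h\circ\pi\in C_c(\mathbf D)$).

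Next I would pin down the one-dimensional picture. From $\mu_t=c_t(\iota_t)_*\vol$ with $c_t=\vol(\mathcal B_t(o))^{-1}$ and $\iota_t(x)(o)=d_X(x,o)-t$, one computes, for $\alpha<\beta$,
\[
\pi_*\mu_t\big((\alpha,\beta]\big)=c_t\,\vol\big(\overline{\mathcal B}_{t+\beta}(o)\setminus\overline{\mathcal B}_{t+\alpha}(o)\big)=\frac{\vol(\mathcal B_{t+\beta}(o))}{\vol(\mathcal B_t(o))}-\frac{\vol(\mathcal B_{t+\alpha}(o))}{\vol(\mathcal B_t(o))},
\]
using Theorem~\ref{thm:BasicSymmetricSpace}(4) to pass between open and closed balls. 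By the sharp volume asymptotics of Theorem~\ref{thm:BasicSymmetricSpace}(6), $\vol(\mathcal B_{t+s}(o))/\vol(\mathcal B_t(o))\to e^{as}$ as $t\to\infty$ for each $s\in\mathbb R$, hence $\pi_*\mu_t((\alpha,\beta])\to e^{a\beta}-e^{a\alpha}=\int_\alpha^\beta ae^{as}\,ds$. Since the candidate limit $\nu:=ae^{as}\,ds$ on $\mathbb R$ is locally finite and atomless, this interval-wise convergence upgrades routinely to $\pi_*\mu_t\overset{v}{\to}\nu$ as $t\to\infty$.

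Now everything assembles. For relative compactness in $\mathbf M(\mathbf D)$ I would use the standard criterion that it suffices to bound $\sup_{t\ge1}\mu_t(C)$ for every compact $C$; by the first paragraph $\mu_t(C)\le\mu_t(F_r)=\vol(\mathcal B_{t+r}(o))/\vol(\mathcal B_t(o))$, which is bounded on $t\in[1,\infty)$ since it is continuous (Theorem~\ref{thm:BasicSymmetricSpace}(5)) and tends to $e^{ar}$. No cluster point is $0$: along a subsequence with $t_n\to\infty$, properness of $\pi$ gives $\pi_*\mu_{t_n}\overset{v}{\to}\pi_*\mu$, so $\pi_*\mu=\nu\neq0$ by uniqueness of vague limits and in particular $\mu\neq0$; along a subsequence with $t_n\to t^*<\infty$ one has $\mu_{t_n}\to\mu_{t^*}$ by vague continuity of $t\mapsto\mu_t$, and $\mu_{t^*}\neq0$ since $\mu_{t^*}(F_0)=1$ by the normalization. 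Hence $\{\mu_t\}_{t\ge1}$ is relatively compact in $\mathbf M(\mathbf D)\setminus\{0\}$. For a subsequential limit $\mu$ along $t_n\to\infty$: item~(5) is immediate since $\mu\in\mathbf M(\mathbf D)$ is locally finite; item~(1) holds because each $\mu_t$ is $G$-invariant ($\iota_t$ is $G$-equivariant and $\vol$ is $G$-invariant) and $G$-invariance survives vague limits by continuity of the $G$-action on $\mathbf D$; and, using $\pi^{-1}((-\infty,r])=F_r$ and $\pi^{-1}(\mathbb R)=\mathbf D$, the identity $\mu(\pi^{-1}(A))=\nu(A)$ evaluated at $A=(-\infty,r]$, $(-\infty,0]$, $\mathbb R$, $\{r\}$ gives item~(2) with value $e^{ar}<\infty$, item~(3) with value $1$, item~(4) with value $\infty$, and item~(6) with value $0$.

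The one genuinely substantive point is the reduction itself: recognizing that $\pi$ is proper---which rests on the $1$-Lipschitz property of elements of $\mathbf D$ and on properness of $X$---and that (2)--(6) together with nonvanishing are all shadows of the exponential measure $ae^{as}\,ds$; after that, Theorem~\ref{thm:BasicSymmetricSpace}(4)--(6) supply all the analytic input and nothing deeper is needed. The routine parts I have suppressed are the Arzel\`a--Ascoli verification, the passage from interval-wise to vague convergence of $\pi_*\mu_t$, the vague continuity of $t\mapsto\mu_t$, and the standard relative-compactness criterion for the vague topology; alternatively, relative compactness and $G$-invariance may simply be quoted from \cite[Section~3]{FMW23}.
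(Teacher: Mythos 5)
Your proof is correct, and it organizes the argument around a genuinely different key idea than the paper. The paper cites \cite[Corollary~3.4]{FMW23} for relative compactness and items~(2)--(3), and then proves (4) and (6) one at a time by choosing Urysohn bump functions $H_r$, $H_\eps$ adapted to the level sets $\{f:f(o)\le r\}$ and evaluating the resulting volume ratios via Theorem~\ref{thm:BasicSymmetricSpace}~(6). You instead observe that the evaluation $\pi\colon f\mapsto f(o)$ is a \emph{proper} continuous map $\mathbf D\to\mathbb R$ (the Arzel\`a--Ascoli argument for $\pi^{-1}([a,b])$, resting on the $1$-Lipschitz property and properness of $X$), so vague convergence on $\mathbf D$ pushes forward to vague convergence on $\mathbb R$; you then compute $\pi_*\mu_t$ explicitly and identify its limit as the exponential density $a e^{as}\,ds$. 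All of items~(2)--(4) and~(6) become literal readings of this one identity $\pi_*\mu=ae^{as}\,ds$, rather than four separate estimates, and relative compactness and nonvanishing follow from the same bounds. This buys a unified, self-contained derivation that makes explicit what the limit ``looks like at $o$'' --- in particular it gives the clean closed forms $\mu(F_r)=e^{ar}$ in~(2) rather than mere finiteness --- at the mild cost of spelling out the Arzel\`a--Ascoli step and the vague continuity of $t\mapsto\mu_t$ (both of which the paper's own argument also relies on implicitly when it asserts that $H_r$ has compact support). The paper's route is shorter on the page because it offloads half of the work to the cited corollary, but the underlying volume-ratio calculation is the same in both.
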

\begin{proof}
    Recall from Remark~\ref{rem:LCSCvague} that vague convergence coincides with the weak$^*$-convergence considered in \cite{FMW23}. Hence, the fact that $\{\mu_t\}_{t\ge 1}$ is relatively compact in $\mathbf M(\mathbf D)\setminus \{0\}$ and items (2) and (3) follow  from \cite[Corollary 3.4]{FMW23} as $G$ is non-amenable by Theorem~\ref{thm:BasicSymmetricSpace}~(1).
    By  definition, we have that $\mu_t$ is $G$-invariant for every $t\in \mathbb{R}$.
    Consequently, we get item (1).
    If $C$ is compact, then $\{f(o):f\in C\}$ is relatively compact.
    Consequently, we have that $C\subseteq \{f\in {\bf D}: f(o)\le r\}$ for some $r>0$ and (5) follows from (2).
    It remains to show (4) and (6).
    
    \medskip

    {\noindent\em Proof of (4):}
    Let $r>0$.
    By Urysohn's lemma, there is a continuous function $H_r:{\bf D}\to [0,1]$ such that $H_r(f)=1$ for every $f\in {\bf D}$ such that $f(o)\in [-r,r]$ and $H_r(f)=0$ for every $f\in {\bf D}$ such that $f(o)\not\in (-r-1,r+1)$.
    By the definition, we have that
    $$
    \lim_{n\to\infty} \int_{{\bf D}} H_r \ d\mu_{t_n} = \int_{{\bf D}} H_r \ d\mu$$
    as $H_r$ has compact support.
    Observe that
    \begin{equation*}
        \begin{split}
            \int_{{\bf D}} H_r \ d\mu_{t_n}\ge & \  \mu_{t_n}(\{f\in {\bf D}:f(o)\le r\})-\mu_{t_n}(\{f\in {\bf D}:f(o)\le -r\})\\
            \ge & \ \frac{\vol(\mathcal{B}_{t_n+r}(o))}{\vol(\mathcal{B}_{t_n}(o))}-1
        \end{split}
    \end{equation*}
    for every $n\in \mathbb{N}$ by the definition.
    By Theorem~\ref{thm:BasicSymmetricSpace}~(6), we have that
    $$\frac{\vol(\mathcal{B}_{t_n+r}(o))}{\vol(\mathcal{B}_{t_n}(o))}=\frac{e^{a(t_n+r)}(t_n+r)^{b}(1+o(1))}{e^{at_n}(t_n)^{b}(1+o(1))}\overset{n\to\infty}{\longrightarrow} e^{ar}.$$
    It follows that 
    $$\int_{{\bf D}} H_r \ d\mu\ge e^{ar}-1\overset{r\to\infty}{\longrightarrow} \infty.$$
    Finally, note that 
    $$\int_{{\bf D}} H_{r} \ d\mu\le \mu(\{f\in {\bf D}: f(o)\le r+1\})\le \mu(\mathbf D)$$
    and (4) follows.

    \medskip
    
    {\noindent\em Proof of (6):}
    Let $r\in \mathbb{R}$ and $\eps>0$.
    By Urysohn's lemma, there is a continuous function $H_\eps:{\bf D}\to [0,1]$ such that $H_\eps(f)=1$ for every $f\in {\bf D}$ such that $f(o)\in [r-\eps,r+\eps]$ and $H_r(f)=0$ for every $f\in {\bf D}$ such that $f(o)\not\in (r-2\eps,r+2\eps)$.
    By the definition, we have that
    $$\lim_{n\to\infty} \int_{{\bf D}} H_\eps \ d\mu_{t_n} = \int_{{\bf D}} H_\eps \ d\mu$$
    as $H_\epsilon$ has compact support.
    Observe that
    \begin{equation*}
        \begin{split}
            \int_{{\bf D}} H_\eps \ d\mu_{t_n}\le & \  \mu_{t_n}(\{f\in {\bf D}: r-2\eps< f(o)\le r+2\eps\})\\
            = & \ \frac{\vol(\mathcal{B}_{t_n+r+2\eps}(o))}{\vol(\mathcal{B}_{t_n}(o))}-\frac{\vol(\mathcal{B}_{t_n+r-2\eps}(o))}{\vol(\mathcal{B}_{t_n}(o))}\\
            = & \ \frac{e^{a(t_n+r+2\eps)}(t_n+r+2\eps)^{b}(1+o(1))-e^{a(t_n+r-2\eps)}(t_n+r-2\eps)^{b}(1+o(1))}{e^{at_n}(t_n)^{b}(1+o(1))}\\
            \to & \ e^{a(r+2\eps)}-e^{a(r-2\eps)}
        \end{split}
    \end{equation*}
    as $n\to\infty$ by Theorem~\ref{thm:BasicSymmetricSpace} (6).
    Consequently, $\int_{{\bf D}} H_\eps \ d\mu\to 0$ as $\eps\to 0$, which implies (6) as
    $$\mu(\{f\in {\bf D}:f(o)=r\})\le \int_{{\bf D}} H_\eps \ d\mu$$
    for every $\eps>0$.
    This finishes the proof.
\end{proof}

\subsection{Coupling of Poisson point processes on ${\bf D}$}\label{sec:CouplingOnCorona}
It will be useful to identify the Poisson process $Y^{(\lambda)}$ with a corresponding Poisson process on the corona space, and to couple the Poisson processes of a given sequence $(\lambda_n)$ in a specific way. This is done in this section.

More precisely, let $t\ge 1$ and write $\Upsilon_t$ for the Poisson point process on ${\bf D}$ with intensity $\mu_t$. It follows directly from the definition of $\iota_t:X\to {\bf D}$ and $\mu_t$, that $\Upsilon_t$ is the push-forward under $\iota_t$ of the Poisson process $Y^{(\lambda)}$ on $X$ with intensity $\lambda\vol$, where $\vol(\mathcal{B}_t(o))=\lambda^{-1}$. In particular, the process has the form
\begin{equation} \label{equ rep Upsilon t}
\Upsilon_t = \sum_{i=1}^\infty \delta_{X_i^{(t)}}    
\end{equation}
with $X^{(t)}:=\big\{X^{(t)}_i\big\}_{i\in \mathbb{N}}$ such that $\big\{X^{(t)}_i(o)\big\}_{i\in \mathbb{N}}$ is strictly increasing.

\begin{remark}
    Note that we use the notation $X_i^{(t)}$ for the points of the Poisson process $\Upsilon_t$. We emphasize that these points are elements of $\mathbf D$, i.e.~functions on $X$, for which the notation $X_i^{(t)}(x)$ to denote the value at a point $x$ of $X$ will be used throughout. 
\end{remark}

Following \cite[Definition~1.2]{FMW23}, we say that a countable subset $F\subseteq {\bf D}$, resp.~locally finite Borel measure $\nu\in \mathbf M(\mathbf D)$ of the form $\nu=\sum_{f\in F} \delta_f$, is \emph{admissible} if $\{f(x):f\in F\}$, as a multiset, is discrete and bounded from below for every $x\in X$.

\begin{proposition}\label{pr:BasicPPP}
    Let $t_n\to\infty$, $\mu\in\mathbf M(\mathbf D)$ be such that $\mu_{t_n}\overset{v}\to \mu$ and let $\Upsilon$ be a Poisson point process on ${\bf D}$ with intensity $\mu$.
    Then $\Upsilon$ and $\Upsilon_{t_n}$ are a.s.~admissible and we may write 
    $$\Upsilon=\sum_{i=1}^\infty \delta_{X_i},$$
    with $\{X_i(o)\}_{i\in \mathbb{N}}$ strictly increasing. 
\end{proposition}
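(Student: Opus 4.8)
The plan is to establish admissibility and the strictly increasing enumeration separately for the approximating processes $\Upsilon_{t_n}$ and for the limit $\Upsilon$, since in each case the assertion depends only on the intensity measure at hand. I would dispatch $\Upsilon_{t_n}$ first. Recall that $\Upsilon_{t_n}$ is the push-forward under $\iota_{t_n}$ of a Poisson process $Y^{(\lambda_n)}$ on $X$ with intensity $\lambda_n\vol$, where $\vol(\mathcal B_{t_n}(o))=\lambda_n^{-1}$; since $X$ is proper and $\vol$ is finite on bounded sets (Theorem~\ref{thm:BasicSymmetricSpace}~(2),(3)), $Y^{(\lambda_n)}$ is a.s.\ locally finite. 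Then for fixed $x\in X$ the multiset $\{X_i^{(t_n)}(x)\}_i=\{d_X(Y_i^{(\lambda_n)},x)-t_n\}_i$ is bounded below by $-t_n$ and, for each $M$, has at most $Y^{(\lambda_n)}(\mathcal B_{M+t_n}(x))<\infty$ entries in $(-\infty,M]$, which is exactly admissibility; moreover the values $d_X(Y_i^{(\lambda_n)},o)$ are a.s.\ pairwise distinct by Theorem~\ref{thm:BasicSymmetricSpace}~(4) together with the multivariate Mecke equation \cite[Theorem~4.4]{LP17} (the argument in the proof of Lemma~\ref{lm:BasicVoronoi}~(iii)), and there are infinitely many of them because $\vol(X)=\infty$, so $\{X_i^{(t_n)}(o)\}_i$ can be listed in strictly increasing order.

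For $\Upsilon$, I would set $A_r:=\{f\in\mathbf D:f(o)\le r\}$. By Proposition~\ref{pr:BasicIdealMeasure}~(2), $\mu(A_r)<\infty$ for every $r$, so $\Upsilon(A_r)<\infty$ a.s., simultaneously for all rational $r$ and hence, by monotonicity, for all $r$. Since every $f\in\mathbf D$ is finite at $o$, the sets $A_{-k}$ shrink to $\emptyset$, and as $\mu(A_0)<\infty$ downward continuity of $\mu$ yields $\mu(A_{-k})\to0$; consequently $\mathbb P(\Upsilon(A_{-k})=0)=e^{-\mu(A_{-k})}\to1$, and because these events increase in $k$, a.s.\ some $A_{-k}$ contains no point of $\Upsilon$. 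This shows $\{f(o):f\in\supp\Upsilon\}$ is a.s.\ a discrete multiset bounded below; it is infinite because $\mu(\mathbf D)=\infty$ (Proposition~\ref{pr:BasicIdealMeasure}~(4)), so these values tend to $+\infty$. Admissibility at an arbitrary $x\in X$ then comes for free on the same event: every $f\in\mathbf D$ is $1$-Lipschitz, so $|f(x)-f(o)|\le d_X(x,o)$, whence $\{f(x):f\in\supp\Upsilon\}$ is bounded below by $\inf_{f\in\supp\Upsilon}f(o)-d_X(x,o)$ and $\{f\in\supp\Upsilon:f(x)\le M\}\subseteq A_{M+d_X(x,o)}$ is finite.

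Finally, to obtain the enumeration $\Upsilon=\sum_{i\ge1}\delta_{X_i}$ with $\{X_i(o)\}_i$ strictly increasing, I would note that $\mu$ is $\sigma$-finite (as $\mathbf D=\bigcup_k A_k$ with $\mu(A_k)<\infty$) and that $\mu(\{f\in\mathbf D:f(o)=r\})=0$ for every $r$ by Proposition~\ref{pr:BasicIdealMeasure}~(6); hence by the multivariate Mecke equation the expected number of ordered pairs of distinct points of $\Upsilon$ with equal value at $o$ equals $\int_{\mathbf D}\mu(\{g\in\mathbf D:g(o)=f(o)\})\,\mu(df)=0$, so a.s.\ all points of $\Upsilon$ (counted with multiplicity) have distinct values at $o$ — in particular $\Upsilon$ is a.s.\ simple. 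Intersecting with the event from the previous paragraph, $\{X_i(o)\}_i$ is then a strictly increasing sequence tending to $+\infty$, which furnishes the required listing. I expect the only step needing genuine care to be the downward-continuity argument for $\inf_{f\in\supp\Upsilon}f(o)>-\infty$: one might be tempted to appeal to a quantitative estimate such as $\mu(A_r)\le e^{ar}$, but the mere finiteness $\mu(A_0)<\infty$ already does the job; everything else is a routine assembly of the properties recorded in Proposition~\ref{pr:BasicIdealMeasure} with standard facts about Poisson processes.
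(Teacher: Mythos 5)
Your proof is correct, and the overall skeleton matches the paper's: use finiteness of $\mu(\{f:f(o)\le r\})$ to get a discrete, bounded-below multiset of root-values, propagate admissibility to other base points via the $1$-Lipschitz property, and invoke Proposition~\ref{pr:BasicIdealMeasure}~(6) plus the Mecke equation to rule out ties. There are two organizational differences worth noting. First, the paper treats $\Upsilon_{t_n}$ and $\Upsilon$ uniformly via the measure-theoretic bound $\mu(\{f:f(o)\le r\})<\infty$ (invoking Proposition~\ref{pr:BasicIdealMeasure}~(2) for both, though for $\mu_{t_n}$ this is immediate from the construction rather than from that proposition), whereas you argue for $\Upsilon_{t_n}$ directly through the underlying Poisson process $Y^{(\lambda_n)}$ on $X$; your route is slightly longer but more self-contained and avoids leaning on Proposition~\ref{pr:BasicIdealMeasure} for the approximants. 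Second, your derivation of ``$\{f(o):f\in\operatorname{supp}\Upsilon\}$ is bounded below'' via downward continuity of $\mu$ and $\mathbb P(\Upsilon(A_{-k})=0)\to1$ is a detour: once you know $\Upsilon(A_0)<\infty$ a.s., the multiset of root-values at or below $0$ is a.s.\ finite, hence already bounded below, which is what the paper uses. Your version is not wrong, just more machinery than the statement requires.
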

\begin{proof}
We first prove admissibility: For every $r>0$, $\{f\in \mathbf D: f(o)\le r\}$ has finite measure under $\mu_{t_n}$ as well as under $\mu$ by Proposition~\ref{pr:BasicIdealMeasure} (2). It follows that 
\begin{equation*}
\{f\in \Upsilon_{t_n} : f(o)\le r\} \quad \mbox{and} \quad \{f\in \Upsilon : f(o)\le r\}    
\end{equation*} 
are finite for every $r>0$ a.s. In particular, the multisets
\begin{equation*}
\{f(o) : f\in \Upsilon_{t_n}\} \quad \mbox{and} \quad \{f(o) : f\in \Upsilon \}    
\end{equation*} 
are discrete and bounded from below a.s.
We claim that this implies that $\Upsilon_{t_n}$ and $\Upsilon$ are a.s. admissible.
Indeed, let $x\in X$.
Then, using the fact that every $g\in {\bf D}$ is $1$-Lipschitz, we have that
$$f(o)-d_X(o,x)\le f(x)$$
for every $f\in \Upsilon$, which implies that the multiset $\{f(x):f\in \Upsilon\}$ is bounded from below.
Similarly, if
$$\{f(x):f\in \Upsilon\}\cap [a,b]$$
is infinite for some $a<b\in \mathbb{R}$, then the multiset
$$\{f(o):f\in \Upsilon\}\cap [a-d_X(o,x),b+d_X(o,x)]$$
is infinite as well.
Consequently, $\Upsilon$ (and similar argument applies to $\Upsilon_{t_n}$ for every $n\in \mathbb{N}$) is admissible a.s.

Moreover, writing $\Upsilon$ in its proper point process representation, cf.~\cite[Corollary 3.7]{LP17}, and rearranging the random elements according to the value at the root yields the desired representation 
\begin{equation} \label{equ rep Upsilon}
\Upsilon=\sum_{i=1}^\infty \delta_{X_i},
\end{equation}
with $\{X_i(o)\}_{i\in \mathbb{N}}$ non-decreasing.
The fact that $\{X_i(o)\}_{i\in \mathbb{N}}$ is strictly increasing follows from Proposition~\ref{pr:BasicIdealMeasure} (6) by a straightforward application of the Mecke equation as in the proof of Lemma \ref{lm:BasicVoronoi}.
\end{proof}

\begin{theorem}[Coupling] \label{thm:Coupling}
    Let $t_n\to\infty$, $\mu\in\mathbf M(\mathbf D)$ be such that $\mu_{t_n}\overset{v}\to \mu$ and let $\Upsilon$ be a Poisson point process on ${\bf D}$ with intensity $\mu$.
    There exists a coupling of $\Upsilon$ and $\Upsilon_{t_n}$, $n\in \mathbb{N}$, on the same probability space such that 
    $$
    \Upsilon=\sum_{i=1}^\infty \delta_{X_i} \quad \mbox{and} \quad \Upsilon_{t_n}=\sum_{i=1}^\infty \delta_{X_i^{(t_n)}},
    $$
    where $\{X_i(o)\}_{i\in \mathbb{N}}$ and $\{X_i^{(t_n)}(o)\}_{i\in \mathbb{N}}$ are strictly increasing, such that
    $$
    \lim_{n\to\infty} X^{(t_n)}_i= X_i
    $$
    for every $i\in \mathbb{N}$ a.s.
\end{theorem}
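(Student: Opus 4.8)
The plan is to bootstrap from Theorem \ref{theorem:ConvergencePPP}, which already gives a coupling of $\Upsilon, \Upsilon_{t_1}, \Upsilon_{t_2}, \ldots$ on one probability space with $\Upsilon_{t_n} \overset{v}{\to} \Upsilon$ almost surely (using that ${\bf D}$ is lcsc, hence Polish under a compatible complete proper metric, and that $\mu_{t_n} \overset{v}{\to} \mu$ by hypothesis). So the real content is purely deterministic: I would prove the following pointwise statement. Suppose $\nu_n \overset{v}{\to} \nu$ in $\mathbf M({\bf D})$, where all of $\nu_n, \nu$ are admissible (in the sense of \cite[Definition 1.2]{FMW23}, as recalled before Proposition \ref{pr:BasicPPP}), $\nu$ has infinite total mass, $\nu(\{f : f(o) = r\}) = 0$ for all $r$, and $\nu(\{f : f(o) \le r\}) < \infty$ for all $r$. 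Write $\nu = \sum_{i\ge 1}\delta_{X_i}$ and $\nu_n = \sum_{i\ge 1}\delta_{X_i^{(n)}}$ with the values at $o$ strictly increasing. Then $X_i^{(n)} \to X_i$ in ${\bf D}$ for every $i$. Since by Proposition \ref{pr:BasicPPP} the hypotheses hold almost surely in our coupling, applying this deterministic lemma to almost every realization finishes the proof.

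To prove the deterministic lemma, fix $i$ and set $r = X_i(o)$. Choose $a < r < b$ with $a, b$ not equal to $X_j(o)$ for any $j$ (possible since that set is discrete). Let $U = \{f \in {\bf D} : a < f(o) < b\}$; this is open, and I would check that $\bar U \subseteq \{f : a \le f(o) \le b\}$ has compact closure — here is where properness of ${\bf D}$ as a metric space and $1$-Lipschitzness are used: a set on which $f(o)$ is bounded and which is closed is compact, by the same Arzelà–Ascoli / local-finiteness reasoning as in Proposition \ref{pr:BasicPPP}. Vague convergence together with $\nu(\partial U) = \nu(\{f : f(o) = a\}) + \nu(\{f : f(o) = b\}) = 0$ gives $\nu_n(U) \to \nu(U)$; choosing $a, b$ so that exactly the points $X_1, \ldots, X_k$ (with $k$ the number of $j$ with $X_j(o) \le r$, including multiplicity) lie in a slightly larger window and pinning down $\nu(U)$ precisely, I can arrange that for all large $n$, the multiset $\{X_j^{(n)} : X_j^{(n)}(o) \in (a,b)\}$ has exactly the right cardinality. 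Then a standard portmanteau/subsequence argument: any subsequence of $(X_i^{(n)})_n$ lies eventually in the compact set $\bar U$, hence has a further subsequence converging to some $f^* \in {\bf D}$; one shows $f^*$ must be $X_i$ by matching counting measures on small neighborhoods (vague convergence forces the limiting configuration restricted to the window $(a,b)$ to equal $\nu$ restricted there, as measures, and the ordering by value at $o$ pins down which point is the limit of $X_i^{(n)}$). Since every subsequence has a sub-subsequence converging to $X_i$, the whole sequence converges to $X_i$.

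The main obstacle, and the step requiring the most care, is the bookkeeping that converts "vague convergence of the measures" into "convergence of the $i$-th atom in the canonical enumeration." Vague convergence a priori allows atoms to drift in from outside any fixed window or to split/merge at the boundary; ruling this out is exactly what the conditions $\nu(\{f(o) = r\}) = 0$ (no mass escaping to the boundary of the window) and the strict monotonicity of $\{X_j(o)\}$ (so the enumeration is genuinely determined, with no ties to resolve) are for. One has to be slightly careful that although $\nu_n(\{f(o)=a\})$ need not be zero, one can perturb $a$ within a set of full measure to avoid this, and that the finite truncations $\nu_n|_{\{f(o)\le b\}}$ converge in total variation of counting (i.e. the number of atoms stabilizes and each atom converges), which is the precise form of "vague convergence of purely atomic locally finite measures with no boundary mass." Everything else — existence of the coupling, admissibility, the representation with increasing values at $o$ — is already supplied by Theorem \ref{theorem:ConvergencePPP} and Propositions \ref{pr:BasicIdealMeasure} and \ref{pr:BasicPPP}.
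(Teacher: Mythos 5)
Your high-level structure---apply Theorem \ref{theorem:ConvergencePPP} to get an a.s.\ vaguely convergent coupling, then argue deterministically atom by atom---is exactly the right starting point, and your handling of the compact window $\bar U = \{f : a \le f(o) \le b\}$ (compact by $1$-Lipschitzness plus Arzel\`a--Ascoli) and the boundary-mass-zero argument is correct. But the deterministic lemma you propose is false as stated, and the gap occurs precisely where you write that ``the finite truncations $\nu_n|_{\{f(o)\le b\}}$ converge in total variation of counting.'' The set $\{f \in \mathbf D : f(o) \le b\}$ is \emph{not} relatively compact in $\mathbf D$: it is bounded above in $f(o)$ but not below, so vague convergence on $\mathbf D$ gives no control over $\nu_n$ on it. Concretely, one can have $\nu_n \overset{v}{\to}\nu$ on $\mathbf D$ while the lowest atom satisfies $X_1^{(n)}(o)\to -\infty$; then the canonical enumeration of $\nu_n$ is permanently shifted by one relative to that of $\nu$, and $X_i^{(n)}\to X_{i-1}$ rather than $X_i$. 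Your claim that $X_i^{(n)}$ eventually lies in $\bar U$ tacitly assumes that no $\nu_n$-atoms drift below $a$, which is exactly what needs to be ruled out.

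The paper flags this at the very start of its proof: ``a direct application of [Theorem \ref{theorem:ConvergencePPP}] does not guarantee in a straightforward way that $X^{(t_n)}_1\not\to-\infty$ a.s.'' The paper's fix is to one-point compactify $\mathbf D$ at $-\infty$, obtaining $\mathbf D^- = \mathbf D\cup\{-\infty\}$ with neighborhood base $\{f : f(o) < r\}\cup\{-\infty\}$ at the new point, and then to prove---this is the genuine new content---that $\mu_{t_n}\overset{v}{\to}\mu$ \emph{on $\mathbf D^-$}. That claim is a uniform tightness estimate at $-\infty$: using the normalization $\mu_{t_n}(\{f : f(o)\le 0\})=\mu(\{f : f(o)\le 0\})=1$ from Proposition \ref{pr:BasicIdealMeasure}, one shows the mass below level $-s$ is uniformly small in $n$ for $s$ large. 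Applying Theorem \ref{theorem:ConvergencePPP} on $\mathbf D^-$ then yields a coupling whose realizations converge vaguely on $\mathbf D^-$, where the sets $\{f : f(o)\le r\}\cup\{-\infty\}$ \emph{are} compact; since $\mu(\{-\infty\})=0$, the limit $\Upsilon$ a.s.\ charges no mass at $-\infty$, so atoms of $\Upsilon_{t_n}$ cannot silently slip below any fixed threshold, and the atom-by-atom matching argument (which the paper then carries out by induction on $i$, using the gap $X_i(o)<X_{i+1}(o)$ from Proposition \ref{pr:BasicPPP}) goes through. Without this compactification step, your deterministic lemma has no proof.
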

\begin{proof}
    Clearly, our aim is to apply Theorem~\ref{theorem:ConvergencePPP}.
    However, a direct application of this theorem does not guarantee in a straightforward way that $X^{(t_n)}_1\not\to-\infty$ a.s.
    We circumvent this issue as follows.

    \medskip

    Let ${\bf D}^-={\bf D}\cup\{-\infty\}$ with the sets $\{f\in {\bf D}:f(o)<r\}\cup \{\infty\}$ for $r\in \mathbb{R}$ forming an open neighborhood base at $-\infty$.
    That is, we compactify ${\bf D}$ at $-\infty$.
    It can be easily checked that the topology generated by the original topology on ${\bf D}$ together with this base at $\{-\infty\}$ turns ${\bf D}^-$ into a lcsc space.
    Observe also that the restriction of the $\sigma$-algebra of Borel sets from ${\bf D}^-$ to ${\bf D}$ coincides with the original  $\sigma$-algebra of Borel sets on ${\bf D}$.
    In particular, we may view $\mu_{t_n},\mu\in {\bf M}({\bf D}^-)$ for every $n\in \mathbb{N}$, and we have that $\mu_{t_n}(\{-\infty\})=\mu(\{-\infty\})=0$ for every $n\in \mathbb{N}$.
    Similarly, we abuse the notation and write $\Upsilon$ and $\Upsilon_{t_n}$, $n\in \mathbb{N}$, for the Poisson point processes with intensity measure $\mu$ and $\mu_{t_n}$ on ${\bf D}^-$.
    Notice that as almost surely the point $-\infty$ does not appear in $\Upsilon$ and $\Upsilon_{t_n}$ for every $n\in \mathbb{N}$ the restrictions of these Poisson point processes to ${\bf D}$ coincide with the original definition of $\Upsilon$ and $\Upsilon_{t_n}$ for every $n\in \mathbb{N}$.

    \begin{claim}
        We have that $\mu_{t_n}$ converges vaguely to $\mu$ on ${\bf D}^-$. 
    \end{claim}
    \begin{proof}
        We need to show that if $H:{\bf D}^-\to \mathbb{R}$ is a continuous function with compact support, c.f. Remark~\ref{rem:LCSCvague}, then 
        $$\int_{{\bf D}^-} H \ d\mu_{t_n}\to \int_{{\bf D}^-} H \ d\mu.$$
        Let $M>0$ be such that $|H(f)|\le M$ for every $f\in {\bf D}^-$. 

        \medskip
        
        Assume first that $H\ge 0$ and $H(f)=0$ whenever $f(o)\ge 0$ for $f\in {\bf D}$, and fix $\eps>0$.
        By Proposition~\ref{pr:BasicIdealMeasure}~(3) and~(6) combined with the fact that $\mu(\{-\infty\})=0$, there are $s,\delta>0$ such that
        $$\mu(\{f\in {\bf D}:-s\le f(o)\le -\delta\})>1-\eps/M.$$
        By Urysohn's lemma, there is a continuous function $F:{\bf D}\to [0,1]$ such that $F(f)=1$ whenever $-s\le f(o)\le -\delta$ and $F(f)=0$ whenever $f(o)\le -s-1$, or $f(o)\ge 0$.
        Then we have
        $$\int_{{\bf D}} F \ d\mu_{t_n}\to \int_{{\bf D}} F \ d\mu,$$
        by $\mu_{t_n}\overset{v}\to \mu$ on ${\bf D}$, which implies that there is $n_0\in \mathbb{N}$ such that
        $$\mu_{t_n}(\{f\in {\bf D}:-s-1\le f(o)\le 0\})\ge \int_{{\bf D}} F \ d\mu_{t_n}>1-\eps/M$$
        holds for every $n\ge n_0$. 

        Let $G:{\bf D}^-\to [0,1]$ be a continuous function such that $G(-\infty)=0$, $G(f)=1$ whenever $-s-1\le f(o)\le 0$ and $G(f)=0$ whenever $f(o)\le -s-2$, or $f(o)\ge 1$ for $f\in {\bf D}$.
        Then we have
        $$\left|\int_{{\bf D}^-} H \ d\mu_{t_n}-\int_{{\bf D}^-} G\cdot H \ d\mu_{t_n}\right|<\eps$$
        for every $n\ge n_0$ as well as
        $$\left|\int_{{\bf D}^-} H \ d\mu-\int_{{\bf D}^-} G\cdot H \ d\mu\right|<\eps.$$
        The desired claim then follows by sending $\eps\to 0$ as
        $$\int_{{\bf D}^-} G\cdot H \ d\mu_{t_n}=\int_{{\bf D}} G\cdot H \ d\mu_{t_n}\to \int_{{\bf D}} G\cdot H \ d\mu=\int_{{\bf D}^-} G\cdot H \ d\mu$$
        by the assumption that $\mu_{t_n}\overset{v}\to \mu$ on ${\bf D}$ as $G\cdot H\upharpoonright {\bf D}$ has compact support.

        \medskip

        To finish the proof for general $H$, note that we can write $H=H_0+H_1$, where $H_1$ has compact support when restricted to ${\bf D}$ and $H_0(f)=0$ for all $f\in {\bf D}$ such that $f(o)\ge 0$, and further $H_i=H_i^+-H_i^-$ with $H_i^+,H_i^-\ge 0$ for $i=0,1$.
    \end{proof}

    By Theorem~\ref{theorem:ConvergencePPP}, we may couple $\Upsilon$ and $\Upsilon_{t_n}$, $n\in \mathbb{N}$, on the same probability space such that
    \begin{equation}\label{eq:ConvergenceOnCompactification}
        \Upsilon_{t_n} \overset{v}\to \Upsilon \quad \mbox{a.s. on } {\bf D}^-.    
    \end{equation}
    Representing $\Upsilon_{t_n}$ in the form \eqref{equ rep Upsilon t} and $\Upsilon$ in the form \eqref{equ rep Upsilon}, it remains to show that 
    $$\lim_{n\to\infty} X^{(t_n)}_i=X_i$$
    a.s. in ${\bf D}$.
    
    \medskip

    Let $i\in \mathbb{N}$.
    Using Proposition~\ref{pr:BasicPPP}, we have that $X_i(o)<X_{i+1}(o)$ a.s.
    By Urysohn's lemma, there is a continuous function $H:{\bf D}^-\to [0,1]$ such that $H(-\infty)=1$, $H(f)=1$ for every $f\in {\bf D}$ such that $f(o)\le 5/8X_i(o)+3/8X_{i+1}(o)$ and $H(f)=0$ for every $f\in {\bf D}$ such that $f(o)\ge 3/8X_i(o)+5/8X_{i+1}(o)$.
    It follows from \eqref{eq:ConvergenceOnCompactification} that 
    $$\int_{{\bf D}^-} H \ d\Upsilon_{t_n}\to \int_{{\bf D}^-} H \ d\Upsilon=i.$$
    We claim that there is $n_0\in \mathbb{N}$ such that $X^{(t_n)}_{i+1}(o)\ge 3/4X_i(o)+1/4X_{i+1}(o)$.
    Indeed, suppose for a contradiction that $X^{(t_n)}_{i+1}<3/4X_i(o)+1/4X_{i+1}(o)$ for infinitely many $n\in \mathbb{N}$.
    Then we have
    $$\int_{{\bf D}^-} H \ d\Upsilon_{t_n}=i+1$$
    for every such $n\in \mathbb{N}$, and that is a contradiction.
    Consequently, using \eqref{eq:ConvergenceOnCompactification} again, it is straightfroward to show by induction that
    $$\lim_{n\to\infty} X^{(t_n)}_j=X_j$$
    for every $1\le j\le i$ in ${\bf D}^-$.
    As $i\in \mathbb{N}$ was arbitrary, and $X_1\not=-\infty$ a.s., we get that the convergence holds when restricted to ${\bf D}$ and the claim follows.
\end{proof}

\begin{remark}[Convergence of Voronoi diagrams] \label{rm:convergence}
    Let us reiterate that we do not prove that the Poisson-Voronoi diagrams converge (in the Fell topology) to a unique limiting tessellation. This has been shown (along with a systematic study of the limiting object) for (discrete) trees~\cite{B19}, hyperbolic spaces~\cite{DCELU23} and the $L^1$-product of hyperbolic planes~\cite{D24}. Instead, we work with subsequential limit points and provide a coupling to obtain convergence of the point processes on~$\mathbf D$. Since this suffices for our purposes, we do not pursue the question about convergence of the Voronoi diagrams. 
    
    Notably, a very general convergence criterion for Voronoi diagrams was proven in \cite[Theorem 2.3]{DCELU23}. This result provides sufficient conditions for a deterministic list of nuclei in any locally compact proper metric space to converge to an ideal diagram. We point out that the nuclei need not form a Poisson point process and the space need not be a symmetric space (in fact, the space need not be a Riemannian manifold \cite{D24} and many discrete spaces, also beyond Cayley graphs, are allowed). To apply their result in the present context, one would additionally have to check convergence of the Poisson points in the Gromov compactification of $X$, cf.~\cite{DCELU23}.
\end{remark}

\subsection{Voronoi diagrams, revisited}\label{sec:VoronoiDiagram}
For an admissible set $F$, or an admissable locally finite Borel measure $\nu\in \mathbf M(\mathbf D)$ of the form $\nu=\sum_{f\in F} \delta_f$, define the \emph{Voronoi diagram}, see \cite[Definition~1.2]{FMW23}, as 
\begin{equation}C^\nu:=C^F:=\{C^F_f\}_{f\in F},
\end{equation}
where
\begin{equation} 
C^\nu_f:=C^F_f=\{x\in X:\forall g\in F \ f(x)\le g(x)\}
\end{equation}
for every $f\in F$.
Also, for $D>0$, $f_1,f_2\in F$, we follow \cite[Section~6]{FMW23} and define the {\em $D$-wall} of~$F$ with respect to $(f_1,f_2)$ as 
\begin{align}
    W^\nu_D(f_1,f_2) & :=W^F_D(f_1,f_2) \nonumber\\
    & :=\{x\in X: f_1(x)=f_2(x) \text{ and } \forall g\in F\setminus \{f_1,f_2\} \ f_1(x)+D< g(x)\}.
\end{align}
Note that if $t\ge 1$ and $\lambda>0$ is such that $\vol(\mathcal{B}_t(o))=\lambda^{-1}$, then 
\begin{equation}
    \mathrm{Vor}(\mathbf{Y}^{(\lambda)}) \overset{(d)}{=} C^{\Upsilon_t},
\end{equation} 
i.e.~we have simply expressed the previous Voronoi tessellation with a different formalism.

Note that the above formalism make sense for realizations of the Poisson point processes in Theorem \ref{thm:Coupling}. With a slight abuse of notation, we will treat these realizations as both measures and sets depending on the context. We now recall a (formally) weaker version of a key result proved in~\cite{FMW23}, which suffices for our purposes.

\begin{theorem}[Touching in the limit, {cf.~\cite[Theorem 6.1]{FMW23}}]\label{thm:InfiniteTouching}
    Let $G$ be a connected higher rank semisimple real Lie group, $X$ its symmetric space, ${\bf D}$ the corona space and $t_n\to \infty$ such that $\mu_{t_n}\overset{v}\to \mu$. Then the Poisson point process $\Upsilon$ with intensity $\mu$ has the following property almost surely: for every $D>0$ and $f_1,f_2\in \Upsilon$, the set $W^\Upsilon_D(f_1,f_2)$ is non-empty. 
\end{theorem}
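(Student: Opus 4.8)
The plan is to derive Theorem~\ref{thm:InfiniteTouching} from the infinite touching phenomenon of \cite[Theorem 6.1]{FMW23} together with the coupling of Theorem~\ref{thm:Coupling}. The cited result of \cite{FMW23} says that the ideal Poisson--Voronoi tessellation has the property that every pair of cells shares an unbounded boundary; in the language of $D$-walls this means that for the limiting process $\Upsilon$ and every $D>0$ and every pair $f_1,f_2\in\Upsilon$, the set $W^\Upsilon_D(f_1,f_2)$ is unbounded, in particular non-empty. Strictly speaking, the formulation we need here is \emph{formally weaker} (we only want non-emptiness), and the main conceptual point is to match conventions: the intensity measure $\mu$ of $\Upsilon$ is any vague subsequential limit of the normalized measures $\mu_{t_n}$ on the corona space $\mathbf D$, which is exactly the class of ideal intensity measures considered in \cite{FMW23}. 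So the first step is to check that the hypotheses of \cite[Theorem 6.1]{FMW23} are met by $\Upsilon$: namely that $\mu$ is $G$-invariant, satisfies the finiteness and non-triviality properties of Proposition~\ref{pr:BasicIdealMeasure}, and that $G$ has higher rank.

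Given that, the proof is essentially a citation: for the coupled process $\Upsilon$ from Theorem~\ref{thm:Coupling}, apply \cite[Theorem 6.1]{FMW23} to conclude that almost surely, for every $D>0$ and every $f_1,f_2\in\Upsilon$, the wall $W^\Upsilon_D(f_1,f_2)$ is non-empty (it is in fact unbounded, but we discard this extra information). Since $\Upsilon$ is a.s. admissible by Proposition~\ref{pr:BasicPPP}, the Voronoi diagram and $D$-wall formalism of Section~\ref{sec:VoronoiDiagram} applies to its realizations, so the statement is well-posed. One should be slightly careful about the order of quantifiers relative to the almost-sure event: there are only countably many pairs $(f_1,f_2)$ with $f_i\in\Upsilon$ and it suffices to range $D$ over positive integers (enlarging $D$ only shrinks the wall, so non-emptiness for all integer $D$ gives it for all real $D$), so a single null set works for all choices simultaneously. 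This countable-intersection bookkeeping is the only genuinely new content beyond invoking \cite{FMW23}.

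The main obstacle — really the only one — is that \cite[Theorem 6.1]{FMW23} may be stated for a specific construction of the ideal Poisson--Voronoi tessellation (e.g. for a particular normalizing choice, or phrased in terms of the ``weak-$^*$'' limit in their terminology) rather than verbatim for an arbitrary vague subsequential limit point $\mu$ of $(\mu_t)_{t\ge1}$. Resolving this amounts to checking that their argument only uses the properties collected in Proposition~\ref{pr:BasicIdealMeasure} (invariance, $\mu(\{f:f(o)\le r\})<\infty$, $\mu(\mathbf D)=\infty$, $\mu(\{f:f(o)=r\})=0$) plus higher rank, all of which hold for any such $\mu$ by Proposition~\ref{pr:BasicIdealMeasure}. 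Since Remark~\ref{rem:LCSCvague} already records that the vague convergence used here coincides with the weak-$^*$ convergence of \cite{FMW23}, this identification is routine, and no further analysis of the geometry of symmetric spaces of higher rank is needed on our side — that analysis is entirely inherited from \cite{FMW23}.
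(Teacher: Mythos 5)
Your proposal matches the paper's treatment: Theorem~\ref{thm:InfiniteTouching} is not proved in the paper but simply recalled as a (formally weaker) restatement of \cite[Theorem 6.1]{FMW23}, with Remark~\ref{rem:InfiniteTouching} noting that ``unbounded'' has been weakened to ``non-empty'' and Remark~\ref{rem:LCSCvague} providing the identification of vague with weak-$^*$ convergence that you flag as the only point needing care. One small inaccuracy: you invoke the coupling of Theorem~\ref{thm:Coupling}, but it plays no role here --- the statement concerns $\Upsilon$ as a standalone Poisson process with intensity $\mu$, and the coupling is only used later (in the proofs of Theorems~\ref{thm:StrongerA2} and~\ref{thm:StrongerA1}) to transfer the conclusion from $\Upsilon$ to the $\Upsilon_{t_n}$.
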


The relationship with \cite[Theorem 6.1]{FMW23} is explained in the following remark.

\begin{remark}[Touching vs.~infinite touching] \label{rem:InfiniteTouching}
    In the setting of Theorem \ref{thm:InfiniteTouching}, a stronger property was proven in \cite[Theorem 6.1]{FMW23}. Namely, it was shown there that ''non-empty'' may  be replaced by ''unbounded''. We shall only need the (formally) weaker version stated above. 
\end{remark}

\subsection{A lemma about geodesics}\label{sec:Geodesic}

Let $(X,d)$ be a geodesic metric space. We fix, for every $x,y\in X$, some geodesic $\gamma_{x,y}$ connecting $x$ to $y$. Abusing notation, we will also write $\gamma_{x,y}$ for the image $\gamma_{x,y}([0,d(x,y)])\subseteq X$.
Given $\delta>0$, we define the {\em $\delta$-thickening} of $\gamma_{x,y}$ as 
\begin{equation}
   \Gamma_{x,y}(\delta)=\{z\in X:d(z,\gamma_{x,y})<\delta\} 
\end{equation}

\begin{lemma}\label{lm:BasicGeodesic}
    Let $(X,d)$ be a geodesic metric space and $x,y,w\in X$ be such that $d(x,w)<d(y,w)$.
    Then
    $$d(x,z)<d(y,z)$$
    holds for every $z\in \Gamma_{x,w}(\delta)$ and $0<\delta<(d(y,w)-d(x,w))/2$.
\end{lemma}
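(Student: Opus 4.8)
The statement is a simple metric-geometry fact, so the plan is to reduce the desired strict inequality $d(x,z) < d(y,z)$ for $z \in \Gamma_{x,w}(\delta)$ to an application of the triangle inequality, using the geodesic through $x$ and $w$ to control $d(x,z)$ and $d(y,z)$ in terms of the position along $\gamma_{x,w}$. First I would fix $z \in \Gamma_{x,w}(\delta)$ and pick a point $z'$ on the geodesic $\gamma_{x,w}$ with $d(z,z') < \delta$. Writing $s = d(x,z')$, the defining property of a geodesic gives $d(z',w) = d(x,w) - s$ (since $z'$ lies between $x$ and $w$ along an isometric path, $d(x,z') + d(z',w) = d(x,w)$).

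Next I would estimate both sides. On one hand, $d(x,z) \le d(x,z') + d(z',z) < s + \delta$. On the other hand, $d(y,z) \ge d(y,z') - d(z',z) > d(y,z') - \delta \ge \bigl(d(y,w) - d(z',w)\bigr) - \delta = d(y,w) - d(x,w) + s - \delta$, where the middle step is the reverse triangle inequality $d(y,z') \ge d(y,w) - d(w,z')$. Combining, it suffices to check
\begin{equation*}
s + \delta \le d(y,w) - d(x,w) + s - \delta,
\end{equation*}
i.e. $2\delta \le d(y,w) - d(x,w)$, which holds by the hypothesis $\delta < (d(y,w) - d(x,w))/2$; in fact the strict inequalities above already give $d(x,z) < d(y,z)$.

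I do not anticipate a genuine obstacle here — this is routine. The only point requiring a little care is bookkeeping with strict versus non-strict inequalities (one wants the final conclusion strict, and indeed the $\delta$-thickening is defined with a strict inequality $d(z,\gamma_{x,w}) < \delta$, which provides the needed slack), and making sure the infimum defining $d(z,\gamma_{x,w})$ is actually attained or at least approximated closely enough by some $z'$ on the compact arc $\gamma_{x,w}([0,d(x,w)])$ — since $(X,d)$ is geodesic this arc is the continuous image of a compact interval, hence compact, so the distance is attained and the choice of $z'$ is legitimate.
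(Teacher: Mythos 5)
Your proof is correct and follows essentially the same approach as the paper: both pick a point $z'$ (the paper's $z_0$) on $\gamma_{x,w}$ within distance $\delta$ of $z$, use $d(x,z')+d(z',w)=d(x,w)$ from the geodesic property, and close with the triangle inequality and the bound $2\delta < d(y,w)-d(x,w)$. The only cosmetic difference is that the paper argues by contradiction while you argue directly; your side remark about attaining the infimum is also unnecessary, since $d(z,\gamma_{x,w})<\delta$ already guarantees some $z'$ on the geodesic with $d(z,z')<\delta$ by the definition of infimum.
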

\begin{proof}
    Suppose for a contradiction that there is $z\in \Gamma_{x,w}(\delta)$ such that $d_G(y,z)\le d_G(x,z)$ and let $z_0\in \gamma_{x,w}$ be such that $d_G(z_0,z)<\delta$.
    Then we have by the triangle inequality that
    \begin{equation*}
        \begin{split}
            d_G(y,w)\le & \ d_G(y,z)+d_G(z,w)\le d_G(x,z)+d_G(z,w)\\
            \le & \ d_G(x,z_0)+ d_G(z_0,z)+d_G(z_0,z)+d_G(z_0,w)\\
            \le & \ d_G(x,w)+2\delta< d_G(y,w),     
        \end{split}
    \end{equation*}
    which is a contradiction.
\end{proof}

\subsection{Proof of Theorem~\ref{thm:A2}}\label{sec:ProofA2}
We shall prove the following stronger statement that might be useful in other applications as well. 

\begin{theorem}\label{thm:StrongerA2}
    Let $G$ be a connected higher rank semisimple real Lie group and let $(X,d_X)$ be its symmetric space.
    Then for every $\eps>0$, $D>0$ and $R>0$ there is $\lambda_0\in (0,1]$ such that for
    \begin{align}
        E^{(\lambda)}_{R,D} := \Big\{ \forall i,j \in \mathbb{N} \  C^{(\lambda)}_i\cap \mathcal{B}_R(o) & \not=\emptyset\not= C^{(\lambda)}_j\cap \mathcal{B}_R(o) \nonumber \\  
        & \Rightarrow\exists z\in C^{(\lambda)}_i\cap C^{(\lambda)}_j \text{ such that } \mathcal{B}_D(z)\subseteq C^{(\lambda)}_i\cup C^{(\lambda)}_j \label{eq:StrongerTouching} \Big\},
    \end{align}
    we have that
    \begin{equation}\label{eq:StrongerFinitaryTouching}
        \inf_{0<\lambda\le\lambda_0} \mathbb{P} \left(E^{(\lambda)}_{R,D}\right)>1-\eps.
    \end{equation}
\end{theorem}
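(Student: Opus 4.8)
\emph{Strategy.} We argue by contradiction, combining the coupling of Theorem~\ref{thm:Coupling} with the touching statement Theorem~\ref{thm:InfiniteTouching} and the (quantitative form of the) geodesic Lemma~\ref{lm:BasicGeodesic}. Suppose the assertion fails: there are $\eps>0$, $D>0$, $R>0$ and $\lambda_n\downarrow 0$ with $\mathbb P(E^{(\lambda_n)}_{R,D})\le 1-\eps$ for all $n$. Let $t_n$ be determined by $\vol(\mathcal B_{t_n}(o))=\lambda_n^{-1}$ (Theorem~\ref{thm:BasicSymmetricSpace}(5)), so $t_n\to\infty$; passing to a subsequence, $\mu_{t_n}\overset{v}{\to}\mu$ with $\mu$ as in Proposition~\ref{pr:BasicIdealMeasure}, and by Theorem~\ref{thm:Coupling} we may put $\Upsilon_{t_n}=\sum_i\delta_{X^{(t_n)}_i}$ and the Poisson process $\Upsilon=\sum_i\delta_{X_i}$ of intensity $\mu$ on one probability space with $X^{(t_n)}_i\to X_i$ uniformly on compacts a.s. Since $\mathrm{Vor}(\mathbf Y^{(\lambda_n)})\overset{(d)}{=}C^{\Upsilon_{t_n}}$, it suffices to show that the property defining $E^{(\lambda_n)}_{R,D}$ holds for the diagram $C^{\Upsilon_{t_n}}$ with probability tending to $1$; we split the quantifier ``$\forall i,j$'' into the cases $i\ne j$ and $i=j$.

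\emph{Reduction of the case $i\ne j$ to walls.} The elementary observation is that if $z\in W^{\Upsilon_{t_n}}_{2D}(X^{(t_n)}_i,X^{(t_n)}_j)$ then automatically $z\in C^{\Upsilon_{t_n}}_{X^{(t_n)}_i}\cap C^{\Upsilon_{t_n}}_{X^{(t_n)}_j}$, and, since every element of $\Upsilon_{t_n}$ is $1$-Lipschitz, $\mathcal B_D(z)\subseteq C^{\Upsilon_{t_n}}_{X^{(t_n)}_i}\cup C^{\Upsilon_{t_n}}_{X^{(t_n)}_j}$ (for $w\in\mathcal B_D(z)$ and $k\ne i,j$ one has $X^{(t_n)}_k(w)>X^{(t_n)}_i(z)+D>X^{(t_n)}_i(w)$ and likewise $>X^{(t_n)}_j(w)$, so $w$ lies in no other cell, hence in $C^{\Upsilon_{t_n}}_{X^{(t_n)}_i}\cup C^{\Upsilon_{t_n}}_{X^{(t_n)}_j}$). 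So for the case $i\ne j$ it is enough to prove that, with probability $\to 1$, $W^{\Upsilon_{t_n}}_{2D}(X^{(t_n)}_i,X^{(t_n)}_j)\ne\emptyset$ for every pair $i\ne j$ whose cells both meet $\mathcal B_R(o)$. A cell $C^{\Upsilon_{t_n}}_{X^{(t_n)}_i}$ meeting $\mathcal B_R(o)$ has $X^{(t_n)}_i(o)\le X^{(t_n)}_1(o)+2R$ (apply $1$-Lipschitzness at $o$ to a point of the cell lying within distance $R$ of $o$); since $X^{(t_n)}_\ell(o)\to X_\ell(o)$, $X_\ell(o)\to\infty$ (Proposition~\ref{pr:BasicIdealMeasure}), and $X^{(t_n)}_\ell(o)$ is increasing in $\ell$, a.s. there is a random $M$ such that for all large $n$ only indices $\le M$ occur. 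Thus only finitely many pairs matter.

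\emph{From $\Upsilon$-walls to $\Upsilon_{t_n}$-walls --- the main step.} Put $D':=2D+10$ and fix a pair $i\ne j$ with $i,j\le M$. By Theorem~\ref{thm:InfiniteTouching}, a.s. $W^{\Upsilon}_{D'}(X_i,X_j)\ne\emptyset$; pick $z$ in it, so $X_i(z)=X_j(z)$ and $X_k(z)>X_i(z)+D'$ for all $k\ne i,j$. Using $X^{(t_n)}_\ell(z)\to X_\ell(z)$ for the boundedly many relevant indices, the bound $X^{(t_n)}_k(z)\ge X^{(t_n)}_k(o)-d_X(o,z)$ for the tail, and $X^{(t_n)}_{M'}(o)\to X_{M'}(o)\to\infty$ (for $M'$ large) to absorb that tail, one gets for all large $n$, uniformly in $k\ne i,j$, that $X^{(t_n)}_k(z)>X^{(t_n)}_i(z)+D'-1$, while $\eps_n:=|X^{(t_n)}_i(z)-X^{(t_n)}_j(z)|\to 0$. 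Write $X^{(t_n)}_\ell=d_X(\cdot,Y^{(n)}_\ell)-t_n$ with $Y^{(n)}_\ell$ the nuclei, and relabel $i,j$ so that $Y^{(n)}_j$ is the farther of the two from $z$; then $d_X(z,Y^{(n)}_j)<d_X(z,Y^{(n)}_k)$ for $k\ne i,j$. Walk along a geodesic $\eta_n$ from $z$ to $Y^{(n)}_j$: the continuous function $s\mapsto d_X(\eta_n(s),Y^{(n)}_i)-d_X(\eta_n(s),Y^{(n)}_j)$ equals $-\eps_n\le 0$ at $z$ and $d_X(Y^{(n)}_i,Y^{(n)}_j)>0$ at $Y^{(n)}_j$, hence vanishes at some $z_n$ on $\eta_n$, i.e. $d_X(z_n,Y^{(n)}_i)=d_X(z_n,Y^{(n)}_j)$. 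Taking $\gamma_{Y^{(n)}_j,z}:=\eta_n$, the point $z_n$ lies on this geodesic, so the quantitative form of Lemma~\ref{lm:BasicGeodesic} (its proof gives $d_X(w,Y^{(n)}_k)-d_X(w,Y^{(n)}_j)>d_X(z,Y^{(n)}_k)-d_X(z,Y^{(n)}_j)-2\delta$ for $w$ in the $\delta$-thickening of $\gamma_{Y^{(n)}_j,z}$, for small $\delta$) yields, letting $\delta\downarrow 0$, $d_X(z_n,Y^{(n)}_k)-d_X(z_n,Y^{(n)}_j)\ge d_X(z,Y^{(n)}_k)-d_X(z,Y^{(n)}_j)>D'-1-\eps_n>2D$ for all $k\ne i,j$ and all large $n$. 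Since $d_X(z_n,Y^{(n)}_i)=d_X(z_n,Y^{(n)}_j)$, this says exactly $z_n\in W^{\Upsilon_{t_n}}_{2D}(X^{(t_n)}_i,X^{(t_n)}_j)$. Taking the maximum of the resulting finitely many (random) thresholds over $i,j\le M$ shows that a.s. the event ``all relevant $\Upsilon_{t_n}$-walls are non-empty'' holds for every large $n$; hence its probability tends to $1$.

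\emph{The case $i=j$ and conclusion.} Here the required property is that every cell of $C^{\Upsilon_{t_n}}$ meeting $\mathcal B_R(o)$ contain a ball of radius $D$, which is a soft consequence of sparseness: with probability $\to 1$ (uniformly in small $\lambda$) every nucleus whose cell meets $\mathcal B_R(o)$ is at distance $>4D$ from all other nuclei, and then the ball of radius $2D$ about that nucleus lies in its cell. The estimate is a union bound over the at most $N=N(\eps,R)$ cells meeting $\mathcal B_R(o)$ (finitely many uniformly in small $\lambda$, by the tightness in Proposition~\ref{pr:BasicIdealMeasure}) of a multivariate Mecke-equation bound which, using $\lambda_n=\vol(\mathcal B_{t_n}(o))^{-1}$ and the exponential growth in Theorem~\ref{thm:BasicSymmetricSpace}(6), is of order $\vol(\mathcal B_{4D}(o))\,\vol(\mathcal B_{t_n+2R+c_0}(o))/\vol(\mathcal B_{t_n}(o))^2=O(e^{-at_n})$. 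Combining the two cases, $\mathbb P(E^{(\lambda_n)}_{R,D})\to 1$, contradicting $\mathbb P(E^{(\lambda_n)}_{R,D})\le 1-\eps$. The main obstacle is precisely the transfer step of the third paragraph: a wall point of the limiting diagram $\Upsilon$ need not even be a bisector point of the approximating diagram $\Upsilon_{t_n}$, so non-emptiness of $W^{\Upsilon}_{D'}$ does not pass to $W^{\Upsilon_{t_n}}_{2D}$ by continuity alone; the geodesic lemma in its quantitative form is what lets one slide a limit wall point to a genuine bisector crossing while keeping a definite margin to every remaining nucleus, with the uniform control of the infinite tail of nuclei as the accompanying bookkeeping.
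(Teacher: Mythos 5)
Your proof is correct and takes essentially the same route as the paper's: argue by contradiction, pass to a vaguely convergent subsequence $\mu_{t_n}\overset{v}{\to}\mu$, invoke the coupling of Theorem~\ref{thm:Coupling} and the touching result Theorem~\ref{thm:InfiniteTouching} to obtain, for each relevant pair, a wall point $z$ of $\Upsilon$ with a generous margin, control the infinite tail of nuclei via the strict increase of $X^{(t_n)}_k(o)$ and convergence at $o$, walk along the geodesic from $z$ towards the farther nucleus in $\Upsilon_{t_n}$ to locate the bisector crossing $z_n$, and apply Lemma~\ref{lm:BasicGeodesic} to preserve a margin at $z_n$. The repackaging you choose (first reducing the target to non-emptiness of a $2D$-wall in $\Upsilon_{t_n}$ and extracting a quantitative version of Lemma~\ref{lm:BasicGeodesic} from its proof rather than applying the lemma directly to a ball around the crossing) is harmless and equivalent in content. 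Your separate treatment of $i=j$ via a second-moment Mecke bound is a genuinely more elementary alternative to the paper's implicit use of $W^{\Upsilon}_{8D}(X_i,X_i)$: it does not use higher rank at all, which is natural since that half of the claim holds in any such symmetric space, whereas the paper subsumes $i=j$ into the same wall construction.
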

\begin{proof}
    For a contradiction, suppose that there exist $\eps>0$, $R>0$ and $D>0$ that do not satisfy the conclusion of Theorem~\ref{thm:StrongerA2}.
    In particular, we may find a decreasing sequence $\lambda_n\to 0$ such that the events $E^{(\lambda_n)}_{R,D}$ in \eqref{eq:StrongerTouching} have probability at most $1-\eps$ for all $n\in\mathbb N$.

    By Theorem~\ref{thm:BasicSymmetricSpace} (5), there are $t_n\to \infty$ such that $\vol(\mathcal{B}_{t_n}(o))=\lambda_n^{-1}$.
    By Proposition~\ref{pr:BasicIdealMeasure}, we may without loss of generality assume that $\mu_{t_n}\overset{v}\to \mu$.
    As this sequence will be fixed from now on, we set $\mu_n=\mu_{t_n}$, $\Upsilon_{n}=\Upsilon_{t_n}$, etc.

    Let $\Upsilon$ and $\Upsilon_n$ for every $n\in \mathbb{N}$ be coupled as in Theorem~\ref{thm:Coupling} and define a random variable
    $$T=\Big\{i\in \mathbb{N}: C^{\Upsilon}_{X_i}\cap \overline{\mathcal{B}}_{R}(o)\not=\emptyset\Big\}.$$
    As $\Upsilon$ is a.s. admissible, we have that $|T|<\infty$ a.s.
    Indeed, as $X_i$ is $1$-Lipschitz for every $i\in \mathbb{N}$, we have $X_i(o)\le X_1(o)+2R$ for every $i\in T$. 
    
    \begin{claim}\label{cl:RandomTime}
        Define 
        $$T_n=\Big\{i\in \mathbb{N}: C^{\Upsilon_n}_{X^{(n)}_i}\cap \mathcal{B}_R(o)\not=\emptyset\Big\} \quad.$$
        Then there is a random variable $N_0\in \mathbb{N}$ such that $T_n\subseteq T$ for every $n\ge N_0$ a.s.

    \end{claim}
    \begin{proof}
        First, we show that there is a random variable $K\in \mathbb{N}$ such that $T_n\subseteq [K]=\{0,\dots,K\}$ for every $n\in \mathbb{N}$ a.s.
        Consider the event that for every $k\in \mathbb{N}$ there is $n_k\in \mathbb{N}$ and $k<\ell_k\in T_{n_k}$.
        By the definition, there is $x_k\in \mathcal{B}_R(o)$ such that $X^{(n_k)}_{\ell_k}(x_k)\le X^{(n_k)}_1(x_k)$.
        As both functions are $1$-Lipschitz, it follows that $X^{(n_k)}_{\ell_k}(o)-2R\le X^{(n_k)}_1(o)$.
        This implies, as $X^{(n)}_1(o)\to X_1(o)\in \mathbb{R}$ by Theorem~\ref{thm:Coupling}, that there is $M>0$ such that $X^{(n_k)}_{\ell_k}(o)<M$.
        Consequently, as $\ell_k\to \infty$, we have that $X_i(o)\le M$ for every $i\in \mathbb{N}$ which shows that on this event $\Upsilon$ is not admissible.
        Hence, this event has probability $0$.

        Let $i\in \mathbb{N}$ be such that $i\in T_n$ for infinitely many $n\in \mathbb{N}$.
        To finish the proof it is clearly enough, by the previous paragraph, to show that $i\in T$.
        It follows from the definition that for every such $n\in \mathbb{N}$ there is $x_n\in \mathcal{B}_{R}(o)$ such that $X^{(n)}_{i}(x_n)\le X^{(n)}_{j}(x_n)$ for every $j\in \mathbb{N}$.
        As $\overline{\mathcal{B}}_{R}(o)$ is compact, we find, after passing to a subsequence if necessary, $x\in \overline{\mathcal{B}}_{R}(o)$ such that $x_n\to x$.
        We claim that $X_i(x)\le X_j(x)$ for every $j\in \mathbb{N}$, which shows $i\in T$.
        Indeed, as the elements of ${\bf D}$ are $1$-Lipschitz, we have for every such $n\in \mathbb{N}$ that
        \begin{equation*}
            \begin{split}
                0\ge & \ X^{(n)}_i(x_n)-X^{(n)}_j(x_n)\\
                = & \  X^{(n)}_i(x)-X^{(n)}_j(x)+(X^{(n)}_i(x_n)-X^{(n)}_i(x))-(X^{(n)}_j(x_n)-X^{(n)}_j(x))\\
                \ge & \ X^{(n)}_i(x)-X^{(n)}_j(x)-2d_X(x_n-x).
            \end{split}
        \end{equation*}
        Note that this implies that $ X_i(x)\le X_j(x)$ as $X^{(n)}_i(x)\to X_i(x)$, $X^{(n)}_j(x)\to X_j(x)$ and $d_X(x_n-x)\to 0$.
        This finishes the proof.
    \end{proof}

    We claim that there is another random variable $N_1\ge N_0$ such that for every $n\ge N_1$ and every $i,j\in T_n$ there is $z^{n}_{i,j}\in X$ such that 
    \begin{equation}\label{eq:LastStep}
        |X^{(n)}_i(z^n_{i,j})-X^{(n)}_j(z^n_{i,j})|<2D \text{ and } X^{(n)}_\ell(z^n_{i,j})+4D< X^{(n)}_k(z^n_{i,j})
    \end{equation}
    for every $\ell\in \{i,j\}$ and $k\in \mathbb{N}\setminus \{i,j\}$.

    Appealing to Theorem~\ref{thm:InfiniteTouching}, there is a.s.~$z_{i,j}\in  W^{\Upsilon}_{8D}(X_i,X_j)$ for every $i,j\in T$.
    We show that $N_1$ is well-defined a.s. by showing that the inequalities in \eqref{eq:LastStep} are satisfied for large $n\in \mathbb{N}$ with the particular choice of $z^n_{i,j}=z_{i,j}$.
    In the rest of the argument we use tacitly Claim~\ref{cl:RandomTime}, that is, we assume that $n\in \mathbb{N}$ is large enough so that $T_n\subseteq T$.
    
    The inequality on the left-hand side of \eqref{eq:LastStep} for large $n\in \mathbb{N}$ follows directly from
    $$X_i(z_{i,j})=X_j(z_{i,j})$$
    combined with the fact that $X^{(n)}_m(z_{i,j})\to X_m(z_{i,j})$ fo every $m\in \mathbb{N}$ Theorem~\ref{thm:Coupling}.
    Concerning the inequalities on the right-hand side \eqref{eq:LastStep}, let $\ell=i$ and assume for a contradiction that there are infinitely many $n\in \mathbb{N}$ such that $X^{(n)}_{k_n}(z_{i,j})\le X^{(n)}_i(z_{i,j})+4D$ for some $k_n\in \mathbb{N}\setminus \{i,j\}$.
    As $\{X_i\}_{i\in \mathbb{N}}$ is admissible almost surely, it must be the case that there is $k'\in \mathbb{N}$ such that $k_n\le k'$ for every such $n\in \mathbb{N}$.
    Indeed, we have
    $$X^{(n)}_{k_n}(o)\le X^{(n)}_{k_n}(z_{i,j})+d_X(z_{i,j},o)\le X^{(n)}_i(z_{i,j})+4D+d_X(z_{i,j},o)\le M$$
    for some $M>0$ as $X^{(n)}_i(z_{i,j})\to X_i(z_{i,j})$ by Theorem~\ref{thm:Coupling}.
    In particular, there are infinitely many $n\in \mathbb{N}$ such that $k_n=k\in \mathbb{N}$, and consequently
    $$X^{(n)}_k(z_{i,j})\to X_k(z_{i,j})\le X_i(z_{i,j})+4D,$$
    which contradicts the definition of $z_{i,j}\in W^\Upsilon_{8D}(X_i,X_j)$.
    The argument for $\ell=j$ is completely analogous.
    This proves the existence of the random variable $N_1$.
    
    Choose $n\in \mathbb{N}$ such that the event $A_0:=\{N_1\le n\}$ has probability at least $1-\eps$. We claim that, conditional on $A_0$, \eqref{eq:StrongerTouching} holds almost surely for $\lambda_{n}$, which is the desired contradiction.

    Conditional on $A_0$, let $i\not =j\in \mathbb{N}$ be such that 
    $$C^{\Upsilon_n}_{X^{(n)}_i}\cap \mathcal{B}_R(o)\not=\emptyset\not= C^{\Upsilon_n}_{X^{(n)}_j}\cap \mathcal{B}_R(o),$$
    or equivalently, $i\not =j\in T_n$.
    As $n\ge N_1$, there is $z^n_{i,j}\in X$ such that 
    $$|X^{(n)}_i(z^n_{i,j})-X^{(n)}_j(z^n_{i,j})|<2D \quad \text{and} \quad X^{(n)}_\ell(z^n_{i,j})+4D\le X^{(n)}_k(z^n_{i,j})$$
    for every $\ell\in \{i,j\}$ and $k\in \mathbb{N}\setminus \{i,j\}$.

    Assume without loss of generality that $X^{(n)}_i(z^n_{i,j})\ge X^{(n)}_j(z^n_{i,j})$ and consider the geodesic $\gamma_{z^n_{i,j},y_i}$ in $X$, where $y_i=\iota^{-1}_{t_n}(X^{(n)}_i)$.
    As $d_X(y_j,\cdot)-d_X(y_i,\cdot)$ is continuous, where $y_j=\iota^{-1}_{t_n}(X^{(n)}_j)$, there is $z\in \gamma_{z^n_{i,j},y_i}$ such that
    $$d_X(y_j,z)-d_X(y_i,z)=0.$$
    Finally, by Lemma~\ref{lm:BasicGeodesic}, we have that
    $$d(y_i,z')<d(y_k,z')$$
    for every $z'\in \mathcal{B}_{D}(z)\subseteq \Gamma_{z^n_{i,j},y_i}(3/2D)$, where $y_k=\iota^{-1}_{t_n}(X^{(n)}_k)$ for every $k\in \mathbb{N}\setminus \{i,j\}$.
    It follows that $z\in X$ works as required in $\eqref{eq:StrongerTouching}$.
    This concludes the proof.    
\end{proof}

\subsection{Proof of Theorem~\ref{thm:A1}}\label{sec:ProofA1}

Theorem~\ref{thm:A1} can be proved in an analogous way by using Theorem~\ref{thm:InfiniteTouching} with $f_1=f_2$.
In fact, this way we obtain the following strengthening of Theorem~\ref{thm:A1}.

\begin{theorem}\label{thm:StrongerA1}
    Let $G$ be a connected higher rank semisimple real Lie group and $(X,d_X)$ be its symmetric space.
    Then for every $\eps>0$, $D\ge 0$ and $N\in \mathbb{N}$ there is $\lambda_0>0$ and $R>0$ such that 
    $$\mathbb{P}\left(\forall 1\le i\le N \ \exists z_i\in \mathcal{B}_R(o) \text{ such that }  \mathcal{B}_D(z_i)\subseteq C^{(\lambda)}_i\right)>1-\eps$$
    for every $0<\lambda\le \lambda_0$.    
\end{theorem}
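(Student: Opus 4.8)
The plan is to follow the proof of Theorem~\ref{thm:StrongerA2} line by line, the only change being that Theorem~\ref{thm:InfiniteTouching} is invoked with $f_1=f_2$. For admissible $\Upsilon$ and $f\in\Upsilon$ one has, by definition, $W^\Upsilon_{\Delta}(f,f)=\{x\in X : f(x)+\Delta<g(x)\text{ for all }g\in\Upsilon\setminus\{f\}\}$, i.e.\ the set of points lying $\Delta$-deep in the interior of the cell $C^\Upsilon_f$. Thus Theorem~\ref{thm:InfiniteTouching} with $f_1=f_2=X_i$ asserts precisely that, in the ideal diagram, each (of the finitely many relevant) cells has non-empty interior at every prescribed depth $\Delta>0$; this is the only new ingredient compared with Theorem~\ref{thm:StrongerA2}.

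I would argue by contradiction and may assume $D>0$, the case $D=0$ being trivial (then $\mathcal{B}_0(z_i)=\emptyset\subseteq C^{(\lambda)}_i$, so any $z_i\in\mathcal{B}_R(o)$ works). Suppose $\eps>0$, $D>0$ and $N\in\mathbb{N}$ fail the conclusion. Since the event in the statement is monotone increasing in $R$, applying the negation with $R=m$ and $\lambda_0=1/m$ produces a sequence $\lambda_m\to0$ such that, writing $\mathcal G_m$ for the event ``each of the first $N$ cells of the (ordered) Voronoi diagram contains an open ball of radius $D$ centred in $\mathcal{B}_m(o)$'', one has $\mathbb P\big(\mathrm{Vor}(\mathbf Y^{(\lambda_m)})\in\mathcal G_m\big)\le1-\eps$ for all $m$. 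Choose $t_m\to\infty$ with $\vol(\mathcal{B}_{t_m}(o))=\lambda_m^{-1}$ (Theorem~\ref{thm:BasicSymmetricSpace}~(5)); passing to a subsequence I may assume $\mu_{t_m}\overset{v}{\to}\mu$ (Proposition~\ref{pr:BasicIdealMeasure}); couple $\Upsilon$ and $\Upsilon_m:=\Upsilon_{t_m}$ as in Theorem~\ref{thm:Coupling}, so that $\Upsilon=\sum_i\delta_{X_i}$, $\Upsilon_m=\sum_i\delta_{X^{(m)}_i}$ with values at $o$ strictly increasing and $X^{(m)}_i\to X_i$ for each $i$ almost surely. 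As $\mathrm{Vor}(\mathbf Y^{(\lambda_m)})\overset{d}{=}C^{\Upsilon_m}$ with matching orderings, it suffices to contradict $\mathbb P\big(C^{\Upsilon_m}\in\mathcal G_m\big)\le1-\eps$.

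The heart of the matter is the ideal-to-finitary transfer. By Theorem~\ref{thm:InfiniteTouching} (with $f_1=f_2$) one can, almost surely, select for each $i\le N$ a point $z_i\in W^\Upsilon_{8D}(X_i,X_i)$; set $R^*:=1+\max_{i\le N}d_X(o,z_i)<\infty$ a.s. Exactly as in Claim~\ref{cl:RandomTime} and the paragraph after it in the proof of Theorem~\ref{thm:StrongerA2} --- using admissibility of $\Upsilon$ (Proposition~\ref{pr:BasicPPP}), that the $X^{(m)}_k$ are $1$-Lipschitz, and that $X^{(m)}_k\to X_k$ on compacta --- there is an a.s.\ finite random $M_0$ such that, for all $m\ge M_0$ and all $i\le N$,
\[
X^{(m)}_i(z_i)+4D<X^{(m)}_k(z_i)\qquad\text{for every }k\neq i;
\]
indeed, if this failed for infinitely many $m$ then the offending indices would be bounded (by admissibility of $\Upsilon$ and $X^{(m)}_i(z_i)\to X_i(z_i)$), so some fixed $k^*$ would recur and force $X_{k^*}(z_i)\le X_i(z_i)+4D<X_i(z_i)+8D$ in the limit, contradicting $z_i\in W^\Upsilon_{8D}(X_i,X_i)$. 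Writing $y^{(m)}_k=\iota_{t_m}^{-1}(X^{(m)}_k)$, the displayed inequality becomes $d_X(y^{(m)}_i,z_i)+4D<d_X(y^{(m)}_k,z_i)$, whence for $z'\in\mathcal{B}_D(z_i)$ the triangle inequality gives $d_X(y^{(m)}_i,z')<d_X(y^{(m)}_i,z_i)+D$ and $d_X(y^{(m)}_k,z')>d_X(y^{(m)}_k,z_i)-D>d_X(y^{(m)}_i,z_i)+3D$ for all $k\neq i$, so that $\mathcal{B}_D(z_i)\subseteq C^{\Upsilon_m}_{X^{(m)}_i}$ (alternatively one may invoke Lemma~\ref{lm:BasicGeodesic}). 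Fixing a deterministic $M$ with $\mathbb P(M_0\le M,\ R^*\le M)>1-\eps$, on this event every $m\ge M$ and $i\le N$ satisfy $z_i\in\mathcal{B}_{R^*}(o)\subseteq\mathcal{B}_m(o)$ and $\mathcal{B}_D(z_i)\subseteq C^{\Upsilon_m}_{X^{(m)}_i}$, i.e.\ $C^{\Upsilon_m}\in\mathcal G_m$. Hence $\mathbb P\big(C^{\Upsilon_m}\in\mathcal G_m\big)>1-\eps$ for all sufficiently large $m$ in the subsequence, contradicting the choice of $\lambda_m$.

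I expect the only real obstacle to be organizational: $R$ is to be produced rather than given, which I handle by running the contradiction with $R=m\to\infty$ and then separately checking that the ``interior witnesses'' $z_i$ of the ideal process lie in a fixed ball with high probability and remain $4D$-deep inside the $i$-th \emph{finitary} cell for all large $m$. This transfer --- via the coupling of Theorem~\ref{thm:Coupling} and the admissibility of $\Upsilon$ --- is technically the same as the corresponding step in the proof of Theorem~\ref{thm:StrongerA2}; the genuinely new point is simply that Theorem~\ref{thm:InfiniteTouching} applied with $f_1=f_2$ produces points in the interior of each limit cell.
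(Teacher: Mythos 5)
Your proposal is correct and follows essentially the same route as the paper: prove by contradiction, pass to a vaguely convergent subsequence $\mu_{t_n}\to\mu$, couple via Theorem~\ref{thm:Coupling}, invoke Theorem~\ref{thm:InfiniteTouching} with $f_1=f_2$ to get points $z_i$ deep inside the ideal cells, transfer these to the finitary diagrams using admissibility of $\Upsilon$ and the $1$-Lipschitz property, and finish with the triangle inequality. The only cosmetic differences are constants ($8D/4D$ where the paper uses $4D/2D$), the explicit handling of $D=0$, and the bookkeeping device of a random $R^*$ rather than the paper's choice of a deterministic $R$ with $\mathbb P(\forall i\le N\ \exists z_i\in\mathcal B_R(o)\cap W^\Upsilon_{4D}(X_i,X_i))>1-\eps/2$; all of these are harmless.
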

\begin{proof}
    For a contradiction, suppose that there exist $\eps>0$, $D>0$ and $N>0$ that do not satisfy the conclusion of Theorem~\ref{thm:StrongerA1}.
    In particular, we may find a decreasing sequence $\lambda_n\to 0$ and an increasing sequence $R_n\to \infty$ such that
    $$\mathbb{P}\left(\forall 1\le i\le N \ \exists z_i\in \mathcal{B}_{R_n}(o) \text{ such that }  \mathcal{B}_D(z_i)\subseteq C^{(\lambda_n)}_i\right)\le 1-\eps$$
    for every $n\in \mathbb{N}$.

    By Theorem~\ref{thm:BasicSymmetricSpace} (5), there are $t_n\to \infty$ such that $\vol(\mathcal{B}_{t_n}(o))=\lambda_n^{-1}$.
    By Proposition~\ref{pr:BasicIdealMeasure}, we may without loss of generality assume that $\mu_{t_n}\overset{v}\to \mu$.
    As this sequence will be fixed from now on, we set $\mu_n=\mu_{t_n}$, $\Upsilon_{n}=\Upsilon_{t_n}$, etc.

    Let $\Upsilon$ and $\Upsilon_n$ for every $n\in \mathbb{N}$ be coupled as in Theorem~\ref{thm:Coupling}.
    By Theorem~\ref{thm:InfiniteTouching}, we have that a.s. for every $1\le i\le N$ there is $z_i\in X$ such that
    $$z_i\in W^\Upsilon_{4D}(X_i,X_i).$$
    In particular, there is $R>0$ such that
    \begin{equation}\label{eq:SingleWall}
        \mathbb{P}\big(\forall 1\le i\le N \ \exists z_i\in \mathcal{B}_{R}(o)\cap W^\Upsilon_{4D}(X_i,X_i)\big)>1-\eps/2.        
    \end{equation}
    By Theorem~\ref{thm:Coupling}, we have that a.s.
    $$\lim_{n\to\infty} X^{(n)}_i\to X_i$$
    for every $i\in \mathbb{N}$.

    Let $n_0\in \mathbb{N}$ be such that $R_n\ge R$ for every $n\ge n_0$.
    We claim that conditioned on the event from \eqref{eq:SingleWall} there is a random variable $K\ge n_0$ such that a.s.~$K\in \mathbb{N}$ and for every $n\ge K$ and every $1\le i\le N$ there is $z^{n}_{i}\in \mathcal{B}_R(o)$ such that 
    \begin{equation}\label{eq:LastStepVersion}
        X^{(n)}_i(z^n_{i})+2D< X^{(n)}_k(z^n_{i})
    \end{equation}
    for every $k\in \mathbb{N}\setminus \{i\}$.

    We show that for large $n\in \mathbb{N}$, \eqref{eq:LastStepVersion} holds with the choice $z^n_i=z_i$, where $z_i$ is from \eqref{eq:SingleWall}.
    Assume for a contradiction that there are infinitely many $n\ge n_0$ such that $X^{(n)}_{k_n}(z_{i})\le X^{(n)}_i(z_{i})+2D$ for some $k_n\in \mathbb{N}\setminus \{i\}$.
    As $\{X_i\}_{i\in \mathbb{N}}$ is admissible almost surely, it must be the case that there is $k'\in \mathbb{N}$ such that $k_n\le k'$ for every such $n\in \mathbb{N}$.
    Indeed, we have
    $$X^{(n)}_{k_n}(o)\le X^{(n)}_{k_n}(z_{i})+d_X(z_{i},o)\le X^{(n)}_i(z_{i})+2D+R\le M$$
    for some $M>0$ as $X^{(n)}_i(z_{i})\to X_i(z_{i})$ by Theorem~\ref{thm:Coupling}.
    In particular, there are infinitely many $n\in \mathbb{N}$ such that $k_n=k\in \mathbb{N}$, and consequently
    $$X^{(n)}_k(z_{i})\to X_k(z_{i})\le X_i(z_{i})+2D,$$
    which contradicts the definition of $z_{i}\in W^\Upsilon_{4D}(X_i,X_i)$.
    This shows that $K$ is defined a.s. on the event from \eqref{eq:SingleWall}.

    To get the desired contradiction, take $n\in \mathbb{N}$ such that $\mathbb{P}(K\le n)>1-\eps$.
    Indeed, on this event, we have that for every $1\le i\le N$ there is $z^n_i\in \mathcal{B}_R(o)\subseteq \mathcal{B}_{R_n}(o)$ such that $X^{(n)}_i(z^n_{i})+2D< X^{(n)}_k(z^n_{i})$ holds for every $k\in \mathbb{N}\setminus \{i\}$.
    Translating back to $Y^{(\lambda_n)}$, this means that for every $1\le i\le N$ we have that
    $$d_X(z^n_i,Y^{(\lambda_n)}_i)+2D<d_X(z^n_i,Y^{(\lambda_n)}_k)$$
    for every $k\in \mathbb{N}\setminus \{i\}$, which in turn, using the triangle inequality, implies that 
    $$\mathcal{B}_D(z^n_i)\subseteq C^{(\lambda_n)}_i$$
    as desired.
    This finishes the proof.
\end{proof}

We will need the following result, which may be viewed as a converse to Theorem~\ref{thm:StrongerA1}, in Sections~\ref{sec:sparse} and \ref{sec:Lattices}.

\begin{proposition} \label{pr:UpperBoundBall}
    Let $G$ be a connected higher rank semisimple real Lie group and let $(X,d_X)$ be its symmetric space.
    Then for every $R>0$ and $\eps>0$ there are $n\in \mathbb{N}$ and $\lambda_0>0$ such that for every $0<\lambda\le \lambda_0$ we have that
    $$\mathbb{P}^{(\lambda)}(\# \text{ Voronoi cells intersecting } \mathcal{B}_R(o)\ge n )<\eps.$$
\end{proposition}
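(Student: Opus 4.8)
The plan is to argue by contradiction along a subsequence, transfer the question to the corona space via the coupling of Theorem~\ref{thm:Coupling}, and then simply invoke admissibility of the limiting Poisson point process; crucially, unlike in Theorems~\ref{thm:A1} and~\ref{thm:A2}, no touching input (Theorem~\ref{thm:InfiniteTouching}) is required here. Suppose the statement fails for some $R>0$ and $\eps>0$. Then a standard diagonal argument produces a sequence $\lambda_n\to 0$ with
$$
\mathbb{P}^{(\lambda_n)}\big(\#\{\text{Voronoi cells intersecting }\mathcal{B}_R(o)\}\ge n\big)\ge\eps\qquad\text{for every }n\in\mathbb{N}.
$$
Following the proof of Theorem~\ref{thm:StrongerA2}, I would pick $t_n\to\infty$ with $\vol(\mathcal{B}_{t_n}(o))=\lambda_n^{-1}$ via Theorem~\ref{thm:BasicSymmetricSpace}~(5), pass to a subsequence so that $\mu_{t_n}\overset{v}{\to}\mu$ by Proposition~\ref{pr:BasicIdealMeasure}, write $\mu_n=\mu_{t_n}$, $\Upsilon_n=\Upsilon_{t_n}$, and couple $\Upsilon$ together with all the $\Upsilon_n$ on one probability space as in Theorem~\ref{thm:Coupling}. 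Since $\mathrm{Vor}(\mathbf{Y}^{(\lambda_n)})\overset{(d)}{=}C^{\Upsilon_n}$ (Section~\ref{sec:VoronoiDiagram}), the number of cells of $\mathrm{Vor}(\mathbf{Y}^{(\lambda_n)})$ meeting $\mathcal{B}_R(o)$ has the same law as $|T_n|$, where $T_n=\{i\in\mathbb{N}:C^{\Upsilon_n}_{X^{(n)}_i}\cap\mathcal{B}_R(o)\ne\emptyset\}$ is exactly the random set considered in Claim~\ref{cl:RandomTime}.

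The heart of the matter is then the first half of the proof of Claim~\ref{cl:RandomTime}: almost surely there is a finite random variable $K\in\mathbb{N}$ with $T_n\subseteq\{0,\dots,K\}$ for every $n$. The reason is that if indices $\ell_n\in T_n$ could be chosen arbitrarily large, witnessed by $x_n\in\mathcal{B}_R(o)$ with $X^{(n)}_{\ell_n}(x_n)\le X^{(n)}_1(x_n)$, then $1$-Lipschitzness of the points of $\mathbf{D}$ gives $X^{(n)}_{\ell_n}(o)\le X^{(n)}_1(o)+2R$, which stays bounded because $X^{(n)}_1(o)\to X_1(o)\in\mathbb{R}$; combined with monotonicity of $i\mapsto X^{(n)}_i(o)$ and the convergence $X^{(n)}_i(o)\to X_i(o)$, this would force $\{X_i(o)\}_{i\in\mathbb{N}}$ to be bounded, contradicting admissibility of $\Upsilon$. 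Hence $|T_n|\le K+1$ for all $n$ almost surely, and therefore
$$
\mathbb{P}^{(\lambda_n)}\big(\#\{\text{Voronoi cells intersecting }\mathcal{B}_R(o)\}\ge n\big)=\mathbb{P}\big(|T_n|\ge n\big)\le\mathbb{P}(K\ge n-1)\longrightarrow 0
$$
as $n\to\infty$, since $K<\infty$ almost surely. This contradicts the lower bound $\eps>0$, which finishes the proof. (If the subsequence selection replaces $n$ by a subsequence $n_k$, one still has the bound at threshold $n_k\ge k$, so nothing changes.)

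I do not expect a genuine obstacle: both ingredients that do the work — the coupling of Theorem~\ref{thm:Coupling} and the boundedness of the indices hitting a fixed ball from Claim~\ref{cl:RandomTime} — are already available, and the present statement is essentially a converse to Theorem~\ref{thm:StrongerA1}. The only points that require a little care are the diagonalization that produces $(\lambda_n)$ and tracking the count ``number of cells meeting $\mathcal{B}_R(o)$'' through the distributional identity $\mathrm{Vor}(\mathbf{Y}^{(\lambda_n)})\overset{(d)}{=}C^{\Upsilon_n}$ and the coupling, both of which are routine.
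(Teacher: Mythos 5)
Your proposal is correct and rests on the same fundamental ingredients as the paper's argument: contradiction, the distributional identity $\mathrm{Vor}(\mathbf{Y}^{(\lambda_n)})\overset{(d)}{=}C^{\Upsilon_n}$, the coupling of Theorem~\ref{thm:Coupling}, $1$-Lipschitzness of elements of $\mathbf D$, and admissibility of $\Upsilon$. The one structural difference is in how the contradiction is extracted at the end. The paper does not cite Claim~\ref{cl:RandomTime}; it introduces a fresh random variable $T=\min\{k:X_k(o)>X_1(o)+4R\}$, which is finite by admissibility, and then runs an explicit quantitative argument: it picks $n_0$ so that $\mathbb P(T\le n_0)>1-\eps/2$, picks $n_1\ge n_0$ so that the first $n_0$ coordinates $X_i^{(n_1)}(o)$ are $R$-close to their limits with probability $>1-\eps/2$, and deduces via the triangle inequality that on an event of probability $>1-\eps$ no cell of index $\ge n_1$ under $\mathbb P^{(\lambda_{n_1})}$ meets $\mathcal B_R(o)$ — contradicting the assumed $\ge\eps$ lower bound at the single index $n_1$. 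You instead recycle the first half of Claim~\ref{cl:RandomTime} to get a single a.s.\ finite $K$ bounding $T_n$ uniformly in $n$, and then pass to the soft limit $\mathbb P(K\ge n-1)\to0$. Both are valid; yours is a touch shorter since the required uniform bound has already been proved earlier in the section, while the paper's is self-contained within the Proposition and makes the constants explicit. Your parenthetical remark about the subsequence relabeling is the right thing to check and is handled correctly.
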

\begin{proof}
    Suppose for a contradiction that there are $R>0$ and $\eps>0$ that do not satisfy the claim.
    It follows that for each $n\in \mathbb{N}$ we find $\lambda_n>0$ such that $\lambda_n\to 0$ and
    \begin{equation}\label{eq:EventZero}
        \mathbb{P}^{(\lambda_n)}(\# \text{ Voronoi cells intersecting } \mathcal{B}_R(o)\ge n )\ge \eps.
    \end{equation}
    By Theorem~\ref{thm:BasicSymmetricSpace} (5), there are $t_n\to \infty$ such that $\vol(\mathcal{B}_{t_n}(o))=\lambda_n^{-1}$.
    By Proposition~\ref{pr:BasicIdealMeasure}, we may without loss of generality assume that $\mu_{t_n}\overset{v}\to \mu$.
    As this sequence will be fixed from now on, we set $\mu_n=\mu_{t_n}$, $\Upsilon_{n}=\Upsilon_{t_n}$, etc.

    Let $\Upsilon$ and $\Upsilon_n$ for every $n\in \mathbb{N}$ be coupled as in Theorem~\ref{thm:Coupling} and define a random variable
    $$T=\min\{k\in \mathbb{N}: X_k(o)>X_1(o)+4R\}.$$
    As $\Upsilon$ is a.s. admissible by Proposition~\ref{pr:BasicPPP}, we have that $|T|<\infty$ a.s.
    In particular, there is $n_0\in \mathbb{N}$ such that
    \begin{equation}\label{eq:EventOne}
        \mathbb{P}(T\le n_0)>1-\eps/2.
    \end{equation}
    By Theorem~\ref{thm:Coupling}, we have that a.s.
    $$\lim_{n\to\infty} X^{(n)}_i\to X_i$$
    for every $i\in \mathbb{N}$.
    In particular, a.s.
    $$\lim_{n\to\infty} X^{(n)}_i(o)\to X_i(o),$$
    for every $1\le i\le n_0$.
    It follows that there is $n_1\ge n_0$ such that
    \begin{equation}\label{eq:EventTwo}
        \mathbb{P}(\forall 1\le i\le n_0 \  |X^{(n_1)}_i(o)-X_i(o)|<R)>1-\eps/2.
    \end{equation}
    On the intersection of the events from \eqref{eq:EventOne} and \eqref{eq:EventTwo}, we have by triangle inequality that
    $$X^{n_1}_{n_0}(o)-X^{n_1}_1(o)\ge X_{n_0}(o)-X_1(o) -|X^{(n_1)}_{n_0}(o)-X_{n_0}(o)|-|X^{(n_1)}_1(o)-X_1(o)|>2R.$$
    Consequently, by monotonicity of $\{X^{n_1}_i(o)\}_{i}$ we conclude that
    $$\mathbb{P}^{(\lambda_{n_1})}(X^{n_1}_{n_1}(o)-X^{n_1}_1(o)>2R)>1-\eps$$
    which reads as
    \begin{equation}\label{eq:EventThree}
        \mathbb{P}^{(\lambda_{n_1})}\left(d_X(o,Y^{(\lambda_{n_1})}_{n_1})-d_X(o,Y^{(\lambda_{n_1})}_1)>2R\right)>1-\eps
    \end{equation}
    using the notation of $Y^{(\lambda_{n_1})}$ back in $X$.
    Finally, on the event from \eqref{eq:EventThree} given any $m\ge n_1$, we have by the triangle inequality that
    $$d_X(x,Y^{(\lambda_{n_1})}_1)<d_X(x,Y^{(\lambda_{n_1})}_m)$$
    for every $x\in \mathcal{B}_R(o)$.
    This implies that, on the event from \eqref{eq:EventThree}, $C^{(\lambda_{n_1})}_m\cap \mathcal{B}_R(o)=\emptyset$ for every $m\ge n_1$, which contradicts the choice of $n_1$ in \eqref{eq:EventZero}.
\end{proof}

\section{Property (T) and long-range order} \label{sec:threshold}

In this section, we prove Theorem \ref{thm:threshold}, which provides the aforementioned criterion for long-range order of sufficiently well-behaved continuum percolation models when there is a suitable action by a group with property~(T). For this purpose, we define the following technical notion.

\begin{definition}\label{def:normal}
    Let $(M,d)$ be a proper geodesic metric space.
    A random closed subset $\mathcal Z$ of $M$ is called \emph{normal} if 
    \begin{itemize}
        \item[{\rm(i)}] the path-connected component of every $x\in M$ in $\mathcal{Z}$ is a closed set.
        \item[{\rm(ii)}] every bounded subset of $M$ intersects finitely many path-connected components of $\mathcal{Z}$.
    \end{itemize}
    We refer to the path-connected components as \emph{clusters}.
\end{definition}

In the setting of Definition \ref{def:normal}, it  makes sense to treat the clusters of a normal random closed subset $\mathcal Z$ as random variables. In particular, $\{x\overset{\mathcal Z}{\longleftrightarrow}y\}$ will denote the event that there exists a path joining $x\in M$ to $y\in M$ in $\mathcal Z$ or, in other words, the clusters of $x$ and $y$ are equal. Similarly, $\{x\overset{\mathcal Z}{\longleftrightarrow}A\}$ will denote the event that there exists a path joining $x\in M$ to some point $y\in A$ in $\mathcal Z$, where $A\subseteq M$ is measurable.
Note that Poisson-Voronoi percolation on a symmetric spaces defines a normal random closed subset by Lemma \ref{lm:closed}.

We now recall a definition of property~(T) suitable for the purposes of this paper: Let $S$ be a set. We say that a map $k\colon S\times S\to\mathbb R$ is a {\em positive definite kernel} if $\sum_{i,j=1}^n \overline{a_i} a_j k(x_i,x_j) \geq 0$ for every $a_1,\ldots,a_n\in \mathbb C$ and $x_1,\ldots,x_n\in S$. 
It is {\em normalized} if $k(x,x)=1$ for all $x\in S$. Similarly, we say that a map $\varphi:G\to\mathbb R$ on a group $G$ is a {\em positive definite function}, if $k_{\varphi}(g,h):= \varphi(g^{-1}h)$ is a positive definite kernel, and is {\em normalized}, if $k_\varphi$ is normalized.

For the purposes of this paper, the following well-known characterization of property (T) may be taken as the definition \cite[Théorème 11]{HV89}: A lcsc group $G$ has {\em property~(T)} if every sequence of continuous, normalized, positive definite functions on $G$ which converges to $1$ uniformly on compact subsets of $G$, converges to $1$ uniformly on $G$.

We are now in a position to state the main result of this section.

\begin{theorem}[Long-range order threshold] \label{thm:threshold}
    Let $G$ be a lcsc group acting continuously and transitively by isometries on a proper geodesic metric space $(M,d)$ and fix some $o\in M$. Suppose that $G$ has property (T). Then for every $R>0$, there exists $p^*<1$ such that every $G$-invariant normal random closed subset $\mathcal Z$ of $M$ with $\mathbb P(o \in \partial \mathcal Z)=0$ satisfies
    \begin{equation*}
        \inf_{x,y\in \mathcal{B}_R(o)} \mathbb P \big( x\overset{\mathcal Z}{\longleftrightarrow} y \big)>p^* \quad \Rightarrow \quad \inf_{x,y\in M} \mathbb P \big( x\overset{\mathcal Z}{\longleftrightarrow} y \big)>0.
    \end{equation*}
\end{theorem}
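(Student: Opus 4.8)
The plan is to follow the Lyons--Schramm paradigm adapted to the continuum setting, as indicated in the statement's reference to \cite{LS99, MR23}. The key device is to build, for each pair $x, y \in M$, a one-parameter family of positive definite functions on $G$ out of the two-point connectivity function, and to exploit property (T) to upgrade local (near $o$) connectivity into global connectivity. More precisely, I would fix $R > 0$ and consider, for a $G$-invariant normal random closed set $\mathcal Z$ with $\mathbb P(o \in \partial \mathcal Z) = 0$, the function
\begin{equation*}
    \varphi(g) := \mathbb P\big( o \overset{\mathcal Z}{\longleftrightarrow} g \cdot o \big).
\end{equation*}
By transitivity this records the two-point function at ``displacement $g$''. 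The first step is to verify that $\varphi$ (or rather a suitable normalization/variant of it) is a continuous, normalized, positive definite function on $G$. Positive definiteness is the crucial structural fact: it follows because ``belonging to the same cluster'' is an equivalence relation, so for any finite collection $g_1, \dots, g_n$ and scalars $a_i$, one can write $\sum_{i,j} \overline{a_i} a_j \mathbf 1_{\{g_i \cdot o \leftrightarrow g_j \cdot o\}}$ as $\sum_{\text{clusters } \mathcal C} \big| \sum_{i : g_i \cdot o \in \mathcal C} a_i \big|^2 \geq 0$ pointwise in $\mathcal Z$, and then take expectations. Continuity uses normality (item (ii) of Definition \ref{def:normal}) together with $\mathbb P(o \in \partial \mathcal Z) = 0$: as $g \to 1_G$, the point $g \cdot o$ eventually lies in the same cluster as $o$ unless $o$ is on a cluster boundary, an event of probability zero; here one needs the ``no boundary at $o$'' hypothesis to rule out the degenerate case, and properness of $M$ to control the local cluster structure.

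**The property (T) step.** With $\varphi$ in hand, suppose toward a contradiction that the conclusion fails: that is, $\inf_{x,y \in \mathcal B_R(o)} \mathbb P(x \leftrightarrow y) > p^*$ for every $p^* < 1$ — equivalently one takes a sequence of counterexample random sets $\mathcal Z_n$ with local connectivity $\to 1$ but global two-point function infimum equal to $0$. Each $\mathcal Z_n$ produces a positive definite function $\varphi_n$ on $G$. The hypothesis that local connectivity tends to $1$ should translate, via transitivity and the fact that $\mathcal B_R(o)$ has non-empty interior so that $\{g : g \cdot o \in \mathcal B_R(o)\}$ contains a neighborhood of $1_G$, into: $\varphi_n \to 1$ uniformly on a fixed compact neighborhood of the identity, hence — by a standard covering/submultiplicativity-type argument for positive definite functions (if $\varphi(g), \varphi(h)$ are both close to $1$ then so is $\varphi(gh)$, using Cauchy--Schwarz for the associated GNS representation) — uniformly on every compact subset of $G$. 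Property (T) then forces $\varphi_n \to 1$ uniformly on all of $G$, so in particular $\inf_{g \in G} \varphi_n(g) \to 1 > 0$, contradicting the assumption that the global infimum is $0$. Running this more carefully with a single $\mathcal Z$: one shows there is a uniform $p^* < 1$ (depending only on $G$, the action, and $R$) such that local connectivity $> p^*$ already forces $\varphi$ itself to be close enough to $1$ on a compact neighborhood of $1_G$ that the property (T) ``transfer'' kicks in — this requires quantifying property (T) via a Kazhdan-type pair, but the qualitative version stated in the excerpt (sequences converging to $1$ compactly converge to $1$ uniformly) suffices if one argues by contradiction with a sequence $\mathcal Z_n$.

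**Main obstacle.** The step I expect to be most delicate is the passage from ``local connectivity is large'' to ``$\varphi$ is uniformly close to $1$ on a compact neighborhood of $1_G$,'' and then propagating this to all compact sets. The subtlety is that $\varphi$ need not be ``submultiplicative'' in any naive sense; the correct statement is the Cauchy--Schwarz inequality in the GNS Hilbert space $\mathcal H_\varphi$ with cyclic vector $\xi$: writing $\varphi(g) = \langle \pi(g)\xi, \xi\rangle$, one has $\|\pi(g)\xi - \xi\|^2 = 2(1 - \varphi(g))$, and then $\|\pi(gh)\xi - \xi\| \leq \|\pi(g)\xi - \xi\| + \|\pi(h)\xi - \xi\|$, which gives $1 - \varphi(gh) \leq 2\big((1-\varphi(g)) + (1-\varphi(h))\big)$ or similar. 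Iterating over a generating compact set covers all compact subsets of $G$ (using that $G$ is compactly generated, which holds for connected Lie groups, though the theorem as stated is for general lcsc $G$ with property (T) — but property (T) already implies compact generation). A second technical point worth care: the two-point function $\mathbb P(x \leftrightarrow y)$ for $x, y$ ranging over $\mathcal B_R(o)$ must be related to $\varphi$ on a genuine neighborhood of $1_G$; since the action is transitive and continuous, the orbit map $g \mapsto g \cdot o$ is open onto $M$, so $\{g : g \cdot o \in \mathcal B_R(o)\}$ is open and non-empty, and by invariance $\mathbb P(o \leftrightarrow g \cdot o) = \mathbb P(h \cdot o \leftrightarrow hg \cdot o)$, which lets one bound $\varphi(g)$ below by the relevant local infimum whenever $g$ lies in a small enough neighborhood — precisely here one uses that $o$ and $g \cdot o$ can be placed inside a common ball of radius $R$. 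These are the places where the proof is ``longer than one might expect,'' matching the remark in Section \ref{sec:threshold}; none of them is conceptually hard, but each requires the hypotheses (properness, geodesic, $\mathbb P(o \in \partial \mathcal Z) = 0$, normality) to be invoked with care.
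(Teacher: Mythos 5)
Your overall architecture matches the paper's: define $\varphi(g)=\mathbb P(o\overset{\mathcal Z}{\leftrightarrow}go)$, prove it is a continuous positive definite function via the cluster decomposition and the hypothesis $\mathbb P(o\in\partial\mathcal Z)=0$, argue by contradiction with a sequence $\mathcal Z_n$, and use the ``uniform on compacts $\Rightarrow$ uniform on $G$'' formulation of property (T). The difference lies in the propagation step — upgrading ``local connectivity close to $1$'' to ``$\varphi_n$ close to $1$ uniformly on compacts.'' You propose the abstract route: GNS representation $\varphi'(g)=\langle\pi(g)\xi,\xi\rangle$, the identity $\|\pi(g)\xi-\xi\|^2=2(1-\varphi'(g))$, and the triangle inequality to propagate smallness of $\|\pi(g)\xi-\xi\|$ over a generating neighborhood. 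The paper instead uses a direct geometric argument: for $g$ in a compact $H$ with $Ho\subset\mathcal B_S(o)$, place $o=y_1,\ldots,y_N=go$ on the geodesic from $o$ to $go$ with $d(y_i,y_{i+1})<R$ and $N\le S/R$, observe that each consecutive pair has connection probability $>1-1/n$ by transitivity and invariance, and conclude $\varphi_n(g)\ge 1-(N-1)/n$ by the union bound. Both arguments are correct; the paper's is more elementary (no GNS machinery, just union bound plus the geodesic structure) and directly quantitative, while yours is more structural and could conceivably transport to settings where geodesics are not available but $\{g:d(o,go)<R\}$ is still known to generate $G$.

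One point in your version needs tightening: you justify that iterated Cauchy--Schwarz eventually covers all compacts by appealing to compact generation of $G$ (which indeed follows from property (T)). But compact generation per se is not what you need — you need the specific open set $U_R:=\{g:d(o,go)<R\}$ to generate $G$, since your hypothesis only gives $\varphi_n>1-1/n$ there. That $U_R$ generates $G$ is true but must be argued: $U_R$ is open, so $\langle U_R\rangle$ is an open (hence closed) subgroup; its orbit $\langle U_R\rangle o$ is clopen in $M$, which is geodesic hence connected, so $\langle U_R\rangle$ acts transitively; and the stabilizer of $o$ lies inside $U_R$, so $\langle U_R\rangle=G$. This is exactly where the geodesic hypothesis enters your route, just as it enters the paper's route through the chain of intermediate points. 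The paper's proof sidesteps the generation question entirely, which is part of why it is slightly shorter at this step.
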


\medskip

Let us include two remarks, which provide the necessary context.

\begin{remark}
    In the setting of Theorem \ref{thm:threshold}, if
    \begin{equation*}
        \inf_{x,y\in M} \mathbb P \big( x\overset{\mathcal Z}{\longleftrightarrow} y \big)>0,
    \end{equation*}
    we say that $\mathcal Z$ exhibits {\em long-range order}, in accordance with terminology used in percolation theory~\cite{LS99}. Since the quantity
    \begin{equation*}
    \inf_{x,y\in \mathcal B_R(o)} \mathbb P \big( x\overset{\mathcal Z}{\longleftrightarrow} y \big)
    \end{equation*}
    may be viewed as a measurement of the {\em density} of $\mathcal Z$, Theorem \ref{thm:threshold} asserts that, for a family of continuum percolation models, a density exceeding the {\em threshold} $p^*$ implies long-range order. 
\end{remark}

\begin{remark}
    Theorem \ref{thm:threshold} is a continuum percolation analogue of a result about group-invarint percolation on Cayley graphs first noted in \cite{LS99}, and strengthened to a characterization of property~(T) for finitely generated groups in \cite{MR23}. The proof follows along the same lines modulo some necessary modifications for the continuous setting. In particular, the condition that $\mathbb P(o\in\partial Z)=0$ guarantees that the two-point function, i.e.~$\mathbb P^{\mathcal Z}( x\leftrightarrow y)$, is continuous.
\end{remark}

\begin{proof}[Proof of Theorem \ref{thm:threshold}]  
    Let $\mathcal Z$ be a $G$-invariant normal random closed subset of $M$ such that $\mathbb P(o \in \partial \mathcal Z)=0$ and $\mathbb P(o \in \mathcal Z)>0$. Define $\tau:M\times M\to[0,1]$ and $\varphi:G\to[0,1]$ by
    \begin{equation*}
        \tau(x,y) := \mathbb P \big( x\overset{\mathcal Z}{\longleftrightarrow} y \big) := \mathbb P^{\mathcal Z}(x\leftrightarrow y) \quad \mbox{and} \quad \varphi(g):= \tau(o,go).
        \vspace{1mm}
    \end{equation*}
    
    \begin{claim} \label{claim-PDF}
        The function $\varphi$ is continuous and positive definite. In particular,
        \begin{equation*} 
            \varphi'(g):= \varphi(g)/\varphi(e) = \frac{\varphi(g)}{\mathbb P( o\in\mathcal Z)}
        \end{equation*}
        is a continuous, normalized, positive definite function.
    \end{claim}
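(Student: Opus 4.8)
The plan is to establish the two assertions — continuity of $\varphi$ and positive definiteness of $\varphi$ — separately, and then observe that the normalization is immediate once we know $\varphi(e) = \mathbb P(o \in \mathcal Z) > 0$.

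\textbf{Positive definiteness.} This is the structurally important part, and it is purely formal. Fix $g_1,\dots,g_n \in G$ and $a_1,\dots,a_n \in \mathbb C$. Writing $x_i := g_i o$, we have $\varphi(g_i^{-1}g_j) = \tau(g_i o, g_j o) = \tau(x_i, x_j)$ by $G$-invariance of $\mathcal Z$ (note $\tau$ is well-defined on pairs of points because the event $\{x \overset{\mathcal Z}{\longleftrightarrow} y\}$ is measurable, the clusters being random closed sets by normality). The point is that the two-point function is itself a positive definite kernel: for each point $z \in M$, let $\mathcal C(z)$ denote the cluster of $z$ in $\mathcal Z$ (with $\mathcal C(z) = \emptyset$ if $z \notin \mathcal Z$), so that $\{x \overset{\mathcal Z}{\longleftrightarrow} y\} = \{\mathcal C(x) = \mathcal C(y), \ \mathcal C(x) \neq \emptyset\}$, and write $\tau(x,y) = \mathbb E\big[ \mathbf 1_{x \in \mathcal Z}\, \mathbf 1_{y \in \mathcal Z}\, \mathbf 1_{\{x \overset{\mathcal Z}{\longleftrightarrow} y\}} \big]$. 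For fixed $x,y \in \mathcal Z$ in the same cluster, the relation $x \sim y$ is an equivalence relation on the (random) set $\mathcal Z$; hence on the event in question we may pick, measurably, a representative and the indicator $\mathbf 1_{\{x \overset{\mathcal Z}{\longleftrightarrow} y\}}\mathbf 1_{x,y\in\mathcal Z}$ factors, pointwise on the probability space, as a product over a partition. Concretely: conditional on $\mathcal Z$, the function $(x,y) \mapsto \mathbf 1_{\{x \overset{\mathcal Z}{\longleftrightarrow} y\}}\mathbf 1_{x,y\in\mathcal Z}$ is the indicator of an equivalence relation, so
\begin{equation*}
    \sum_{i,j=1}^n \overline{a_i} a_j\, \mathbf 1_{\{x_i \overset{\mathcal Z}{\longleftrightarrow} x_j\}}\mathbf 1_{x_i,x_j \in \mathcal Z} = \sum_{\text{clusters } C} \Big| \sum_{i : x_i \in C} a_i \Big|^2 \geq 0
\end{equation*}
for every realization of $\mathcal Z$. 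Taking expectations gives $\sum_{i,j} \overline{a_i} a_j \varphi(g_i^{-1}g_j) \geq 0$, which is exactly positive definiteness of $\varphi$.

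\textbf{Continuity.} This is where the hypothesis $\mathbb P(o \in \partial\mathcal Z) = 0$ enters, and it is the step I expect to require the most care. Since $G$ acts transitively and continuously by isometries, it suffices to show that $x \mapsto \tau(o, x)$ is continuous on $M$ — more precisely, given $g_n \to g$ in $G$ we have $g_n o \to go$ in $M$, so it is enough to prove $\tau(o, x_n) \to \tau(o, x)$ whenever $x_n \to x$ in $M$. By $G$-invariance again we may reduce to continuity at a fixed second coordinate, and in fact to showing: if $x_n \to x$, then $\mathbf 1_{\{o \overset{\mathcal Z}{\longleftrightarrow} x_n\}} \to \mathbf 1_{\{o \overset{\mathcal Z}{\longleftrightarrow} x\}}$ almost surely, after which bounded convergence finishes the job. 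The almost-sure convergence holds off the event $\{x \in \partial\mathcal Z\} \cup \{o \in \partial \mathcal Z\}$, which has probability zero: if $x \in \operatorname{interior}(\mathcal Z)$ then $x_n$ eventually lies in the same cluster as $x$ (the interior of a cluster is open, and by normality — item (ii) of Definition \ref{def:normal} — only finitely many clusters meet a neighborhood of $x$, so $x$ has a neighborhood inside its own cluster), so $\{o \overset{\mathcal Z}{\longleftrightarrow} x_n\} = \{o \overset{\mathcal Z}{\longleftrightarrow} x\}$ for large $n$; and if $x \notin \mathcal Z$ then, $\mathcal Z$ being closed, $x_n \notin \mathcal Z$ eventually, so both indicators are $0$. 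Here $\mathbb P(x \in \partial\mathcal Z) = \mathbb P(o \in \partial\mathcal Z) = 0$ by transitivity and $G$-invariance, exactly as in the proof of Lemma \ref{lm:boundary}. The potential subtlety — and the main obstacle — is verifying that normality genuinely forces clusters to be "locally fat" at interior points so that the eventual-coincidence argument goes through; this is where items (i) and (ii) of Definition \ref{def:normal} are used together, and one should double-check that a point in the topological interior of $\mathcal Z$ indeed lies in the interior of its own cluster (which follows since the finitely many other clusters meeting a small ball around $x$ are closed and miss $x$, hence miss a smaller ball).

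\textbf{Normalization.} Finally, $\varphi(e) = \tau(o,o) = \mathbb P(o \in \mathcal Z) > 0$ by the standing assumption of the proof, so $\varphi'(g) := \varphi(g)/\varphi(e)$ is well-defined, continuous (as $\varphi$ is), positive definite (scaling a positive definite function by a positive constant preserves positive definiteness), and satisfies $\varphi'(e) = 1$, i.e.\ it is normalized. This proves the claim.
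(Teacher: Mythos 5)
Your proof is correct and takes essentially the same approach as the paper. The positive-definiteness computation via the cluster-partition identity is identical, and your continuity argument — almost-sure pointwise convergence of the indicators $\mathbf 1_{\{o\overset{\mathcal Z}{\longleftrightarrow}x_n\}}$ via the interior/exterior/boundary trichotomy, followed by dominated convergence — is a mildly streamlined repackaging of the paper's separate upper and lower semicontinuity bounds, relying on the same three ingredients (closedness of clusters, local finiteness of the cluster decomposition, and the null-boundary hypothesis $\mathbb P(o\in\partial\mathcal Z)=0$).
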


    \begin{proof}
        This can be checked as in \cite[Lemma 2.2]{MR23}. We include the details for the convenience of the reader. Let us start with the observation that $\tau$ defines a positive definite kernel, which goes back to \cite{AN84}. Let $x_1,\ldots,x_n\in M$ and $a_1,\ldots,a_n\in\mathbb C$. Let $\mathcal C$ denote the set of clusters of $\mathcal Z$. Then
        \begin{align*}
        \sum_{i,j=1}^n \overline{a_i}a_j\tau(x_i,x_j) &= \sum_{i,j} \overline{a_i}a_j \mathbb E^{\mathcal Z} \big[ \mathbf 1_{\{x_i \leftrightarrow x_j\}}\big] = \mathbb E^{\mathcal Z} \biggl[  \ \sum_{i,j} \overline{a_i}a_j \mathbf 1_{\{x_i \leftrightarrow x_j\}} \biggr] \\
        & =  \mathbb E^{\mathcal Z} \biggl[ \ \sum_{C\in\mathcal C} \, \sum_{\{x_i,x_j\} \subset C} \overline{a_i}a_j \biggr] = \mathbb E^{\mathcal Z} \biggl[ \ \sum_{C\in\mathcal C} \, \Bigl| \sum_{x_i \in C} a_i \Bigr|^2 \biggr] \geq 0,
        \end{align*}
        proving positive definiteness. Now since $\mathcal Z$ is $G$-invariant, we have that
        $$
        k_{\varphi}(g,h)=\varphi(g^{-1}h)=\tau(o,g^{-1}ho)=\tau(go,ho).
        $$
        It follows that $k_\varphi$ is a positive definite kernel, i.e.~$\varphi$ is a positive definite function. Since constant multiples of positive definite functions are clearly positive definite, this implies that $\varphi'$ is a normalized, positive definite function.
        
        It remains to show that $\varphi$ is continuous. To see this, let $g,g_1,g_2,\ldots\in G$ with $g_i\to g\in G$. Note that since $G$ acts continuously, $g_io\to go$ and $\eps_i:=d(go,g_io)\to 0$. We first show that $\varphi(g)\geq \limsup_{i\to\infty} \varphi(g_i)$. Note that for every subsequence of $(g_i)_{i=1}^\infty$, there exists a further subsequence $(g_{i_k})_{k=1}^\infty$ for which $d(go,g_{i_k}o)$ is decreasing and hence 
        \begin{equation*}
            \big\{ o \overset{\mathcal Z}{\longleftrightarrow} \mathcal {\overline B}_{\eps_{i_k}}(go)\big\},
        \end{equation*} 
        where $\mathcal {\overline B}_{\eps_{i_k}}(go)$ denotes the closed ball of radius $\eps_{i_k}$ around $go$, are decreasing events.
        Since $\mathcal Z$ is normal we also have that
    \begin{equation*}
        \big\{ o \overset{\mathcal Z}{\longleftrightarrow} go \big\} = \bigcap_{k=1}^\infty \big\{ o \overset{\mathcal Z}{\longleftrightarrow} \mathcal {\overline B}_{\eps_{i_k}}(go) \big\}.
    \end{equation*} 
    It follows that 
    \begin{equation*}
        \varphi(g) = \lim_{k\to\infty} \mathbb P \big( o \overset{\mathcal Z}{\longleftrightarrow} \mathcal {\overline B}_{\eps_{i_k}}(go) \big) \geq \limsup_{k\to\infty} \, \mathbb P \big( o \overset{\mathcal Z}{\longleftrightarrow} g_{i_k} o \big).
    \end{equation*}
    Since the subsequence was arbitrary, we obtain the claimed inequality.
    
    We now show that $\varphi(g)\leq \liminf_{i\to\infty} \varphi(g_i)$, or, equivalently, $\limsup_{i\to\infty} (\varphi(g)-\varphi(g_i)) \leq 0$:
    \begin{align*}
        \limsup_{i\to\infty} \, (\varphi(g)-\varphi(g_i)) & = \limsup_{i\to\infty} \, \Big( \mathbb P^{\mathcal Z}\big( o \leftrightarrow go, o \not\leftrightarrow g_io \big) - \mathbb P^{\mathcal Z}\big( o \not\leftrightarrow go, o \leftrightarrow g_io \big) \Big) \\
        & \leq \limsup_{i\to\infty} \, \ \mathbb P^{\mathcal Z}\big( o \leftrightarrow go, o \not\leftrightarrow g_io \big) \\
        & \leq \mathbb P^{\mathcal Z}\Big( \limsup_{i\to\infty} \, \big\{ o \leftrightarrow go, o \not\leftrightarrow g_io\big\} \Big) \le \mathbb P^{\mathcal Z}(go \in \partial Z) = 0,
    \end{align*}
    which proves the claimed inequality. The proof of Claim \ref{claim-PDF} is thus complete.      \end{proof}

    We now conclude the proof of the theorem. To reach a contradiction, suppose there exist $R>0$ and $G$-invariant normal random closed subsets $\mathcal Z_n$ of $M$, $n\geq1$, such that $\mathbb P(o \in \partial Z_n)=0$, 
    \begin{equation*}
        \inf_{x,y\in \mathcal B_R(o)} \mathbb P \big( x\overset{\mathcal Z_n}{\longleftrightarrow} y \big)>1-1/n \qquad \text{and} \qquad \inf_{x,y\in M} \mathbb P \big( x\overset{\mathcal Z_n}{\longleftrightarrow} y \big)=0.
    \end{equation*}
    Then 
    $$
    \varphi_n'(g) := \frac{\mathbb P \big( o \overset{\mathcal Z_n}{\longleftrightarrow} go \big)}{\mathbb P\big(o\in \mathcal Z_n\big)}
    $$
    defines a continuous, normalized, positive definite function on $G$ for every $n$ by Claim~\ref{claim-PDF}. Since $G$ acts transitively and $\inf_{x,y\in M} \mathbb P^{\mathcal Z_n}( x \leftrightarrow y \big)=0$,
    $\varphi_n$ does not converge to $1$ uniformly on $G$. To obtain the desired contradiction with property~(T), we now show that $\varphi_n'$ converges to $1$ uniformly on compacts. 
    Since 
    $$
    1-1/n < \inf_{x,y\in \mathcal B_R(o)} \mathbb P \big( x\overset{\mathcal Z_n}{\longleftrightarrow} y \big) \le \mathbb P(o\in\mathcal Z_n) \le 1,
    $$
    it suffices to show that $\varphi_n(g):=\mathbb P \big( o \overset{\mathcal Z_n}{\longleftrightarrow} go \big)$
    converges to $1$ uniformly on compacts. To see this, let $H\subset G$ be compact. Since $g\mapsto d(o,go)$ is continuous, $Ho\subset M$ is contained in a ball $\mathcal B_S(o)$ of large enough radius. Let $g\in H$. Then $d(o,go) < S$ and it follows that we may choose $o=y_1,\ldots,y_N=go$ on the geodesic from $o$ to $go$ such that $d(y_i,y_{i+1})<R$ for all $i=1,\ldots,N-1$ and $N\le S/R$. Since $G$ acts transitively by isometries and $\mathcal Z_n$ is $G$-invariant,
    \begin{equation*}
        \mathbb P\big( y_i \overset{\mathcal Z_n}{\longleftrightarrow} y_{i+1} \big) > 1-1/n
    \end{equation*}
    for every $i=1,\ldots,N-1$. It follows that
    \begin{equation*}
        \varphi(g) \geq \mathbb P\bigg( \bigcap_{i=1}^{N-1} \{y_i \overset{\mathcal Z_n}{\longleftrightarrow} y_{i+1}\} \bigg) \geq 1- (N-1)/n.
    \end{equation*}
    Since $g\in H$ was arbitrary, $\varphi_n \to 1$ as $n\to\infty$ uniformly on $H$. The proof of Theorem \ref{thm:threshold} is thus complete.
\end{proof}

\section{Long-range order and uniqueness} \label{sec:LRO}

In this section, we prove Theorem \ref{thm:LRO} which asserts that the uniqueness phase of Voronoi percolation on a symmetric space is characterized by long-range order.

\begin{theorem}[Long-range order implies uniqueness] \label{thm:LRO}
    Let $G$ be a non-compact connected semisimple real Lie group and $(X,d_X)$ be its symmetric space. Then 
    \begin{equation*}
        p_u(\lambda) = \inf \Big\{ p \, : \, \inf_{x,y\in X} \mathbb P_{p}^{(\lambda)}( x \leftrightarrow y )>0 \Big\}.
    \end{equation*}
\end{theorem}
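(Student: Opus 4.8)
The plan is to show $p_u(\lambda)=q(\lambda)$, where
\[
q(\lambda):=\inf\Big\{p\in(0,1):\inf_{x,y\in X}\mathbb P_p^{(\lambda)}(x\leftrightarrow y)>0\Big\},
\]
by proving the two inequalities separately. The inequality $q(\lambda)\le p_u(\lambda)$ is the easy one and only needs the Harris--FKG inequality. Fix $p>p_u(\lambda)$; by ergodicity (Lemma~\ref{lm:Ergodicity}) there is a.s.\ a unique unbounded cluster $\mathcal C$, which, being a union of whole cells, has positive volume a.s., so the volume-fraction identity recorded after Lemma~\ref{lm:boundary} gives $\theta:=\mathbb P_p^{(\lambda)}(o\in\mathcal C)>0$, and by transitivity $\mathbb P_p^{(\lambda)}(x\in\mathcal C)=\theta$ for every $x\in X$. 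The event $A_x:=\{x\in\omega_p^{(\lambda)}$ and the cluster of $x$ is unbounded$\}$ is increasing, so Lemma~\ref{lm:FKG} gives $\mathbb P_p^{(\lambda)}(A_x\cap A_y)\ge\theta^2$; and on the a.s.\ event that the unbounded cluster is unique one has $A_x\cap A_y\subseteq\{x\leftrightarrow y\}$. Hence $\inf_{x,y}\mathbb P_p^{(\lambda)}(x\leftrightarrow y)\ge\theta^2>0$, so $q(\lambda)\le p$; letting $p\downarrow p_u(\lambda)$ gives the claim.

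For the reverse inequality $p_u(\lambda)\le q(\lambda)$, fix $p>q(\lambda)$ and put $c:=\inf_{x,y}\mathbb P_p^{(\lambda)}(x\leftrightarrow y)>0$; the goal is to show that $\omega_p^{(\lambda)}$ has a.s.\ a unique unbounded cluster, which forces $p\ge p_u(\lambda)$ and, on letting $p\downarrow q(\lambda)$, the theorem. First, existence of an unbounded cluster: since clusters are closed (Lemma~\ref{lm:closed}), on the event that the cluster $C_o$ of $o$ is bounded one has $o\not\leftrightarrow y$ once $d_X(o,y)$ exceeds the diameter of $C_o$, so by dominated convergence $\mathbb P_p^{(\lambda)}(o\leftrightarrow y,\ C_o \text{ bounded})\to0$ as $d_X(o,y)\to\infty$; combined with $\mathbb P_p^{(\lambda)}(o\leftrightarrow y)\ge c$ this yields $\mathbb P_p^{(\lambda)}(C_o \text{ unbounded})\ge c>0$, and ergodicity upgrades this to a.s.\ existence of an unbounded cluster. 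The same estimate shows $\mathbb P_p^{(\lambda)}(x\leftrightarrow y \text{ through a bounded cluster})\to0$ as $d_X(x,y)\to\infty$, so the restricted two-point function $\tau_\infty(x,y):=\mathbb P_p^{(\lambda)}(x\leftrightarrow y \text{ within an unbounded cluster})$ satisfies $\liminf_{d_X(x,y)\to\infty}\tau_\infty(x,y)\ge c>0$.

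It remains to rule out non-uniqueness, and this is the step I expect to be the main obstacle. Suppose, towards a contradiction, that there are at least two unbounded clusters with positive probability, hence a.s.\ by ergodicity. Poisson--Voronoi percolation is \emph{insertion tolerant}: conditionally on the marked process $\mathbf Y^{(\lambda)}$ and the colours of all cells disjoint from a fixed compact set $K$, the finitely many cells meeting $K$ (Lemma~\ref{lm:closed}(iii)) are simultaneously black with positive conditional probability, and forcing this merges all clusters meeting $K$; a Newman--Schulman-type argument then upgrades the number of unbounded clusters to $\infty$ a.s. To derive a contradiction with $\inf_{x,y}\tau_\infty(x,y)>0$, I would transfer the argument of Lyons and Schramm \cite{LS99} to the continuum. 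Since $G$ is unimodular and acts transitively, properly and by measure-preserving isometries (Theorem~\ref{thm:BasicSymmetricSpace}(1)), one can encode $\omega_p^{(\lambda)}$ by a $G$-equivariant factor of $\mathbf Y^{(\lambda)}$ that is a unimodular random graph in the sense of \cite{AL07}: its vertices are the nuclei of black cells, and two are joined when the corresponding cells share a boundary. This graph has a.s.\ finite degrees, is insertion tolerant, and by path-connectedness of cells (Lemma~\ref{lm:closed}(i)) its infinite components are exactly the unbounded clusters of $\omega_p^{(\lambda)}$, so $\tau_\infty$ becomes its two-point function; the Lyons--Schramm uniqueness criterion (cf.\ \cite[Theorem~7.50]{LP16}) then shows that infinitely many infinite components force the two-point function to vanish along far-apart pairs of vertices, the desired contradiction. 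The delicate point, and the reason such an argument is longer than one might expect, is precisely this transfer: \cite{LS99} is written for invariant percolation on a fixed transitive graph, whereas $X$ admits no $G$-invariant discretisation, so one must route everything through a factor graph of a Poisson process and verify that invariance, insertion tolerance, the mass-transport principle and the two-point-function estimates all survive the encoding — with additional care needed because the encoding graph, though a.s.\ locally finite, has unbounded degree and because $G$ is a non-discrete group.
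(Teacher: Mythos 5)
Your easy direction ($q(\lambda)\le p_u(\lambda)$) is correct and essentially the paper's FKG argument, as is the reduction to ruling out infinitely many unbounded clusters when $\liminf_{d_X(x,y)\to\infty}\tau_\infty(x,y)>0$. The gap is in the final step, and it is exactly the one the paper flags as the reason this proof is longer than expected.

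You propose to encode the percolation as Bernoulli site percolation on the (unimodular, extremal) Delaunay graph and then apply a Lyons--Schramm-type criterion. Two things go wrong. First, \cite[Theorem~7.50]{LP16} is a statement about invariant percolation on a fixed transitive graph; the Delaunay graph is a random rooted graph, so that theorem does not apply, and one must instead use the unimodular-random-graph version, whose engine is the cluster-frequency/indistinguishability package of \cite[Section~6]{AL07}. Second, and more seriously, in the \cite{AL07} framework the cluster frequency is defined via a (delayed) simple random walk \emph{on the graph}, taken conditionally on the URG. Here the Delaunay graph is a function of the Poisson process $\mathbf Y^{(\lambda)}$, and the percolation configuration is also a function of $\mathbf Y^{(\lambda)}$; so the auxiliary random walk is \emph{not} independent of the percolation. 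The deduction ``long-range order $\Rightarrow$ expected frequency of $C_\rho$ is $\ge c$'', which is what you need to contradict \cite[Theorem~6.15]{AL07}, is a Fubini computation that hinges on precisely this independence, and it breaks. You write that $\tau_\infty$ ``becomes the two-point function'' of the encoding graph, but the positions visited by the graph random walk are random points of $\mathbf Y^{(\lambda)}$ coupled to the percolation, so there is no straightforward way to convert $\inf_{x,y\in X}\mathbb P_p^{(\lambda)}(x\leftrightarrow y)\ge c$ into a lower bound on the graph-walk frequency. This is not a technicality to ``verify survives the encoding''; it is the obstruction.

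The paper's fix is structurally different. Cluster frequencies are defined using an auxiliary random walk on the continuum $X$ (step distribution the normalised volume measure on a unit ball), which is genuinely independent of $\mathbf Y^{(\lambda)}$. This makes the Fubini computation in Lemma~\ref{lm:LROimpliesPositiveFreq} go through and shows some cluster has positive frequency. But this walk is not the one for which \cite{AL07} indistinguishability is proved, so indistinguishability cannot be invoked at all. Instead, the paper runs a H\"aggstr\"om--Jonasson style coupling: fix $p<p'$ with LRO at $p$; the finitely many maximal-frequency clusters of $\omega_p^{(\lambda)}$ are ``special''; using a tailored Mass Transport Principle for the embedded Delaunay graph with labels (Proposition~\ref{prop:Delaunay}(3), needed because the frequency is only invariant under all isometries, not under graph automorphisms), one shows every infinite $p'$-cluster contains a special $p$-cluster, hence $\mathcal G[p']$ has finitely many infinite clusters and therefore exactly one. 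Your ``insertion tolerance + Newman--Schulman'' step is fine but redundant (the $0/1/\infty$ trichotomy for Bernoulli percolation on extremal URGs already gives it); the missing idea is the continuum random walk together with the embedded-MTP coupling argument in place of indistinguishability.
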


\medskip

The idea behind the proof is as follows. Consider the {\em Delaunay graph} associated to Poisson-Voronoi percolation, i.e.~the graph defined by declaring the cells to be vertices and by declaring edges between every pair of cells sharing a boundary.
Let us insert an additional point at the origin and declare its cell to be the root. With this definition, we obtain an extremal unimodular random graph. It follows that Bernoulli percolation on this graph has an almost surely constant number of infinite clusters, which is $0$, $1$ or $\infty$. The important observation, that follows from Lemma~\ref{lm:closed} and Lemma~\ref{lm:BasicVoronoi}, is that the number of infinite clusters corresponds to the number of unbounded clusters in Poisson-Voronoi percolation. To prove Theorem \ref{thm:LRO}, it thus remains to show that long-range order for Poisson-Voronoi percolation implies that the number of infinite clusters cannot be $0$ and cannot be $\infty$. The first possibility is easy to rule out. To rule out the second possibility, we use a version of a celebrated method from discrete percolation theory due to Lyons and Schramm \cite{LS99}. This version is described below.

In the discrete setting, the method of \cite{LS99} proceeds by considering  {\em cluster frequencies} which associate to each cluster the asymptotic density of visits by an independent random walk. If the clusters are {\em indistinguishable} in the sense of \cite{LS99}, then the presence of infinitely many infinite clusters implies that each cluster has frequency equal to $0$. This is easily seen to contradict long-range order because the latter implies that the expected frequency of, say, the cluster of the origin is positive. For the purposes of this paper, it would be desirable to use this method for Bernoulli percolation on the Delaunay graph. In fact, the method has been developed for percolation clusters on a unimodular random graph $(G,o)$ in the literature. Here, frequencies are measured according to an auxiliary simple random walk defined {\em conditionally} on $(G,o)$, see~\cite[Section 6]{AL07}. Since Bernoulli percolation clusters are indistinguishable in the appropriate sense \cite[Theorem 6.16]{AL07}, every cluster again has frequency equal to $0$ whenever there are infinitely many infinite clusters, cf.~\cite[Theorem 6.15]{AL07}. It seems intuitively obvious that this property contradicts long-range order. However, in our setting, the following difficulty arises: The random walk on the Delaunay graph and the configuration of Poisson-Voronoi percolation both depend on the underlying Poisson point process, hence are not independent. Therefore the expected frequency of, say, the cluster of the origin can not be computed in a straightforward way, as was the case in the discrete setting. To circumvent this problem, we follow another approach to cluster frequencies, which was developed for the discrete setting in \cite{HJ06}.

We point out that a different version of the Delaunay graph, where edges are declared only between pairs of cells which share a boundary of co-dimension $1$, has been considered by Benjamini, Paquette and Pfeffer~\cite{BPP18}. However, it will be clear from the discussion below that our definition is appropriate for analyzing percolation (see Section \ref{sec:Delaunay} for a comparison).

Let us also report the following description of the basic phase transition in Poisson-Voronoi percolation. Recall the definition of the critical parameters $p_c(\lambda)$ and $p_u(\lambda)$ of Poisson-Voronoi percolation from \eqref{equ-pc} and \eqref{equ-pu}.

\begin{corollary}[Phase transition for Poisson-Voronoi percolation] \label{cor:Phases} Let $G$ be a non-compact connected semisimple real Lie group and $(X,d_X)$ be its symmetric space. Fix $\lambda>0$. Then 
\begin{itemize}
\item[{\rm (1)}] for every $p\in [0,p_c(\lambda))$, $\omega_p^{(\lambda)}$ does not have an unbounded cluster a.s.
\item[{\rm (2)}] for every $p\in (p_c(\lambda),p_u(\lambda))$, $\omega_p^{(\lambda)}$ has infinitely many unbounded clusters a.s.
\item[{\rm (3)}] for every $p\in (p_u(\lambda),1]$, $\omega_p^{(\lambda)}$ has a unique unbounded clusters a.s.
\end{itemize}
\end{corollary}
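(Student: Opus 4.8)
The plan is to derive all three statements from ergodicity, the trichotomy for the number of unbounded clusters, an elementary monotonicity in $p$, and Theorem~\ref{thm:LRO}; no new idea beyond what is already set up in Section~\ref{sec:LRO} is needed. First I would assemble the ingredients. By Lemma~\ref{lm:BasicVoronoi}~(i), for each fixed $\lambda>0$ and $p\in[0,1]$ the configuration $\omega_p^{(\lambda)}$ is $G$-invariant and ergodic, so every $G$-invariant event has probability $0$ or $1$; in particular the event that $\omega_p^{(\lambda)}$ has an unbounded cluster, and the event that it has a unique unbounded cluster, both obey a $0$--$1$ law. As recalled in the discussion following the statement of Theorem~\ref{thm:LRO}, the unbounded clusters of $\omega_p^{(\lambda)}$ are in bijection with the infinite clusters of Bernoulli-$p$ percolation on the rooted Delaunay graph, which is an extremal unimodular random graph; hence the number of unbounded clusters of $\omega_p^{(\lambda)}$ is almost surely $0$, $1$ or $\infty$. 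Finally, keeping the marked process $\mathbf Y^{(\lambda)}$ fixed and declaring cell $i$ black precisely when $Z_i^{(\lambda)}\le p$ gives a monotone coupling of $(\omega_p^{(\lambda)})_{p\in[0,1]}$, and since the connectivity event $\{x\overset{\omega_p^{(\lambda)}}{\leftrightarrow}y\}$ and the event ``$\omega_p^{(\lambda)}$ has an unbounded cluster'' are increasing, both $p\mapsto\mathbb P_p^{(\lambda)}(x\leftrightarrow y)$ and $p\mapsto\mathbb P_p^{(\lambda)}(\omega_p^{(\lambda)}\text{ has an unbounded cluster})$ are non-decreasing. I would also note that $p_u(\lambda)\ge p_c(\lambda)$, since uniqueness forces existence.

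With these in place, parts (1) and (2) are bookkeeping. For (1), if $p<p_c(\lambda)$ then, by the definition of $p_c$, the probability that $\omega_p^{(\lambda)}$ has an unbounded cluster is not positive, hence it is $0$ by the $0$--$1$ law. For (2), fix $p_c(\lambda)<p<p_u(\lambda)$; since $p>p_c(\lambda)$ there is some $p'<p$ at which the event ``there is an unbounded cluster'' has positive probability, hence probability $1$ by ergodicity, hence probability $1$ at $p$ by monotonicity. Thus $\omega_p^{(\lambda)}$ has an unbounded cluster a.s., and by the trichotomy the number of unbounded clusters is a.s.\ $1$ or $\infty$. It cannot be $1$, for then $\mathbb P_p^{(\lambda)}(\omega_p^{(\lambda)}\text{ has a unique unbounded cluster})=1>0$, which by the definition of $p_u(\lambda)$ would force $p\ge p_u(\lambda)$, contradicting $p<p_u(\lambda)$. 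Hence the number of unbounded clusters is $\infty$ a.s.

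For (3), fix $p>p_u(\lambda)$. Since $p>p_u(\lambda)\ge p_c(\lambda)$, the argument from part (2) again shows that $\omega_p^{(\lambda)}$ has an unbounded cluster a.s., so the number of unbounded clusters is $1$ or $\infty$. By Theorem~\ref{thm:LRO}, $p_u(\lambda)$ coincides with the long-range order threshold, so there is some $p'<p$ with $\inf_{x,y\in X}\mathbb P_{p'}^{(\lambda)}(x\leftrightarrow y)>0$; monotonicity of the two-point function then gives $\inf_{x,y\in X}\mathbb P_p^{(\lambda)}(x\leftrightarrow y)>0$, i.e.\ $\omega_p^{(\lambda)}$ exhibits long-range order. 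The cluster-indistinguishability (Lyons--Schramm) argument carried out in the proof of Theorem~\ref{thm:LRO}---infinitely many unbounded clusters would force the (Delaunay-)frequency of every cluster to vanish, while long-range order makes the expected frequency of the cluster of the origin positive---then rules out $\infty$ unbounded clusters, leaving exactly one.

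The argument is routine given Theorem~\ref{thm:LRO}, and the one point needing a little care --- hence the ``main obstacle'' in what is otherwise pure bookkeeping --- is that part (3) uses the \emph{pointwise} implication ``long-range order at $p$ $\Rightarrow$ a unique unbounded cluster at $p$'' rather than merely the equality of the two thresholds stated in Theorem~\ref{thm:LRO}. This pointwise implication is precisely what the proof of Theorem~\ref{thm:LRO} establishes; alternatively one could combine the equality of thresholds with a monotonicity of the uniqueness phase in the spirit of H\"aggstr\"om--Peres and Schonmann, but invoking the proof of Theorem~\ref{thm:LRO} directly is cleaner.
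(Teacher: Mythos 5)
Your proof is correct, and parts (1) and (2) match the paper's logic in substance. The notable divergence is in part (3). The paper's proof of Corollary~\ref{cor:Phases} is a one-liner: it invokes Lemma~\ref{lm:DelaunayExt} (the rooted Delaunay graph is an extremal unimodular random graph) and Proposition~\ref{prop:Bernoulli}, and the relevant item there is Proposition~\ref{prop:Bernoulli}~(3), which is precisely the Aldous--Lyons / H\"aggstr\"om--Peres / Schonmann ``monotonicity of the uniqueness phase'' for Bernoulli percolation on extremal unimodular random graphs. Once one knows (via the Delaunay correspondence and Lemma~\ref{lm:Palm}) that $p_u(\lambda)$ is the uniqueness threshold for the rooted Delaunay graph, Proposition~\ref{prop:Bernoulli}~(3) gives part (3) outright, with no detour through long-range order. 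You instead route part (3) through Theorem~\ref{thm:LRO}, picking $p''\in(p_{\mathrm{LRO}}(\lambda),p)$ and appealing to the argument inside Claim~\ref{cl:unique} (which establishes ``LRO at $p''$ $\Rightarrow$ unique unbounded cluster at every $p'>p''$''). That also works, but it is strictly heavier machinery: Theorem~\ref{thm:LRO} is itself proved using Proposition~\ref{prop:Bernoulli} and the mass-transport/cluster-frequency apparatus of Section~\ref{sec:LRO}, so you are effectively rederiving what Proposition~\ref{prop:Bernoulli}~(3) already hands you. Your closing remark has the comparison inverted: the ``H\"aggstr\"om--Peres/Schonmann monotonicity'' alternative you dismiss as less clean is exactly what the paper uses, and it is the cleaner of the two here, since it needs neither the long-range-order characterization nor the cluster-frequency argument. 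Two small points of precision: the pointwise implication available from Claim~\ref{cl:unique} is ``LRO at $p$ $\Rightarrow$ uniqueness at $p'$ for every $p'>p$,'' with a strict inequality, so you must insert an intermediate parameter as you implicitly do; and your phrasing of the frequency argument (``force the frequency of every cluster to vanish'') compresses what the paper actually proves --- the paper does not establish indistinguishability in the strong sense, and Claim~\ref{cl:unique} works instead by showing every infinite $\mathcal G[p']$-cluster contains a special $\mathcal G[p]$-cluster. Since you cite the proof rather than rederive it, this does not create a gap, but it is worth stating accurately.
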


While this result seems to be known to experts, we could not find a suitable reference and therefore have included the short proof based on the aforementioned link with Bernoulli percolation on the Delaunay graph. See Section \ref{sec:closing} for open questions related to this phase transition.

The rest of this section is devoted to the proof of Theorem~\ref{thm:LRO} and Corollary~\ref{cor:Phases}.

\subsection{The Delaunay graph}\label{sec:Delaunay} In this section, we recall relevant background about unimodular random graphs, define the Delaunay graph and lay out its fundamentals.

\subsubsection{Unimodular random graphs}

Let us start by recalling the definition and basic properties of unimodular random graphs, which were introduced in \cite{BS01}. For more details, see~\cite{AL07}.

A {\em rooted graph} $(G,\rho)$ is a simple undirected countable locally finite graph $G$ with a distinguished vertex $\rho$, which is called the {\em root}. Let $\mathcal G_{\bullet}$ denote the set of connected rooted graphs modulo rooted isomorphisms. To lighten the notation, we use the same notation $(G,\rho)$ for the rooted graph and its equivalence class. We equip $\mathcal G_{\bullet}$ with the {\em local metric} $d_{\rm LOC} \colon \mathcal G_\bullet \times \mathcal G_\bullet \to [0,1]$ defined by $d_{\rm LOC}((G,\rho),(G',\rho')) := 1/(1+r)$, where $r := \sup \{ n\ge 0 : \mathcal B_n(\rho) \cong \mathcal B_n(\rho') \}.$ 
It is well-known that $(\mathcal G_{\bullet}, d_{\rm LOC})$ is a complete separable metric space. We equip it with its Borel $\sigma$-field. A {\em random rooted graph} is a $\mathcal G_{\bullet}$-valued random variable. We denote by $\mathcal P(\mathcal G_{\bullet})$ the space of Borel probability measures on $\mathcal G_\bullet$. Let $\mathcal G_{\bullet \bullet}$ denote the analogue of the space~$\mathcal G_{\bullet}$ with two distinguished roots. The law $\mathcal L$ of a random rooted graph $(G,\rho)$ is {\em unimodular} if 
\begin{equation} \label{equ-MTP}
\mathbb E \bigg[ \sum_{x\in V(G)} f(G,\rho,x) \bigg] = \mathbb E \bigg[ \sum_{x\in V(G)} f(G,x,\rho) \bigg]
\end{equation}
for every measurable function $f\colon \mathcal G_{\bullet \bullet} \to[0,\infty]$. In this case, we also say that $(G,\rho)$ is a {\em unimodular random graph} (URG). Equation \eqref{equ-MTP} is called the {\em Mass Transport Principle} (MTP) because it asserts that the expected mass sent out by the root equals the expected mass received by the root. The class of unimodular probability measures on $\mathcal G_{\bullet}$ is convex. A unimodular probability measure is called {\em extremal}, if it can not be written as a convex combination of other unimodular probability measures. These measures admit the following description in terms of the {\em invariant} $\sigma$-field $\mathcal I$, which is the $\sigma$-field of Borel measurable subsets of $\mathcal G_{\bullet}$ which are invariant under non-rooted isomorphisms.

\begin{theorem}[Extremality] \label{thm:Extremal} Let $\mathcal L\in\mathcal P(\mathcal G_{\bullet})$ be unimodular. Then $\mathcal L$ is extremal if and only if $\mathcal I$ is $\mathcal L$-trivial, i.e.~$\mathcal L(A)\in\{0,1\}$ for all $A\in \mathcal I$.
\end{theorem}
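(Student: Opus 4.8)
The plan is to prove the two implications of the equivalence separately. The implication "$\mathcal{I}$ not $\mathcal{L}$-trivial $\Rightarrow$ $\mathcal{L}$ not extremal'' is a direct unwinding of the Mass Transport Principle, while "$\mathcal{I}$ $\mathcal{L}$-trivial $\Rightarrow$ $\mathcal{L}$ extremal'' carries the real content.

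For the first implication, suppose there is $A\in\mathcal I$ with $0<\mathcal L(A)<1$, and set $\mathcal L_1:=\mathcal L(\,\cdot\mid A)$ and $\mathcal L_2:=\mathcal L(\,\cdot\mid A^{c})$, so that $\mathcal L=\mathcal L(A)\,\mathcal L_1+\mathcal L(A^{c})\,\mathcal L_2$ is a non-trivial convex combination with $\mathcal L_1\neq\mathcal L_2$ (as $\mathcal L_1(A)=1\neq 0=\mathcal L_2(A)$). The only thing to check is that $\mathcal L_1$, and symmetrically $\mathcal L_2$, is unimodular: given a measurable $f\colon\mathcal G_{\bullet \bullet}\to[0,\infty]$, I would apply \eqref{equ-MTP} under $\mathcal L$ to $\tilde f(G,\rho,x):=\mathbf{1}_A(G,\rho)\,f(G,\rho,x)$; since $A$ is invariant under change of root we have $\mathbf{1}_A(G,x)=\mathbf{1}_A(G,\rho)$ for every vertex $x$, so \eqref{equ-MTP} for $\tilde f$ rearranges (after dividing by $\mathcal L(A)$) exactly into \eqref{equ-MTP} for $f$ under $\mathcal L_1$. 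Hence $\mathcal L$ is not extremal.

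For the second implication, assume $\mathcal I$ is $\mathcal L$-trivial and $\mathcal L=t\mathcal L_1+(1-t)\mathcal L_2$ with $t\in(0,1)$ and $\mathcal L_1,\mathcal L_2$ unimodular; the goal is $\mathcal L_1=\mathcal L_2=\mathcal L$. Since $\mathcal L_1\ll\mathcal L$, set $X:=d\mathcal L_1/d\mathcal L\ge 0$ (so $\mathbb E_{\mathcal L}[X]=1$), and recall $\mathcal L_1=X\cdot\mathcal L$ is unimodular. The crux is to show that $X$ coincides $\mathcal L$-a.s.\ with an $\mathcal I$-measurable, i.e.\ re-rooting invariant, function; granting this, $\mathcal L$-triviality of $\mathcal I$ makes $X$ $\mathcal L$-a.s.\ constant, hence $X\equiv 1$ and $\mathcal L_1=\mathcal L$, and then $\mathcal L_2=\mathcal L$ as well. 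To prove this re-rooting invariance I would first record the identity $\mathbb E_{\mathcal L}[\sum_{x\sim\rho}h(G,x)]=\mathbb E_{\mathcal L}[\deg(\rho)\,h(G,\rho)]$, valid for every nonnegative measurable $h$ on $\mathcal G_\bullet$ and immediate from \eqref{equ-MTP} applied to $f(G,\rho,x)=\mathbf{1}[x\sim\rho]\,h(G,x)$, together with the same identity for $\mathcal L_1$ (i.e.\ with an extra factor $X(G,\rho)$ inside both expectations). Feeding a bounded test function into the $\mathcal L_1$-version and transporting its left-hand side once more via \eqref{equ-MTP} yields the harmonicity relation $\deg(\rho)\,X(G,\rho)=\sum_{x\sim\rho}X(G,x)$ $\mathcal L$-a.s.; combining this with the $\mathcal L$-identity for $h=X^2$ and the pointwise expansion $\sum_{x\sim\rho}(X(G,x)-X(G,\rho))^2=\sum_{x\sim\rho}X(G,x)^2-\deg(\rho)X(G,\rho)^2$ gives $\mathbb E_{\mathcal L}[\sum_{x\sim\rho}(X(G,x)-X(G,\rho))^2]=0$, so $X$ is $\mathcal L$-a.s.\ constant along every edge at the root. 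Since, for a unimodular $\mathcal L$, an a.s.\ property of $(G,\rho)$ propagates to all vertices of $G$ (apply \eqref{equ-MTP} to the indicator of the exceptional set, cf.\ \cite{AL07}), and the graphs in $\mathcal G_\bullet$ are connected, $X$ is $\mathcal L$-a.s.\ constant on $V(G)$; this common value is the sought invariant version.

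The step I expect to be the main obstacle is exactly this last claim --- that the Radon--Nikodym derivative of one unimodular law with respect to another is re-rooting invariant --- everything else being bookkeeping with \eqref{equ-MTP}. Two routine technical points arise in its proof, namely the integrability needed to deduce the vanishing of $\mathbb E_{\mathcal L}[\sum_{x\sim\rho}(X(G,x)-X(G,\rho))^2]$ from the $h=X^2$ identity, and the passage from equality in expectation against all bounded test functions to the a.s.\ harmonicity relation; I would dispatch both by the usual truncation of $X$ and, if necessary, of $\deg(\rho)$ at a level $M$ with $M\to\infty$, cf.\ \cite{AL07}.
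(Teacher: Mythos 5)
Your overall architecture is the right one, and the easy direction (conditioning on a non-trivial invariant event to break extremality) is complete: writing $\tilde f(G,\rho,x)=\mathbf 1_A(G,\rho)f(G,\rho,x)$ and using $\mathbf 1_A(G,x)=\mathbf 1_A(G,\rho)$ (which is exactly what $A\in\mathcal I$ gives) correctly transfers \eqref{equ-MTP} from $\mathcal L$ to $\mathcal L(\cdot\mid A)$. For the hard direction your strategy --- prove the Radon--Nikodym derivative $X=d\mathcal L_1/d\mathcal L$ is re-rooting invariant, then invoke triviality of $\mathcal I$ and $\mathbb E_{\mathcal L}[X]=1$ --- is also the right one, and your derivation of the harmonicity relation $\sum_{x\sim\rho}X(G,x)=\deg(\rho)X(G,\rho)$ from the two Mass Transport Principles is sound.

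However, the integrability issue you flag at the end is not a ``routine technical point'' that truncation dispatches; it is a genuine gap. The variance expansion requires $\mathbb E_{\mathcal L}[\deg(\rho)X(\rho)^2]<\infty$, and a general unimodular $\mathcal L$ need not even satisfy $\mathbb E_{\mathcal L}[\deg(\rho)]<\infty$ (there is no such assumption in Theorem~\ref{thm:Extremal}, and e.g.~a unimodular Galton--Watson tree with infinite-mean offspring distribution violates it). Truncating $X$ to $X_M:=X\wedge M$ does not repair this, because the cross term in the expansion of $\sum_{x\sim\rho}(X_M(x)-X_M(\rho))^2$ can no longer be simplified: the harmonicity identity holds for $X$ but not for $X_M$, and it is precisely this identity that made the middle term match the other two. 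Truncating the degree is similarly problematic, since restricting to low-degree vertices breaks the MTP symmetry used to derive harmonicity in the first place. So as written, the ``routine truncation'' step fails, and without it the quadratic argument gives $\infty-\infty$.

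A clean fix, valid for arbitrary unimodular $\mathcal L$, avoids the $L^2$ machinery entirely. First combine the MTP for $\mathcal L_1$ with the MTP for $\mathcal L$ applied to $(G,u,v)\mapsto X(G,u)f(G,u,v)$ to obtain, for every non-negative measurable $g$ on $\mathcal G_{\bullet\bullet}$,
\begin{equation*}
\mathbb E_{\mathcal L}\Bigl[\,X(G,\rho)\sum_{x}g(G,\rho,x)\Bigr]=\mathbb E_{\mathcal L}\Bigl[\,\sum_{x}X(G,x)\,g(G,\rho,x)\Bigr].
\end{equation*}
Now take $g_n(G,\rho,x):=\mathbf 1[X(G,x)<X(G,\rho)]\,\mathbf 1[d_G(\rho,x)\le n]\big/|\mathcal B_n(\rho)|$, where $\mathcal B_n(\rho)$ is the graph-distance ball. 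Then $\sum_x g_n(G,\rho,x)\le 1$, so the left-hand side is $\le\mathbb E_{\mathcal L}[X(\rho)]=1<\infty$, and the right-hand side is dominated term-by-term by the left, hence also finite. Subtracting, $\mathbb E_{\mathcal L}\bigl[\sum_x (X(\rho)-X(x))g_n(G,\rho,x)\bigr]=0$, and since every summand is non-negative by the choice of indicator, a.s.~there is no $x$ with $d_G(\rho,x)\le n$ and $X(x)<X(\rho)$. Letting $n\to\infty$ and using connectedness gives $X(x)\ge X(\rho)$ for all $x\in V(G)$ a.s.; your ``everything shows up at the root'' principle then yields $X$ constant on $V(G)$ a.s., and the rest of your argument closes. (If you only care about the paper's application, Lemma~\ref{lm:FinExpDegree} gives $\mathbb E[\deg(\rho)]<\infty$, in which case a shorter fix is to replace $X^2$ by $\phi(X)$ for a bounded strictly concave increasing $\phi$: Jensen turns harmonicity into the pointwise inequality $\sum_{x\sim\rho}\phi(X(x))\le\deg(\rho)\phi(X(\rho))$, MTP makes expectations equal, and strict concavity gives $X(x)=X(\rho)$; but this still needs $\mathbb E[\deg(\rho)]<\infty$, so it does not prove the theorem in the stated generality.)
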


\begin{proof} See \cite[Theorem 4.7]{AL07}. \end{proof}

The definition of unimodular random rooted graphs extends to {\em networks}, i.e.~graphs with additional marks on edges and vertices. More precisely, let $\Xi$ be a complete separable metric space of marks and let $\psi$ be an assignment of marks to edges and vertices. Then a random rooted network $(G,\rho,\psi)$ is {\em unimodular} if 
\begin{equation} \label{equ-MTPnetwork}
\mathbb E \bigg[ \sum_{x\in V(G)} f(G,\rho,x,\psi) \bigg] = \mathbb E \bigg[ \sum_{x\in V(G)} f(G,x,\rho,\psi) \bigg]
\end{equation}
for every measurable, non-negative function $f$ of isomorphism classes of bi-rooted (with ordered roots) networks, see~\cite{AL07} for details.

\medskip

{\noindent\bf Bernoulli percolation.} Let $(G,\rho)$ be a unimodular random rooted graph. A {\em (site) percolation} of $(G,\rho)$ is a unimodular random rooted network with $\{0,1\}$-valued marks on the vertices such that the random rooted graph obtained by forgetting the marks has the same law as $(G,\rho)$. Deleting from this random rooted graph all vertices whose mark is $0$ yields a random subgraph of $G$, which we call the {\em percolation configuration}. Its connected components are called {\em clusters}.

For the purposes of this paper, we focus on {\em $p$-Bernoulli site percolation}, or simply {\em Bernoulli percolation} which is defined by deleting each vertex independently with probability $p\in(0,1)$. The main properties are collected below.

\begin{proposition}[Bernoulli percolation on extremal URGs] \label{prop:Bernoulli} Let $(G,\rho)$ be an extremal unimodular random graph and let $G[p]$ denote the configuration of $p$-Bernoulli percolation on $(G,\rho)$. Then the following hold:
\begin{itemize}
\item[{\rm(1)}] The number of infinite clusters in $G[p]$ is almost surely constant and $0$, $1$ or $\infty$.
\item[{\rm(2)}] There exists a constant $p_c$ such that for any $p>p_c$, $G[p]$ has an infinite cluster almost surely, while for any $p<p_c$, there is almost surely no infinite cluster.
\item[{\rm(3)}] There exists a constant $p_u$ such that for any $p>p_u$, $G[p]$ has a unique infinite cluster almost surely, while for any $p<p_u$, there is almost surely not a unique infinite cluster.
\end{itemize}
\end{proposition}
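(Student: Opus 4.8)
The plan is to reduce everything to extremality (Theorem~\ref{thm:Extremal}) together with the standard monotone coupling and Burton--Keane arguments. First I would realize $p$-Bernoulli percolation for all $p$ simultaneously by decorating $(G,\rho)$ with iid $\mathrm{Unif}[0,1]$ marks $(U_v)_{v\in V(G)}$ and declaring a vertex open in $G[p]$ exactly when $U_v\le p$; the resulting marked unimodular random network is again extremal (see \cite{AL07}), so by Theorem~\ref{thm:Extremal} every event in its invariant $\sigma$-field $\mathcal I$ has probability $0$ or $1$. For each $k\in\{0,1,2,\dots,\infty\}$ the event that $G[p]$ has exactly $k$ infinite clusters lies in $\mathcal I$, and so does the event that $G[p]$ has an infinite cluster and the event that it has a unique infinite cluster; hence all of these have probability $0$ or $1$, and in particular the number of infinite clusters is a.s.\ a deterministic constant $k(p)$.

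For part~(1) it then remains to rule out $2\le k(p)<\infty$. Here I would run the insertion-tolerance argument of Burton--Keane in its unimodular form: $p$-Bernoulli percolation is insertion tolerant (opening a finite set $F$ of vertices changes the law to an absolutely continuous one, with Radon--Nikodym derivative bounded by $p^{-|F|}$), and since there are only finitely many infinite clusters, with positive probability all of them meet the ball $\mathcal B_n(\rho)$ for some deterministic $n$; opening this ball merges them into a single infinite cluster, forcing the event ``exactly one infinite cluster'' to have positive probability and contradicting $k(p)\ge 2$. This is routine for unimodular random graphs; cf.\ \cite{AL07,LP16}.

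For part~(2), the coupling gives $G[p]\subseteq G[p']$ for $p\le p'$, so an infinite cluster of $G[p]$ is contained in one of $G[p']$; hence $\{p:\mathbb P(G[p]\text{ has an infinite cluster})=1\}$ is an up-set (using the $0$--$1$ law above), and taking $p_c$ to be its infimum gives the statement. Part~(3) has the same structure: set $p_u:=\inf\{p:\mathbb P(G[p]\text{ has a unique infinite cluster})=1\}$; then the probability equals $0$ for $p<p_u$ is immediate, and what remains is to know that $\{p:\mathbb P(G[p]\text{ has a unique infinite cluster})=1\}$ is an up-set. This is the one genuinely non-formal input: uniqueness of the infinite cluster is not an increasing event, so monotonicity here is the H\"aggstr\"om--Peres--Schonmann uniqueness-monotonicity theorem, whose proof uses the monotone coupling together with indistinguishability of infinite clusters and the Mass Transport Principle. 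I would invoke its version for (extremal) unimodular random graphs (following \cite{AL07}, cf.\ \cite{LP16}) and conclude exactly as in part~(2).

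The step I expect to be the main obstacle is thus part~(3): everything else follows either directly from extremality or from a routine transcription of Burton--Keane and the monotone coupling, whereas the monotonicity of the uniqueness probability genuinely requires the indistinguishability machinery and its adaptation to the unimodular random graph setting.
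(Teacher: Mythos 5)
Your proposal is correct and amounts to unpacking the references the paper itself cites: the paper's proof of Proposition~\ref{prop:Bernoulli} is just the pointer to \cite[Corollary 6.9, Section 6, Theorem 6.7]{AL07}, and your sketch (iid decoration preserving extremality plus the $0$--$1$ law for the invariant $\sigma$-field, Burton--Keane insertion tolerance to exclude $2\le k<\infty$, the monotone coupling for $p_c$, and uniqueness monotonicity via indistinguishability and the MTP for $p_u$) is exactly the content of those cited results. You also correctly identify part~(3) as the only step requiring genuinely nontrivial machinery, which is consistent with the paper needing to develop an MTP with labels in Section~\ref{sec:MTP} precisely so that a version of this argument can be rerun later in the proof of Theorem~\ref{thm:LRO}.
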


\begin{proof} See \cite[Corollary 6.9]{AL07}, \cite[Section 6]{AL07} and \cite[Theorem 6.7]{AL07}, where these results are stated for bond percolation; the proofs for site percolation are similar. \end{proof}

\subsubsection{The Delaunay graph} We now define the Delaunay graph, which is the canonical URG associated to Poisson-Voronoi percolation, and discuss two useful refinements. We also compare our definition with another definition from the literature.

\medskip

{\bf\noindent The unrooted Delaunay graph.} Let $X$ be a symmetric space and let $Y$ be a point process with associated Voronoi diagram ${\rm Vor}(Y)$ defined similarly to \eqref{equ-Voronoi}. The {\em Delaunay graph} $\mathcal G(Y)$ associated to the point process $Y$ is the graph with vertex set ${\rm Vor}(Y)$ and edges between every pair of vertices whose corresponding cells have non-empty intersection.

It will be useful to consider the following two refinements of the Delaunay graph: Let  $\mathbf Y$ be the point process $Y$ together with iid ${\rm Unif}[0,1]$-marks. The {\em embedded Delaunay graph} associated to the point process $Y$ is the graph $\mathcal G(Y)$ together with the assignment $\psi(Y)$ of marks, which marks each vertex by the location of its nucleus and each edge by the midpoint of the geodesic between its endpoints. Similarly, the {\em embedded Delaunay graph with labels} associated to the marked point process $\mathbf Y$ is the graph $\mathcal G(Y)$ together with the assignment $\Psi(\mathbf Y)=(\psi(Y),\psi'(\mathbf Y))$ of marks, where $\psi'$ additionally marks each vertex by the mark of its nucleus in $\mathbf Y$.

\medskip

{\bf\noindent The Delaunay graph.} Let $X$ be a symmetric space and let  $\mathbf Y$ be a point process $Y$ together with iid ${\rm Unif}[0,1]$-marks. We may, and will, treat each of the Delaunay graphs associated to~$\mathbf Y$ as a random rooted network by inserting a point at the origin $o$ with an independent ${\rm Unif}[0,1]$-mark, and declaring its cell to be the root.

More precisely, consider the point processes $Y_0:=Y\cup\{o\}$ and $\mathbf Y_0:=\mathbf Y\cup\{\mathbf o\}$, where $\mathbf o=(o,Z)$ denotes the origin equipped with an independent ${\rm Unif}[0,1]$-mark. Let $\rho(Y_0)$ denote the cell of the origin in ${\rm Vor}(Y_0)$. We will subsequently work with the random rooted graph $(\mathcal G(Y_0),\rho(Y_0))$ and the random rooted networks $(\mathcal G(Y_0),\rho(Y_0),\psi(Y_0))$ and $(\mathcal G(Y_0),\rho(Y_0),\Psi(\mathbf Y_0))$.

In Section \ref{sec:MTP} below, we recall the well-known fact that the Delaunay graph obtained in this way is unimodular and extremal, show that the embedded Delaunay graph satisfies a MTP for functions which additionally depend on the marks in a natural way and extend this statement to the embedded Delaunay graph with labels. The first fact suffices to establish the basic phase transition of Poisson-Voronoi percolation via Proposition~\ref{prop:Bernoulli}; the other facts will be needed in the proof of Theorem~\ref{thm:LRO}.

\medskip

{\bf\noindent Comparison with the Delaunay graph defined in \cite{BPP18}.} In two recent papers~\cite{BPP18,P18}, probabilistic properties (amenability, anchored amenability and random walk speed) of Poisson-Voronoi tessellations in symmetric spaces are studied. In these works, the embedded Delaunay graph is defined by declaring an edge only between those pairs of cells which share a boundary of co-dimension $1$. Let us denote this graph by $\mathcal D$. In \cite[Proposition 1.6]{BPP18}, reversibility of the degree-biased law of $\mathcal D$ and the fact that every vertex has finite expected degree are shown. This implies that $\mathcal D$ is unimodular by a classical argument, see e.g.~\cite[Proposition 2.5]{BC12}.

For the purpose of analyzing percolation, it is clear that cells have to be considered adjacent if they have non-trivial intersection. With this definition, there could a priori be more edges adjacent at each vertex than in $\mathcal{D}$. The following lemma shows that this does not cause too many issue.

\begin{lemma}[Finite expected degree] \label{lm:FinExpDegree}
Let $G$ be a non-compact connected semisimple real Lie group and $(X,d_X)$ be its symmetric space.
Then there is $\alpha\ge 1$ such that if $Y$ is a Poisson point process on $X$ with intensity $\lambda \, {\rm vol}$ for some $\lambda>0$, $Y_0=Y \cup \{o\}$ and ${\rm Vor}(Y_0)$ is the associated Voronoi diagram, then $\mathbb E[{\rm deg}_{\mathcal G}(\rho)] = O(\lambda^{-\alpha})$ as $\lambda\to0$,
where $\mathcal G$ denote the Delaunay graph associated to ${\rm Vor}(Y_0)$. In particular, $\mathcal G$ is locally finite a.s.
\end{lemma}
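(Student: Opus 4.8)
The plan is to control the expected degree of the root cell $\rho = \rho(Y_0) = C_o$ by a covering/packing argument on the sphere of radius $d_X(o, Y_0^{(1)})$ around $o$, analogous to the boundedness argument in Lemma~\ref{lm:BasicVoronoi}. First I would note that $\deg_{\mathcal G}(\rho)$ counts the number of cells $C_i$ with $C_i \cap C_o \neq \emptyset$. If $z \in C_i \cap C_o$, then $d_X(z, Y_i) = d_X(z, o) = d_X(z, Y_0)$, so in particular $d_X(o, Y_i) \le 2 d_X(o, z) = 2 d_X(z, Y_0) \le 2 d_X(z,o)$; moreover $d_X(o,z)=d_X(z,Y_0)$ forces $d_X(o,z) \ge \tfrac12 d_X(o, Y)$ is not quite automatic, but in any case one has: no point of $Y$ lies in $\mathcal B_{d_X(o,z)}(z)$ other than potentially $Y_i$ itself, and $d_X(o, Y_i) \le 2 d_X(o,z)$. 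The key geometric point is that a neighboring cell $C_i$ forces its nucleus $Y_i$ to lie within distance $2r$ of $o$ for some $r$ realized as an ``empty-ball radius'', and simultaneously forces an empty ball of radius roughly $r/2$; both events become expensive when $r$ is forced to be large.

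The main steps, in order: (1) Condition on $r_0 := d_X(o, Y^{(1)})$, the distance to the nearest Poisson point (so the root cell is then the cell of $o$ with this nucleus configuration). (2) Show that any neighbor $C_i$ of $C_o$ has $d_X(o, Y_i) \le 4 r_0 + c$ for a universal constant $c$ — this is because a witness point $z \in C_i \cap C_o$ must satisfy $d_X(o,z) = d_X(z, Y_0) \le d_X(z, Y^{(1)}) \le d_X(z,o) + r_0$, wait, more carefully $d_X(z, Y_0) \le r_0$ is false in general, so instead use that the geodesic from $o$ to $z$ stays in $C_o$, hence $d_X(z, Y^{(1)}) \ge d_X(z, o)$, which combined with the triangle inequality and the defining property of $z \in C_i$ bounds $d_X(o, Y_i)$ linearly in $r_0$ and $d_X(o,z)$; one must also bound $d_X(o,z)$ itself, which in turn is controlled by the diameter of $C_o$, and by the proof of Lemma~\ref{lm:BasicVoronoi} the diameter of $C_o$ has exponential tails, so after conditioning it is at most a constant times $r_0$ with overwhelming probability. (3) Thus $\deg_{\mathcal G}(\rho) \le \#\{i : d_X(o, Y_i) \le C r_0 + c\} = Y(\mathcal B_{Cr_0 + c}(o))$, which, given $r_0$, is $1$ plus a Poisson random variable with mean $\lambda \cdot {\rm vol}(\mathcal B_{Cr_0+c}(o))$. (4) Finally, integrate over $r_0$: the density of $r_0$ is $\lambda f'(t) e^{-\lambda f(t)}$ where $f(t) = {\rm vol}(\mathcal B_t(o))$, and $\mathbb E[\deg_{\mathcal G}(\rho)] \le 1 + \int_0^\infty \lambda {\rm vol}(\mathcal B_{Ct+c}(o)) \cdot \lambda f'(t) e^{-\lambda f(t)} \, dt$; using the asymptotics ${\rm vol}(\mathcal B_t(o)) = c e^{at} t^b(1+o(1))$ from Theorem~\ref{thm:BasicSymmetricSpace}~(6), one has ${\rm vol}(\mathcal B_{Ct+c}(o)) \le c' e^{aCt}(1+t)^b$, and the substitution $u = f(t)$ reduces the integral to something like $\int \lambda^2 (\text{polynomial in } \log u) \cdot u^{C'} e^{-\lambda u}\,du$, which evaluates to $O(\lambda^{-\alpha})$ for $\alpha = C' \approx C - 1$ by a standard Gamma-function estimate. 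Local finiteness a.s. is then immediate since a random variable with finite expectation is a.s. finite, and by the Mecke equation the same holds for every cell, not just the root.

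The main obstacle I anticipate is step (2): cleanly bounding $d_X(o, Y_i)$ for a neighboring cell. The naive bound ``$z \in C_o$ so $d_X(z, Y_0) = d_X(z,o)$'' only says the nearest point to $z$ among $Y_0$ is $o$; to get a bound on $d_X(o, Y_i)$ one needs $z \in C_i$ as well, giving $d_X(z, Y_i) = d_X(z, o)$, hence $d_X(o, Y_i) \le 2 d_X(o, z)$ — so everything hinges on bounding $\sup_{z \in C_o} d_X(o, z) = \operatorname{diam}$-type quantity for the root cell. This is exactly the quantity shown to have exponentially decaying tails (uniformly in $\lambda \le 1$, after the rescaling, via the packing argument) in the proof of Lemma~\ref{lm:BasicVoronoi}; I would re-use that argument, now keeping track of the conditioning on $r_0$, to get $\mathbb P(\operatorname{diam}(C_o) > Kr_0 \mid r_0) $ small enough (super-polynomially in $K$) that the contribution of the bad event to the expectation is negligible. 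A secondary technical point is making sure all estimates are uniform as $\lambda \to 0$, which is why one tracks the explicit polynomial rate $\lambda^{-\alpha}$ rather than just finiteness for fixed $\lambda$; the exponential volume growth is what makes $\alpha$ finite (and, incidentally, is where higher rank is not needed — the lemma holds for any non-compact connected semisimple $G$).
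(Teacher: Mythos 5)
Your approach---conditioning on $r_0 := d_X(o,Y^{(1)})$ and bounding the degree by the number of Poisson points in a ball of radius comparable to $r_0$---is genuinely different from the paper's, but it has a real gap that I don't think is easily repairable in the form you state it.

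The problem is in steps (2) and (3). You correctly observe that a common point $z\in C_o\cap C_i$ gives $d_X(o,Y_i)\le 2\,d_X(o,z)\le 2D$ where $D:=\sup_{z\in C_o}d_X(o,z)$, so $\deg_{\mathcal G}(\rho)\le Y(\mathcal B_{2D}(o))$. But you then want to replace $D$ by a deterministic multiple of $r_0$. The conditional bound $\mathbb P(D>Kr_0\mid r_0)$ being super-polynomially small in $K$ is not established (and, for small $r_0$ in the ``Euclidean'' regime near $o$, is dubious: the empty-ball exponent $\lambda\bigl(f(Kr_0/2)-f(r_0)\bigr)$ need not be large). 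More fundamentally, even if you knew $D\le Cr_0+c$ held, you cannot then treat $Y(\mathcal B_{Cr_0+c}(o))$ conditionally on $r_0$ as $1$ plus an independent Poisson and also restrict to the event $\{D\le Cr_0+c\}$: $D$ is a function of the same Poisson configuration, so the event and the point count are not conditionally independent given $r_0$. You would need either a Cauchy--Schwarz/H\"older split on the event $\{D\in[k,k+1)\}$ (which works but is not what you wrote) or a way to decouple.

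The paper avoids all of this by applying the Mecke equation directly to the functional ``$y$ is a Delaunay neighbour of $o$,'' yielding
\begin{equation*}
\mathbb E[\deg_{\mathcal G}(\rho)]=\lambda\int_X \mathbb P\bigl(o,y\ \text{adjacent in}\ \mathcal G^{(y)}\bigr)\,\mathrm{vol}(dy),
\end{equation*}
where $\mathcal G^{(y)}$ is the Delaunay graph of $Y\cup\{o,y\}$. The integrand is then bounded by observing that if the two cells meet, one of them must contain a point at distance $\ge d_X(o,y)/2$ from its nucleus, which by the packing argument of Lemma~\ref{lm:BasicVoronoi} has probability at most $2f(r)\,e^{-\lambda f(r/4)}$ with $r=d_X(o,y)$. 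Integrating against $\mathrm{vol}$ and using the exponential volume growth from Theorem~\ref{thm:BasicSymmetricSpace}~(6) produces the $O(\lambda^{-\alpha})$ bound via a Gamma-function estimate, which is the same final computation you envisage. This route buys you decoupling for free (Mecke inserts a deterministic extra point $y$, so the remaining Poisson process is genuinely independent of the event being measured) and never needs a conditional control of $D$ by $r_0$. If you want to salvage your route, drop the conditioning on $r_0$ entirely, bound $\mathbb E[Y(\mathcal B_{2D}(o))]$ by summing over $k$ and applying Cauchy--Schwarz to $\mathbb E[Y(\mathcal B_{2(k+1)}(o))\mathbf 1\{D\ge k\}]$ using the tail bound on $D$ from Lemma~\ref{lm:BasicVoronoi}; but the Mecke approach is substantially cleaner.
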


We leave it as an open problem whether $\alpha=1$, see Remark \ref{rm:DegreeAsymp}.

\begin{proof} This follows along similar lines as the proof of \cite[Proposition 1.6]{BPP18}.
    For $y\in Y$, let $\mathcal G^{(y)}$ denote the Delaunay graph of $Y_{o,y}:= Y\cup\{o,y\}$. The Mecke equation \cite[Theorem 4.1]{LP17} applied with
$$
F : \mathbf M(X) \times X \to [0,1] \, , \, F\big(\eta, y\big) := \mathbf 1\{y\in \eta\}\mathbf 1\{o,y \, \, \text{are connected in} \, \, \mathcal G^{(y)}\}
$$
yields 
\begin{align}
\mathbb E[{\rm deg}_{\mathcal G}(\rho)] & = \mathbb E\bigg[ \sum_{y\in Y} \mathbf1\{ o,y \, \, \text{are connected in} \, \, \mathcal G^{(y)} \}\bigg] \nonumber \\ 
& = \lambda \int_X \mathbb E\big[ \mathbf1\{ o,y \, \, \text{are connected in} \, \, \mathcal G^{(y)} \} \big] \, {\rm vol}(dy). \label{equ:ExpectedDegree}
\end{align}
Fix $y\in Y$ and set $r:=d_X(o,y)$. We claim that 
\begin{equation}\label{equ:ProbConnected}
\mathbb P\big( o,y \, \, \text{are connected in} \, \, \mathcal G^{(y)} \big) \le 2 f(r) \exp(-\lambda f(r/4)),
\end{equation}
where $f(t)=\vol(\mathcal{B}_t(o))$.
This may be seen by the same argument as in the proof of Lemma~\ref{lm:BasicVoronoi}. Indeed, let $C_x^{(y)}$ denote the Voronoi cell of $x\in X$ in ${\rm Vor}(Y_{o,y})$, then
 $$
 \mathbb P\big( o,y \, \, \text{connected in} \, \, \mathcal G^{(y)} \big) = \mathbb P ( C_o^{(y)} \cap C_y^{(y)} \neq \emptyset ) \le \mathbb P ( C_o^{(y)} \not\subset \mathcal B_{r/2}(o)) + \mathbb P ( C_y^{(y)} \not\subset \mathcal B_{r/2}(y))
 $$
and $C_o^{(y)}\subseteq C_o$.
As $f$ is continuous increasing by Theorem~\ref{thm:BasicSymmetricSpace}~(5), we get 
$$
\mathbb E[{\rm deg}_{\mathcal G}(\rho)] \le 2\lambda \int_0^\infty  f(r) e^{-\lambda f(r/4)} f'(r) \, d r.
$$
Observe that by Theorem~\ref{thm:BasicSymmetricSpace}~(6), there are $r_0>0$ and $\beta\in(0,1)$ such that $f(r/4)>f(r)^\beta$ for every $r\ge r_0$.
We obtain, with $\gamma:=2/\beta-1>1$, that
\begin{align*}
\int_{r_0}^\infty  f(r) e^{-\lambda f(r/4)} f'(r) \, d r & \le  \int_0^\infty  f(r) e^{-\lambda f(r)^\beta} f'(r) \, d r = \int_0^\infty xe^{-\lambda x^\beta} \, d x \\
& = \beta^{-1} \int_0^\infty x^{2-\beta} e^{-\lambda x^\beta} \beta x^{\beta-1}  \, d x \\
& = \beta^{-1} \int_0^\infty x^\gamma e^{-\lambda x} \, dx = (\beta\lambda)^{-1} \, \mathbb E[ Z^\gamma],
\end{align*}
where $Z\sim{\rm Exp}(\lambda)$.
Since with $k:=\lceil \gamma \rceil$, $\mathbb E[ Z^\gamma] \le 1 + \mathbb E[ Z^k] = 1+k!/\lambda^k$, and $\int_{0}^{r_0}  f(r) e^{-\lambda f(r/4)} f'(r) \, d r\le \vol(\mathcal{B}_{r_0}(o))^2$ for every $\lambda>0$, we obtain that there exists $\alpha\ge 1$ such that
\begin{equation} \label{equ:DegreeAsymp}
\mathbb E[{\rm deg}_{\mathcal G}(\rho)] = O(\lambda^{-\alpha}) \qquad \text{as} \, \, \lambda\to0
\end{equation}
as desired.
\end{proof}

\subsection{Unimodularity and Mass Transport Principles} \label{sec:MTP} We now provide the main properties of the Delaunay and embedded Delaunay graphs which will be based on corresponding properties of Poisson point processes.

\subsubsection{Palm distribution and MTP for the Poisson point process} In this section, we recall two fundamental results about Poisson point processes which fall within the scope of Palm theory. Roughly speaking, a Palm version of a stationary point process is the point process conditioned to have a point at the origin.  For instance, it is well-known that if $Y$ (resp.~$\mathbf Y$) is a Poisson point process on $X$ with intensity $c \, {\rm vol}$ for $c>0$ (resp.~with intensity $c \, {\rm vol}$ and iid ${\rm Unif}[0,1]$-marks), then a Palm version is given by $Y_0=Y\cup\{o\}$ (resp.~$\mathbf Y_0=\mathbf Y\cup\{\mathbf o\}$, where $\mathbf o=(o,Z)$ is the origin equipped with an independent ${\rm Unif}[0,1]$-mark). Since in this paper we will work with Poisson point processes, the formalism of Palm theory is not needed. We thus refrain from developing the formalism here and instead use the terminology {\em Palm version} to refer to the explicit point processes described above. We refer to \cite{DV08,LP17} for background.

We now recall ergodicity of the Palm version, which will be used throughout this section. In the next result, the action of $G$ on marked configurations $\overline \eta \in\mathbf M(X\times[0,1])$ is the action induced by the diagonal action on $X\times[0,1]$, where $G$ acts on the $X$-coordinate as before and leaves the $[0,1]$-coordinate as is.

\begin{lemma}[Ergodicity of the Palm version] \label{lm:Palm} Let $G$ be a non-compact connected semisimple real Lie group and $(X,d_X)$ be its symmetric space. Let $\mathbf Y$ be a Poisson point process on $X$ with intensity $c \, {\rm vol}$ for some $c>0$ and with iid ${\rm Unif}[0,1]$-marks. Let $\mathbf Y_0=\mathbf Y \cup \{\mathbf o\}$, where $\mathbf o$ is equipped with an independent ${\rm Unif}[0,1]$-mark. Then the following hold:
\begin{itemize} 
\item[{\rm(1)}] Let $A$ be a $G$-invariant event and let $A_0$ be the restriction of $A$ to configurations containing the origin. Then $\mathbb P(\mathbf Y_0\in A_0)=\mathbb P(\mathbf Y\in A)\in \{0,1\}$.
\item[{\rm(2)}] Let $B_0$ be an event such that $B_0=B'$, where $B'$ is the restriction of the event $B:=GB_0$ to configurations that contain the origin.
Then $\mathbb P(\mathbf Y_0\in B_0)=\mathbb P(\mathbf Y\in B)$. In particular, $\mathbb P(\mathbf Y_0\in B_0)\in\{0,1\}$.
\end{itemize}
The same conclusions hold when $\mathbf Y$ is replaced with the unmarked Poisson point process $Y$ and $\mathbf Y_0$ is replaced with $Y_0=Y\cup\{o\}$.
\end{lemma}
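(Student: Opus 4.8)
The plan is to deduce everything from two facts recorded earlier: first, that $\mathbf Y$ (and likewise the unmarked $Y$) is $G$-invariant and mixing, hence ergodic, as in the proof of Lemma~\ref{lm:Ergodicity}; and second, the elementary observation (Slivnyak's theorem) that for a Poisson process the Palm version is obtained by adding a point at the origin, which is a one-line consequence of the Mecke equation. Ergodicity immediately gives $\mathbb P(\mathbf Y\in A)\in\{0,1\}$ for every $G$-invariant measurable event $A$, which settles the triviality assertions; the substantive content of the lemma is then only the equality of the two probabilities.

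I would first prove part~(1). Since $\mathbf Y_0=\mathbf Y\cup\{\mathbf o\}$ almost surely has a point at $o$, the event $\{\mathbf Y_0\in A_0\}$ coincides with $\{\mathbf Y_0\in A\}$, so it suffices to show $\mathbb P(\mathbf Y_0\in A)=\mathbb P(\mathbf Y\in A)$ for $G$-invariant $A$; replacing $A$ by $A^c$ if necessary (permissible by ergodicity), we may assume $\mathbb P(\mathbf Y\in A)=0$. Fix a measurable $B\subseteq X$ with $0<{\rm vol}(B)<\infty$ and apply the Mecke equation (e.g.~\cite[Theorem~4.1]{LP17}) to the marked Poisson process and the function $f\big((x,s),\eta\big)=\mathbf 1_B(x)\mathbf 1_A(\eta)$; since $A$ has probability zero the left-hand side $\mathbb E\big[\mathbf Y(B\times[0,1])\,\mathbf 1_A(\mathbf Y)\big]$ vanishes, and therefore
\begin{equation*}
\int_B\Phi(x)\,{\rm vol}(dx)=0,\qquad\text{where}\qquad \Phi(x):=\int_0^1\mathbb P\big(\mathbf Y\cup\{(x,s)\}\in A\big)\,ds .
\end{equation*}
Hence $\Phi=0$ for ${\rm vol}$-almost every $x\in B$, and since $B$ is arbitrary, ${\rm vol}$-almost everywhere on $X$. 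The key point is that $\Phi$ is $G$-invariant: for $g\in G$ one has $g\cdot(\mathbf Y\cup\{(x,s)\})=g\mathbf Y\cup\{(gx,s)\}$, and $g\mathbf Y$ has the same law as $\mathbf Y$ (as ${\rm vol}$ is $G$-invariant and the marking is iid), so $G$-invariance of $A$ gives $\mathbb P(\mathbf Y\cup\{(x,s)\}\in A)=\mathbb P(\mathbf Y\cup\{(gx,s)\}\in A)$ and hence $\Phi(gx)=\Phi(x)$. As $G$ acts transitively on $X$, $\Phi$ is constant, hence identically $0$; evaluating at $x=o$ and using that the mark of $\mathbf o$ is an independent ${\rm Unif}[0,1]$ variable yields $\mathbb P(\mathbf Y_0\in A)=\Phi(o)=0=\mathbb P(\mathbf Y\in A)$. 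The unmarked statement for $Y$ is proved identically, using the Mecke equation with $Y_0=Y\cup\{o\}$.

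Part~(2) then follows from part~(1): the event $B:=GB_0$ is $G$-invariant by construction, and by hypothesis its restriction to configurations containing the origin is precisely $B_0$, so applying part~(1) with $A=B$ gives $\mathbb P(\mathbf Y_0\in B_0)=\mathbb P(\mathbf Y\in B)$, and the right-hand side lies in $\{0,1\}$ by ergodicity of $\mathbf Y$; the same applies verbatim to $Y$.

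I do not expect a genuine obstacle here — the lemma is essentially a repackaging of Slivnyak's theorem together with ergodicity of the Poisson process. The only points requiring (routine) care are the bookkeeping with the $[0,1]$-marks in the Mecke computation and the measurability of the uncountable union $GB_0$; the latter is harmless since $G$ is $\sigma$-compact and acts continuously, so that $GB_0$ is analytic and in particular universally measurable (and in the applications one may arrange $B_0$ so that $B$ is Borel).
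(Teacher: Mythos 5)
Your proof is correct and takes essentially the same route as the paper: the Mecke equation for the marked Poisson process, $G$-invariance to transport the Palm-like quantity to the origin, ergodicity of $\mathbf Y$ for the $\{0,1\}$ dichotomy, and part~(2) as a corollary of part~(1). The only cosmetic difference is that you truncate the Mecke identity to a bounded set $B$ and invoke transitivity to upgrade "${\rm vol}$-a.e.\ zero" to "identically zero", whereas the paper applies the Mecke equation globally and matches the two sides directly — both are fine.
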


\begin{proof} This is known to hold in greater generality, see e.g.~\cite[Section 3.4]{AM22}. We include a direct proof based on properties of the Poisson point process for the reader's convenience. We shall only prove the statement for $\mathbf Y$; the proofs in the unmarked case are simpler.

(1).
We express $\mathbb P(\mathbf Y\in A)$ in terms of Palm probabilities using the Mecke equation.
By the Marking Theorem (see \cite[Theorem 5.6]{LP17}), $\mathbf Y$ is a Poisson point process on $X\times[0,1]$ with intensity measure $c\, {\rm vol} \otimes {\rm Unif}[0,1]$. The Mecke equation (see \cite[Theorem 4.1]{LP17}) applied with
$$
F : \mathbf M(X\times[0,1]) \times (X\times[0,1]) \to [0,1] \, , \, F\big(\overline \eta, (y,z)\big) := \mathbf 1_{\{\overline \eta \in A\}}
$$
yields 
\begin{equation} \label{equ:Palm1}
\mathbb E \bigg[ \sum_{(y,z)\in \mathbf Y} F\big(\mathbf Y, (y,z)\big) \bigg] = c \int_{X\times[0,1]} \mathbb E\big[ F\big(\mathbf Y \cup \{(y,z)\},(y,z)\big) \big] \ {\rm vol}\otimes{\rm Unif}[0,1](dy,dz).
\end{equation}
For $y\in X$, let $\mathbf y$ denote the point $y$ equipped with an independent ${\rm Unif}[0,1]$-mark and choose $g\in G$ such that $g^{-1}y=o$. Then 
$$
g^{-1}( \mathbf Y \cup \{\mathbf y\} ) \overset{(d)}{=} \mathbf Y \cup \mathbf o
$$ 
by $G$-invariance of $\mathbf Y$. In particular, $\mathbb P( \mathbf Y \cup \{\mathbf y \} \in A )= \mathbb P(\mathbf Y \cup \{\mathbf o \} \in A_0)$ by $G$-invariance of $A$ and the fact that $\mathbf Y_0$ is supported on configurations containing the origin. By Fubini's theorem (applied twice), we thus obtain 
\begin{align*}
& \int_{X\times[0,1]} \mathbb E\big[ F\big(\mathbf Y \cup \{(y,z)\},(y,z)\big) \big] \ {\rm vol}\otimes{\rm Unif}[0,1](dy,dz) \\
& \qquad \qquad = \int_X \bigg( \int_0^1 \mathbb E\big[ F\big(\mathbf Y \cup \{(y,z)\},(y,z)\big) \big] \, dz \bigg) \, {\rm vol}(dy) \\
& \qquad \qquad = \int_X \mathbb E\big[ F\big(\mathbf Y \cup \{\mathbf y\},\mathbf y\big) \big] \, {\rm vol}(dy) \\
& \qquad \qquad = \int_X \mathbb P\big( \mathbf Y \cup \{\mathbf y\} \in A \big) \, {\rm vol}(dy) \\
& \qquad \qquad = \int_X \mathbb P\big( \mathbf Y \cup \{ \mathbf o\} \in A_0 \big) \, {\rm vol}(dy).
\end{align*}
By ergodicity of $\mathbf Y$, we have $\mathbb P(\mathbf Y\in A)\in\{0,1\}$. If $\mathbb P(\mathbf Y\in A)=0$, then the left-hand side in \eqref{equ:Palm1} is zero, hence $\mathbb P\big( \mathbf Y \cup \{\mathbf o\} \in A_0 \big)$. On the other hand, if $\mathbb P(\mathbf Y\in A)=1$, then the same argument shows that $\mathbb P\big( \mathbf Y \cup \{ \mathbf o\} \in A_0^c \big)=0$, which proves the claim.
 
(2). Note that $B:=GB_0$ is $G$-invariant, hence the same argument as above applies to $B'$, which is equal to $B_0$ by assumption. The additional statement follows from ergodicity of $\mathbf Y$.
\end{proof}

We now formulate the Mass Transport Principle for the Poisson point process in the notation of Section \ref{section PVP}. As above, $\tau\in{\rm Isom(X)}$ acts on marked configurations $\overline\eta \in\mathbf M(X\times[0,1])$ by the action induced by the diagonal action on $X\times[0,1]$ where $\tau$ acts on the $X$-coordinate as before and leaves the $[0,1]$-coordinate as is.

\begin{proposition}[MTP for the marked Poisson point process] \label{prop:MTPPPP}
Let $G$ be a non-compact connected semisimple real Lie group and $(X,d_X)$ be its symmetric space.
Let ${\bf Y}^{(\lambda)}$ be the marked Poisson point process on $X$ with intensity $\lambda \, {\rm vol}$ for some $\lambda>0$.
Let ${\bf Y}^{(\lambda)}_o={\bf Y}^{(\lambda)} \cup \{\mathbf o\}$, where~$\mathbf o$ is equipped with an independent ${\rm Unif}[0,1]$-mark.
Then
\begin{equation*} 
\mathbb E \bigg[ \sum_{i\in \mathbb{N}} F(o,Y^{(\lambda)}_i,{\bf Y}_o^{(\lambda)}) \bigg] = \mathbb E \bigg[ \sum_{i\in \mathbb{N}} F(Y^{(\lambda)}_i,o,{\bf Y}_o^{(\lambda)}) \bigg]
\end{equation*}
for every measurable $F : X\times X\times \mathbf M(X\times[0,1]) \to [0,\infty]$ such that 
$$
F(\tau(x),\tau(y),\tau \circ \overline \eta) = F(x,y, \overline \eta)
$$
for every $\overline\eta \in\mathbf M(X\times[0,1])$ and every $x,y\in X$ and $\tau\in{\rm Isom}(X)$ which interchanges $x$ and $y$.
\end{proposition}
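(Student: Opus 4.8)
The plan is to derive this from the Mecke equation for the marked Poisson point process exactly as Palm theory prescribes, and then symmetrize. First I would invoke the Marking Theorem (\cite[Theorem 5.6]{LP17}) to view $\mathbf Y^{(\lambda)}$ as a Poisson point process on $X\times[0,1]$ with intensity $\lambda\,{\rm vol}\otimes{\rm Unif}[0,1]$, and recall (as in Lemma~\ref{lm:Palm}) that $\mathbf Y^{(\lambda)}_o=\mathbf Y^{(\lambda)}\cup\{\mathbf o\}$ is the Palm version. The Mecke equation applied with the functional $(\overline\eta,(y,z))\mapsto F(o,y,\overline\eta)$ (with the understanding that the $X$-coordinates of $\overline\eta$ determine the points $Y^{(\lambda)}_i$ and we sum over them) gives
\begin{equation*}
\mathbb E\bigg[\sum_{i\in\mathbb N} F(o,Y_i^{(\lambda)},\mathbf Y^{(\lambda)}\cup\{(y,z)\})\bigg]\bigg|_{\text{after inserting}}
= \lambda\int_{X\times[0,1]} \mathbb E\big[F(o, \text{(points of }\mathbf Y^{(\lambda)}\cup\{(y,z)\}),\,\mathbf Y^{(\lambda)}\cup\{(y,z)\})\big]\,{\rm vol}\otimes{\rm Unif}[0,1](dy,dz),
\end{equation*}
but the cleaner route is to handle the two sides of the claimed identity separately: express each of $\mathbb E[\sum_i F(o,Y_i^{(\lambda)},\mathbf Y_o^{(\lambda)})]$ and $\mathbb E[\sum_i F(Y_i^{(\lambda)},o,\mathbf Y_o^{(\lambda)})]$ as integrals over pairs of points, and show both equal the same double integral.

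Concretely, I would apply the Mecke equation twice — once with the functional picking out, in the Palm configuration, an ordered pair of points (the root $o$ together with a generic point $y$), and use $G$-invariance of $\mathbf Y^{(\lambda)}$ together with the diagonal-equivariance hypothesis on $F$ to replace ``$o$ together with $\mathbf y$'' by a symmetric expression in two generic points. The key point is that for $x,y\in X$ one can choose $\tau\in{\rm Isom}(X)$ interchanging $x$ and $y$ (symmetric spaces are homogeneous and there is an isometry swapping any two points — the geodesic symmetry at the midpoint), so the hypothesis $F(\tau x,\tau y,\tau\circ\overline\eta)=F(x,y,\overline\eta)$ forces a genuine symmetry. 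Thus
\begin{equation*}
\mathbb E\bigg[\sum_{i\in\mathbb N} F(o,Y_i^{(\lambda)},\mathbf Y_o^{(\lambda)})\bigg]
= \lambda\int_X \mathbb E\big[F(o,y,\mathbf Y^{(\lambda)}\cup\{\mathbf o\}\cup\{\mathbf y\})\big]\,{\rm vol}(dy),
\end{equation*}
after integrating out the ${\rm Unif}[0,1]$ mark of $\mathbf y$ and using $G$-invariance to center one insertion at $o$; and the analogous computation for the other side produces
\begin{equation*}
\mathbb E\bigg[\sum_{i\in\mathbb N} F(Y_i^{(\lambda)},o,\mathbf Y_o^{(\lambda)})\bigg]
= \lambda\int_X \mathbb E\big[F(y,o,\mathbf Y^{(\lambda)}\cup\{\mathbf o\}\cup\{\mathbf y\})\big]\,{\rm vol}(dy).
\end{equation*}
Applying the equivariance of $F$ under an isometry interchanging $o$ and $y$ (which maps $\mathbf Y^{(\lambda)}\cup\{\mathbf o\}\cup\{\mathbf y\}$ to a configuration with the same law, since the marks are i.i.d.\ uniform and ${\rm vol}$ is isometry-invariant by Theorem~\ref{thm:BasicSymmetricSpace}~(1)) identifies the two integrands in distribution, giving the claimed equality.

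The main obstacle, and the step deserving the most care, is the bookkeeping of the double application of the Mecke equation together with the interchange of the two inserted points: one must be careful that in $\mathbf Y_o^{(\lambda)}=\mathbf Y^{(\lambda)}\cup\{\mathbf o\}$ the origin is a \emph{fixed} atom rather than one produced by Mecke, so the correct move is a single Mecke insertion of the ``other'' point $y$ into the already-Palm configuration, followed by the observation that $\mathbf Y^{(\lambda)}\cup\{\mathbf o\}$ has the same law as $g^{-1}(\mathbf Y^{(\lambda)}\cup\{\mathbf y\})$ for $g$ with $g^{-1}y=o$ (as in the proof of Lemma~\ref{lm:Palm}). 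Once this is set up, invariance of ${\rm vol}$ under ${\rm Isom}(X)$, the hypothesis on $F$, and Fubini's theorem close the argument; the remaining details are routine and parallel the proof of Lemma~\ref{lm:Palm}~(1).
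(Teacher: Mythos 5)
Your proof is correct, but it takes a more direct route than the paper. The paper's proof of Proposition~\ref{prop:MTPPPP} is a one-liner citing Proposition~\ref{prop:Delaunay}~(3) (the MTP for the embedded Delaunay graph with labels), with the observation that the marked graph determines the marked Poisson point process, so the transport identity for $F$ can be encoded as one for a suitable function $f$ of bi-rooted embedded labeled graphs and read off from \eqref{equ:MTPEmbeddedDelaunayLabels}. You instead prove the statement directly: after the Marking Theorem, a single application of the Mecke equation inserts a marked point $\mathbf y$ into the Palm configuration $\mathbf Y^{(\lambda)}\cup\{\mathbf o\}$, reducing both sides to $\lambda\int_X \mathbb E\big[F(\,\cdot\,,\,\cdot\,,\mathbf Y^{(\lambda)}\cup\{\mathbf o\}\cup\{\mathbf y\})\big]\,{\rm vol}(dy)$, and you then symmetrize the integrand via the geodesic symmetry at the midpoint of the geodesic from $o$ to $y$ --- this isometry interchanges $o$ and $y$, preserves the law of the marked configuration (isometry-invariance of ${\rm vol}$ plus iid ${\rm Unif}[0,1]$ marks, Theorem~\ref{thm:BasicSymmetricSpace}~(1)), and triggers the invariance hypothesis on $F$. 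This is exactly the Mecke-plus-involutive-isometry computation that underlies the paper's proof of Proposition~\ref{prop:Delaunay}~(3); the paper simply performs it once at the level of the Delaunay graph (which it needs anyway for the cluster-frequency argument in Section~\ref{sec:LRO}) and then obtains Proposition~\ref{prop:MTPPPP} as a corollary. Your direct version is a bit cleaner in isolation; the paper's detour avoids repeating the Mecke calculation. One small expositional remark: after the single Mecke insertion at $y$, no ``re-centering via $G$-invariance'' is needed --- the origin remains the fixed Palm atom throughout --- so that intermediate clause can be dropped; the only substantive step is the isometry swap of $o$ and $y$, which you carry out correctly.
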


\begin{proof} This will follow from Proposition~\ref{prop:Delaunay}~(3),
which provides the MTP which will be used in the sequel, as the marked graph appearing there fully determines the information of the marked Poisson point process appearing here.
\end{proof}

\subsubsection{Unimodularity of the Delaunay and embedded Delaunay graphs} We now specialize to the setting of the Delaunay graph.

For $\tau\in{\rm Isom}(X)$ and a bi-rooted embedded graph $(H,u,v,\varphi)$, that is a bi-rooted graph $(H,u,v)$ with an assignment $\varphi$ of marks in $X$ to vertices and edges, define $\tau \circ \varphi$ pointwise. Similarly, for a bi-rooted embedded graph with labels $(H,u,v,\Phi)$, that is a bi-rooted graph $(H,u,v)$ with $\Phi=(\varphi,\varphi')$ consisting of an assignment $\varphi$ of marks in $X$ to vertices and edges and an assignment $\varphi'$ of $[0,1]$-marks to vertices, define $\tau\circ \Phi=(\tau \circ \varphi,\varphi')$. Note that if $(\mathcal G(Y),\psi(Y))$ is the embedded Delaunay graph associated to a point process $Y$, then $(\mathcal G(Y),\tau\circ\psi(Y))$ is equal (as a network) to the embedded Delaunay graph associated to $\tau(Y)$.

\begin{proposition}[Unimodularity of the embedded Delaunay graph] \label{prop:Delaunay} Let $G$ be a non-compact connected semisimple real Lie group and $(X,d_X)$ be its symmetric space. Let $\mathbf Y$ be a Poisson point process $Y$ on $X$ with intensity $c \, {\rm vol}$ for some $c>0$ and with iid ${\rm Unif}[0,1]$-marks. Let $Y_0=Y\cup\{o\}$ and $\mathbf Y_0=\mathbf Y \cup \{\mathbf o\}$, where $\mathbf o$ is the origin equipped with an independent ${\rm Unif}[0,1]$-mark. Let $\mathcal G$ denote the Delaunay graph associated to ${\rm Vor}(Y_0)$ and let $\rho$ denote the vertex corresponding to the cell of the origin. Then the following hold:
\begin{itemize}
\item[{\rm(1)}] The Delaunay graph $(\mathcal G,\rho)$ defines a unimodular random graph. 
\item[{\rm(2)}] Let $\psi$ denote the marking of the embedded Delaunay graph. Then $(\mathcal G,\rho,\psi)$ satisfies 
\begin{equation}\label{equ:MTPEmbeddedDelaunay}
\mathbb E \bigg[ \sum_{x\in V(\mathcal G)} f(\mathcal G,\rho,x,\psi) \bigg] = \mathbb E \bigg[ \sum_{x\in V(\mathcal G)} f(\mathcal G,x,\rho,\psi) \bigg]
\end{equation}
for every non-negative measurable function $f$ of bi-rooted embedded graphs $(H,u,v,\varphi)$ with the additional property that
\begin{equation} \label{equ:DiagonalInvariance1}
f(H,u,v,\tau \circ \varphi)= f(H,u,v,\varphi)
\end{equation}
for every isometry $\tau$ of $X$ which interchanges $\varphi(u)$ and $\varphi(v)$. 
\item[{\rm(3)}] Let $\Psi$ denote the marking of the embedded Delaunay graph with labels. Then $(\mathcal G,\rho,\Psi)$ satisfies 
\begin{equation}\label{equ:MTPEmbeddedDelaunayLabels}
\mathbb E \bigg[ \sum_{x\in V(\mathcal G)} f(\mathcal G,\rho,x,\Psi) \bigg] = \mathbb E \bigg[ \sum_{x\in V(\mathcal G)} f(\mathcal G,x,\rho,\Psi) \bigg]
\end{equation}
for every non-negative measurable function $f$ of bi-rooted embedded labeled graphs $(H,u,v,\Phi=(\varphi,\varphi'))$ with the additional property that 
\begin{equation} \label{equ:DiagonalInvariance}
f(H,u,v,\tau \circ \Phi)= f(H,u,v,\Phi)
\end{equation} 
for every isometry $\tau$ of $X$ which interchanges $\varphi(u)$ and $\varphi(v)$. 
\end{itemize}
\end{proposition}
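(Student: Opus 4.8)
The plan is to derive all three statements from the Mecke equation for the Poisson point process, exploiting that in a symmetric space every pair of points is interchanged by an isometry. I would prove the most general assertion, part~(3), in full; part~(2) then follows by restricting to functions $f$ that ignore the $[0,1]$-labels $\varphi'$, and part~(1) by further restricting to functions of the bare rooted graph, for which the invariance hypothesis~\eqref{equ:DiagonalInvariance1} is vacuous. For~(1) one records in addition that $\mathcal G$ is locally finite by Lemma~\ref{lm:FinExpDegree} and connected a.s.\ (by Lemma~\ref{lm:closed} the cells cover $X$ and only finitely many meet a given geodesic, so consecutive cells along a geodesic between two nuclei are adjacent); statement~(1) is moreover well known for equivariant factors of Poisson processes, cf.~\cite{AL07,AM22}.

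\textbf{Reduction and the Mecke equation.} Since $V(\mathcal G)$ is in canonical bijection with $Y_0=Y\cup\{o\}$ through nuclei, with $\rho$ corresponding to $o$, the $x=\rho$ term of~\eqref{equ:MTPEmbeddedDelaunayLabels} contributes equally to both sides and may be dropped; writing $v_z$ for the vertex with nucleus $z$, it suffices to prove
\[
\mathbb E\Big[\sum_{y\in Y} f\big(\mathcal G,v_o,v_y,\Psi\big)\Big]=\mathbb E\Big[\sum_{y\in Y} f\big(\mathcal G,v_y,v_o,\Psi\big)\Big].
\]
Applying the multivariate Mecke equation~\cite[Theorem 4.1]{LP17} in its marked form (via the Marking Theorem~\cite[Theorem 5.6]{LP17}), both sides become $c\,{\rm vol}\otimes\mathrm{Unif}$-integrals over $X\times[0,1]$ of the expectation of the respective functional evaluated at $\mathbf Y\cup\{\mathbf o\}\cup\{(y,z)\}$. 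Absorbing the $z$-integration into an auxiliary independent $\mathrm{Unif}[0,1]$-label $W$ attached to the inserted nucleus $y$, it is enough to show, for each fixed $y\in X$,
\[
\mathbb E\big[f(\mathcal G(\mathbf Y_0'),v_o,v_y,\Psi)\big]=\mathbb E\big[f(\mathcal G(\mathbf Y_0'),v_y,v_o,\Psi)\big],\qquad \mathbf Y_0':=\mathbf Y\cup\{\mathbf o,(y,W)\},
\]
where $\Psi$ now denotes the labeled-embedded marking of the enlarged configuration.

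\textbf{The swap isometry.} Since $(X,d_X)$ is a geodesically complete Riemannian symmetric space, there is a geodesic from $o$ to $y$, and the geodesic symmetry $\tau_y$ about its midpoint is an isometry of $X$ that interchanges $o$ and $y$. As ${\rm vol}$ is invariant under every isometry of $X$ (Theorem~\ref{thm:BasicSymmetricSpace}~(1)) and $\tau_y$ fixes the marks, $\tau_y(\mathbf Y_0')=\tau_y\mathbf Y\cup\{(y,Z),(o,W)\}$ has the same law as $\mathbf Y\cup\{(o,Z),(y,W)\}=\mathbf Y_0'$, with the distinguished positions $o$ and $y$ matched: here $Z$ is the label of $\mathbf o$, and one uses exchangeability of $(Z,W)$ to carry the labels back onto the correct nuclei. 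Moreover $\tau_y$ induces a mark-preserving isomorphism from $(\mathcal G(\mathbf Y_0'),\tau_y\circ\Psi)$ onto the labeled-embedded Delaunay graph of $\tau_y(\mathbf Y_0')$ (cf.\ the remark preceding the proposition) which sends $v_y$ to the vertex $w_o$ with nucleus $o$ and $v_o$ to the vertex $w_y$ with nucleus $y$. Hence, by isomorphism-invariance of $f$ followed by the hypothesis~\eqref{equ:DiagonalInvariance} — applicable because $\tau_y$ interchanges $\varphi(v_y)=y$ and $\varphi(v_o)=o$ —
\[
f\big(\mathcal G(\mathbf Y_0'),v_y,v_o,\Psi\big)=f\big(\mathcal G(\tau_y\mathbf Y_0'),w_o,w_y,\Psi\big),
\]
and taking expectations, using the distributional identity above (under which $(w_o,w_y)$ matches $(v_o,v_y)$), yields the displayed equality, proving~(3).

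\textbf{Main obstacle.} I expect no deep difficulty, only careful bookkeeping in the last two steps: one must ensure that after applying $\tau_y$ one is comparing $f$ at a configuration genuinely equidistributed with the original — which is exactly why the $z$-integration is converted into the random label $W$, so that $W$ and $Z$ may be swapped — and one must check that the diagonal-invariance hypothesis is precisely what absorbs the $\tau_y$-transformation of the geometric marks. The remaining points (measurability needed to invoke Mecke and Fubini, monotone-convergence arguments since $f\ge 0$, the passage from~(3) to~(2) and~(1), local finiteness and connectedness) are routine. Extremality of $(\mathcal G,\rho)$, which is used later, is a separate consequence of ergodicity of the Palm version (Lemma~\ref{lm:Palm}) and is not addressed here.
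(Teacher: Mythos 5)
Your proposal is correct and follows essentially the same approach as the paper: reduce via the Mecke equation to a pointwise comparison of integrands, then invoke the involutive geodesic symmetry at the midpoint of $o$ and $y$, using distributional invariance of the (marked) Poisson process under isometries together with the diagonal-invariance hypothesis on $f$. The only organizational difference is that you prove (3) first and deduce (2) and (1) by specialization, whereas the paper proves (2) directly and then repeats the argument with an extra Fubini integration over the root's label to obtain (3); your device of absorbing the inserted label into an exchangeable pair $(Z,W)$ is a minor but clean variant of that Fubini step.
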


The Mass Transport Principle~\eqref{equ:MTPEmbeddedDelaunay} may be interpreted as unimodularity of the embedded random graph. It is essentially due to \cite{BPP18}, who proved it for the slightly different Delaunay graph described in Section \ref{sec:Delaunay}. Similarly to~\cite{BPP18}, the key property which yields \eqref{equ:MTPEmbeddedDelaunay}, and \eqref{equ:MTPEmbeddedDelaunayLabels} as well, is the existence of an {\em involutive isometry} at every point $x\in X$, i.e.~the existence of an isometry which fixes $x$ and reverses all geodesics through $x$.  We note that among Riemannian manifolds, this property is particular to symmetric spaces.

We remark that even though the isometry $\tau$ in Proposition~\ref{prop:Delaunay} interchanges $\varphi(u)$ and $\varphi(v)$, the vertices are not swapped in \eqref{equ:DiagonalInvariance1} and \eqref{equ:DiagonalInvariance} as $(H,u,v)$, the isomorphism type of the bi-rooted Delauney graph, does not change when applying $\tau$, c.f. Proposition~\ref{prop:MTPPPP}.

\begin{proof}[Proof of Proposition \ref{prop:Delaunay}] First note that $\mathcal G$ is locally finite almost surely by Lemma \ref{lm:FinExpDegree}. 

(1).~The proof of unimodularity of the Delaunay graph is standard, see e.g.~\cite[Lemma 7.12]{CL16}. Below, we adapt this proof to show \eqref{equ:MTPEmbeddedDelaunay}. 

(2).~Let $f$ be a non-negative measurable function of bi-rooted embedded graphs satisfying~\eqref{equ:DiagonalInvariance1}. For $\eta\in \mathbf M(X)$, denote by $\mathcal G(\eta)$ the Delaunay graph defined using $\eta$, let $\psi(\eta)$ denote the marking for the embedded Delaunay graph and let $v_y$ denote the vertex corresponding to the Voronoi cell of $y\in \eta$. Define the measurable function 
$$
F\colon \mathbf M(X)\times X\to[0,\infty) \ , \ F(\eta,y) := \mathbf 1_{y\in \eta} \, f\big(\mathcal G(\eta \cup \{o\}),v_o,v_y,\psi(\eta \cup \{o\})\big).
$$
By the Mecke equation (see \cite[Theorem 4.1]{LP17})
\begin{equation*}
\mathbb E \bigg[ \sum_{y\in Y} F(Y,y) \bigg] = c \int_X \mathbb E [ F(Y \cup \{y\},y) ] \, {\rm vol}(dy),
\end{equation*}
which may be rewritten as
\begin{equation*}
\mathbb E \bigg[ \sum_{x\in V(\mathcal G) \setminus\{\rho\}} f(\mathcal G,\rho,x,\psi) \bigg] = c \int_X \mathbb E \big[ f\big(\mathcal G(Y \cup \{o,y\}),v_o,v_y,\psi(Y \cup \{o,y\}) \big) \big] \, {\rm vol}(dy).
\end{equation*}
By similar reasoning, we have that
\begin{equation*}
\mathbb E \bigg[ \sum_{x\in V(\mathcal G) \setminus\{\rho\}} f(\mathcal G,x,\rho,\psi) \bigg] = c \int_X \mathbb E \big[ f\big(\mathcal G(Y \cup \{o,y\}),v_y,v_o,\psi(Y \cup \{o,y\}) \big) \big] \, {\rm vol}(dy).
\end{equation*}
To prove \eqref{equ:MTPEmbeddedDelaunay}, we now show that the integrands in the previous two displays are the same: Fix $y\in X$. Let $\tau$ denote an involutive isometry at the midpoint $m$ of the geodesic between $o$ and~$y$; note that $\tau$ interchanges $o$ and $y$. Let $Y'=\tau(Y)$. Distributional invariance of the Poisson point process w.r.t.~isometries implies 
$$
\mathbb E \big[ f\big(\mathcal G(Y \cup \{o,y\}),v_o,v_y,\psi(Y \cup \{o,y\}) \big) \big] = \mathbb E \big[ f\big(\mathcal G(Y' \cup \{o,y\}),v_o,v_y,\psi(Y' \cup \{o,y\}) \big) \big].
$$
The crucial observation is that
$$
(\mathcal G(Y' \cup \{o,y\}),v_o,v_y,\tau \circ \psi(Y' \cup \{o,y\})) = ( \mathcal G(Y\cup\{0,y\}),v_y,v_o, \psi(Y\cup\{0,y\})).
$$
By assumption \eqref{equ:DiagonalInvariance1}, we thus obtain that 
$$
\mathbb E [ f(\mathcal G(Y \cup \{o,y\}),v_o,v_y,\psi(Y \cup \{o,y\})) ] = \mathbb E [ f(\mathcal G(Y\cup\{0,y\}),v_y,v_o, \psi(Y\cup\{0,y\})) ],
$$
and \eqref{equ:MTPEmbeddedDelaunay} follows (up to adding $\mathbb E[f(\mathcal G,\rho,\rho,\psi)]$ to both terms). We now further adapt this argument to show \eqref{equ:MTPEmbeddedDelaunayLabels}.

(3).~By the Marking Theorem (see \cite[Theorem 5.6]{LP17}), $\mathbf Y$ is a Poisson point process on $X\times[0,1]$ with intensity measure $c{\rm vol} \otimes {\rm Unif}[0,1]$. Let $f$ be a non-negative measurable function of bi-rooted embedded labeled graphs satisfying \eqref{equ:DiagonalInvariance}. For $\overline\eta\in \mathbf M(X\times[0,1])$, denote by $\mathcal G(\eta)$ the Delaunay graph defined using $\eta$, let $\Psi(\overline\eta)=(\psi(\eta),\psi'(\overline\eta))$ denote the marking of the embedded Delaunay graph and let $v_y$ denote the vertex corresponding to the cell of $y\in \eta$. For $z_0\in[0,1]$, define the measurable function 
\begin{align*}
& F_{z_0} \colon \mathbf M(X\times[0,1])\times (X\times[0,1])\to[0,\infty) \\
& \qquad \qquad \ F_{z_0} \big(\overline\eta,(y,z)) := \mathbf 1_{(y,z)\in \overline\eta} \ f\big(\mathcal G(\eta \cup \{o\}),v_o,v_y,\Psi(\overline\eta \cup\{o,z_0\})\big).
\end{align*}
 By the Mecke equation (see \cite[Theorem 4.1]{LP17})
\begin{equation*}
\mathbb E \bigg[ \sum_{(y,z) \in \mathbf{Y}} F_{z_0}(\mathbf Y,(y,z)) \bigg] = c \int_{X\times[0,1]} \mathbb E [ F_{z_0}(\mathbf Y \cup \{(y,z)\},(y,z)) ] \ {\rm vol}\otimes{\rm Unif}[0,1](dy,dz).
\end{equation*}
By Fubini's theorem,
\begin{align*}
& \mathbb E \bigg[ \sum_{x\in V(\mathcal G)\setminus\{\rho\}} f(\mathcal G,\rho,x,\Psi) \bigg] = \int_0^1 \mathbb E \bigg[ \sum_{(y,z) \in \mathbf{Y}} F_{z_0}(\mathbf Y,(y,z)) \bigg] dz_0 \\
& \ \ = c \int_{X\times[0,1]\times[0,1]} \mathbb E \big[ f\big(\mathcal G(Y \cup \{o,y\}),v_o,v_y,\Psi(\mathbf Y \cup \{(o,z_0),(y,z)\})) \big] \ {\rm vol}(dy) \, dz \, dz_0.
\end{align*}
By similar reasoning 
\begin{align*}
& \mathbb E \bigg[ \sum_{x\in V(\mathcal G)\setminus\{\rho\}} f(\mathcal G,x,\rho,\Psi) \bigg] \\
& \ \ = c \int_{X\times[0,1]\times[0,1]} \mathbb E \big[ f\big(\mathcal G(Y \cup \{o,y\}),v_y,v_o,\Psi(\mathbf Y \cup \{(o,z_0),(y,z)\})) \big] \ {\rm vol}(dy) \, dz \, dz_0 
\end{align*}
We again show that the integrands in the above two displays are the same: Fix $y\in X$. Let $\tau$ denote an involutive isometry at the midpoint $m$ of the geodesic between $o$ and~$y$ and let $\mathbf Y'=\tau(\mathbf Y)$. By distributional invariance of $\mathbf Y$,
\begin{align*}
& \mathbb E \big[ f\big(\mathcal G(Y \cup \{o,y\}),v_o,v_y,\Psi(\mathbf Y \cup \{(o,z_0),(y,z)\}) \big) \big] \\
& \qquad \qquad = \mathbb E \big[ f\big(\mathcal G(Y' \cup \{o,y\}),v_o,v_y,\Psi(\mathbf Y' \cup \{(o,z_0),(y,z)\}) \big) \big].
\end{align*}
Since again
$$
(\mathcal G(Y' \cup \{o,y\}),v_o,v_y,\tau \circ \Psi(\mathbf Y' \cup \{(o,z_0),(y,z)\})) = ( \mathcal G(Y\cup\{0,y\}),v_y,v_o, \Psi(\mathbf Y \cup \{(o,z_0),(y,z)\})),
$$
assumption \eqref{equ:DiagonalInvariance} implies the claim. The proof of Proposition \ref{prop:Delaunay} is thus complete.
\end{proof}

We point out that Proposition~\ref{prop:Delaunay}~(1) holds more generally for (locally finite) factor graphs of Palm versions of stationary point processes~\cite{AL07,AM22}. Proposition~\ref{prop:Delaunay}~(2) has been shown more generally for Delaunay  -- in the sense of \cite{BPP18} -- graphs of Palm versions of general stationary point processes on symmetric spaces, see \cite[Theorem 1.4]{P18}.

\begin{lemma}[Extremality of the Delaunay graph] \label{lm:DelaunayExt} Let $G$ be a non-compact connected semisimple real Lie group and $(X,d_X)$ be its symmetric space. Let $Y$ be a Poisson point process on $X$ with intensity $c \, {\rm vol}_X$ for some $c>0$. Let $Y_0=Y \cup \{o\}$ and let ${\rm Vor}(Y_0)$ be the associated Voronoi diagram. Let $\mathcal G$ denote the Delaunay graph associated to ${\rm Vor}(Y_0)$, and let $\rho$ be the vertex of $\mathcal G$ corresponding to the cell of the origin. Then the law of $(\mathcal G,\rho)$ is an extremal unimodular probability measure.
\end{lemma}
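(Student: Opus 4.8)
The plan is to deduce extremality from the extremality criterion in Theorem~\ref{thm:Extremal}, namely that a unimodular probability measure is extremal if and only if the invariant $\sigma$-field $\mathcal I$ (events invariant under non-rooted isomorphisms of rooted graphs) is trivial. So the task reduces to showing that every event $A\in\mathcal I$ for the rooted graph $(\mathcal G,\rho)$ satisfies $\mathbb P((\mathcal G,\rho)\in A)\in\{0,1\}$. The natural route is to transfer such an event back to an event about the underlying Poisson point process $Y_0=Y\cup\{o\}$ and invoke ergodicity of the Palm version (Lemma~\ref{lm:Palm}).

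First I would set up the transfer carefully. For a locally finite configuration $\eta$ containing $o$, write $\Phi(\eta):=(\mathcal G(\eta),v_o)\in\mathcal G_\bullet$ for the (isomorphism class of the) rooted Delaunay graph with root the cell of $o$; this is a measurable map from configurations containing $o$ to $\mathcal G_\bullet$, and by Lemma~\ref{lm:FinExpDegree} it a.s.\ lands in the locally finite graphs. The law of $(\mathcal G,\rho)$ is the pushforward of the law of $Y_0$ under $\Phi$. Given $A\in\mathcal I$, set $B_0:=\Phi^{-1}(A)$, an event on configurations containing $o$. The key point is that $B_0$ is, up to the defining condition ``$o\in\eta$'', invariant under the $G$-action: if $g\in G$ and $\eta$ is a configuration containing $o$, then $g\eta$ contains $go$, and $\Phi'(g\eta):=(\mathcal G(g\eta),v_{go})$ is rooted-isomorphic to $\Phi(\eta)$ (the cell structure is carried isometrically), so whether $\Phi(g\eta\setminus\{go\}\cup\{o\})\dots$—more cleanly: the non-rooted isomorphism class of $\mathcal G(g\eta)$ equals that of $\mathcal G(\eta)$, and since $A$ is invariant under changing the root, membership of $\mathcal G(g\eta)$ (with any root) in $A$ is equivalent to membership of $\mathcal G(\eta)$ in $A$. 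Hence $B:=GB_0$ restricted to configurations containing the origin is again $B_0$, which is exactly the hypothesis of Lemma~\ref{lm:Palm}(2). Therefore $\mathbb P(Y_0\in B_0)\in\{0,1\}$, i.e.\ $\mathbb P((\mathcal G,\rho)\in A)\in\{0,1\}$, so $\mathcal I$ is trivial and extremality follows from Theorem~\ref{thm:Extremal}. Unimodularity itself is Proposition~\ref{prop:Delaunay}(1).

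The main obstacle is the bookkeeping in the previous paragraph: making precise that an invariant (root-forgetting) event $A\subseteq\mathcal G_\bullet$ pulls back under $\Phi$ to an event $B_0$ satisfying the hypothesis of Lemma~\ref{lm:Palm}(2), i.e.\ that $B_0$ coincides with the restriction to origin-containing configurations of its $G$-orbit $GB_0$. This hinges on two facts that should be spelled out: (i) for $g\in G$, the Delaunay graph $\mathcal G(g\eta)$ is isomorphic as an unrooted graph to $\mathcal G(\eta)$ via the map induced by $g$ on cells (because $g$ is an isometry of $X$, it sends Voronoi cells of $\eta$ to Voronoi cells of $g\eta$ and preserves the adjacency relation ``non-empty intersection''); and (ii) since $A$ is in the invariant $\sigma$-field, the truth value of ``$(\mathcal G(\zeta),\text{any root})\in A$'' does not depend on the choice of root, for any origin-containing $\zeta$. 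Combining (i) and (ii): $g\eta\in B_0 \iff (\mathcal G(g\eta),v_{go})\in A \iff (\mathcal G(\eta),v_o)\in A \iff \eta\in B_0$ for origin-containing $\eta$ with $go$—wait, one must be slightly careful since $g\eta$ contains $go$, not necessarily $o$; the correct statement is that $B_0$ is characterized among origin-containing configurations by a $G$-invariant property of the whole configuration, which is precisely what Lemma~\ref{lm:Palm}(2) is designed to exploit. Once this is said cleanly, the conclusion is immediate.

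In summary, the proof is: (1) $(\mathcal G,\rho)$ is unimodular by Proposition~\ref{prop:Delaunay}(1); (2) by Theorem~\ref{thm:Extremal}, extremality is equivalent to triviality of the invariant $\sigma$-field $\mathcal I$; (3) any $A\in\mathcal I$ pulls back under the measurable map $\eta\mapsto(\mathcal G(\eta),v_o)$ to an event $B_0$ on origin-containing configurations which equals the restriction of its $G$-orbit, using that isometries of $X$ induce unrooted isomorphisms of Delaunay graphs and that $A$ is root-independent; (4) Lemma~\ref{lm:Palm}(2) gives $\mathbb P(Y_0\in B_0)\in\{0,1\}$, hence $\mathbb P((\mathcal G,\rho)\in A)\in\{0,1\}$; (5) conclude by Theorem~\ref{thm:Extremal}.
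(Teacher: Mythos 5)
Your proposal is correct and matches the paper's proof exactly: the paper's argument is a one-line citation of Proposition~\ref{prop:Delaunay}(1), Theorem~\ref{thm:Extremal}, and Lemma~\ref{lm:Palm}(2), and you have correctly supplied the bookkeeping those citations implicitly require — namely that the pullback $B_0=\Phi^{-1}(A)$ of a root-invariant event satisfies the hypothesis of Lemma~\ref{lm:Palm}(2), because isometries of $X$ induce unrooted isomorphisms of Delaunay graphs and $A\in\mathcal I$ is root-independent.
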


\begin{proof} This follows from Proposition \ref{prop:Delaunay}~(1), Theorem \ref{thm:Extremal} and Lemma \ref{lm:Palm}~(2). \end{proof}

We may also point out that Proposition~\ref{prop:Delaunay}~(1) together with Lemma~\ref{lm:FinExpDegree} imply reversibility of the degree-biased version of the Delaunay graph.

\subsection{Cluster frequencies} \label{sec:Frequency}

In this section, we define a notion of frequency of a cluster suitable for the purposes of this paper.  

Let $\nu_x$ denote the normalized restriction of ${\rm vol}$ to $\mathcal{B}_1(x)$, the open unit ball around $x$, for every $x\in X$.
As every isometry of $(X,d_X)$ preserves $\vol$ by Theorem~\ref{thm:BasicSymmetricSpace}~(1), we have that $\nu_x$ is the push-forward of $\nu_o$ for every isometry $\tau$ that satisfies $\tau(o)=x$.
Let $(R_k)_{k=1}^\infty$ denote \emph{random walk on~$X$} with transition probabilities $\nu_x$ started in $o\in X$, i.e.~the Markov chain on $X$ with $R_0=o$ and 
\begin{equation}
\mathbb P[ R_{k+1}\in \cdot \mid R_1,\ldots,R_k] = \mathbb P[ R_{k+1}\in \cdot \mid R_k] = \nu_{R_k}.
\end{equation}
The random walk started in $x\in X$ will be denoted by $(R^x_k)_k$.

\begin{remark} In this section, the {\em random walk} always refers to the particular random walk defined above and we will only use this random walk to measure cluster frequencies. It seems possible to work with other spatially homogeneous Markov processes.
\end{remark}

Following an approach, which in the discrete setting is due to H\"{a}ggstr\"{o}m and Jonasson \cite[Section 8]{HJ06}, we now show that random walk on $X$ allows us to define an isometry invariant cluster frequency.
Let us emphasize that invariance under all isometries, as opposed to $G$-invariance, will be important in our subsequent applications of the MTP \eqref{equ:MTPEmbeddedDelaunayLabels}.

To be more precise, given $Z\subseteq X$, we will be interested in the asymptotic density of the sequence $({\bf 1}_Z(R_k))_{k=1}^\infty$, where ${\bf 1}_Z$ is the indicator of $Z$.
We will establish the first results regarding this quantity for random closed sets under the weaker assumption of $G$-invariance, as opposed to isometry invariance. We start with a preliminary observation.

\begin{proposition}\label{pr:StationaryWalks}
    Let $\mathcal{Z}$ be a $G$-invariant random closed subset of $X$ and $(R_k)_{k=1}^\infty$ be an independent random walk on $X$.
    Then $({\bf 1}_\mathcal{Z}(R_k))_{k=1}^\infty$ is a stationary process.
\end{proposition}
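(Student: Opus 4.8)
The plan is to show that the process $({\bf 1}_{\mathcal Z}(R_k))_{k\ge 1}$ is stationary by exhibiting, for each $k$, an isometry (or rather a randomized choice of isometry) which ``moves the walk by one step'' while preserving the joint law of walk and random closed set. The key point is that the one-step transition kernel $x\mapsto \nu_x$ is equivariant under isometries: since $\vol$ is isometry-invariant by Theorem~\ref{thm:BasicSymmetricSpace}~(1), for any isometry $\tau$ we have $\tau_*\nu_x=\nu_{\tau(x)}$, and in particular $\nu_x$ is the push-forward of $\nu_o$ under any isometry sending $o$ to $x$. The subtlety (and the reason one cannot just invoke $G$-invariance naively) is that $X$ is \emph{not} a group, so "translating by $R_1$" is not literally a group operation; one has to choose, measurably and in a way consistent with the transition kernel, an isometry $\tau_{R_1}$ with $\tau_{R_1}(o)=R_1$, and then use that $\mathcal Z$ is $G$-invariant --- here one must check that $G$ acts transitively on $X$ and that a measurable section $x\mapsto\tau_x$ into $G$ (equivalently into ${\rm Isom}(X)$) can be chosen, which holds because $G\to X=G/K$ admits a Borel section.

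Concretely, I would argue as follows. Fix $n\ge 1$ and a bounded measurable test function $h$ on $\{0,1\}^n$; we must show $\mathbb E[h({\bf 1}_{\mathcal Z}(R_1),\dots,{\bf 1}_{\mathcal Z}(R_n))]=\mathbb E[h({\bf 1}_{\mathcal Z}(R_2),\dots,{\bf 1}_{\mathcal Z}(R_{n+1}))]$. Condition on $R_1=x$. Given $R_1=x$, the process $(R_2,\dots,R_{n+1})$ has the law of a random walk $(R^x_1,\dots,R^x_n)$ started at $x$, independent of $\mathcal Z$. Pick a Borel map $x\mapsto g_x\in G$ with $g_x\cdot o=x$ (this exists since $G\to G/K$ has a Borel section). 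Because $\nu$ is $G$-equivariant, $(g_x R_1^o,\dots, g_x R_n^o)\overset{(d)}{=}(R^x_1,\dots,R^x_n)$ where $(R^o_k)_k$ is an independent copy of the walk from $o$; and because $\mathcal Z$ is $G$-invariant, $g_x^{-1}\mathcal Z\overset{(d)}{=}\mathcal Z$ and this holds jointly with the (independent) walk from $o$. Hence
\begin{align*}
\mathbb E\big[h({\bf 1}_{\mathcal Z}(R_2),\dots,{\bf 1}_{\mathcal Z}(R_{n+1}))\,\big|\,R_1=x\big]
&=\mathbb E\big[h({\bf 1}_{\mathcal Z}(g_x R^o_1),\dots,{\bf 1}_{\mathcal Z}(g_x R^o_n))\big]\\
&=\mathbb E\big[h({\bf 1}_{g_x^{-1}\mathcal Z}(R^o_1),\dots,{\bf 1}_{g_x^{-1}\mathcal Z}(R^o_n))\big]\\
&=\mathbb E\big[h({\bf 1}_{\mathcal Z}(R^o_1),\dots,{\bf 1}_{\mathcal Z}(R^o_n))\big],
\end{align*}
and the last line does not depend on $x$. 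Integrating over the law of $R_1$ (and noting $R^o\overset{(d)}{=}R$) gives exactly $\mathbb E[h({\bf 1}_{\mathcal Z}(R_1),\dots,{\bf 1}_{\mathcal Z}(R_n))]$, which is the desired identity. Since $n$ and $h$ were arbitrary, $({\bf 1}_{\mathcal Z}(R_k))_{k\ge 1}$ is stationary.

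The only genuinely delicate point is the measurable choice $x\mapsto g_x$ and the verification that the equivariance of the kernel $\nu$ combines correctly with the $G$-invariance of $\mathcal Z$ so that the change of variables above is justified jointly (not just marginally). This is routine but worth stating carefully: one uses that $\mathcal Z$ and the walk are independent, that $G$-invariance of $\mathcal Z$ means $g\cdot\mathcal Z\overset{(d)}{=}\mathcal Z$ for each fixed $g$ (hence, by independence, the pair $(g_x^{-1}\mathcal Z, R^o)\overset{(d)}{=}(\mathcal Z,R^o)$ for each fixed $x$), and then conditions on $R_1=x$ before applying this. Everything else --- that $\nu_x=(\tau)_*\nu_o$ for any isometry $\tau$ with $\tau(o)=x$, and hence that $(g_x R^o_k)_k$ is a walk from $x$ --- follows immediately from isometry-invariance of $\vol$ in Theorem~\ref{thm:BasicSymmetricSpace}~(1).
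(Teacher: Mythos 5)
Your proof is correct and takes essentially the same approach as the paper: condition on the value of the first (resp. $m$-th) step, pick $g\in G$ sending $o$ to that value, use $G$-equivariance of the transition kernel to identify the shifted walk with $g$ applied to a fresh walk from $o$, and then absorb $g$ into $\mathcal Z$ by $G$-invariance and independence. The only cosmetic difference is that you verify shift-by-one and remark on the Borel section $x\mapsto g_x$, whereas the paper proves shift-by-$m$ directly and leaves the measurable choice of $g$ implicit; these are interchangeable.
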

\begin{proof}
    We need to show that 
    \begin{equation*}
    ({\bf 1}_\mathcal{Z}(R_k))_k \overset{(d)}{=} ({\bf 1}_\mathcal{Z}(R_{k+m}))_k
    \end{equation*}
    for every $m\in\mathbb N$. To see this, it suffices to show that for every $(\sigma_0,\ldots,\sigma_\ell)\in\{0,1\}^\ell$
    $$\mathbb{P}({\bf 1}_\mathcal{Z}(R_0)=\sigma_0,\dots,{\bf 1}_\mathcal{Z}(R_\ell)=\sigma_\ell)=\mathbb{P}({\bf 1}_\mathcal{Z}(R_m)=\sigma_0,\dots,{\bf 1}_\mathcal{Z}(R_{m+\ell})=\sigma_\ell).$$
    Let $x\in X$ and choose $g\in G$ with $g\cdot o=x$. Then 
    $$
    (R_0^x,\ldots,R_\ell^x) \overset{(d)}{=}(go,gR_1,\ldots,gR_{\ell}),
    $$
    where $(R^x_k)_k$ denotes the random walk starting at $x$. Hence
    \begin{align*}
    \mathbb P ({\bf 1}_\mathcal{Z}(R_m)=\sigma_0,\dots,{\bf 1}_\mathcal{Z}(R_{m+\ell})=\sigma_\ell \mid R_m = x) & = \mathbb P ({\bf 1}_\mathcal{Z}(R_0^x)=\sigma_0,\dots,{\bf 1}_\mathcal{Z}(R^x_\ell)=\sigma_\ell) \\
    & = \mathbb P ({\bf 1}_\mathcal{Z}(go)=\sigma_0,\dots,{\bf 1}_\mathcal{Z}(gR_{\ell})=\sigma_\ell) \\
    & = \mathbb P ({\bf 1}_{g^{-1}\mathcal{Z}}(o)=\sigma_0,\dots,{\bf 1}_{g^{-1}\mathcal{Z}}(R_{\ell})=\sigma_\ell) \\
    & = \mathbb P ({\bf 1}_{\mathcal{Z}}(o)=\sigma_0,\dots,{\bf 1}_{\mathcal{Z}}(R_{\ell})=\sigma_\ell)
    \end{align*}
   where we have used $G$-invariance of $\mathcal Z$ and the fact that $\mathcal Z$ and $(R_k)$ are independent in the last step. Since $x\in X$ was arbitrary, the claim follows. 
\end{proof}

\begin{corollary} \label{cor:Birkhoff}
    Let $\mathcal{Z}$ be a $G$-invariant random closed subset of $X$ and $(R_k)_{k=1}^\infty$ be an independent random walk on $X$.
    Then
    $$\beta(\mathcal{Z},(R_k)_k):=\lim_{n\to\infty}\frac{1}{n}\sum_{i=0}^{n-1} {\bf 1}_\mathcal{Z}(R_i)$$
    exists almost surely and in $L^1$.
\end{corollary}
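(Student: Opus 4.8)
The plan is to deduce the statement directly from Proposition~\ref{pr:StationaryWalks} together with Birkhoff's pointwise ergodic theorem. First I would record that the sequence $(\mathbf 1_{\mathcal Z}(R_i))_{i=0}^\infty$ is well defined and measurable: since $\mathcal Z$ is a random closed set it is in particular Borel, and $R_0=o$, so there is nothing to adjust even though Proposition~\ref{pr:StationaryWalks} is phrased with the index starting at $1$ (its proof in fact already involves $R_0$). By that proposition, $(\mathbf 1_{\mathcal Z}(R_i))_{i\ge 0}$ is a stationary process. Realizing it in the usual way on the shift space, i.e. letting $\mathbb Q$ denote its law on $\{0,1\}^{\mathbb N}$, $T$ the left shift (which preserves $\mathbb Q$ precisely because the process is stationary), and $F$ the projection to the zeroth coordinate, puts us in the standard setting of the ergodic theorem with $F\in L^1(\mathbb Q)$; integrability is immediate since $0\le F\le 1$.

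Next I would apply Birkhoff's pointwise ergodic theorem to $F$ to conclude that $\frac1n\sum_{i=0}^{n-1}F\circ T^i$ converges $\mathbb Q$-almost surely, equivalently $\frac1n\sum_{i=0}^{n-1}\mathbf 1_{\mathcal Z}(R_i)$ converges almost surely, to $\mathbb E_{\mathbb Q}[F\mid\mathcal I]$, where $\mathcal I$ is the $T$-invariant $\sigma$-field. For the $L^1$ convergence one may either invoke the mean ergodic theorem directly, or simply note that the Cesàro averages are uniformly bounded by $1$, so that the almost sure convergence upgrades to $L^1$ convergence by bounded convergence. This yields the existence of $\beta(\mathcal Z,(R_k)_k)$ both almost surely and in $L^1$, completing the proof.

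There is no real obstacle here; the only point I would state carefully is that Birkhoff's theorem does not require ergodicity of the shift, so $\beta(\mathcal Z,(R_k)_k)$ is in general a genuine random variable rather than a deterministic constant — it is measurable with respect to the invariant data of the joint law of $\mathcal Z$ and the walk. I would flag this explicitly, as the subsequent analysis of cluster frequencies relies on $\beta$ being random and varying from cluster to cluster.
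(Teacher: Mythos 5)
Your proposal matches the paper's argument: the paper likewise derives the corollary from Proposition~\ref{pr:StationaryWalks} (stationarity of $(\mathbf 1_{\mathcal Z}(R_k))_k$) together with Birkhoff's pointwise ergodic theorem, citing Durrett for the standard reference. Your additional remarks on the shift-space realization, the bounded-convergence upgrade to $L^1$, and the fact that the limit is a genuine invariant-$\sigma$-field-measurable random variable are correct and consistent with how $\beta$ is used later in the paper.
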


\begin{proof} This is a consequence of Proposition~\ref{pr:StationaryWalks} and the ergodic theorem, see for instance \cite[Theorem 6.2.1]{D19}.
\end{proof}

In fact, the following analogue of \cite[Lemma 8.2]{HJ06} shows that the limit in Corollary \ref{cor:Birkhoff} does not depend on the random walk path.

\begin{proposition}\label{pr:DefinitionOfFrequency}
     Let $\mathcal{Z}$ be a $G$-invariant random closed subset of $X$ and $(R_k)_{k=1}^\infty$ be an independent random walk on $X$.
    Then almost surely the value $\beta(\mathcal{Z},(R_k)_k)$ depends only on $\mathcal Z$, i.e.~does not depend on $(R_k)_k$.
\end{proposition}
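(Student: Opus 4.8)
The plan is to use a pair of independent random walks and the tail-triviality that comes from ergodicity. Fix the $G$-invariant random closed set $\mathcal Z$ and let $(R_k)_k$, $(R'_k)_k$ be two independent random walks on $X$ (independent of each other and of $\mathcal Z$), both started at $o$. By Corollary~\ref{cor:Birkhoff} both limits $\beta=\beta(\mathcal Z,(R_k)_k)$ and $\beta'=\beta(\mathcal Z,(R'_k)_k)$ exist a.s.\ and in $L^1$, and by symmetry they have the same distribution. It suffices to show $\beta=\beta'$ a.s. The key point is that, conditionally on $\mathcal Z$, the value $\beta$ is a \emph{deterministic} function of $\mathcal Z$; then the same is true of $\beta'$ and the two must agree.

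First I would argue the conditional determinism of $\beta$ given $\mathcal Z$. Condition on a realization $Z$ of $\mathcal Z$. Then $({\bf 1}_Z(R_k))_k$ is a function of the random walk path only. The limit $\beta$, when it exists, is a tail random variable of the random walk: changing finitely many steps $R_0,\dots,R_m$ does not change the Cesàro limit. So $\beta$ is measurable with respect to the tail $\sigma$-field of $(R_k)_k$. The random walk on $X$ with step kernel $\nu_x$ is, by homogeneity, a random walk driven by i.i.d.\ increments in the sense that $R_k$ is obtained from $R_{k-1}$ by applying an i.i.d.\ sequence of isometries; hence its tail $\sigma$-field is trivial by the Kolmogorov zero--one law (or: it is a factor of an i.i.d.\ process, whose tail is trivial). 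Therefore, conditionally on $\mathcal Z=Z$, the random variable $\beta$ is a.s.\ equal to a constant $F(Z)$, and $F$ is a measurable function of $Z$. Running the same argument for $(R'_k)_k$ gives that, conditionally on $\mathcal Z=Z$, $\beta'=F(Z)$ a.s.\ with the \emph{same} function $F$, because the conditional law of $\beta'$ given $\mathcal Z=Z$ equals that of $\beta$ given $\mathcal Z=Z$ (both walks have the same law and are independent of $\mathcal Z$). Consequently $\beta=F(\mathcal Z)=\beta'$ a.s., which is exactly the claim: $\beta(\mathcal Z,(R_k)_k)$ depends only on $\mathcal Z$.

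The only slightly delicate step is making rigorous that ``$\beta$ is tail-measurable in the walk and the walk's tail is trivial.'' For the first part one uses that for each fixed $m$, $\tfrac1n\sum_{i=0}^{n-1}{\bf 1}_Z(R_i)$ and $\tfrac1n\sum_{i=0}^{n-1}{\bf 1}_Z(R_{i+m})$ have the same limit when the limit exists; hence on the full-measure event where the limit exists, $\beta$ is $\sigma(R_m,R_{m+1},\dots)$-measurable for every $m$, i.e.\ tail-measurable. For the second part, realize the walk as $R_k=g_1g_2\cdots g_k\cdot o$ for an i.i.d.\ sequence $(g_i)$ of random isometries with the appropriate law (such a representation exists precisely because the step kernel is $G$-equivariant and $G$ acts transitively, cf.\ the construction in Proposition~\ref{pr:StationaryWalks}); the tail of $(R_k)_k$ is contained in the tail of the i.i.d.\ sequence $(g_i)$, which is trivial by Kolmogorov. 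I expect this measurability/triviality bookkeeping, rather than any substantive new idea, to be the main obstacle; everything else is a soft conditioning argument. An alternative, perhaps cleaner, route avoids tail-triviality entirely: show directly that $\mathbb E[(\beta-\beta')^2\mid\mathcal Z]=0$ by expanding $\beta$ and $\beta'$ as $L^2$ limits and using conditional independence of the two walks given $\mathcal Z$ together with the explicit covariance structure coming from the Markov kernel; this reduces the statement to the a.s.\ convergence already granted by Corollary~\ref{cor:Birkhoff}. I would present whichever of the two is shorter to write cleanly, likely the tail-triviality version.
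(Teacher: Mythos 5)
There is a genuine gap, and it is fatal: you claim that the tail $\sigma$-field of the random walk $(R_k)_k$ on $X$ is trivial, justified either by ``Kolmogorov's zero--one law'' or by ``factor of an i.i.d.\ process.'' Neither justification is valid. Writing $R_k = g_1\cdots g_k\cdot o$ with $(g_i)_i$ i.i.d.\ does \emph{not} place $\sigma(R_m,R_{m+1},\dots)$ inside $\sigma(g_m,g_{m+1},\dots)$, since each $R_m$ already depends on all of $g_1,\dots,g_m$; so Kolmogorov's theorem does not apply, and the tail of a function of an i.i.d.\ sequence need not be trivial. In fact the tail here is genuinely non-trivial: $G$ is a non-amenable semisimple Lie group and the step distribution is spread-out, so the walk converges a.s.\ to a random point of the Furstenberg boundary, and the limiting boundary point is a non-constant tail-measurable random variable. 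Conditioning on $\mathcal Z$ cannot help, as $\mathcal Z$ is independent of the walk. A structural symptom of the error is that your argument never uses $G$-invariance of $\mathcal Z$ in the crucial step, whereas the conclusion is false without it (take $\mathcal Z$ a deterministic half-space of $\mathbb H^2$: then $\beta$ is roughly $\mathbf 1\{\xi\in\overline{Z}\cap\partial\mathbb H^2\}$ with $\xi$ the random boundary limit, which does depend on the walk). Your sketched ``$L^2$ alternative'' is also circular: conditional independence of the two walks gives $\mathbb E[(\beta-\beta')^2]=2\,\mathbb E[\mathrm{Var}(\beta\mid\mathcal Z)]$, so showing this vanishes is literally the statement to be proved and does not reduce to anything simpler.

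The paper's proof (following H\"aggstr\"om--Jonasson) avoids tail triviality entirely. It applies L\'evy's $0$--$1$ law to the martingale $\mathbb P(\beta\le c\mid R_0,\dots,R_m,\mathcal Z)$, uses that the Ces\`aro limit is unchanged after deleting the first $M$ steps together with the Markov property to collapse the conditioning to $(R_M,\mathcal Z)$, and then uses $G$-invariance of $\mathcal Z$ and independence from the walk to ``reset'' $R_M$ to the origin, concluding that $\mathbb P(\beta\le c\mid\mathcal Z)$ is a.s.\ within $\eps$ of $\{0,1\}$. That last $G$-invariance step is exactly the ingredient missing from your argument, and it is what makes the conclusion true despite the non-trivial tail.
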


In the setting of Proposition \ref{pr:DefinitionOfFrequency}, we may thus define the {\em frequency} of $\mathcal{Z}$ with respect to the random walk $(R_k)_k$ to be the almost sure limit 
\begin{equation} \label{def:frequency}
\beta(\mathcal{Z}):=\beta(\mathcal{Z},(R_k)_k) = \lim_{n\to\infty}\frac{1}{n}\sum_{i=0}^{n-1} {\bf 1}_\mathcal{Z}(R_i).
\end{equation}

\begin{proof}[Proof of Proposition \ref{pr:DefinitionOfFrequency}]
    The argument is identical with the proof of \cite[Lemma~8.2]{HJ06}.
    Namely, by Corollary~\ref{cor:Birkhoff} and L\'{e}vy's $0$-$1$-law we have for every $c\in [0,1]$ that
    $$\lim_{m\to\infty} \mathbb{P}(\beta(\mathcal{Z},(R_k)_k)\le c \ | \ R_0,\dots,R_m,\mathcal{Z})={\bf 1}_{\beta(\mathcal{Z},(R_k)_k)\le c}$$
    almost surely. This implies that for every $\eps>0$ there is $M\in \mathbb{N}$ such that
    $$
    \mathbb{P}(\beta(\mathcal{Z},(R_k)_k)\le c \ | \ R_0,\dots,R_M,\mathcal{Z}) \in[0,\eps]\cup [1-\eps,1]
    $$
    with probability at least $1-\eps$. Since 
    $$
    \lim_{n\to\infty}\frac{1}{n}\sum_{i=0}^{n-1} {\bf 1}_\mathcal{Z}(R_i) = \lim_{n\to\infty}\frac{1}{n}\sum_{i=M}^{M+n-1} {\bf 1}_\mathcal{Z}(R_i)
    $$ 
   where the right-hand side depends on $R_0,\ldots,R_M$ only through $R_M$, we obtain that
    $$
    \mathbb{P}\bigg( \lim_{n\to\infty}\frac{1}{n}\sum_{i=M}^{M+n-1} {\bf 1}_\mathcal{Z}(R_i) \le c \ \bigg| \ R_M,\mathcal{Z}\bigg)\in[0,\eps]\cup [1-\eps,1
    ]$$
    with probability at least $1-\eps$. By $G$-invariance of $\mathcal{Z}$ and independence of $(R_k)_k$ and $\mathcal Z$, we conclude that
    $$\mathbb{P}(\beta(\mathcal{Z},(R_k)_k)\le c \ | \ \mathcal{Z})\in[0,\eps]\cup [1-\eps,1]$$
    with probability at least $1-\eps$.
    As $c$ and $\eps$ were arbitrary, the proof is finished.
\end{proof}

Next, we prove an auxiliary claim about the distribution of $R_k$.

\begin{lemma}\label{lm:DistributionRW}
    Let $(R_k)_{k=1}^\infty$ be random walk on $X$ and let $k\in \mathbb{N}$.
    Then the distribution of~$R_k$ is supported on $\mathcal{B}_{k}(o)$ with Radon--Nikodym derivative 
    $$
    F_k := \frac{ d \mathbb P(R_k\in \cdot)}{d {\rm vol}_{|\mathcal B_k(o)}} > 0.
    $$ 
\end{lemma}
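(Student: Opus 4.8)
The plan is to argue by induction on $k\in\mathbb{N}$, proving simultaneously the three assertions: $R_k\in\mathcal{B}_k(o)$ almost surely, $\mathbb{P}(R_k\in\cdot)$ is absolutely continuous with respect to ${\rm vol}$ restricted to $\mathcal{B}_k(o)$, and the resulting density $F_k$ is strictly positive on $\mathcal{B}_k(o)$. Throughout, write $c_1:={\rm vol}(\mathcal{B}_1(o))$, which is finite by Theorem~\ref{thm:BasicSymmetricSpace}~(3) and positive by Theorem~\ref{thm:BasicSymmetricSpace}~(5); since ${\rm vol}$ is invariant under all isometries of $(X,d_X)$ by Theorem~\ref{thm:BasicSymmetricSpace}~(1) and $G$ acts transitively, we have ${\rm vol}(\mathcal{B}_1(x))=c_1$ for every $x\in X$.

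For the base case $k=1$, by definition $R_1\sim\nu_o$, i.e.\ ${\rm vol}$ restricted to the \emph{open} ball $\mathcal{B}_1(o)$ and normalized, so $F_1=c_1^{-1}\mathbf 1_{\mathcal{B}_1(o)}>0$ on $\mathcal{B}_1(o)$, and $R_1\in\mathcal{B}_1(o)$ a.s. For the inductive step I would condition on $R_k$ and use the disintegration together with the inductive hypothesis to write $\mathbb{P}(R_{k+1}\in A)=\int_{\mathcal{B}_k(o)}\nu_x(A)\,F_k(x)\,{\rm vol}(dx)$. Since $\nu_x(A)=c_1^{-1}\int_A\mathbf 1\{d_X(x,y)<1\}\,{\rm vol}(dy)$, applying Fubini--Tonelli gives
\[
\mathbb{P}(R_{k+1}\in A)=\int_A\Big(c_1^{-1}\!\!\int_{\mathcal{B}_k(o)\cap\mathcal{B}_1(y)}F_k(x)\,{\rm vol}(dx)\Big)\,{\rm vol}(dy),
\]
so $\mathbb{P}(R_{k+1}\in\cdot)$ has density $F_{k+1}(y)=c_1^{-1}\int_{\mathcal{B}_k(o)\cap\mathcal{B}_1(y)}F_k(x)\,{\rm vol}(dx)$ against ${\rm vol}$.

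It then remains to read off support and positivity from this formula. For the support: if $d_X(o,y)\ge k+1$ then every $x\in\mathcal{B}_k(o)$ satisfies $d_X(x,y)\ge d_X(o,y)-d_X(o,x)>1$, hence $\mathcal{B}_k(o)\cap\mathcal{B}_1(y)=\emptyset$ and $F_{k+1}(y)=0$; equivalently a single step moves distance strictly less than $1$, so $R_{k+1}\in\mathcal{B}_{k+1}(o)$ a.s. For positivity, fix $y$ with $d_X(o,y)<k+1$, take a geodesic $\gamma$ from $o$ to $y$, and choose $z$ on $\gamma$ with $d_X(o,z)<k$ and $d_X(z,y)<1$: if $d_X(o,y)<1$ take $z=o$ (using $k\ge 1$), otherwise take $z=\gamma(d_X(o,y)-1+\delta)$ with $\delta\in(0,\,k+1-d_X(o,y))$. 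Then $\mathcal{B}_k(o)\cap\mathcal{B}_1(y)$ is a nonempty open set, hence contains some $\mathcal{B}_\varepsilon(z)$, which has positive ${\rm vol}$-measure by Theorem~\ref{thm:BasicSymmetricSpace}~(5) and isometry-invariance of ${\rm vol}$; since $F_k>0$ on $\mathcal{B}_k(o)$ by the inductive hypothesis, the integral defining $F_{k+1}(y)$ is strictly positive.

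The argument is essentially routine; the only mild subtlety—and the closest thing to an obstacle—is to work consistently with the \emph{open} balls, so that ``distance $<k$ after $k$ steps'' holds almost surely rather than merely ``$\le k$'', together with the elementary geodesic construction producing an interior point of $\mathcal{B}_k(o)\cap\mathcal{B}_1(y)$. Neither point is genuinely difficult.
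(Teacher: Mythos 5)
Your proof is correct, and the overall induction structure matches the paper's, but the inductive step is handled differently. The paper establishes $F_{k+1}>0$ \emph{indirectly} via mutual absolute continuity: it assumes $\mathbb P(R_{k+1}\in A)=0$, deduces $\nu_x(A)=0$ for ${\rm vol}$-a.e.\ $x\in\mathcal B_k(o)$ from $F_k>0$, upgrades this to ``for \emph{every} $x\in\mathcal B_k(o)$'' by a small openness argument, and then covers $\overline{\mathcal B}_{k+1-\varepsilon}(o)$ by finitely many balls $\mathcal B_1(x_i)$ (using properness) to conclude ${\rm vol}(A)=0$. You instead apply Fubini--Tonelli to produce the explicit pointwise formula $F_{k+1}(y)=c_1^{-1}\int_{\mathcal B_k(o)\cap\mathcal B_1(y)}F_k\,d{\rm vol}$, from which both the support claim and the strict positivity are read off directly (the latter via a short geodesic construction showing $\mathcal B_k(o)\cap\mathcal B_1(y)$ is a nonempty open set of positive volume). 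Your route is more direct and gives a concrete representative of the density that is positive \emph{everywhere} on $\mathcal B_{k+1}(o)$, not just a.e.; the price is the explicit Fubini manipulation, whereas the paper's argument is somewhat more measure-theoretically hands-off. Both are valid, and the outputs are equivalent for the purposes for which the lemma is used (Proposition~\ref{pr:Invariance}).
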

\begin{proof}
    The fact that the distribution of $R_k$ is concentrated on $\mathcal{B}_{k}(o)$ follows directly from the definition. Regarding the additional part, note that the Radon--Nikodym derivative of $\nu_o=\mathbb P^{R_1}$ with respect to ${\rm vol}$ is given by a scalar multiple of ${\bf 1}_{\mathcal{B}_1(o)}$. Hence the claim holds for $k=1$.
    Suppose it holds for $k\ge 1$. Then, for every measurable $A\subset X$, we have that
    \begin{equation} \label{equ:Fk}
    \mathbb P ( R_{k+1} \in A) = \int_X \mathbb P ( R_{k+1} \in A \mid R_k = x) \mathbb P^{R_k}(d x) = \int_{\mathcal B_k(o)} \nu_x(A) \, F_k \, {\rm vol} (dx).
    \end{equation}
    
    To show that $\mathbb P^{R_{k+1}}$ is absolutely continuous with respect to ${\rm vol}_{|\mathcal B_{k+1}(o)}$, let $A\subset \mathcal B_{k+1}(o)$ measurable with ${\rm vol}(A)=0$. Similarly to $\nu_0$, $\nu_x$ is absolutely continuous with respect to ${\rm vol}$ for every $x\in X$, hence $\nu_x(A)=0$. Hence \eqref{equ:Fk} implies that $\mathbb P(R_{k+1}\in A)=0$, which proves the claimed absolute continuity. 
    
    In particular, the Radon-Nikodym derivative $F_{k+1} := d \mathbb P^{R_{k+1}}/d {\rm vol}_{|\mathcal B_{k+1}(o)}$ exists. To show that $F_{k+1}>0$, it suffices to prove mutual absolute continuity. To that end, let $A\subset \mathcal B_{k+1}(o)$ measurable with $\mathbb P(R_{k+1}\in A)=0$. Since $F_k>0$ by the induction hypothesis, \eqref{equ:Fk} implies that $\nu_x(A)=0$ for ${\rm vol}_{|\mathcal B_{k}(o)}$-almost every $x$. Since ${\rm vol}_{|\mathcal B_1(x)}$ is absolutely continuous with respect to $\nu_x$, we also have that ${\rm vol}_{|\mathcal B_1(x)}(A)=0$ for ${\rm vol}_{|\mathcal B_{k}(o)}$-almost every $x$. We claim that     \begin{equation} \label{equ:volB1x}
     {\rm vol}_{|\mathcal B_1(x)}(A)=0 \quad \text{for every} \, \, x \in \mathcal B_k(o).
     \end{equation}
     Suppose there is $x\in \mathcal B_k(o)$ such that ${\rm vol}_{|\mathcal B_1(x)}(A)>0$. By monotonicity 
     $$
     {\rm vol}_{|\mathcal B_1(x)}(A)= {\rm vol}(A \cap \mathcal B_1(x)) = \lim_{\eps\to0} \, {\rm vol} (A \cap \mathcal B_{1-\eps}(x)),
     $$ 
     hence ${\rm vol} (A \cap \mathcal B_{1-\eps_0}(x))>0$ for some $\eps_0>0$. If $y\in X$ with $d_X(x,y)<\eps_0$, then $\mathcal B_{1-\eps_0}(x)\subset \mathcal B_1(y)$ and thus ${\rm vol}_{|\mathcal B_1(y)}(A) \ge {\rm vol} (A \cap \mathcal B_{1-\eps_0}(x))>0$. Hence ${\rm vol}_{|\mathcal B_1(y)}(A)>0$ for all $y\in \mathcal B_{\eps_0}(x)$, a set of positive volume. This proves \eqref{equ:volB1x}.
     
     Now note that ${\rm vol}(A) := \lim_{\eps\to0} \,{\rm vol}(A \cap \mathcal B_{k+1-\eps})$ by monotonicity. To prove ${\rm vol}(A)=0$, it thus suffices to show that ${\rm vol}(A \cap \mathcal B_{k+1-\eps})=0$ for every $\eps>0$. For each $\eps>0$, 
     $$
     \mathcal {\overline B}_{k+1-\eps}(o) \subset \bigcup_{x\in B_{k}(o)} \mathcal B_1(x)
     $$
     is an open cover of the compact set $\mathcal {\overline B}_{k+1-\eps}$, hence $\mathcal B_{k+1-\eps}(o)\subset \bigcup_{i=1}^n \mathcal B_1(x_i)$ for some $n\ge1$ and $x_1,\ldots,x_n \in \mathcal B_{k(o)}$. By \eqref{equ:volB1x}, ${\rm vol}(A \cap \mathcal B_{k+1-\eps}) \le \sum_{i=1}^n {\rm vol}_{|\mathcal B_1(x_i)}(A) = 0$ as claimed.
\end{proof}

We now show that $\beta(\mathcal{Z})$ defined in \eqref{def:frequency} above is isometry invariant. In fact, we shall prove a stronger statement. So far we have associated frequencies to $G$-invariant random closed subsets of $X$. The next step is to associate frequencies to other subsets $Z$ of $X$, and in particular to clusters of $\mathcal Z$. We will show in Proposition \ref{pr:Invariance} below that these frequencies are also isometry invariant.

 Let $Z\subset X$ closed and let $(R_k)_{k=1}^\infty$ be random walk on $X$. The {\em frequency} of $Z$ is defined as
\begin{equation} \label{def:FrequencyZ}
\beta(Z) := \beta(Z,(R_k)_k) :=  \lim_{n\to\infty}\frac{1}{n}\sum_{i=0}^{n-1} {\bf 1}_{Z}(R_i)
\end{equation}
provided that the limit exists almost surely and does not depend on the random walk.
Similarly, if $\mathcal Z$ is a random closed subset of $X$ and, for $y\in X$, $C_y(\mathcal Z)$ denotes the cluster of $\mathcal Z$ containing~$y$, the {\em cluster frequency} of $C_y$ is defined as
\begin{equation} \label{def:FrequencyCluster}
\beta(C_y) := \lim_{n\to\infty}\frac{1}{n}\sum_{i=0}^{n-1} {\bf 1}_{C_y}(R_i)
\end{equation}
provided that the limit exists almost surely and does not depend on the random walk.

\begin{proposition}[Invariance of frequencies] \label{pr:Invariance}
    Let $(R_k)_{k=1}^\infty$ be random walk on $X$. Let $Z$ be a closed subset of $X$ such that $\beta(Z)$ exists a.s.~and does not depend on the random walk.
    Then, for every isometry $\tau$ of $(X,d_X)$,
    $$\beta(\tau(Z)) = \beta(Z).$$
    In particular, if $\mathcal{Z}$ is a $G$-invariant enough random closed subset of $X$ which is independent of $(R_k)_k$, then $\beta(\mathcal{Z})$ is an isometry invariant measurable function.
\end{proposition}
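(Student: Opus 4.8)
The plan is to exploit the defining property of the random walk $(R_k)$, namely that its transition kernels $\nu_x$ are the push-forwards of $\nu_o$ under any isometry sending $o$ to $x$, together with the fact that $\beta(Z)$ is assumed not to depend on the random walk path. First I would observe that for any isometry $\tau$ of $(X,d_X)$, the process $(\tau(R_k))_{k=0}^\infty$ is itself a random walk on $X$ in the sense of Section~\ref{sec:Frequency}, except that it is started at $\tau(o)$ rather than at $o$. Indeed, $\tau(R_0)=\tau(o)$, and since $\nu_{R_k}$ is the push-forward of $\nu_o$ under any isometry carrying $o$ to $R_k$, applying $\tau$ shows that the conditional law of $\tau(R_{k+1})$ given $\tau(R_0),\ldots,\tau(R_k)$ is $\nu_{\tau(R_k)}$, because $\tau$ carries an isometry $o\mapsto R_k$ to an isometry $\tau(o)\mapsto\tau(R_k)$ and $\nu$ transforms correctly. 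So $(\tau(R_k))_k$ is distributed as $(R^{\tau(o)}_k)_k$, the walk started at $\tau(o)$.

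Next I would compute, using the substitution $R_i \mapsto \tau(R_i)$ inside the indicator,
\begin{align*}
\beta(\tau(Z),(R_k)_k) &= \lim_{n\to\infty}\frac1n\sum_{i=0}^{n-1}\mathbf 1_{\tau(Z)}(R_i) = \lim_{n\to\infty}\frac1n\sum_{i=0}^{n-1}\mathbf 1_{Z}(\tau^{-1}(R_i)),
\end{align*}
and $(\tau^{-1}(R_i))_i$ is a random walk started at $\tau^{-1}(o)$. The right-hand side therefore equals $\beta(Z,(\tau^{-1}(R_k))_k)$, the frequency of $Z$ measured along the walk started at $\tau^{-1}(o)$. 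To conclude $\beta(\tau(Z))=\beta(Z)$ I need that the frequency of $Z$ is the same whether measured along the walk started at $o$ or along a walk started at an arbitrary point $x=\tau^{-1}(o)$. This is where the hypothesis ``$\beta(Z)$ does not depend on the random walk'' has to be interpreted carefully: one reads it as independence of the full path, and by Lemma~\ref{lm:DistributionRW} the law of $R_k$ for $k\ge 1$ has a strictly positive density on $\mathcal B_k(o)$; in particular, for $k$ large, $\mathbb P(R_k \in \cdot)$ and $\mathbb P(R^x_k\in\cdot)$ are mutually absolutely continuous on overlapping balls. Since $\beta(Z)$ is a tail random variable of the walk (it is unchanged by deleting any finite prefix, cf.\ the argument in the proof of Proposition~\ref{pr:DefinitionOfFrequency}), its almost-sure value computed from the walk started at $o$ and from the walk started at $x$ must coincide: couple the two walks after the first time they land at a common point (which happens a.s.\ by positivity of the densities), or alternatively run one step of the $o$-walk, note $R_1$ has positive density near $x$, and invoke path-independence of $\beta$. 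Either way one gets $\beta(Z,(R^x_k)_k) = \beta(Z,(R_k)_k)$ a.s., hence $\beta(\tau(Z))=\beta(Z)$.

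For the final sentence, once $\beta(\mathcal Z)$ is known to exist a.s.\ and to be path-independent — which is Proposition~\ref{pr:DefinitionOfFrequency} — the first part applied pathwise (conditionally on $\mathcal Z$, using independence of $\mathcal Z$ and $(R_k)_k$) gives $\beta(\tau(\mathcal Z)) = \beta(\mathcal Z)$ for every isometry $\tau$; measurability is immediate since $\beta(\mathcal Z)$ is an a.s.\ limit of measurable functions of $(\mathcal Z,(R_k)_k)$. The main obstacle is the middle step: making rigorous that the $\omega$-wise limit $\beta(Z)$ is insensitive to the starting point of the walk, not merely to a re-randomization of its increments. The cleanest route is probably the coupling argument — two independent walks from $o$ and from $x$ almost surely visit a common vertex (both have positive density on, say, $\mathcal B_{k}(o)\cap\mathcal B_k(x)$ for large $k$), and from the first common visit onwards they can be run identically, so their Cesàro limits of $\mathbf 1_Z$ agree; combined with the given path-independence of $\beta(Z)$ along the $o$-walk and the same for the $x$-walk (which follows from the identical argument of Proposition~\ref{pr:DefinitionOfFrequency} relativized to start at $x$), this closes the gap.
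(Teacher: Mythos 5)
Your reduction to showing $\beta(Z,(R^x_k)_k)=\beta(Z)$ for $x=\tau^{-1}(o)$, and your identification of Lemma~\ref{lm:DistributionRW} as the technical input, both match the paper. The issue is how you close the gap, and both of your proposed routes fail as stated. The coupling route asserts that two \emph{independent} walks from $o$ and from $x$ ``almost surely visit a common vertex''; in a continuum this is false, since for each $k$ the laws of $R_k$ and $R^x_k$ are absolutely continuous and hence $\mathbb P(R_k=R^x_k)=0$, so ``from the first common visit onwards they can be run identically'' is vacuous. (A Doeblin coupling built from the density overlap would repair this, but that is a construction, not a property of independent walks.) The alternative route states that ``$R_1$ has positive density near $x$'', which is wrong: $R_1\sim\nu_o$ is supported on $\mathcal B_1(o)$, possibly far from $x$. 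Finally, ``relativizing Proposition~\ref{pr:DefinitionOfFrequency} to start at $x$'' is unavailable for a deterministic $Z$, since that proposition's argument rests on $G$-invariance of a \emph{random} closed set.

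The paper's actual closing argument is your second route, corrected: choose $k$ so that $\mathcal B_1(x)\subseteq\mathcal B_k(o)$. By the hypothesis, the Markov property, and prefix-invariance of the Cesàro limit, for $\mathbb P^{R_k}$-a.e.\ $y$ one has $\beta(Z,(R^y_m)_m)=\beta(Z)$ a.s.; since $F_k>0$ on $\mathcal B_k(o)\supseteq\mathcal B_1(x)$ by Lemma~\ref{lm:DistributionRW} and $\nu_x$ is the normalized restriction of ${\rm vol}$ to $\mathcal B_1(x)$, one gets $\nu_x\ll\mathbb P^{R_k}$, hence the same holds for $\nu_x$-a.e.\ $y$. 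Conditioning on $R^x_1\sim\nu_x$ and using prefix-invariance once more gives $\beta(Z,(R^x_m)_m)=\beta(Z)$ a.s. Note the key point is \emph{not} ``mutual absolute continuity on overlapping balls'' between the laws of $R_k$ and $R^x_k$ --- those laws have different supports, so neither dominates the other globally; what is needed is the one-sided domination $\nu_x\ll\mathbb P^{R_k}$ of the full first-step law of the $x$-walk, which is exactly what the choice of $k$ supplies.
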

\begin{proof}
    Let $\tau$ be an isometry.
    Showing that $\beta(Z)=\beta(\tau(Z)):=\beta(\tau(Z),(R_k)_k)$ is the same as showing that
    $$\beta(Z)=\beta(Z,(R^{x}_k)_k)$$
    for almost every $(R^x_k)_k$, where $x=\tau^{-1}(o)$.
    Indeed, as every isometry preserves $\vol$ by Theorem~\ref{thm:BasicSymmetricSpace}~(1), we have that
    $$(\tau^{-1}(R_k))_{k}\overset{(d)}{=}(R^x_k)_k$$
    by the definition of the random walk.
    
    Choose $k\in \mathbb{N}$ such that $\mathcal{B}_1(x)\subseteq \mathcal{B}_{k}(o)$. By the assumption, for $\mathbb P^{R_k}$-almost every $y\in X$, we have that
    $$\beta(Z)=\beta(Z,(R^y_k)_k)$$
    for almost every realization of $(R^y_k)_k$.
    By Lemma~\ref{lm:DistributionRW}, the Radon--Nikodym derivative $F_{k}$ of $\mathbb P^{R_k}$ with respect to ${\rm vol}$ is strictly positive on $\mathcal{B}_{k}(o)$, and in particular on $\mathcal{B}_1(x)$.
    It follows that for $\nu_x$-almost every $y\in \mathcal{B}_1(x)$, we have that
    $$\beta(Z)=\beta(Z,(R^y_k)_k)$$
    for almost every realization of $(R^y_k)_k$.
    We infer that
    $$\beta(Z)=\beta(Z,(R^x_k)_k)$$
    for almost every realization of $(R^x_k)_k$, as desired.
    The additional part of the statement is a direct consequence of the main part together with Proposition~\ref{pr:DefinitionOfFrequency}.
\end{proof}

The next theorem asserts that almost surely all Poisson-Voronoi percolation clusters have a cluster frequency. To state the result, recall that we denote by $\Omega$ the space of configurations of $\omega_p^{(\lambda)}$ and write $C_y(\omega)$ for the cluster of $\omega\in\Omega$ containing $y\in X$.

\begin{theorem}[Cluster frequencies]\label{thm:ClusterFrequency}
    Let $\lambda>0$ and $p\in(0,1]$. 
    There is a measurable map
    $$\tilde{\beta}:X\times \Omega\to [0,1],$$
    which is diagonally invariant under isometries,
   and an isometry invariant event $\mathcal A\subset \Omega$ with $\mathbb{P}^{(\lambda)}_p(\mathcal A)=1$ such that    
   \begin{equation}\label{eq:Satisfy}
       \tilde{\beta}(y,\omega)=\beta(C_y(\omega))
   \end{equation}
   for every $(y,\omega)\in X\times\mathcal A$.
\end{theorem}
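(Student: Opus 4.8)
The plan is to define $\tilde\beta$ by brute force as the almost-sure limit of random-walk occupation times and then verify that it has the desired invariance and agrees with the cluster frequency on a large set. First I would fix $\lambda>0$ and $p\in(0,1]$ and work on the product probability space carrying $\omega_p^{(\lambda)}$ together with an independent random walk $(R_k)_k$ on $X$. For each $y\in X$ and each configuration $\omega\in\Omega$, the cluster $C_y(\omega)$ is a closed set by Lemma~\ref{lm:closed}, so it makes sense to ask for its frequency $\beta(C_y(\omega))$ in the sense of \eqref{def:FrequencyCluster}. The key point, which I would extract from Corollary~\ref{cor:Birkhoff} and Proposition~\ref{pr:DefinitionOfFrequency}, is that for a \emph{fixed} closed set $Z$ the limit $\beta(Z,(R_k)_k)$ need not exist, but when $Z$ is replaced by the random cluster $C_y(\omega_p^{(\lambda)})$, the pair $(\omega_p^{(\lambda)}, (R_k)_k)$ is amenable to the ergodic theorem. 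Concretely: for fixed $y$, translate by $g\in G$ with $g\cdot o=y$ so as to reduce to $y=o$, and observe that $({\bf 1}_{C_o(\omega_p^{(\lambda)})}(R_k))_k$ is a stationary sequence by the same computation as in Proposition~\ref{pr:StationaryWalks} (here one uses $G$-invariance of $\omega_p^{(\lambda)}$, Lemma~\ref{lm:BasicVoronoi}(i), and independence of the walk). Hence $\beta(C_o(\omega_p^{(\lambda)}))$ exists a.s.\ and in $L^1$, and by the argument of Proposition~\ref{pr:DefinitionOfFrequency} (Lévy's $0$--$1$ law plus $G$-invariance) it does not depend on the walk. Define $\tilde\beta(y,\omega)$ to be this a.s.\ limit when it exists and, say, $0$ otherwise.

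Next I would pin down the event $\mathcal A$. Let $\mathcal A_0\subseteq X\times\Omega$ be the set of $(y,\omega)$ for which the limit $\lim_n \frac1n\sum_{i<n}{\bf 1}_{C_y(\omega)}(R_i)$ exists a.s.\ and is walk-independent and equal to $\tilde\beta(y,\omega)$; by the previous paragraph, for each fixed $y$ the slice $\{\omega:(y,\omega)\in\mathcal A_0\}$ has probability $1$. By Fubini (integrating over $y$ against, say, $\mathrm{vol}$ restricted to a ball, or more cleanly against the $\sigma$-finite measure $\mathrm{vol}$), the set of $\omega$ for which $(y,\omega)\in\mathcal A_0$ for $\mathrm{vol}$-a.e.\ $y$ has probability $1$. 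To upgrade ``a.e.\ $y$'' to ``every $y$'' inside each cluster, I would use the random-walk absolute-continuity lemma: given $\omega$ and $y$, if $\tilde\beta(y',\omega)=\beta(C_{y'}(\omega))$ holds for $\mathrm{vol}$-a.e.\ $y'$ in some ball around $y$, then since $C_{y'}(\omega)=C_y(\omega)$ for $y'$ in the path-component of $y$ and, crucially, the argument of Proposition~\ref{pr:Invariance} (built on Lemma~\ref{lm:DistributionRW}: the law of $R_k$ has strictly positive density on $\mathcal B_k(o)$) shows $\beta(C_y(\omega))$ is unchanged under moving the start point within a small ball, one concludes $\tilde\beta(y,\omega)=\beta(C_y(\omega))$ for \emph{every} $y\in X$. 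Then let $\mathcal A$ be this event; replacing it by $\bigcap_{\tau\in \mathrm{Isom}(X)}\tau^{-1}\mathcal A$ (or, to stay measurable, by the largest isometry-invariant subset, using that $\mathrm{Isom}(X)$ is lcsc and acts measurably) we may assume $\mathcal A$ is isometry invariant; it still has full measure because $\omega_p^{(\lambda)}$ is isometry invariant by Lemma~\ref{lm:BasicVoronoi}(i).

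For the diagonal isometry invariance of $\tilde\beta$, i.e.\ $\tilde\beta(\tau y,\tau\omega)=\tilde\beta(y,\omega)$, I would invoke Proposition~\ref{pr:Invariance}: on $\mathcal A$ we have $\tilde\beta(y,\omega)=\beta(C_y(\omega))$ and $\tilde\beta(\tau y,\tau\omega)=\beta(C_{\tau y}(\tau\omega))=\beta(\tau(C_y(\omega)))$, and $\beta(\tau Z)=\beta(Z)$ for closed $Z$ whose frequency exists and is walk-independent. Off $\mathcal A$ — which, being isometry invariant, is mapped to itself by $\tau$ — both sides are $0$ by definition, so the identity holds everywhere. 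Measurability of $(y,\omega)\mapsto\tilde\beta(y,\omega)$ is routine: it is a pointwise $\limsup$ of the measurable functions $(y,\omega,\text{walk})\mapsto\frac1n\sum_{i<n}{\bf 1}_{C_y(\omega)}(R_i)$ followed by taking an expectation over the walk (the limit, where it exists, equals its own conditional expectation given $(y,\omega)$, so one may \emph{define} $\tilde\beta$ as that conditional expectation of the $\limsup$, which is automatically measurable and agrees with the a.s.\ limit on the full-measure set).

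\textbf{Main obstacle.} The delicate point is not the ergodic-theorem input but the passage from ``$\mathrm{vol}$-almost every starting point $y$'' to ``every $y$'', carried out simultaneously for all clusters, while keeping everything jointly measurable in $(y,\omega)$ and isometry invariant. This is where Lemma~\ref{lm:DistributionRW} and the covering argument in the proof of Proposition~\ref{pr:Invariance} do the real work: one must show that the (random) exceptional $\mathrm{vol}$-null set of bad starting points can be avoided by a one-step (or finite-step) move of the walk whose law has a positive density, and that doing so does not change the limiting occupation density of the cluster. Handling the measurable choice of an isometry-invariant full-measure event $\mathcal A$ — rather than a mere $G$-invariant one — also requires a little care, but is standard given that $\mathrm{Isom}(X)$ is a Polish group acting measurably on $\Omega$.
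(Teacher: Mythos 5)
There is a genuine gap in your first paragraph that the rest of the argument cannot repair. You claim that $(\mathbf 1_{C_o(\omega_p^{(\lambda)})}(R_k))_k$ is stationary ``by the same computation as in Proposition~\ref{pr:StationaryWalks}'' and hence that $\beta(C_o)=\beta(C_o,(R_k)_k)$ exists a.s.\ and is walk-independent by Corollary~\ref{cor:Birkhoff} and Proposition~\ref{pr:DefinitionOfFrequency}. But Propositions~\ref{pr:StationaryWalks} and~\ref{pr:DefinitionOfFrequency} both require $\mathcal Z$ to be a \emph{$G$-invariant} random closed subset of $X$, and $C_o(\omega_p^{(\lambda)})$ is not $G$-invariant: for $g\in G$ we have $g^{-1}\cdot C_o(\omega_p^{(\lambda)})=C_{g^{-1}o}(g^{-1}\omega_p^{(\lambda)})\overset{d}{=}C_{g^{-1}o}(\omega_p^{(\lambda)})$, which is the cluster of the point $g^{-1}o$, not the cluster of $o$. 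Concretely, $\mathbf 1_{C_o}(R_k)=\mathbf 1\{R_k\leftrightarrow o\}$, and the joint law of $(\mathbf 1\{R_k\leftrightarrow o\})_{k\geq m}$ is not that of $(\mathbf 1\{R_k\leftrightarrow o\})_{k\geq 0}$: in the former the walker has already drifted away from the fixed reference point $o$, while in the latter it starts at $o$. The $G$-invariance of $\omega_p^{(\lambda)}$ does not rescue this, because in the change-of-variables in the proof of Proposition~\ref{pr:StationaryWalks} one needs $g^{-1}\mathcal Z\overset{d}{=}\mathcal Z$, and translating $\omega_p^{(\lambda)}$ moves the anchor of the cluster along with it. So the a.s.\ existence and walk-independence of $\beta(C_o)$ — the central fact your whole definition of $\tilde\beta$ rests on — is not established.

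The paper gets around exactly this point with a trick from H\"aggstr\"om--Jonasson: introduce an auxiliary $G$-invariant random closed set $\Gamma$ by independently deleting each cluster of $\omega_p^{(\lambda)}$ with probability $1/2$, and a second set $\Gamma'$ defined by flipping only the coin of $C_o$, so that $C_o=\Gamma\triangle\Gamma'$. Each of $\Gamma$, $\Gamma'$ \emph{is} $G$-invariant, so Propositions~\ref{pr:StationaryWalks} and~\ref{pr:DefinitionOfFrequency} apply to them, and $\beta(C_o,(R_k)_k)=\pm(\beta(\Gamma,(R_k)_k)-\beta(\Gamma',(R_k)_k))$ inherits a.s.\ existence and walk-independence. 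This step is essential and is missing from your proposal. A secondary, smaller point: your Fubini-plus-density argument to pass from ``a.e.\ $y$'' to ``all $y$'' is workable but more awkward than necessary; the paper instead intersects $g_n\cdot\mathcal A_o$ over a countable dense $\{g_n\}\subset G$ and then uses that every cluster of $\omega_p^{(\lambda)}$ has non-empty interior, so for each $y$ there is some $g_n o$ in $C_y$ with $C_{g_n o}=C_y$. You should also be more careful with ``the largest isometry-invariant subset'' — the paper's explicit construction of $\mathcal A$ avoids having to justify that this is measurable.
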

\begin{proof}
    Define $\tilde{\beta}(y,\omega^{(\lambda)}_p):= \beta(C_y)=\beta(C_y,(R_k)_k)$ provided that the right-hand side exists almost surely and does not depend on the random walk, otherwise set $\tilde{\beta}(y,\omega^{(\lambda)}_p):=0$.
    The map $\tilde{\beta}$ is isometry invariant by Proposition~\ref{pr:Invariance}, and it is easy to check that it is also measurable.

    Set $\mathcal{A}_o$ for the event that of $\tilde{\beta}(o,\omega^{(\lambda)}_p)= \beta(C_o)=\beta(C_o,(R_k)_k)$ exists almost surely and does not depend on the random walk.
    
    \begin{claim}
    We have that $\mathbb{P}^{(\lambda)}_p(\mathcal A_o)=1$.
    \end{claim}
    \begin{proof}
        The argument is identical with the proof of \cite[Theorem~8.4]{HJ06}.
        Consider the random closed subset $\Gamma\subset X$ defined by erasing each cluster of $\omega_p^{(\lambda)}$ independently with probability $1/2$. It is easy to see that $\Gamma$ is $G$-invariant. Let again $C_o$ denote the cluster of the origin in $\omega_p^{(\lambda)}$ and let $C^\Gamma_o$ denote the cluster of the origin in $\Gamma$.
        Note that conditioned on $C_o\not=\emptyset$ and $C^\Gamma_o\not=\emptyset$, $C_o$ and $C^\Gamma_o$ have the same distribution.
        On the same probability space, define $\Gamma'$ as follows.
        If $C_o=\emptyset$, set $\Gamma=\Gamma'$.
        Otherwise, let $\Gamma'$ be such that $C_o=\Gamma\triangle \Gamma'$, that is, $\Gamma'$ takes the opposite of the outcome of the coin flip at $C_o$.
        It follows that $\Gamma'$ and $\Gamma$ have the same distribution. By Proposition~\ref{pr:DefinitionOfFrequency}, we have that almost surely
        \begin{equation}\label{eq:cluster}
            \lim_{n\to\infty} \frac{1}{n} \sum_{\ell=0}^n \left({\bf 1}_{\Gamma}(R_k)-{\bf 1}_{\Gamma'}(R_k)\right)
        \end{equation}
        takes the same value for almost every realization of $(R_k)_{k}$.
        Note that the value in \eqref{eq:cluster} is, up to a sign, equal to $\beta(C_o,(R_k)_k)$.
        In particular, $\tilde\beta(C_o,\omega^{(\lambda)}_p)$ is well-defined almost surely as a function of $\Gamma$.
        This finishes the proof by the above observation that, conditioned on $C_o\not=\emptyset$ and $C^\Gamma_o\not=\emptyset$, $C_o$ and $C^\Gamma_o$ have the same distribution.
    \end{proof}

    Now, the proof is finished as follows.
    Let $\{g_n\}_{n=1}^\infty$ be a countable dense subset of $G$ and define $\mathcal{A}=\bigcap_{n\in \mathbb{N}} g_n\cdot \mathcal{A}_o$, where $\omega^{(\lambda)}_p\in g\cdot \mathcal{A}_0$ if and only if $g^{-1}\cdot \omega^{(\lambda)}_p\in \mathcal{A}_0$ for every $g\in G$.
    We claim that $\mathcal{A}$ works as required.

    Clearly $\mathcal{A}$ is a $\mathbb{P}^{(\lambda)}_p$-conull event as $g_n\cdot \mathcal{A}_0$ is $\mathbb{P}^{(\lambda)}_p$-conull for every $n\in \mathbb{N}$.
    By the definition and Proposition~\ref{pr:Invariance}, we have that if $\omega^{(\lambda)}_p\in \mathcal{A}$, then for every $n\in \mathbb{N}$
    $$\tilde{\beta}(g_n\cdot o,\omega^{(\lambda)}_p)=\beta(C_{g_n\cdot o},(R_k)_k)$$
    for almost every realization of $(R_k)_k$.
    Given an arbitrary $y\in X$, there are two cases.
    Either $y\not\in \omega^{(\lambda)}_p$, which clearly implies that 
    $$\tilde{\beta}(y,\omega^{(\lambda)}_p)=\beta(C_y,(R_k)_k)=0$$
    for every realization of $(R_k)_k$, or there is $n\in \mathbb{N}$ such that $g_n\cdot o\in C_y$ because every cluster has non-empty interior.
    In that case we have
    $$\beta(C_{y},(R_k)_k)=\beta(C_{g_n\cdot o},(R_k)_k)$$
    for every realization of $(R_k)_k$.
    This shows that $X\times \mathcal{A}$ satisfies \eqref{eq:Satisfy}.
    
    Finally, the proof is finished by noting that $\mathcal{A}$ is invariant under all isometries of $(X,d_X)$.
    Indeed, let $\omega^{(\lambda)}_p\in \mathcal{A}$.
    By a combination of Proposition~\ref{pr:Invariance} combined with the fact that $X\times \mathcal{A}$ satisfies \eqref{eq:Satisfy}, we have that for every isometry $\tau$
    $$g_n^{-1}\cdot \tau(\omega^{(\lambda)}_p)\in \mathcal{A}_o$$
    for every $n\in \mathbb{N}$.
    In particular,
    $$\tau(\omega^{(\lambda)}_p)\in \bigcap_{n\in \mathbb{N}} g_n\cdot \mathcal{A}_o$$
    and the proof is finished.
\end{proof}

Let us conclude by recording the following well-known consequence of long-range order for later use.

\begin{lemma}[Long-range order implies positive frequency]\label{lm:LROimpliesPositiveFreq}
    Let $\omega_p^{(\lambda)}$ be Poisson-Voronoi percolation with parameters $\lambda>0$ and $p\in(0,1]$ such that
    $$
    \inf_{x,y\in X} \mathbb P_{p}^{(\lambda)}\big( x \leftrightarrow y \big) \geq \delta
    $$
    for some $\delta>0$.
    Let $\tilde{\beta}$ be the map from Theorem~\ref{thm:ClusterFrequency}. Then
    $$\mathbb E \big[ \tilde{\beta}(o,\omega^{(\lambda)}_p) \big]  \ge \delta.$$    
    In particular, $\tilde{\beta}(o,\omega^{(\lambda)}_p)>0$ with positive probability.   
\end{lemma}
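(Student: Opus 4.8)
The plan is to unwind the definition of $\tilde\beta$ and to compute its expectation by Fubini's theorem on the product of the percolation configuration space and the path space of the random walk. By Theorem~\ref{thm:ClusterFrequency} there is an isometry invariant, $\mathbb P_p^{(\lambda)}$-conull event $\mathcal A$ on which $\tilde\beta(o,\omega)=\beta(C_o(\omega))$, meaning that $\frac1n\sum_{i=0}^{n-1}\mathbf 1_{C_o(\omega)}(R_i)$ converges, as $n\to\infty$, to $\tilde\beta(o,\omega)$ almost surely in the random walk, and the limit does not depend on its realisation. Since $\omega_p^{(\lambda)}$ and $(R_k)_k$ are independent and all the partial averages lie in $[0,1]$, bounded convergence lets me first average over the walk for fixed $\omega\in\mathcal A$, giving
\[
\tilde\beta(o,\omega)=\lim_{n\to\infty}\frac1n\sum_{i=0}^{n-1}\mathbb P\big(R_i\in C_o(\omega)\big),
\]
and then average over $\omega$, giving
\[
\mathbb E\big[\tilde\beta(o,\omega_p^{(\lambda)})\big]=\lim_{n\to\infty}\frac1n\sum_{i=0}^{n-1}\mathbb P\big(R_i\in C_o\big),
\]
where on the right-hand side $\mathbb P$ now denotes the joint law of $\omega_p^{(\lambda)}$ and the independent walk.

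The second step is to bound each term of the Ces\`aro average from below. The event $\{R_i\in C_o\}$ is exactly $\{o\overset{\omega_p^{(\lambda)}}{\longleftrightarrow}R_i\}$, so conditioning on the value of $R_i$ and using independence of the walk and the percolation configuration,
\[
\mathbb P\big(R_i\in C_o\big)=\int_X \mathbb P_p^{(\lambda)}\big(o\leftrightarrow x\big)\,\mathbb P^{R_i}(dx)\ \ge\ \delta,
\]
by the hypothesis $\inf_{x,y\in X}\mathbb P_p^{(\lambda)}(x\leftrightarrow y)\ge\delta$ (for $i=0$ this reads $\mathbb P_p^{(\lambda)}(o\in\omega_p^{(\lambda)})\ge\delta$). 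Hence every term in the average is at least $\delta$, and therefore $\mathbb E\big[\tilde\beta(o,\omega_p^{(\lambda)})\big]\ge\delta$. The ``in particular'' part is then immediate: as $\tilde\beta(o,\omega_p^{(\lambda)})$ takes values in $[0,1]$ and has expectation at least $\delta>0$, it must be strictly positive with positive probability.

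The points requiring (only minor) care are the measurability of $\omega\mapsto\mathbb P(R_i\in C_o(\omega))$, which follows from the measurability of $\tilde\beta$ in Theorem~\ref{thm:ClusterFrequency}, the two invocations of bounded convergence, and the Fubini step on the product space; all of the real content is packaged into Theorem~\ref{thm:ClusterFrequency} (that the cluster frequency of the origin is almost surely well-defined and independent of the walk) together with the assumed uniform lower bound on the two-point function. I do not expect any genuine obstacle here.
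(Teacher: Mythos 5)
Your proof is correct and follows essentially the same route as the paper's: by Theorem~\ref{thm:ClusterFrequency} one expresses $\tilde\beta(o,\omega)$ as the almost-sure limit of the Ces\`aro averages of $\mathbf 1_{C_o}(R_i)$, then swaps limits and expectations via Fubini--Tonelli and dominated/bounded convergence, and finally bounds each term $\mathbb P(R_i\in C_o)$ from below by $\delta$ using independence of the walk and the configuration together with the uniform lower bound on the two-point function. The only cosmetic difference is the order in which Fubini and the convergence theorem are applied; the content is the same.
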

\begin{proof}
Let $\mathbb P^{(\lambda)}_p$ denote the distribution of $\omega_p^{(\lambda)}$ and let $\mathbf P$ denote the distribution of  random walk $(R_k)$ on $X$. By Theorem~\ref{thm:ClusterFrequency}, the Fubini-Tonelli theorem and the dominated convergence theorem we have that 
\begin{align*}
\mathbb E \big[ \tilde{\beta}(o,\omega^{(\lambda)}_p) \big] & = \int \left(\int \lim_{n\to\infty} \frac{1}{n}\sum_{\ell=0}^n{\bf 1}_{C_o}(R_\ell) \ d\mathbf P\right) \ d\mathbb{P}^{(\lambda)}_p \\
& = \int \left(\int \lim_{n\to\infty} \frac{1}{n}\sum_{\ell=0}^n{\bf 1}_{C_o}(R_\ell) \ d\mathbb{P}^{(\lambda)}_p\right) \ d \mathbf P \\
& = \lim_{n\to\infty} \frac{1}{n}\sum_{\ell=0}^n \int \left(\int {\bf 1}_{C_o}(R_\ell) \ d\mathbb{P}^{(\lambda)}_p\right) \ d \mathbf P \\
& \ge \lim_{n\to\infty} \frac{1}{n}\sum_{\ell=0}^n \int \inf_{x,y\in X} \mathbb P_{p}^{(\lambda)}\big( x \leftrightarrow y \big) \ d \mathbf P \ge \delta.
\end{align*}
The proof of Lemma \ref{lm:LROimpliesPositiveFreq} is thus complete.
\end{proof}

\begin{remark}[Clusters of maximal frequency vs.~uniqueness] In the situation of Lemma~\ref{lm:LROimpliesPositiveFreq}, it is possible to prove that almost surely there exists a unique cluster $C_\infty$ with $\beta(C_\infty)>0$. Moreover, $C_\infty$ is unbounded and its cluster frequency is a positive constant. Upon first glance, this seems to prove Theorem \ref{thm:LRO}.
However, we have not yet ruled out the possibility that there are unbounded clusters with frequency $0$. In classical percolation theory, this possibility does not occur by the {\em Indistinguishability Theorem}~\cite{LS99}. While a version of this result exists for percolation on unimodular random graphs \cite[Theorem 6.15]{AL07}, that notion of indistinguishability does not directly apply to the frequency defined above; more precisely, it does not state that Poisson-Voronoi percolation clusters are indistinguishable with respect to $G$-invariant events~$A\subset\Omega$. This would be the ''natural'' notion of indistinguishability. Since it is possible to prove Theorem~\ref{thm:LRO} by more elementary arguments, see~Section \ref{sec:ProofLRO}, we do not pursue the question about ''natural'' indistinguishability here. Let us, however, mention that in \cite{T21} ''natural'' indistinguishability is proved for a wide, but different, class of continuum percolations and that the method may be relevant for carrying out the alternative approach in our setting.
\end{remark}

\subsection{Proof of Theorem \ref{thm:LRO}} \label{sec:ProofLRO}

Define 
\begin{equation}
    p_{\rm LRO}(\lambda) := \inf \Big\{ p \, : \, \inf_{x,y\in X} \mathbb P_{p}^{(\lambda)}\big( x \leftrightarrow y \big)>0 \Big\}.
\end{equation}
As pointed out before, proving $p_{\rm LRO}(\lambda) \le p_u(\lambda)$ is standard. More precisely, suppose that $\omega_p^{(\lambda)}$ has a unique unbounded cluster~$C_{p,\infty}^{(\lambda)}$. By $G$-invariance and the fact that $G$ acts transitively on $X$, we have that
$$
p_\infty(\lambda) := \mathbb P \big( o \in C_{p,\infty}^{(\lambda)} \big) = \mathbb P \big( x \in C_{p,\infty}^{(\lambda)} \big) > 0
$$
for all $x\in X$. The Harris-FKG-Inequality, see Lemma \ref{lm:FKG}, then implies 
$$
\inf_{x,y\in X} \mathbb P_{p}^{(\lambda)}\big( x \leftrightarrow y \big) \ge \inf_{x,y\in X} \mathbb P\big( x,y\in C_{p,\infty}^{(\lambda)}\big) \geq p_\infty(\lambda)^2>0.
$$
This proves that $p_{\rm LRO}(\lambda) \le p_u(\lambda)$. We now prove the converse, which is an immediate consequence of the following claim.

\begin{claim}\label{cl:unique}
    Let $\lambda>0$ and $p\in(0,1]$ be such that 
    \begin{equation} \label{equ:LRO1}
        \inf_{x,y\in X} \mathbb P_{p}^{(\lambda)}\big( x \leftrightarrow y \big)>0.
    \end{equation}
    Then, for every $p'>p$, $\omega_{p'}^{(\lambda)}$ has a unique unbounded cluster almost surely.
\end{claim}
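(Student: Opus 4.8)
The strategy is to invoke the long-range order threshold of Theorem~\ref{thm:threshold}, but first we must artificially boost the density of $\omega_{p'}^{(\lambda)}$ without changing the number of unbounded clusters. Fix $p' > p$ and suppose \eqref{equ:LRO1} holds with some $\delta := \inf_{x,y} \mathbb P_p^{(\lambda)}(x\leftrightarrow y) > 0$. The key point is that $\omega_{p'}^{(\lambda)}$ \emph{contains} a copy of $\omega_p^{(\lambda)}$ (couple them by retaining a cell in the $p$-configuration iff its label $Z_i \le p$, and in the $p'$-configuration iff $Z_i \le p'$), so every cluster of $\omega_p^{(\lambda)}$ is contained in a cluster of $\omega_{p'}^{(\lambda)}$. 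I would then build an auxiliary random closed set $\mathcal Z$ as follows: take $\omega_{p'}^{(\lambda)}$ and, to each pair of black cells $B_i, B_j$ whose cells $C_i, C_j$ satisfy $C_i \cap \mathcal B_R(o)\ne\emptyset\ne C_j\cap\mathcal B_R(o)$ \emph{and} which lie in the same cluster of $\omega_p^{(\lambda)}$, add a ``bridge'' — but since cells already touching a common ball might not touch each other, the cleaner route is to add, for \emph{every} pair of cells both meeting a ball of radius $R$, the short geodesic segment witnessing their intersection guaranteed by Theorem~\ref{thm:A2} (applied with $\eps$ small). Concretely: choose $R$ large and $\lambda_0$ from Theorem~\ref{thm:A2} so that with probability $>1-\eps$ all cells meeting $\mathcal B_R(o)$ pairwise intersect; on that event the union of all cells meeting $\mathcal B_R(o)$ is connected. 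Translating this around by $G$-invariance and taking the union of $\omega_p^{(\lambda)}$ with these ``local bundles'' (glued in an equivariant, $\mathrm{Isom}(X)$-invariant way) produces a $G$-invariant normal random closed set $\mathcal Z \supseteq \omega_p^{(\lambda)}$ with $\mathbb P(o\in\partial\mathcal Z)=0$ (by Remark~\ref{rem:BoundaryVolume} and Lemma~\ref{lm:boundary}, since we added only finitely many lower-dimensional pieces per bounded region) and with the property that $\mathcal Z$ has the \emph{same} unbounded clusters as $\omega_p^{(\lambda)}$ — the added pieces are bounded and only merge cells that were already in a common cluster, or more carefully, we must check the merging does not create a new unbounded cluster, which holds because each added bundle is bounded.

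Here is the difficulty, and where I expect to spend the real effort: the auxiliary set must simultaneously (i) have large local density, i.e.\ $\inf_{x,y\in\mathcal B_R(o)}\mathbb P(x\overset{\mathcal Z}{\leftrightarrow}y) > p^*$ for the threshold $p^*$ coming from Theorem~\ref{thm:threshold}, and (ii) not change the number of unbounded clusters relative to $\omega_p^{(\lambda)}$. Requirement (i) forces $R$ and $\lambda_0$ to depend on $p^*$, which in turn depends on $R$ through Theorem~\ref{thm:threshold} — so one has to order the quantifiers carefully: first fix $R$ large enough (independently of everything) so that $\mathcal B_R(o)$ contains a $\delta$-fraction... no — rather, first apply Theorem~\ref{thm:threshold} to get $p^*=p^*(R)$ for a \emph{provisional} $R$, then use Theorem~\ref{thm:A1}/\ref{thm:StrongerA1} and Theorem~\ref{thm:A2} to find $\lambda_0$ small so that, at scale $R$, with probability exceeding $p^*$ \emph{everything in $\mathcal B_R(o)$ is connected in $\mathcal Z$}. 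Since $\omega_p^{(\lambda)}\subseteq\mathcal Z$ and (by the coupling) $\omega_p^{(\lambda)}$ already has long-range order, we also need $\lambda$ to run over $(0,\lambda_0]$ only — but the claim is stated for fixed $\lambda$, so in fact we do \emph{not} get to shrink $\lambda$.

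This means the above coupling-argument route proves only the low-intensity statement (Theorem~\ref{maintheorem}), not Claim~\ref{cl:unique} for arbitrary fixed $\lambda$. So for Claim~\ref{cl:unique} itself I would instead argue directly and more cheaply: it suffices to show that the number of unbounded clusters of $\omega_{p'}^{(\lambda)}$ is not $\infty$. By Corollary~\ref{cor:Phases} (equivalently, by Proposition~\ref{prop:Bernoulli} applied to Bernoulli percolation on the extremal unimodular Delaunay graph, Lemma~\ref{lm:DelaunayExt}), the number of unbounded clusters is a.s.\ $0$, $1$, or $\infty$; \eqref{equ:LRO1} together with monotonicity in the coupling rules out $0$ for $\omega_{p'}^{(\lambda)}$ since $\omega_p^{(\lambda)}$ already percolates. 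To rule out $\infty$, I invoke the machinery of Section~\ref{sec:Frequency}: by Theorem~\ref{thm:ClusterFrequency} almost surely every cluster has a well-defined frequency, and by Lemma~\ref{lm:LROimpliesPositiveFreq} applied to $\omega_{p'}^{(\lambda)}$ (whose two-point function dominates that of $\omega_p^{(\lambda)}$, hence is $\ge\delta$), we get $\mathbb E[\tilde\beta(o,\omega_{p'}^{(\lambda)})]\ge\delta>0$. Now if there were infinitely many unbounded clusters, a standard mass-transport / indistinguishability argument on the Delaunay graph (using extremality and the MTP \eqref{equ:MTPEmbeddedDelaunayLabels}, in the spirit of \cite[Theorem 6.15]{AL07} or the Lyons–Schramm method) forces every cluster to have frequency $0$, contradicting the positive expected frequency of the cluster of the origin. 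The main obstacle is making this last step rigorous without invoking a full ``natural indistinguishability'' theorem — one works with the auxiliary coin-flip set $\Gamma$ as in the proof of Theorem~\ref{thm:ClusterFrequency}, shows that the total frequency $\sum_C \beta(C) = \beta(X) \le 1$ is a sum over infinitely many i.i.d.-flavored terms that must each vanish, and checks via the MTP that by invariance all unbounded clusters are exchangeable in the relevant sense. I expect the bookkeeping identifying ``number of unbounded clusters of $\omega_{p'}^{(\lambda)}$'' with ``number of infinite clusters in Bernoulli($p'$) percolation on the Delaunay graph'' (via Lemmas~\ref{lm:closed} and \ref{lm:BasicVoronoi}) to be routine, and the frequency-vanishing dichotomy to be the crux.
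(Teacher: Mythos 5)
Your second attempt has the right architecture: pass to the Palm version, identify unbounded Poisson--Voronoi clusters with infinite Bernoulli-percolation clusters on the Delaunay graph, invoke the extremality-based $0/1/\infty$ trichotomy (Proposition~\ref{prop:Bernoulli} via Lemma~\ref{lm:DelaunayExt}), and rule out $0$ by monotonicity and ergodicity. (Your first attempt via Theorem~\ref{thm:threshold} indeed cannot work here, since $\lambda$ is fixed; you were right to abandon it.) The problem is the step ruling out $\infty$, where your proposal has a genuine gap: you want to conclude from infinitely many infinite clusters that \emph{every} cluster (in particular the cluster of $o$) has frequency $0$, and you appeal to indistinguishability ``in the spirit of \cite[Theorem 6.15]{AL07} or the Lyons--Schramm method.'' But the paper explicitly points out that this is not available: the Aldous--Lyons theorem concerns frequencies measured by a random walk \emph{on the unimodular random graph}, whereas $\beta$ is defined via a random walk on the ambient space $X$, and ``natural'' indistinguishability with respect to $G$-invariant events for this notion of frequency is not established. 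So ``each term must vanish'' does not follow from anything you have.

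The paper sidesteps this entirely by making essential use of the gap $p' > p$, which your sketch does not exploit. Concretely: LRO at level $p$ (not $p'$) gives some cluster of $\omega_p^{(\lambda)}$ with $\beta>0$; since $\sum_C \beta(C)\le 1$, only finitely many clusters of $\omega_p^{(\lambda)}$ attain the maximal frequency, and these ``special'' clusters are unbounded. One then runs the H\"aggstr\"om--Peres uniqueness-monotonicity argument in the form given by H\"aggstr\"om--Jonasson, implemented here via the Mass Transport Principle \eqref{equ:MTPEmbeddedDelaunayLabels} on the labeled embedded Delaunay graph: under the monotone coupling of $\mathcal G[p]$ and $\mathcal G[p']$, every infinite $\mathcal G[p']$-cluster must contain a special infinite $\mathcal G[p]$-cluster (ruling out the minimal-distance obstruction by the insertion-tolerance argument when $M(u)=\infty$, and by mass transport on $1/M(u)$ when $M(u)<\infty$). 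Since there are only finitely many special $\mathcal G[p]$-clusters, $\mathcal G[p']$ has finitely many infinite clusters, and by the trichotomy exactly one. This requires no indistinguishability statement for $\beta$ --- only that finitely many clusters maximize it, which you do have from the LRO hypothesis --- and this is precisely why the conclusion is asserted at $p'>p$ rather than at $p$ itself. If you want to salvage your original approach, you would first need to prove a form of cluster indistinguishability for the $X$-random-walk frequency $\beta$, which is a nontrivial extra step (the paper notes Tim\'ar's work~\cite{T21} as potentially relevant but does not carry this out).
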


\begin{proof}[Proof of Claim \ref{cl:unique}] Since the event that there exists a unique unbounded cluster is $G$-invariant, Lemma  \ref{lm:Palm}~(1) shows that it suffices to prove the same conclusion with the underlying Poisson point process ${\bf Y}^{(\lambda)}$ replaced with its Palm version ${\bf Y}^{(\lambda)}\cup\{{\bf o}\}$. Let $\mathcal G$ denote the associated Delaunay graph and let $\rho$ denote the vertex corresponding to the cell of the origin. Here, we have dropped the dependence on $\lambda$ to lighten the notation -- this parameter will be fixed throughout the proof of Claim \ref{cl:unique}. Let $\mathcal G[q]$ denote Bernoulli percolation with parameter $q\in(0,1)$ on $\mathcal G$. Note that since cells are bounded by Lemma~\ref{lm:BasicVoronoi} and every bounded set is covered by finitely many cells a.s.~by Lemma~\ref{lm:closed}, there is a unique unbounded cluster in Poisson-Voronoi percolation on $Y^{(\lambda)}\cup\{o\}$ with parameter $q$ if and only if there is a unique infinite cluster in~$\mathcal G[q]$. Thus, it suffices to show that $\mathcal G[p']$ has a unique infinite cluster almost surely for every $p'>p$. This is the statement we prove below.

Let $p'>p$ be fixed. By Lemma \ref{lm:DelaunayExt}, $(\mathcal G,\rho)$ is an extremal unimodular random graph. In particular, Proposition \ref{prop:Bernoulli} implies that the number of infinite clusters in $\mathcal G[p']$ is almost surely constant and equal to $0$, $1$ or $\infty$. To show that it is equal to $1$, we now rule out the other two options.

\medskip

{\em\noindent Ruling out $0$.} Clearly, \eqref{equ:LRO1} implies that $\omega_p^{(\lambda)}$ has an unbounded cluster with positive probability and, by Lemma \ref{lm:Ergodicity}, almost surely. By Lemma \ref{lm:Palm}~(1), the same is true for Poisson-Voronoi percolation on the Palm version. But, as explained above, this is equivalent with the number of infinite clusters in $\mathcal G[p]$ being non-zero almost surely. By monotonicity, also  $\omega_{p'}^{(\lambda)}$ has unbounded clusters almost surely.

\medskip

{\em\noindent Ruling out $\infty$.} 
Recall the definition of the cluster frequency $\beta$ from \eqref{def:FrequencyCluster}. By Lemma \ref{lm:LROimpliesPositiveFreq}, \eqref{equ:LRO1} implies that there exists a cluster $C$ of $\omega_p^{(\lambda)}$ with $\beta(C)>0$. Since $\sum_{C'} \beta(C')\le 1$, where the sum rangers over all clusters $C'$, there can be at most finitely many clusters of maximal frequency, i.e.~maximizing~$\beta$. Note that every such cluster must then be unbounded. 
By Proposition~\ref{pr:Invariance}, the event that there exist finitely many clusters with maximal frequency is $G$-invariant.
By Lemma~\ref{lm:Palm}~(1), Poisson-Voronoi percolation on the Palm version also has finitely many unbounded clusters maximizing $\beta$ almost surely. Hence there are finitely many infinite clusters in $\mathcal G[p]$ which are {\em special}, meaning their embedding into $X$ maximizes $\beta$.

Consider the canonical monotone coupling between Bernoulli percolations $\mathcal G[p]$ and $\mathcal G[p']$ on the embedded Delaunay graph with labels $(\mathcal G,\rho, \Psi)$, cf.~Section \ref{sec:Delaunay}.
In the rest of the proof we work with the embedded Delaunay graph with labels but suppress the dependence on $\Psi$ to lighten the notation.
We will now use the Mass Transport Principle \eqref{equ:MTPEmbeddedDelaunayLabels}, proved in Proposition~\ref{prop:Delaunay}, to prove that every infinite cluster of $\mathcal G[p']$ contains a special infinite cluster of $\mathcal G[p]$. This proof is a straightforward adaptation to the setting of unimodular random graphs of the proof of uniqueness monotonicity \cite{HP99} given in \cite[Theorem 5.4]{HJ06}. For completeness, we include the details below.

By monotonicity of the coupling, it suffices to show that every infinite $\mathcal G[p']$-cluster intersects a special infinite $\mathcal G[p]$-cluster almost surely. For $u\in V(\mathcal G)$, define
$$
D(u):= \min \big\{ {\rm dist}_{\mathcal G}(u,v) : v \, \, \text{is in some special infinite cluster of} \, \, \mathcal G[p] \big\}.
$$
Moreover, let $C_{p'}(u)$, resp.~$C_{p}(u)$, denote the cluster of $u$ in $\mathcal G[p']$, resp.~$\mathcal G[p]$. Note that on the event that some infinite $\mathcal G[p']$-cluster does not intersect any special infinite $\mathcal G[p]$-cluster, there exists a vertex $u\in V(\mathcal G)$ such that $|C_{p'}(u)|=\infty$ and
$$
D(u) = \min \big\{ D(v) : v\in C_{p'}(u) \big\} >0.
$$
Therefore it suffices to show that no such vertex exists almost surely in ${\bf Y}^{\lambda}\cup \{{\bf o}\}$.

By Lemma~\ref{lm:Palm}~(1), it is enough to show that a.s. no such vertex exists in $\mathbf{Y}^{(\lambda)}$.
For that it suffices to show, by using the same argument as in the proof of Lemma~\ref{lm:Palm}~(1), that $\mathbb P(B)=0$, where $B$ is the event that the origin is such a vertex.
To see that $\mathbb P(B)=0$, define, for $u\in V(\mathcal G)$, $M'(u)$ to be the set of vertices $v\in C_{p'}(u)$ which minimize the distance to special infinite $\mathcal G[p]$-clusters and let $M(u):=|M'(u)|$. Partition
$$
B = B_\infty \cup B_{{\rm fin}}, \qquad \text{where} \ \ B_\infty:=B \cap \{M(\rho)=\infty\} \ \ \text{and} \ \ B_{\rm fin}:=B \cap \{M(\rho)<\infty\}.
$$

We first show that $\mathbb P(B_\infty)=0$, which can be proved without \eqref{equ:MTPEmbeddedDelaunayLabels}. Namely, further partition $B_\infty=\bigcup_{k=1}^\infty B_{k,\infty}$, where 
$$
B_{k,\infty} := B_\infty \cap \{ D(\rho) = k \}.
$$
To show that $\mathbb P(B_{k,\infty})=0$, consider the following: condition on $\mathcal G[p]$ and then condition on the configuration of $\mathcal G[p']$ of all vertices which are not within distance $k$ of infinite special $\mathcal G[p]$-clusters. The conditional distribution of the $\mathcal G[p']$-configuration of the remaining vertices is then iid with probability $(p'-p)/(1-p)$ to be present in the configuration. Now note that on the event $B_{k,\infty}$, there are infinitely many disjoint paths of length $k$ of such vertices which tie $C_{p'}(\rho)$ to a special infinite $\mathcal G[p]$-cluster. Hence, on $B_{k,\infty}$, $C_{p'}(\rho)$ intersects a special infinite $\mathcal G[p]$-cluster almost surely. This proves $\mathbb P(B_{k,\infty})=0$ and thus $\mathbb P(B_\infty)=0$.

We now show that $\mathbb P(B_{\rm fin})=0$ by an application of the Mass Transport Principle~\eqref{equ:MTPEmbeddedDelaunayLabels}. Define a non-negative function $f$ of bi-rooted embedded labeled graphs by 
$$
f(H,u,v,\Phi)=1/M_H(u)
$$
whenever the embeddings of all $H[p]$-clusters have a well-defined frequency $\beta$, $u$ is in an infinite $H[p']$-cluster which does not intersect any special $H[p]$-cluster, the set $M'_H(u)$ of vertices in $u$'s infinite $H[p']$-cluster which minimize the distance from special infinite $H[p]$-clusters satisfies $M_H(u):=|M_H(u)|<\infty$, and $v\in M'_H(u)$ is such a minimizer. Otherwise, set $f(H,u,v,\Phi)=0$. By Theorem \ref{thm:ClusterFrequency}, $f$ is measurable and the condition \eqref{equ:DiagonalInvariance} holds. By~the Mass Transport Principle \eqref{equ:MTPEmbeddedDelaunayLabels},
$$
\mathbb E \bigg[ \sum_{x\in V(\mathcal G)} f(\mathcal G,x,\rho,\Psi) \bigg] = \mathbb E \bigg[ \sum_{x\in V(\mathcal G)} f(\mathcal G,\rho,x,\Psi) \bigg] \le 1.
$$
Since on $B_{\rm fin}$ we have that
$$
\sum_{x\in V(\mathcal G)} f(\mathcal G,x,\rho,\Psi) = \infty,
$$
it follows that $\mathbb P(B_{\rm fin})=0$.

We have thus shown that every infinite cluster of $\mathcal G[p']$ contains a special infinite cluster of $\mathcal G[p]$. Hence $\mathcal G[p']$ does not have infinitely many infinite clusters.

\medskip

This finishes the proof of Claim \ref{cl:unique}.
\end{proof}

\noindent The proof of Theorem \ref{thm:LRO} is thus complete.

\begin{proof}[Proof of Corollary \ref{cor:Phases}] This follows from Lemma \ref{lm:DelaunayExt} and Proposition \ref{prop:Bernoulli}. 
\end{proof}

\section{Vanishing uniqueness thresholds} \label{sec:vanishing}

In this section, we state and prove our main result.

\begin{theorem}[Vanishing uniqueness thresholds]\label{thm:vanishing}
    Let $G$ be a connected higher rank semisimple real Lie group with property (T) and let $(X,d_X)$ be its symmetric space. Then 
    \begin{equation*}
        \lim_{\lambda\to0} p_u(\lambda)=0.
    \end{equation*}
\end{theorem}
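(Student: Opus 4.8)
The strategy is to combine the three main ingredients assembled in the paper: the long-range order characterization of the uniqueness threshold (Theorem~\ref{thm:LRO}), the property~(T) threshold for long-range order (Theorem~\ref{thm:threshold}), and the finitary touching phenomenon in higher rank (Theorems~\ref{thm:A1} and~\ref{thm:A2}). By Theorem~\ref{thm:LRO} it suffices, for every $\varepsilon>0$, to produce $\lambda_0>0$ such that for all $0<\lambda\le\lambda_0$ there is some $p\le\varepsilon$ with $\inf_{x,y\in X}\mathbb P_p^{(\lambda)}(x\leftrightarrow y)>0$. The difficulty, as emphasized in Section~\ref{sec:IdeasProof}, is that $\omega_p^{(\lambda)}$ has small density for small $p$, so Theorem~\ref{thm:threshold} cannot be applied to it directly. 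The remedy is a coupling argument: I would introduce an auxiliary percolation $\widehat\omega$ which has large density (exceeding the threshold $p^*$ from Theorem~\ref{thm:threshold}) yet whose clusters are in bijection with, or at least refine to, those of $\omega_p^{(\lambda)}$, so that long-range order for $\widehat\omega$ (obtained from property~(T)) transfers back to $\omega_p^{(\lambda)}$.

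\textbf{Construction of the auxiliary percolation.} Fix $R>0$ large, to be chosen via Theorem~\ref{thm:threshold}, and let $p^*<1$ be the resulting threshold; also fix the target $\varepsilon>0$. The idea is: given $\omega_p^{(\lambda)}$, ``thicken'' the configuration using the Voronoi structure so that it covers a $p^*$-fraction of space in every ball of radius $R$, while being careful that this thickening does not merge distinct clusters or create new unbounded ones. Concretely, one can use the finitary touching property (Theorem~\ref{thm:A2}, or better the stronger Theorem~\ref{thm:StrongerA2}): with probability $>1-\varepsilon'$, all cells meeting $\mathcal B_R(o)$ pairwise share a boundary, and moreover each such intersection contains a $D$-ball inside the union of the two cells. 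On the (isometry-invariant) good event, the cells meeting a fixed ball form a ``clique'' in the Delaunay graph. The plan is to build $\widehat{\mathcal Z}$ as follows: color a cell black if it is black in $\omega_p^{(\lambda)}$; additionally, near the centre of each ball $\mathcal B_R(gx)$ for $gx$ in a suitable invariant point set, add a small connected ``patch'' that links together all black cells meeting that ball. Because $p>0$, in every such ball at least a fixed fraction of the cells is black with uniformly positive probability (using Theorem~\ref{thm:A1}/Theorem~\ref{thm:StrongerA1} to control how many cells meet the ball from below and Proposition~\ref{pr:UpperBoundBall} to bound it from above), and on the touching event these black cells become connected to one another; one then tunes the patches/radii so that $\widehat{\mathcal Z}$ satisfies $\inf_{x,y\in\mathcal B_R(o)}\mathbb P(x\overset{\widehat{\mathcal Z}}{\longleftrightarrow}y)>p^*$. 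Crucially, because we only ever join cells that already touch, no two $\omega_p^{(\lambda)}$-clusters get merged that were not already connected within a bounded region, and boundedness of clusters in $\omega_p^{(\lambda)}$ is preserved; so $\widehat{\mathcal Z}$ has an unbounded cluster through $x$ and $y$ iff $\omega_p^{(\lambda)}$ does. The set $\widehat{\mathcal Z}$ is $G$-invariant (indeed isometry-invariant), normal by Lemma~\ref{lm:closed}-type arguments, and satisfies $\mathbb P(o\in\partial\widehat{\mathcal Z})=0$.

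\textbf{Applying property~(T) and concluding.} Apply Theorem~\ref{thm:threshold} with this $R$ and $p^*$ to $\widehat{\mathcal Z}$: the hypothesis $\inf_{x,y\in\mathcal B_R(o)}\mathbb P(x\overset{\widehat{\mathcal Z}}{\longleftrightarrow}y)>p^*$ gives $\inf_{x,y\in X}\mathbb P(x\overset{\widehat{\mathcal Z}}{\longleftrightarrow}y)>0$, i.e.\ long-range order for $\widehat{\mathcal Z}$. Since $x\overset{\widehat{\mathcal Z}}{\longleftrightarrow}y$ implies $x\overset{\omega_p^{(\lambda)}}{\longleftrightarrow}y$ whenever the underlying connection passes through an unbounded cluster — and on the good event of full-measure asymptotic density it does, because the only connections we added are bounded ``patch'' detours between cells that touch — we get $\inf_{x,y\in X}\mathbb P_p^{(\lambda)}(x\leftrightarrow y)>0$, hence $p_u(\lambda)\le p$ by Theorem~\ref{thm:LRO}. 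The parameter $p$ used here can be taken arbitrarily small: the only constraint is that the black fraction in each $R$-ball, after the touching-based thickening, exceeds $p^*$, and since the thickening turns ``$\ge$ one black cell meeting the ball'' into ``a $p^*$-fraction of the ball is connected black'', any fixed $p>0$ works once $\lambda$ is small enough (so that $R$-balls split into many cells, by Theorem~\ref{thm:A1}, at least one of which is black). Choosing $p=\varepsilon$ yields $p_u(\lambda)\le\varepsilon$ for all $\lambda\le\lambda_0(\varepsilon)$, which is the claim. The main obstacle I anticipate is the careful design of the thickening: one must simultaneously (a) guarantee density $>p^*$ in every $R$-ball uniformly in small $\lambda$, (b) keep the procedure a genuine factor that is isometry-invariant and normal, and (c) ensure it neither merges previously-separate clusters in an unbounded way nor manufactures unbounded clusters, so that the two-point functions of $\widehat{\mathcal Z}$ and $\omega_p^{(\lambda)}$ are comparable. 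Making (a)–(c) compatible is exactly where the higher-rank input (Theorems~\ref{thm:A1}, \ref{thm:StrongerA2}, and Proposition~\ref{pr:UpperBoundBall}) is used, and writing out the coupling precisely is the technical heart of the argument.
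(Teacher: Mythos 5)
Your plan identifies exactly the three ingredients the paper combines — the long-range order characterization (Theorem~\ref{thm:LRO}), the property~(T) threshold (Theorem~\ref{thm:threshold}), and the finitary touching input (Theorems~\ref{thm:A1},~\ref{thm:A2}) — and the key idea of an auxiliary high-density percolation $\widehat{\mathcal Z}$ whose clusters track those of $\omega_p^{(\lambda)}$. This is indeed the paper's route. However, two substantive gaps remain.

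First, the construction of $\widehat{\mathcal Z}$ is not actually carried out, and the informal description given (``patches near the centre of each ball linking all black cells meeting that ball'') does not obviously produce what you need. The paper's Theorem~\ref{thm:AuxiliaryPercolation} thickens each black cell $C$ of $\omega_p^{(\lambda)}$ by a radius proportional to $\min\{3(1-\alpha)R,\ \tfrac12(1-\alpha)\,d_X(\cdot,A_C)\}$, where $A_C$ is the union of nearby cells in \emph{different} $\omega_p^{(\lambda)}$-clusters; the factor $\tfrac12$ is what forces thickenings of distinct clusters to stay disjoint (Proposition~\ref{pr:PropertiesAuxiliaryPerc}(5)), and the choice of $\alpha$ (Proposition~\ref{pr:ComputableNumber}) makes the boundary volume vanish so Lemma~\ref{lm:boundary} applies. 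The crucial consequence (Theorem~\ref{thm:AuxiliaryPercolation}(4)) is that $\mathcal B_R(x)\subset\mathcal Z$ whenever $\mathcal B_{6R}(x)$ meets only one $\omega_p^{(\lambda)}$-cluster; combined with the touching event from Theorem~\ref{thm:A2} and the ``many cells, at least one black, all touching'' count from Theorem~\ref{thm:A1}, this produces $\mathcal B_1(o)\subset\mathcal Z$ with probability $>p^*$, not merely a ``$p^*$-fraction of the ball is black.'' Your sketch blurs over this and, as you acknowledge, leaves the technical heart — making density, normality, factor-measurability, and cluster non-merging compatible simultaneously — unverified.

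Second, the transfer of long-range order from $\widehat{\mathcal Z}$ back to $\omega_p^{(\lambda)}$ is not a direct implication, contrary to what you assert (``$x\overset{\widehat{\mathcal Z}}{\longleftrightarrow}y$ implies $x\overset{\omega_p^{(\lambda)}}{\longleftrightarrow}y$''). The thickened set contains points outside $\omega_p^{(\lambda)}$, so a $\widehat{\mathcal Z}$-connection between two arbitrary $x,y$ need not be an $\omega_p^{(\lambda)}$-connection. The paper handles this with a separate FKG comparison (Step 2 of the proof): using that a $\widehat{\mathcal Z}$-connection implies $\mathcal B_{3R}(x)\overset{\omega_p^{(\lambda)}}{\longleftrightarrow}\mathcal B_{3R}(y)$ and that $c:=\mathbb P(\mathcal B_{3R}(o)\subset\omega_p^{(\lambda)})>0$, the Harris--FKG inequality gives $\mathbb P(x\overset{\widehat{\mathcal Z}}{\longleftrightarrow}y)\le c^{-2}\,\mathbb P(x\overset{\omega_p^{(\lambda)}}{\longleftrightarrow}y)$, so long-range order passes with a uniform multiplicative loss. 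This step is essential and missing from your argument.
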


The proof of this result is done roughly as follows.
Given $\eps>0$, we aim to show that for every small enough $\lambda>0$ we have that 
\begin{equation*}
    \inf_{x,y\in X} \mathbb P_\eps^{(\lambda)} \big(x \leftrightarrow y) >0,
\end{equation*}
which is enough by Theorem~\ref{thm:LRO}.
To do this, we would like to employ Theorem~\ref{thm:threshold}, however, as $\lambda>0$ is small, we cannot guarantee that
\begin{equation*}
    \inf_{x,y\in \mathcal{B}_1(o)} \mathbb P_\eps^{(\lambda)} \big(x \leftrightarrow y) > p^*
\end{equation*}
for the threshold $0<p^*<1$ from Theorem~\ref{thm:threshold} with $R=1$.
In fact, even more severely, the density tends to $0$ as $\eps\to 0$. 
The idea is to define an auxiliary normal random closed set $\mathcal{Z}^{(\lambda)}_\eps$ that is a $G$-invariant thickening of the random closed set $\omega_\eps^{(\lambda)}$ so that 
    \begin{equation*}
        \inf_{x,y\in \mathcal{B}_1(o)} \mathbb P \big( x\overset{\mathcal{Z}^{(\lambda)}_\eps}{\longleftrightarrow} y \big)>p^*,
    \end{equation*}
and to then transfer the lower bound on the two-point function of $\mathcal{Z}^{(\lambda)}_\eps$ to a lower bound for~$\omega^{(\lambda)}_\eps$, by proving that it is at least a constant multiple of the two-point function of $\mathcal{Z}^{(\lambda)}_\eps$. We now provide the details.

\subsection{Construction of the auxiliary percolations} Throughout this section, consider the setting of Theorem \ref{thm:vanishing}. The goal is to prove the following result, which constructs the auxiliary normal random closed sets which will be used in the proof of Theorem \ref{thm:vanishing}.

\begin{theorem}\label{thm:AuxiliaryPercolation}
    Let $\lambda,R>0$ and $p\in(0,1)$.
    Then there is a random closed subset $\mathcal{Z}=\mathcal{Z}^{(\lambda)}_{p,R}$~of~$X$ defined as a $G$-equivariant measurable function of $\omega^{(\lambda)}_p$ such that the following hold a.s.
    \begin{enumerate}
        \item[{\rm(1)}] $\mathcal{Z}$ is normal,
        \item[{\rm(2)}] ${\rm vol}(\partial(\mathcal{Z}))=0$,
        \item[{\rm(3)}] for every cluster $\omega\subseteq \omega^{(\lambda)}_p$ there is a cluster $Z\subseteq \mathcal{Z}$ such that $        \omega^{(\lambda)}_p\cap Z=\omega$, in particular, $Z\cap\omega'=\emptyset$ for every cluster $\omega'\subset\omega_p^{(\lambda)}$ with $\omega'\ne\omega$,
        \item[{\rm(4)}] $\mathcal{B}_{R}(x)\subseteq \mathcal{Z}$ for every $x\in \omega^{(\lambda)}_p$ such that $\mathcal{B}_{6R}(x)$ only intersects one cluster of $\omega^{(\lambda)}_p$.
    \end{enumerate}
\end{theorem}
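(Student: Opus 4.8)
The plan is to let $\mathcal{Z}$ be a $G$-equivariant thickening of $\omega^{(\lambda)}_p$ that only adds mass in places that sit close to one cluster and far from all others. Concretely, I would fix the ``safe set''
\[
S := \{\, x\in \omega^{(\lambda)}_p : \mathcal{B}_{6R}(x)\ \text{meets only one cluster of}\ \omega^{(\lambda)}_p \,\},
\qquad
\mathcal{Z} := \omega^{(\lambda)}_p \cup \bigcup_{x\in S}\overline{\mathcal{B}}_R(x).
\]
Since ``$\mathcal{B}_{6R}(x)$ meets only one cluster'' depends only on the closed set $\omega^{(\lambda)}_p$ and is preserved by isometries, $\mathcal{Z}$ is an isometry-equivariant (in particular $G$-equivariant) Borel function of $\omega^{(\lambda)}_p$, and measurability in the Fell sense is routine. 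For $x\in S$ let $C(x)$ be the cluster of $\omega^{(\lambda)}_p$ through $x$, and for each cluster $C$ put $\widetilde C := C\cup\bigcup_{x\in S,\,C(x)=C}\overline{\mathcal{B}}_R(x)$. The heart of the argument is to show that, as $C$ ranges over the clusters of $\omega^{(\lambda)}_p$, the sets $\widetilde C$ are pairwise disjoint, closed, path-connected, and are precisely the clusters of $\mathcal{Z}$.

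\textbf{Disjointness, path-connectedness, closedness.} The bookkeeping $2R<6R$ is what yields disjointness: if $\overline{\mathcal{B}}_R(x)\cap\overline{\mathcal{B}}_R(y)\neq\emptyset$ with $x,y\in S$, then $d_X(x,y)\le 2R$, so $y\in\mathcal{B}_{6R}(x)\cap\omega^{(\lambda)}_p$ and $C(y)=C(x)$; and if $\overline{\mathcal{B}}_R(x)$ meets a cluster then the meeting point lies in $\mathcal{B}_{6R}(x)$, so that cluster is $C(x)$ and in particular $\overline{\mathcal{B}}_R(x)\cap\omega^{(\lambda)}_p\subseteq C(x)$. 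These two facts force $\widetilde C\cap\widetilde{C'}=\emptyset$ for $C\neq C'$. Path-connectedness of $\widetilde C$ is clear: each $\overline{\mathcal{B}}_R(x)$ is path-connected (geodesics from $x$ stay in the ball, as in the proof of Lemma~\ref{lm:closed}) and contains the point $x\in C$, while $C$ is path-connected. For closedness I would use that, almost surely, $\omega^{(\lambda)}_p$ is a union of compact cells of which only finitely many meet a bounded set (Lemma~\ref{lm:closed}, Lemma~\ref{lm:BasicVoronoi}): if $z_n\to z$ with $z_n\in\overline{\mathcal{B}}_R(x_n)$ and $x_n\in S\cap C$, then the bounded sequence $(x_n)$ lies, along a subsequence, in a single compact cell $C_0\subseteq C$, so $x_n\to x\in C_0$ and $d_X(z,x)\le R$; since each $\mathcal{B}_{6R}(x_n)$ meets only $C$ and these balls are \emph{open}, $\mathcal{B}_{6R}(x)$ also meets only $C$, hence $x\in S\cap C$ and $z\in\overline{\mathcal{B}}_R(x)\subseteq\widetilde C$. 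The same argument (with $C_0$ arbitrary) shows $\mathcal{Z}$ is closed.

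\textbf{Properties (1) and (3).} Any compact path in $\mathcal{Z}$ meets only finitely many of the (disjoint, closed) sets $\widetilde C$ by the same bounded-region argument, hence lies in a single $\widetilde C$; so the clusters of $\mathcal{Z}$ are exactly the $\widetilde C$. Each $\widetilde C$ is closed, and a bounded set meets only finitely many of them ($\widetilde C\cap K\neq\emptyset$ forces $C\cap K\neq\emptyset$, finitely many $C$ by Lemma~\ref{lm:closed}(iv), or some $x\in S\cap C$ in the bounded set $\overline{\mathcal{B}}_R(K)$, again finitely many $C$); this is (1). From $\overline{\mathcal{B}}_R(x)\cap\omega^{(\lambda)}_p\subseteq C(x)$ we get $\omega^{(\lambda)}_p\cap\widetilde C=C$, so for a given cluster $\omega=C$ the cluster $Z:=\widetilde C$ of $\mathcal{Z}$ satisfies $\omega^{(\lambda)}_p\cap Z=\omega$, and $Z$ is disjoint from every other cluster of $\omega^{(\lambda)}_p$; this is (3).

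\textbf{Properties (4), (2), and the main obstacle.} Property (4) is immediate: if $\mathcal{B}_{6R}(x)$ meets only one cluster then $x\in S$, so $\mathcal{B}_R(x)\subseteq\overline{\mathcal{B}}_R(x)\subseteq\mathcal{Z}$. For (2), writing $\overline S$ for the closure of $S$, one has $\bigcup_{x\in S}\mathcal{B}_R(x)=\{d_X(\cdot,\overline S)<R\}\subseteq\mathcal{Z}\subseteq\omega^{(\lambda)}_p\cup\{d_X(\cdot,\overline S)\le R\}$, and together with $\mathrm{int}\,\omega^{(\lambda)}_p\subseteq\mathrm{int}\,\mathcal{Z}$ this gives $\partial\mathcal{Z}\subseteq\partial\omega^{(\lambda)}_p\cup\{d_X(\cdot,\overline S)=R\}$. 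The first set is $\mathrm{vol}$-null by Remark~\ref{rem:BoundaryVolume}; the second is $\mathrm{vol}$-null because the $1$-Lipschitz function $u=d_X(\cdot,\overline S)$ satisfies $|\nabla u|=1$ a.e.\ on $\{u>0\}$ (compare $u$ with the distance to a nearest point of $\overline S$ along a geodesic) while $\nabla u=0$ a.e.\ on the level set $\{u=R\}$; alternatively, since $\mathcal{Z}$ is $G$-invariant one may invoke Lemma~\ref{lm:boundary} and check only $\mathbb{P}(o\in\partial\mathcal{Z})=0$. I expect the genuinely delicate points to be the closedness of $\mathcal{Z}$ and of the $\widetilde C$ — where properness of $X$, compactness of cells, and the use of \emph{open} balls in the definition of $S$ (so the one-cluster condition survives a limit of centers) all have to be combined with care — together with the $\mathrm{vol}$-null bound on the thickened boundary $\{d_X(\cdot,\overline S)=R\}$.
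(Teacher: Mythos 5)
Your construction is correct, but it takes a genuinely different route from the paper. You thicken the whole configuration at once, adding balls $\overline{\mathcal{B}}_R(x)$ of a \emph{fixed} radius $R$ around every ``safe'' point $x$ (those whose open $6R$-ball meets exactly one cluster); the paper instead thickens each black \emph{cell} $C$ individually, by a position- and parameter-dependent radius $\min\{3(1-\alpha)R,\tfrac12(1-\alpha)\,d_X(z,A_C)\}$, where $A_C$ is the union of nearby black cells in other clusters. Disjointness of thickened clusters comes out the same way in both proofs (a short triangle-inequality computation using that the thickening never reaches past half the gap to the opposing cluster), and both use the finitely-many-cells-per-bounded-set facts from Lemmas~\ref{lm:closed} and~\ref{lm:BasicVoronoi} for closedness and normality. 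Where the arguments really diverge is property (2). The paper keeps a free parameter $\alpha(C)\in(0,2/3]$ per cell and selects it, in an isometry-invariant measurable way, from outside the countable exceptional set on which $\mathrm{vol}(\partial C_\alpha)>0$, exploiting that the boundaries $\partial C_\alpha$ for distinct $\alpha$ are pairwise disjoint subsets of the finite-volume set $C_0$. You fix the radius and show directly that the sphere $\{d_X(\cdot,\overline S)=R\}$ is $\mathrm{vol}$-null via the Lipschitz-coarea/Stampacchia fact that $|\nabla u|=1$ a.e.\ on $\{u>0\}$ while $\nabla u=0$ a.e.\ on any level set of $u=d_X(\cdot,\overline S)$. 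Your version is shorter, with a fixed, cleaner thickening (and every point of your $\mathcal{Z}$ is within distance $R$ of $\omega_p^{(\lambda)}$ rather than $3R$, which is actually the quantitative fact the proof of Theorem~\ref{thm:vanishing} uses); but it leans on a nontrivial piece of geometric measure theory for Lipschitz distance functions on a Riemannian manifold. The paper's parameter-tuning is longer but elementary and entirely self-contained. Both strategies are sound.
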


We first need some preparations. Let $C=C^{(\lambda)}_i$, $i\in \mathbb{N}$, be such that $C\subseteq \omega^{(\lambda)}_p$ and write $A_C$ for the union of cells $C^{(\lambda)}_j$ such that the distance of $C^{(\lambda)}_j$ to $C$ is at most $10R$, $C^{(\lambda)}_j\subseteq \omega^{(\lambda)}_p$, and $C^{(\lambda)}_j$ and $C$ lie in different clusters of $\omega^{(\lambda)}_p$. Note that $A_C$ is closed and $A_C\cap C=\emptyset$ a.s.

For $0\le \alpha\le 1$, define
\begin{equation}\label{eq:ThickeningCell}
    C_\alpha=\Big\{x\in X:\exists z\in C \text{ such that } \ d_X(x,z)\le \min\{3(1-\alpha)R,1/2(1-\alpha)d_X(z,A_C)\}\Big\}.
\end{equation}

The next result describes the basic properties of this thickening procedure.

\begin{proposition}\label{pr:PropertiesAuxiliaryPerc}
    Let $C$ and $A_C$ be as above.
    For every $0\le \alpha\le 2/3$, the following hold a.s.
    \begin{enumerate}
        \item[{\rm(1)}] $C_\alpha$ is compact and path-connected,
        \item[{\rm(2)}] $C\subseteq C_\alpha\subseteq C_\beta$ for every $0\le \beta<\alpha$,
        \item[{\rm(3)}] $\mathcal{B}_R(z)\subseteq C_\alpha$ for every $z\in C$ such that $\mathcal{B}_{6R}(z)\cap A_C=\emptyset$,
        \item[{\rm(4)}] $\partial(C_\alpha)\cap \partial(C_\beta)=\emptyset$ whenever $\alpha\not=\beta$,
        \item[{\rm(5)}] if $D\subseteq \omega^{(\lambda)}_p$ is a cell that belongs to a different cluster than $C$, then $C_\alpha\cap D_\beta=\emptyset$ for every $0<\beta\le 2/3$.
    \end{enumerate}
\end{proposition}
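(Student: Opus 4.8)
The plan is to work directly from the explicit formula \eqref{eq:ThickeningCell} for $C_\alpha$, treating the quantity $r_\alpha(z):=\min\{3(1-\alpha)R,\ \tfrac12(1-\alpha)d_X(z,A_C)\}$ as a radius function on $C$, and to exploit that $X$ is a proper geodesic space (Theorem~\ref{thm:BasicSymmetricSpace}~(2),(3)) together with the a.s. properties of cells recorded in Lemma~\ref{lm:closed} and Lemma~\ref{lm:BasicVoronoi} (every cell is compact, path-connected, and closed; $C$ and $A_C$ are disjoint closed sets a.s.). Note that $C$ is compact and $A_C$ is closed with $C\cap A_C=\emptyset$, so $z\mapsto d_X(z,A_C)$ is continuous and strictly positive on $C$, hence bounded below by some $\delta>0$; this will be used repeatedly.

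First, for (2) and (1): monotonicity in $\alpha$ is immediate since $\alpha\mapsto r_\alpha(z)$ is non-increasing for each fixed $z$ and $C_0\supseteq C_\alpha\supseteq C$ because $r_\alpha\ge 0$ and each $z\in C$ satisfies $d_X(z,z)=0\le r_\alpha(z)$. For compactness, I would write $C_\alpha$ as the image of the compact set $\{(z,x):z\in C,\ d_X(z,x)\le r_\alpha(z)\}\subseteq C\times X$ under the (continuous, proper) projection to $X$; properness of $X$ and compactness of $C$, together with the uniform bound $r_\alpha\le 3R$, confine this set to a bounded closed subset of $X\times X$, which is compact. Then $C_\alpha$ is compact, in particular closed. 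Path-connectedness: $C$ is path-connected a.s.; any $x\in C_\alpha$ is joined to its witness $z\in C$ by a geodesic, and every point $y$ on that geodesic satisfies $d_X(z,y)\le d_X(z,x)\le r_\alpha(z)$, so the whole geodesic lies in $C_\alpha$; concatenating with a path inside $C$ gives path-connectedness of $C_\alpha$.

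For (3): if $z\in C$ with $\mathcal B_{6R}(z)\cap A_C=\emptyset$ then $d_X(z,A_C)\ge 6R$, so for $\alpha\le 2/3$ we get $r_\alpha(z)\ge\min\{3(1-\alpha)R, 3(1-\alpha)R\}=3(1-\alpha)R\ge R$, whence $\mathcal B_R(z)\subseteq C_\alpha$. For (5): suppose $x\in C_\alpha\cap D_\beta$ with witnesses $z\in C$, $w\in D$. Since $D$ lies in a different cluster and $D\subseteq\omega_p^{(\lambda)}$, $D\subseteq A_C$ provided $d_X(C,D)\le 10R$ — and indeed $d_X(z,w)\le d_X(z,x)+d_X(x,w)\le 3(1-\alpha)R+3(1-\beta)R\le 6R\le 10R$, so $w\in A_C$ and symmetrically $z\in A_D$. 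Then $d_X(z,A_C)\le d_X(z,w)\le 6R$ gives $r_\alpha(z)\le\tfrac12(1-\alpha)\cdot 6R=3(1-\alpha)R$; but more usefully, $d_X(x,z)\le \tfrac12(1-\alpha)d_X(z,A_C)\le \tfrac12 d_X(z,w)$ and likewise $d_X(x,w)\le\tfrac12 d_X(z,w)$, so $d_X(z,w)\le d_X(z,x)+d_X(x,w)\le d_X(z,w)$ with equality forced only if $\beta<1$ fails — a careful accounting of the strict inequalities (using $\beta>0$, $\alpha\le 2/3<1$) yields $d_X(z,w)=0$, i.e. $z=w\in C\cap D$, contradicting that $C,D$ are distinct cells. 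The main obstacle is (4): I expect to prove $\partial C_\alpha\subseteq\{x: d_X(x,z)=r_\alpha(z)\text{ for the/some extremal witness } z\}$ and then argue that the "radial distance'' $\rho(x):=\inf_{z\in C}\big(d_X(x,z)-r_\alpha(z)+\text{const}\big)$ type functional separates the boundaries for different $\alpha$; concretely, for $x\in\partial C_\alpha$ one shows $d_X(x,z)= r_\alpha(z)$ for every minimizing $z$, and since $r_\alpha(z)\le r_\beta(z)$ with the gap $r_\beta(z)-r_\alpha(z)\ge (\alpha-\beta)R'$ uniformly bounded below (using $\delta\le d_X(z,A_C)$ and the $3R$ cap), $x$ lies in the interior of $C_\beta$ for $\beta<\alpha$ and in the exterior of $C_\beta$ for $\beta>\alpha$. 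Making the "extremal witness'' argument rigorous — in particular checking that a boundary point cannot be an interior point witnessed by a non-extremal $z$, which uses that geodesics can be prolonged slightly in a geodesic proper space and that $r_\alpha$ is continuous on the compact set $C$ — is the technical heart of the proof; everything else is triangle-inequality bookkeeping.
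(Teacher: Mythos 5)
Your arguments for parts (1)--(3) and (5) are sound and match the paper's in substance. For (1) you argue compactness of $C_\alpha$ as the projection of a compact subset of $C\times X$ to a proper space rather than the paper's sequential argument, but this is a valid (and arguably cleaner) route; path-connectedness is handled identically. For (5) your triangle-inequality bookkeeping is exactly the paper's: the bound $d_X(z,w)\le 6R\le 10R$ forces $D\subseteq A_C$ and $C\subseteq A_D$, and then $d_X(z,w)\le d_X(z,x)+d_X(x,w)\le(1-\tfrac12\alpha-\tfrac12\beta)\,d_X(z,w)$ gives the contradiction once $\beta>0$ (and the degenerate case $z=w$ is ruled out because distinct black cells in different clusters are disjoint). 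Your phrase ``with equality forced only if $\beta<1$ fails'' misstates the dichotomy, but the inequality you actually write already yields $d_X(z,w)=0$, so the reasoning survives.

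Part (4) is where you have a genuine gap. You set up an ``extremal witness'' characterization of $\partial C_\alpha$ and leave its rigorous justification open, calling it ``the technical heart of the proof.'' This machinery is not needed, and the route you sketch (showing a boundary point cannot be witnessed by a non-extremal $z$, extending geodesics, etc.) would be delicate to carry out. The paper's argument is an order of magnitude simpler and avoids any boundary characterization: assume WLOG $0\le\beta<\alpha$, take an arbitrary $x\in C_\alpha$ with \emph{any} witness $z\in C$, and set
\[
S_\alpha := (1-\alpha)\min\{3R,\tfrac12 d_X(z,A_C)\},\qquad S_\beta := (1-\beta)\min\{3R,\tfrac12 d_X(z,A_C)\},\qquad \eps := S_\beta-S_\alpha>0.
\]
Then for every $y\in\mathcal B_\eps(x)$ one has $d_X(y,z)\le d_X(x,z)+d_X(x,y)\le S_\alpha+\eps=S_\beta$, so $y\in C_\beta$ with the \emph{same} witness $z$. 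Hence $C_\alpha\subseteq\operatorname{int}(C_\beta)$, and since $\partial C_\alpha\subseteq C_\alpha$ (by closedness of $C_\alpha$ from part (1)) while $\partial C_\beta$ is disjoint from $\operatorname{int}(C_\beta)$, the two boundaries are disjoint. The point you missed is that $\eps$ is allowed to depend on $x$ (through $z$); you do not need a uniform gap, only openness of $C_\alpha$ in $C_\beta$, and no identification of ``the'' witness is required.
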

\begin{proof}
    (1). By Lemma~\ref{lm:BasicVoronoi}, we have that a.s.~$C$ and $A_C$ are compact.
    Let $(x_k)_k$ be a sequence in $C_\alpha$ such that $x_k\to x\in X$.
    By \eqref{eq:ThickeningCell}, there is a sequence $(z_k)_k\subseteq C$ such that
    $$d_X(x_k,z_k)\le \min\{3(1-\alpha)R,1/2(1-\alpha)d_X(z_k,A_C)\}$$
    for every $k\in \mathbb{N}$.
    Without loss of generality we may assume that $z_k\to z\in C$ as $C$ is compact.
    It follows that
    $$3(1-\alpha)R\ge d_X(x_k,z_k)\to d_X(x,z)$$
    as $k\to\infty$.
    This shows that $C_\alpha$ is compact in case that $A_C=\emptyset$. If $A_C\not=\emptyset$, then, as $A_C$ is compact, there is a sequence $(y_k)_k\subseteq A_C$ such that $d_X(z_k,A_C)=d_X(z_k,y_k)$ and $y_k\to y\in A_C$.
    Observe that
    $$d_X(z,A_C)= d_X(z,y)=\lim_{k\to\infty}d_X(z_k,y_k) \text{ and } d_X(z,x)=\lim_{k\to\infty} d_X(z_k,x_k).$$
    Consequently,
    $$d_X(x,z)\le 1/2(1-\alpha)d_X(z,A_C)$$
    by \eqref{eq:ThickeningCell}, which shows that $C_\alpha$ is compact.

    In order to see that $C_\alpha$ is path-connected, recall first that $C$ is path-connected by Lemma~\ref{lm:closed}.
    Given $x\in C_\alpha$ and $z\in C$ as in \eqref{eq:ThickeningCell}, we see that $x\in \overline{\mathcal{B}}_S(z)\subseteq C_\alpha$, where $\overline{\mathcal{B}}_S(z)$ is the closed ball of radius $S=\min\{3(1-\alpha)R,1/2(1-\alpha)d_X(z,A_C)\}$ around $z$.
    The claim then follows from the fact that closed balls in $X$ are path connected as $X$ is a geodesic space by Theorem~\ref{thm:BasicSymmetricSpace}~(2).
    
    \medskip

    (2).
    Follows directly from \eqref{eq:ThickeningCell}.

    \medskip

    (3).
    Let $z\in C$ be such that $\mathcal{B}_{6R}(z)\cap A_C=\emptyset$ and $x\in \mathcal{B}_R(z)$.
    Then we have that
    $$d_X(x,z)\le R\le \min\{R,1/6d_X(z,A_C)\}\le \min\{3(1-\alpha)R,1/2(1-\alpha)d_X(z,A_C)\}$$    
    for every $0\le \alpha\le 2/3$.

    \medskip

    (4).
    Suppose that $0\le \beta<\alpha$.
    Let $x\in C_\alpha$ and $z\in C$ be as in \eqref{eq:ThickeningCell} for $\alpha$.
    Define
    $$S_\alpha=(1-\alpha)\min\{3R,1/2d_X(z,A_C)\} \text{ and } S_\beta=(1-\beta)\min\{3R,1/2d_X(z,A_C)\}.$$
    Set $\eps=S_\beta-S_\alpha$ and note that $\eps>0$.
    We claim that $\mathcal{B}_\eps(x)\subseteq C_\beta$.
    Indeed, if $y\in \mathcal{B}_\eps(x)$, then
    $$d_X(y,z)\le d_X(x,z)+d_X(x,y)\le S_\alpha+\eps=S_\beta=(1-\beta)\min\{3R,1/2d_X(z,A_C)\}$$
    as needed.
    It follows that $\partial(C_\alpha)\subseteq C_\alpha\subseteq C_\beta\setminus \partial(C_\beta)$, which gives the claim.

    (5).
    Suppose that there is $x\in C_\alpha\cap D_\beta$.
    It follows from \eqref{eq:ThickeningCell} that $D\subseteq A_C$ and $C\subseteq A_D$.
    Consequently, again by \eqref{eq:ThickeningCell}, there are $z\in C$ and $y\in D$ such that
    $$d_X(x,z)\le 1/2(1-\alpha) d_X(z,y) \text{ and } d_X(x,y)\le 1/2(1-\beta) d_X(z,y).$$
    By the triangle inequality, we have that 
    $$d_X(z,y)\le d_X(x,z)+d_X(x,y)\le d_X(z,y)(1-1/2\alpha-1/2\beta),$$
    which is a contradiction.
\end{proof}

In order to apply the above thickening procedure, it will be important to choose $\alpha$ suitably for each cell. The justification is provided by the following result.

\begin{proposition}\label{pr:ComputableNumber}
    There is $0< \alpha(C)\le 2/3$, which can be computed in an isometry invariant measurable way from $\omega^{(\lambda)}_p$ and the cell $C\subseteq \omega^{(\lambda)}_p$, such that ${\rm vol}(\partial(C_{\alpha(C)}))=0$ a.s.
\end{proposition}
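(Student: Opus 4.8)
The plan is to exploit the fact that $\alpha\mapsto C_\alpha$ gives, for each cell $C$, an uncountable nested family of thickenings whose boundaries are pairwise disjoint by Proposition~\ref{pr:PropertiesAuxiliaryPerc}~(4). Since the boundaries $\partial(C_\alpha)$, $\alpha\in(0,2/3]$, are pairwise disjoint closed subsets of $X$ and ${\rm vol}$ is $\sigma$-finite on $X$ (Theorem~\ref{thm:BasicSymmetricSpace}~(3)), at most countably many of them can have positive volume. Hence the set $G_C:=\{\alpha\in(0,2/3]\colon {\rm vol}(\partial(C_\alpha))=0\}$ is co-countable in $(0,2/3]$, in particular non-empty, and moreover a Borel subset of $(0,2/3]$ whose complement is countable. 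So the only real issue is to \emph{select} such an $\alpha(C)$ in a way that is measurable and isometry-equivariant as a function of the pair $(\omega^{(\lambda)}_p,C)$.

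First I would record the measurability of the map $(\omega,C,\alpha)\mapsto {\rm vol}(\partial(C_\alpha))$. The cell $C$ and the obstruction set $A_C$ are measurable functions of $\omega=\omega^{(\lambda)}_p$ (indeed of the underlying Poisson process, by Lemma~\ref{lm:closed} and the discussion there), the function $(x,z)\mapsto \min\{3(1-\alpha)R,\tfrac12(1-\alpha)d_X(z,A_C)\}$ is jointly continuous in $\alpha$ and the spatial variables, so $C_\alpha$ depends measurably on $(\omega,C,\alpha)$ as a closed set, and by Remark~\ref{rem:BoundaryVolume}-type reasoning together with Fubini the real-valued map $(\omega,C,\alpha)\mapsto {\rm vol}(\partial(C_\alpha))=\int_X \mathbf 1_{\{x\in\partial(C_\alpha)\}}\,{\rm vol}(dx)$ is jointly measurable (here one uses that for a random closed set the indicator of the boundary, and hence its integral against the fixed $\sigma$-finite measure ${\rm vol}$, is measurable; one may integrate against an exhausting sequence of balls and take a limit to stay within finite measures). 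Then I would set
\begin{equation*}
    \alpha(C):=\sup\Big\{\alpha\in(0,2/3]\colon {\rm vol}\big(\partial(C_\beta)\big)=0 \ \text{for ${\rm Leb}$-a.e.\ }\beta\in(0,\alpha]\Big\},
\end{equation*}
or, more robustly, I would fix a Borel function $\sigma\colon \mathcal P((0,2/3])\to (0,2/3]$ selecting a point of any co-countable Borel set in a translation-canonical way (e.g.\ the infimum of the set of points whose distance-to-complement exceeds a threshold, or simply $\sup G_C$, noting $\sup G_C\in G_C$ because the complement is countable and the boundaries are nested-disjoint so $\partial(C_{\sup G_C})$ cannot itself carry positive volume unless it coincides with one of the countably many bad levels, which a short limiting argument using $\partial(C_\alpha)\subseteq \bigcap_{\beta<\alpha}(C_\beta\setminus\partial(C_\beta))$ from the proof of Prop.~\ref{pr:PropertiesAuxiliaryPerc}~(4) rules out). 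This produces a value $\alpha(C)\in(0,2/3]$ with ${\rm vol}(\partial(C_{\alpha(C)}))=0$.

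Next I would check the two remaining requirements. Measurability: $\alpha(C)$ is obtained by applying the fixed Borel selector $\sigma$ to the (measurably-varying) set $G_C=G(\omega,C)$, so it is a measurable function of $(\omega,C)$. Isometry-equivariance: for any isometry $\tau$ of $(X,d_X)$ one has $\tau(C_\alpha)=(\tau C)_\alpha$ computed with respect to $\tau(\omega)$, because \eqref{eq:ThickeningCell} is written entirely in terms of $d_X$, $C$ and $A_C$, and all three transform equivariantly under $\tau$ (in particular $A_{\tau C}=\tau(A_C)$, again directly from its definition via distances and the $\tau$-image of the cells). Consequently ${\rm vol}(\partial((\tau C)_\alpha))={\rm vol}(\tau(\partial(C_\alpha)))={\rm vol}(\partial(C_\alpha))$ since ${\rm vol}$ is isometry-invariant (Theorem~\ref{thm:BasicSymmetricSpace}~(1)), so $G(\tau\omega,\tau C)=G(\omega,C)$ and hence $\alpha(\tau C)=\alpha(C)$; that is, $\alpha$ is isometry-invariant, as required.

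The main obstacle is not the existence of a good $\alpha$ --- that is immediate from $\sigma$-finiteness of ${\rm vol}$ and the disjointness in Proposition~\ref{pr:PropertiesAuxiliaryPerc}~(4) --- but rather packaging the selection so that it is simultaneously \emph{measurable in $\omega$} and \emph{canonical enough to commute with isometries}. The clean way to handle this is to make the selection depend only on the abstract Borel set $G_C\subseteq(0,2/3]$ (which is an isometry-invariant functional of the data) via a once-and-for-all fixed Borel map on subsets of the interval; since the ambiguity in ``which good $\alpha$'' lives in the parameter space $(0,2/3]$ on which isometries act trivially, equivariance is then automatic. A minor technical point to get right along the way is the joint measurability of $(\omega,C,\alpha)\mapsto {\rm vol}(\partial(C_\alpha))$, which I would address exactly as the boundary-volume statements earlier in the paper (Remark~\ref{rem:BoundaryVolume}, Lemma~\ref{lm:boundary}): express the boundary volume as an integral of a jointly measurable indicator against ${\rm vol}$ restricted to balls, and pass to the limit.
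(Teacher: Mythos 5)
Your key observation is correct and is essentially the same one the paper uses: by Proposition~\ref{pr:PropertiesAuxiliaryPerc}~(4) the boundaries $\partial(C_\alpha)$, $\alpha\in[0,2/3]$, are pairwise disjoint, and they are all contained in the compact set $C_0$ of finite volume, so at most countably many can have positive volume. (The paper records this as: $P_\delta(C):=\{\alpha\colon {\rm vol}(\partial(C_\alpha))>\delta\}$ is \emph{finite} for each $\delta>0$; your co-countable ``good set'' $G_C$ is the complement of $\bigcup_n P_{2^{-n}}(C)$.) You also correctly locate the real difficulty: it is not the existence of a good $\alpha$ but a \emph{measurable, isometry-canonical selection} of one.

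However, the selection procedures you propose do not actually work. Your first formula,
$\alpha(C)=\sup\{\alpha\colon {\rm vol}(\partial(C_\beta))=0 \text{ for Leb-a.e.\ }\beta\in(0,\alpha]\}$,
is vacuous: the bad set is countable, hence Lebesgue-null, so the condition holds for \emph{every} $\alpha$ and the formula always returns $2/3$, which may itself be bad. Your fallback $\alpha(C)=\sup G_C$ has the same problem: $\sup G_C$ is a limit of good points but there is no reason it must be good, and the ``short limiting argument'' you allude to does not go through. From $\partial(C_\alpha)\subseteq C_\beta\setminus\partial(C_\beta)$ for $\beta<\alpha$, together with left-continuity of $\alpha\mapsto{\rm vol}(C_\alpha)$, one only gets ${\rm vol}(\partial(C_\alpha))\le{\rm vol}(C_\alpha)$, which is trivial; there is no right-semicontinuity of $\alpha\mapsto{\rm vol}(\partial(C_\alpha))$ to appeal to, and indeed $\sup G_C$ can be a genuine jump point of ${\rm vol}(C_\cdot)$. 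The suggestion of a ``fixed Borel selector $\sigma$ on co-countable Borel subsets of $(0,2/3]$'' is not given a concrete realization (and the variant ``infimum of points whose distance-to-complement exceeds a threshold'' can also fail, since the countable bad set may be dense, making the distance-to-complement identically zero).

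What the paper does differently is precisely to turn your observation into a \emph{constructive} selection. Because each $P_{\delta_n}(C)$ is a \emph{finite} set, one can build a nested sequence of closed intervals $I_{C,n}$ with $\diam(I_{C,n+1})\le\tfrac12\diam(I_{C,n})$ such that $I_{C,n+1}$ avoids $P_{\delta_{n+2}}(C)$: at each step one takes the largest ``bad'' point $a'$ below the right endpoint (a max over a \emph{finite} set, hence well-defined and measurable) and recenters inside $(a',b)$. The unique intersection point $\alpha(C)$ then misses every $P_{\delta_n}$, hence has ${\rm vol}(\partial(C_{\alpha(C)}))=0$, and one also sees directly that $\alpha(C)>0$. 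Measurability is immediate from the finite-max structure, and isometry invariance is immediate because each $P_\delta(C)$ is isometry invariant (as ${\rm vol}$ is isometry invariant). So the ``canonical-enough selector'' you were reaching for is supplied not by an abstract Borel selection device, but by this concrete dyadic nesting; this step is the essential content missing from your proposal.
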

\begin{proof}
    By Proposition~\ref{pr:PropertiesAuxiliaryPerc}~(1) and (2), we have that $C_\alpha\subseteq C_0$ for every $0\le \alpha\le 2/3$ and $C_0$ is compact  a.s.
    In particular, ${\rm vol}(C_0)<\infty$, which implies by Proposition~\ref{pr:PropertiesAuxiliaryPerc} (4), that for every $\delta>0$, the set
    \begin{equation}\label{eq:ExceptionalPoints}
        P_\delta(C)=\{\alpha\in [0,2/3]: {\rm vol}(\partial(C_{\alpha}))>\delta\}
    \end{equation}
    is finite.
    The set $P_\delta(C)$ is moreover isometry invariant as every isometry preserves $\vol$ by Theorem~\ref{thm:BasicSymmetricSpace}~(1).

    \newcommand{\diam}{\operatorname{diam}}

    Let $\delta_n=2^{-n}$.
    The desired number $\alpha(C)$ is expressed as $\{\alpha(C)\}=\bigcap_{n\in \mathbb{N}} I_{C,n}$, where for every $n\in \mathbb{N}$ we have that $I_{C,n}$ is a closed non-trivial interval such that $I_{C,n+1}\subseteq I_{C,n}$ and $\diam(I_{C,n})\to 0$.
    Set $I_{C,0}=[0,2/3]$ and suppose that $I_{C,n}$ has been defined with the additional property that $I_{C,n}\cap P_{\delta_{n+1}}=\emptyset$.
    Let $[a,b]=I_{C,n}$ and define
    $$a'=\max\{\alpha\in I_{C,n}:\alpha\in P_{\delta_{n+2}}\cup\{a\}, \ \alpha<b\}.$$
    Set
    $$I_{C,n+1}=\left[\frac{3a'+1b}{4},\frac{1a'+3b}{4}\right].$$
    It can be easily checked that $\diam(I_{C,n+1})\le 1/2\diam(I_{C,n})$ and $I_{C_{n+1}}\subseteq I_{C,n}\setminus P_{\delta_{n+2}}$.
    It follows from the construction that $\alpha(C)$ is defined in an isometry invariant way, and that ${\rm vol}(\partial(C_{\alpha(C)}))=0$ as
    $$\alpha(C)\not\in \{0\}\cup \bigcup_{n\in \mathbb{N}} P_{\delta_n}=\{\alpha\in [0,2/3]: {\rm vol}(\partial(C_{\alpha}))>0\}.$$
    It remains to argue that the assignment $(C,\omega^{(\lambda)}_p)\mapsto \alpha(C)$ is measurable.
    To that end observe that the maps $(C,\omega^{(\lambda)}_p)\mapsto A_C$ and $(C,A_C,\alpha)\mapsto {\rm vol}(C_\alpha$) are measurable.
    The rest follows from the fact that the sets from \eqref{eq:ExceptionalPoints} are finite for every $\delta>0$.   
\end{proof}

With these preparations, we are now in a position to construct the auxiliary percolations.

\begin{proof}[Proof of Theorem~\ref{thm:AuxiliaryPercolation}]
    For each $C=C^{(\lambda)}_i\subset\omega_p^{(\lambda)}$, define $Z_C=C_{\alpha(C)}$, where $\alpha(C)$ is from Proposition~\ref{pr:ComputableNumber}.
    Set
    $$\mathcal{Z}=\bigcup_{C^{(\lambda)}_i\subseteq \omega_p^{(\lambda)}} Z_{C^{(\lambda)}_i}.$$
    It follows from the construction and Proposition~\ref{pr:ComputableNumber} that $\mathcal{Z}$ is defined in a $G$-equivariant and measurable way from $\omega^{(\lambda)}_p$.
    We now show that it satisfies (1)-(4).

    \medskip

    (1).
    Let $\omega$ be the cluster of $C$ in $\omega^{(\lambda)}_p$.
    By Lemma~\ref{lm:closed}, $\omega$ is closed and path-connected.
    By Proposition~\ref{pr:PropertiesAuxiliaryPerc}~(1), every $C_{\alpha(C)}$ is compact and path-connected.
    Recall that every bounded set of $X$ is covered by finitely many Voronoi cells a.s.~by Lemma~\ref{lm:closed}.
    As $C_\alpha\subseteq \bigcup_{x\in C} \overline{\mathcal{B}}_{3R}(x)$ by \eqref{eq:ThickeningCell}, it follows that a.s.
    $$Z_\omega=\bigcup_{C^{(\lambda)}_i\subseteq \omega} Z_{C^{(\lambda)}_i}$$
    is closed and path connected and every bounded set is intersected by at most finitely many sets of the form $Z_\omega$.
    In particular, $\mathcal{Z}$ is a normal random closed subset.

    (3).
    Observe that by the definition, we have that $\omega\subseteq Z_\omega$.
    Suppose that $\omega,\omega'\subseteq \omega^{(\lambda)}_p$ are different clusters.
    Then $Z_\omega\cap Z_{\omega'}=\emptyset$.
    Indeed, otherwise, by the definition, there are Voronoi cells $C\subseteq \omega$ and $D\subseteq \omega'$ such that $C_{\alpha(C)}\cap D_{\alpha(D)}\not=\emptyset$.
    This contradicts Proposition~\ref{pr:PropertiesAuxiliaryPerc}~(6) as $\alpha(C),\alpha(D)>0$ by Proposition~\ref{pr:ComputableNumber}.

    (2).
    By Lemma~\ref{lm:closed}, every bounded set of $X$ is covered by finitely many Voronoi cells a.s.
    Consequently, we have that a.s.
    $$\partial(\mathcal{Z})\subseteq \bigcup_{C\subseteq \omega^{(\lambda)}_p} \partial(C_{\alpha(C)}),$$
    which implies that ${\rm vol}(\partial(\mathcal{Z}))=0$ a.s.

    (4).
    The fact that $\mathcal{B}_R(x)\subseteq \mathcal{Z}$ whenever $x\in \omega^{(\lambda)}_p$ is such that $\mathcal{B}_{6R}(x)$ intersect a single cluster of $\omega^{(\lambda)}_p$ follows directly from Proposition~\ref{pr:PropertiesAuxiliaryPerc} (3).
\end{proof}

\subsection{Proof of the main result.}
We are ready to finish the proof of Theorem~\ref{thm:vanishing}.

\begin{proof}[Proof of Theorem~\ref{thm:vanishing}]
Let $\eps>0$. We show that 
\begin{equation*}
    \limsup_{\lambda\to0}p_u(\lambda)\leq \eps,
\end{equation*}
which implies the conclusion of the theorem as $\eps>0$ was arbitrary. By Theorem \ref{thm:LRO}, it suffices to show that there exists $\lambda_0>0$ such that for all $\lambda<\lambda_0$
\begin{equation} \label{equ:vanishing1}
    \inf_{x,y\in X} \mathbb P_\eps^{(\lambda)} \big(x \leftrightarrow y) >0.
\end{equation}
We prove \eqref{equ:vanishing1} in two steps.

\medskip

{\noindent\em Step 1} (Long-range order for auxiliary percolations).
By Theorem \ref{thm:threshold}, there exists $p^*<1$ such that every $G$-invariant normal random closed subset $\mathcal Z$ of $X$ with $\mathbb P(o \in \partial \mathcal Z)=0$, or equivalently ${\rm vol}(\partial \mathcal Z) = 0$ a.s. by Lemma~\ref{lm:boundary}, satisfies
    \begin{equation*}
        \inf_{x,y\in \mathcal{B}_1(o)} \mathbb P \big( x\overset{\mathcal Z}{\longleftrightarrow} y \big)>p^* \quad \Rightarrow \quad \inf_{x,y\in X} \mathbb P \big( x\overset{\mathcal Z}{\longleftrightarrow} y \big)>0.
    \end{equation*}
Choose $N\geq1$ such that
\begin{equation} \label{equ:vanishing2}
    \big(1-(1-\eps)^N\big) \big((1+p^*)/2\big) > p^*.
\end{equation}
By Theorem \ref{thm:A1}, we may then choose $S>0$ and $\lambda_1>0$ such that 
\begin{equation} \label{equ:vanishing3}
    \inf_{\lambda<\lambda_1} \mathbb P \left(\forall 1\le i\le N \ \mathcal{B}_{S}(o)\cap C^{(\lambda)}_i\not=\emptyset\right) \ge p^* + (1-p^*)3/4.
\end{equation}
Assume, without loss of generality, that $S>2$.
For $\lambda,R>0$, where $R=2S$, let $\mathcal{Z}^{(\lambda)}:=\mathcal{Z}^{(\lambda)}_{\eps,R}$ be the normal random closed set that satisfies the conclusion of Theorem~\ref{thm:AuxiliaryPercolation}.

\medskip

\begin{claim} \label{cl:LROAuxiliary}
    There exists $\lambda_0>0$ such that 
    \begin{equation} \label{equ:LROxi}
        \inf_{x,y\in X} \mathbb P \big( x\overset{\mathcal{Z}^{(\lambda)}}{\longleftrightarrow} y \big)>0
    \end{equation}
    for every $\lambda<\lambda_0$.
\end{claim}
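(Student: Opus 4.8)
### Proof Plan

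The plan is to deduce Claim~\ref{cl:LROAuxiliary} from the long-range order threshold Theorem~\ref{thm:threshold} applied to the auxiliary random closed sets $\mathcal{Z}^{(\lambda)}$. By Theorem~\ref{thm:AuxiliaryPercolation}, each $\mathcal{Z}^{(\lambda)}$ is a normal random closed set, defined as a $G$-equivariant measurable function of $\omega_\eps^{(\lambda)}$; hence it is $G$-invariant, and it satisfies ${\rm vol}(\partial \mathcal{Z}^{(\lambda)})=0$ a.s.\ by item~(2), which by Lemma~\ref{lm:boundary} is equivalent to $\mathbb P(o\in\partial\mathcal{Z}^{(\lambda)})=0$. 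So the hypotheses of Theorem~\ref{thm:threshold} (with $R=1$) are met, and it suffices to verify the local density bound
\begin{equation*}
    \inf_{x,y\in \mathcal{B}_1(o)} \mathbb P\big( x\overset{\mathcal{Z}^{(\lambda)}}{\longleftrightarrow} y\big) > p^*
\end{equation*}
for all sufficiently small $\lambda>0$, where $p^*<1$ is the threshold furnished by Theorem~\ref{thm:threshold}. This is where the finitary conditions from Section~\ref{sec:FinitaryConditions} enter.

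The key estimate is the following. Fix $x,y\in\mathcal{B}_1(o)$. By Theorem~\ref{thm:A1}, with probability at least $p^*+(1-p^*)3/4$ (uniformly in $\lambda<\lambda_1$), each of the first $N$ Voronoi cells $C_1^{(\lambda)},\dots,C_N^{(\lambda)}$ intersects $\mathcal{B}_S(o)$; call this event $E_1^{(\lambda)}$. On $E_1^{(\lambda)}$, consider the colours of these $N$ cells: the probability that at least one of $C_1^{(\lambda)},\dots,C_N^{(\lambda)}$ is black (kept) is $1-(1-\eps)^N$, independently of the Voronoi diagram. Next, by Theorem~\ref{thm:A2}, with probability at least (say) $(1+p^*)/2$ for small $\lambda$, all pairs of cells meeting $\mathcal{B}_R(o)=\mathcal{B}_{2S}(o)$ pairwise share a boundary; call this event $E_2^{(\lambda)}$. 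Now suppose $E_1^{(\lambda)}$, $E_2^{(\lambda)}$ both hold and some $C_i^{(\lambda)}$ with $i\le N$ is black. Then $C_i^{(\lambda)}$ meets $\mathcal{B}_S(o)$, and — the crucial point — I claim that the black cluster containing $C_i^{(\lambda)}$ has the property that $\mathcal{B}_R(z)\subseteq\mathcal{Z}^{(\lambda)}$ for appropriate $z$: indeed, using $E_2^{(\lambda)}$ (intersection of all relevant cells with a ball of radius $R=2S$), for any point $z$ in this cluster with $\mathcal{B}_{6R}(z)$ meeting only that cluster, item~(4) of Theorem~\ref{thm:AuxiliaryPercolation} gives $\mathcal{B}_R(z)\subseteq\mathcal{Z}^{(\lambda)}$. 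The geometry is arranged so that $x$ and $y$, both in $\mathcal{B}_1(o)\subseteq\mathcal{B}_S(o)$, lie within distance $R=2S$ of a point of $C_i^{(\lambda)}\subseteq\omega_\eps^{(\lambda)}$, and hence (after possibly invoking Theorem~\ref{thm:StrongerA2} with its parameter $D$ chosen large, and Proposition~\ref{pr:PropertiesAuxiliaryPerc}(3)) both $x$ and $y$ lie in the single cluster $Z$ of $\mathcal{Z}^{(\lambda)}$ containing $C_i^{(\lambda)}$. Thus on this event $x\overset{\mathcal{Z}^{(\lambda)}}{\longleftrightarrow}y$.

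Combining: by a union bound and independence of the colouring,
\begin{equation*}
    \mathbb P\big(x\overset{\mathcal{Z}^{(\lambda)}}{\longleftrightarrow}y\big) \ge \mathbb P(E_1^{(\lambda)}\cap E_2^{(\lambda)}) \cdot \big(1-(1-\eps)^N\big) \ge \big(1-(1-\eps)^N\big)\big((1+p^*)/2\big) > p^*
\end{equation*}
for small $\lambda$, where the last inequality is exactly \eqref{equ:vanishing2} and the middle one uses \eqref{equ:vanishing3} together with the Theorem~\ref{thm:A2} bound on $E_2^{(\lambda)}$ (shrinking $\lambda_0\le\lambda_1$ as needed so $\mathbb P(E_1^{(\lambda)}\cap E_2^{(\lambda)})\ge (1+p^*)/2$). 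Since $x,y\in\mathcal{B}_1(o)$ were arbitrary, this verifies the hypothesis of Theorem~\ref{thm:threshold}, which then yields \eqref{equ:LROxi}.

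The main obstacle I anticipate is the careful bookkeeping in the geometric step: one must check that the radii $S$, $R=2S$, $6R$, and the parameter $D$ in Theorem~\ref{thm:StrongerA2} fit together so that (a) the black cell $C_i^{(\lambda)}$ meeting $\mathcal{B}_S(o)$ actually produces a thickened cluster $Z_{C}$ containing a ball of radius $\ge R$ around a point near $o$, and (b) both $x$ and $y$ in $\mathcal{B}_1(o)$ fall inside that ball or are connected through the thickening to it. This requires invoking Proposition~\ref{pr:PropertiesAuxiliaryPerc}(3) with the condition $\mathcal{B}_{6R}(z)\cap A_C=\emptyset$, which is where Theorem~\ref{thm:A2} (forcing the nearby cells to all touch, hence not separating) is used to rule out the obstruction set $A_C$ near $o$. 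A clean way to organize (b) is to note that once $\mathcal{B}_R(z)\subseteq\mathcal{Z}^{(\lambda)}$ with $z\in\mathcal{B}_S(o)$ and $R=2S>2+S$, the ball $\mathcal{B}_R(z)$ contains $\mathcal{B}_1(o)\supseteq\{x,y\}$, so $x$ and $y$ are trivially connected inside this single cluster of $\mathcal{Z}^{(\lambda)}$. I would present the argument with these inclusions made fully explicit.
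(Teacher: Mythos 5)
Your overall strategy is exactly the paper's: apply the long-range-order criterion Theorem~\ref{thm:threshold} to the auxiliary normal random closed set $\mathcal Z^{(\lambda)}$, verify its hypotheses via Theorem~\ref{thm:AuxiliaryPercolation}(1),(2) and Lemma~\ref{lm:boundary}, and establish the local density bound $\inf_{x,y\in\mathcal B_1(o)}\mathbb P(x\overset{\mathcal Z^{(\lambda)}}{\longleftrightarrow}y)>p^*$ using Theorem~\ref{thm:A1} to seed $N$ cells near the origin, the independence of the coloring to get a black one, Theorem~\ref{thm:A2} to make the relevant cells pairwise touch, and Theorem~\ref{thm:AuxiliaryPercolation}(4) to inflate the resulting single cluster to cover $\mathcal B_1(o)$.

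However, there is a concrete gap in the radius bookkeeping, precisely at the spot you flag as ``the main obstacle.'' You define the touching event $E_2^{(\lambda)}$ by applying Theorem~\ref{thm:A2} to the ball $\mathcal B_R(o)=\mathcal B_{2S}(o)$. But Theorem~\ref{thm:AuxiliaryPercolation}(4) requires that for the point $x\in\mathcal B_S(o)\cap\omega_\eps^{(\lambda)}$ you pick, the ball $\mathcal B_{6R}(x)$ intersects only one cluster of $\omega_\eps^{(\lambda)}$. Since $\mathcal B_{6R}(x)\subseteq\mathcal B_{6R+S}(o)=\mathcal B_{6R+R/2}(o)\subseteq\mathcal B_{7R}(o)$, you need the cells meeting $\mathcal B_{7R}(o)$ --- not merely those meeting $\mathcal B_R(o)$ --- to pairwise touch. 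As stated, $E_2^{(\lambda)}$ controls a ball that is too small: a black cell could meet $\mathcal B_{6R}(x)$ without meeting $\mathcal B_R(o)$, and nothing forces it to lie in the same cluster. The paper accordingly applies Theorem~\ref{thm:A2} with radius $7R$. Your suggested repair --- invoking Theorem~\ref{thm:StrongerA2} with a larger $D$ --- does not address this: $D$ thickens the shared boundary between two cells that already intersect, but does not enlarge the region over which touching is guaranteed. The fix is simply to replace $\mathcal B_R(o)$ by $\mathcal B_{7R}(o)$ in the definition of $E_2^{(\lambda)}$; with that change your argument goes through as you outlined and matches the paper's proof.
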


\begin{proof}[Proof of Claim \ref{cl:LROAuxiliary}]
    By Theorem \ref{thm:A2}, we have that
        $$
        \liminf_{\lambda\to0} \, \, \mathbb{P} \bigg(\forall i,j \in \mathbb{N} \  C^{(\lambda)}_i\cap \mathcal{B}_{7R}(o)\not=\emptyset\not= C^{(\lambda)}_j\cap \mathcal{B}_{7R}(o) \  \Rightarrow \  C^{(\lambda)}_i\cap C^{(\lambda)}_j\not=\emptyset \bigg) = 1.
        $$
    Combined with \eqref{equ:vanishing3}, it follows that 
        \begin{align} \label{equ:vanishing4}
        \mathbb{P} \bigg(\forall i,j \in \mathbb{N} & \  C^{(\lambda)}_i\cap \mathcal{B}_{7R}(o)\not=\emptyset\not= C^{(\lambda)}_j\cap \mathcal{B}_{7R}(o) \  \Rightarrow \  C^{(\lambda)}_i\cap C^{(\lambda)}_j\not=\emptyset \ \mbox{and} \\
        & \forall 1\le i\le N \ \mathcal{B}_{S}(o)\cap C^{(\lambda)}_i\not=\emptyset \bigg) > (1+p^*)/2 \nonumber
        \end{align}
        for all sufficiently small $\lambda>0$, say $\lambda\leq\lambda_0$. But conditional on the event in \eqref{equ:vanishing4}, independently coloring each cell black with probability $\eps$ yields a unique black cluster $\omega$ in $\mathcal{B}_{7R}(o)$ that has non-empty intersection with $\mathcal{B}_S(o)$ with probability at least $1-(1-\eps)^N$.
        In particular, conditioned on this event, we have that $\mathcal{B}_1(o)\subseteq \mathcal{Z}^{(\lambda)}$.
        This is because there is $x\in \mathcal{B}_S(o)\cap \omega$ such that $\mathcal{B}_{6R}(x)\subseteq \mathcal{B}_{7R}(o)$ intersects a single cluster of $\omega^{(\lambda)}_\eps$, which implies by Theorem~\ref{thm:AuxiliaryPercolation}~(4) that $\mathcal{B}_1(o)\subseteq \mathcal{B}_{R}(x)\subseteq \mathcal{Z}$ as $2R=S>2$.
        Recalling the choice of $N$ from \eqref{equ:vanishing1}, we obtain that 
        $$
        \inf_{x,y\in \mathcal{B}_1(o)} \mathbb P \big( x\overset{\mathcal{Z}^{(\lambda)}}{\longleftrightarrow} y \big) > p^*
        $$
        for all $\lambda\le \lambda_0$.
        Combined with Theorem~\ref{thm:AuxiliaryPercolation} and the definition of $p^*$, Claim \ref{cl:LROAuxiliary} follows from Theorem~\ref{thm:threshold}.
\end{proof}

{\noindent\em Step 2} (Comparison). We now show that long-range order for $\mathcal{Z}^{(\lambda)}$ implies the same for $\omega_\eps^{(\lambda)}$. Since the former was shown in \eqref{equ:LROxi} for $\lambda\le\lambda_0$, this will prove \eqref{equ:vanishing1}.

More precisely, we show that for every $\lambda$, 
the two-point function of $\omega_\eps^{(\lambda)}$ is bounded from below by the two-point function of  $\mathcal{Z}^{(\lambda)}$ times a multiplicative factor which depends only on $\lambda$ and $R$. Set
    \begin{equation*}
        c:= c(\lambda,R):= \mathbb P \big( \mathcal B_{3R}(o) \subset \omega_\eps^{(\lambda)}\big) > 0.
    \end{equation*}
    The Harris-FKG-inequality, see Lemma \ref{lm:FKG}, implies that
    \begin{equation*}
        \mathbb P \Big( \mathcal B_{3R}(x) \overset{\omega_\eps^{(\lambda)}}{\longleftrightarrow} \mathcal B_{3R}(y), \mathcal B_{3R}(x) \subset \omega_\eps^{(\lambda)}, \mathcal B_{3R}(y) \subset \omega_\eps^{(\lambda)} \Big) \geq c^2 \mathbb P \Big[ \mathcal B_{3R}(x) \overset{\omega_\eps^{(\lambda)}}{\longleftrightarrow} \mathcal B_{3R}(y) \Big],
    \end{equation*}
    where $\mathcal B_{3R}(x) \overset{\omega_\eps^{(\lambda)}}{\longleftrightarrow} \mathcal B_{3R}(y)$ is the event that there is a path in $\omega_\eps^{(\lambda)}$ that connects a point from $\mathcal B_{3R}(x)$ to a point from $\mathcal B_{3R}(y)$. 
    Hence
    \begin{align*}
    \mathbb P \Big( x \overset{\mathcal{Z}^{(\lambda)}}{\longleftrightarrow} y\Big) & \leq \mathbb P \Big( \mathcal B_{3R}(x) \overset{\omega_\eps^{(\lambda)}}{\longleftrightarrow} \mathcal B_{3R}(y) \Big) \\
    & \leq \, c^{-2} \, \mathbb P \Big( \mathcal B_{3R}(x) \overset{\omega_\eps^{(\lambda)}}{\longleftrightarrow} B_{3R}(y), \mathcal B_{3R}(x) \subset \omega_\eps^{(\lambda)}, \mathcal B_{3R}(y) \subset \omega_\eps^{(\lambda)} \Big) \\
    & \leq \, c^{-2} \, \mathbb P \Big( x \overset{\omega_\eps^{(\lambda)}}{\longleftrightarrow} y\Big),
    \end{align*}
    where the first inequality follows from the construction of $\mathcal{Z}^{(\lambda)}$.
    Indeed, by \eqref{eq:ThickeningCell} there are $x',y'\in \omega^{(\lambda)}_\eps$ that are in the same cluster of $\mathcal{Z}^{(\lambda)}$ as $x,y$ (hence $x'$ and $y'$ are in the same cluster of $\omega^{(\lambda)}_\eps$ by Theorem~\ref{thm:AuxiliaryPercolation}~(3)) and that satisfy $d_X(x,x'),d_X(y,y')<3R$.
    The proof of Theorem~\ref{thm:vanishing} is thus complete.
\end{proof}

\section{Sparse factor graphs with unique infinite cluster} \label{sec:sparse}

In this section, we apply Theorem \ref{thm:vanishing} to prove the existence of sparse factor graphs with a unique infinite cluster for the Poisson point process on a connected higher rank semisimple real Lie group $G$ with property (T) (see Corollary \ref{cor:FIIDsparseUSectionEight}). This fact is a consequence of the following lemma, which provides a similar statement for the Poisson point process on the symmetric space $X$ of $G$.

To shorten notation, we say that a Poisson point process $Y$ on $X$, resp.~$\Pi$ on $G$, has {\em intensity} equal to a constant $c>0$, if its intensity measure is $c{\rm vol}$, resp.~$c m_G$ with $m_G$ denoting left-invariant Haar measure on $G$.

\begin{lemma}[FIID sparse unique infinite clusters on the symmetric space] \label{lm:sparseX}
    Let $G$ be a connected higher rank semisimple real Lie group with property (T) and let $(X,d_X)$ be its symmetric space. Let~$\mathbf Y$ be a Poisson point process $Y$ on $X$ of intensity $1$ together with iid ${\rm Unif}[0,1]$ marks. Then, for every $\eps>0$, there exists $\lambda>0$ such that if $\mathbf Y^{(\lambda)}$ is an independent Poisson point process on $X$ of intensity $\lambda$ equipped with iid ${\rm Unif}[0,1]$ marks,  there is a graph $\mathcal H$ on $Y$ defined as an isometry-equivariant factor of $(\mathbf Y,\mathbf Y^{(\lambda)})$ with a unique infinite cluster and $\mathbb E\big[ {\rm deg}_{\mathcal H(Y_0)}(o) \big]\le\eps$, where $Y_0:=Y\cup\{o\}$.
\end{lemma}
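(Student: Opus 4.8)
\textbf{Proof plan for Lemma~\ref{lm:sparseX}.}
The plan is to combine Theorem~\ref{thm:vanishing} with the observation, already recorded in the introduction, that vanishing uniqueness thresholds yield sparse FIID unique clusters, but now in the continuum setting where the ``factor of iid'' must be realized as an isometry-equivariant factor of the pair $(\mathbf Y,\mathbf Y^{(\lambda)})$. First I would fix $\eps>0$ and let $d$ be a quantity to be determined (it will play the role of a ``degree budget''); by Theorem~\ref{thm:vanishing} choose $\lambda>0$ small enough that $p_u(\lambda)<\eps/(2d)$, and in particular fix $p\in(p_u(\lambda),\eps/(2d))$ so that, by Theorem~\ref{thm:LRO} and Corollary~\ref{cor:Phases}, $(\lambda,p)$-Poisson-Voronoi percolation $\omega_p^{(\lambda)}$ on $X$ has a unique unbounded cluster a.s. The colors of the cells should be read off from the marks $Z_i^{(\lambda)}$ carried by $\mathbf Y^{(\lambda)}$, so that $\omega_p^{(\lambda)}$ is an isometry-equivariant factor of $\mathbf Y^{(\lambda)}$ alone; this is exactly the setting of Section~\ref{section PVP}, and by Lemma~\ref{lm:BasicVoronoi} all cells are compact.

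Next I would transfer this unbounded cluster in $X$ to a subgraph of the Delaunay-type graph on the \emph{independent} process $Y$. Concretely, for each point $y\in Y$ (and for $o$, in the rooted picture $Y_0=Y\cup\{o\}$), look at the Voronoi cell of $y$ in $\mathrm{Vor}(\mathbf Y^{(\lambda)})$ containing $y$ — call it the cell ``hosting'' $y$ — and keep $y$ if and only if that host cell is black, i.e.\ belongs to $\omega_p^{(\lambda)}$. Declare two retained points $y,y'\in Y$ adjacent in $\mathcal H$ if and only if their host cells are equal or share a boundary and both are black (so the corresponding Delaunay edge lies inside $\omega_p^{(\lambda)}$); equivalently, $\mathcal H$ is obtained by pulling back to $Y$ the Bernoulli-percolated Delaunay graph on $\mathbf Y^{(\lambda)}$ via the host-cell map and then contracting/relabelling so vertices are points of $Y$. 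Since cells are compact and every bounded set meets finitely many cells a.s.\ (Lemma~\ref{lm:closed}), each black cluster of $\omega_p^{(\lambda)}$ is unbounded iff it contains infinitely many points of $Y$ a.s.\ (here one uses that $Y$ is an independent Poisson process of positive intensity, so a.s.\ it puts infinitely many points in any unbounded region of positive volume and the boundary has zero volume by Remark~\ref{rem:BoundaryVolume}); hence $\mathcal H$ has a unique infinite cluster a.s. This whole construction is manifestly an isometry-equivariant measurable factor of $(\mathbf Y,\mathbf Y^{(\lambda)})$.

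It remains to control the expected root degree, and this is where the parameter $d$ gets pinned down. By the Mecke equation applied to $Y_0$, $\mathbb E[\deg_{\mathcal H}(o)]$ equals $\lambda^{-1}$ — wait, rather: conditioning on $o$ being retained (i.e.\ the host cell of $o$ is black, probability $p$), the expected number of $\mathcal H$-neighbours of $o$ is at most $p$ times the expected number of points of $Y$ lying in cells adjacent to the host cell of $o$. By Proposition~\ref{pr:UpperBoundBall} (its converse-to-$\mathrm{A}1$ direction), for small $\lambda$ a ball of fixed radius around $o$ meets few cells with high probability, and more usefully: the expected number of cells of $\mathrm{Vor}(\mathbf Y^{(\lambda)})$ adjacent to a given cell, and the expected $Y$-mass of each such cell ($=\lambda^{-1}$ by stationarity of $Y$ against the cell partition, times the expected number of cells), is finite; write $d=d(\lambda)$ for the resulting bound on $\mathbb E[\#\{y\in Y: y \text{ in host cell of }o \text{ or an adjacent cell}\}\mid o\text{ retained}]$. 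Then $\mathbb E[\deg_{\mathcal H(Y_0)}(o)]\le p\cdot d(\lambda)$. The subtlety — and the main obstacle — is that $d(\lambda)$ is not obviously bounded as $\lambda\to0$: it is of order $\lambda^{-1}\cdot\mathbb E[\deg_{\mathrm{Delaunay}}]$, and by Lemma~\ref{lm:FinExpDegree} the Delaunay degree is only controlled as $O(\lambda^{-\alpha})$. Hence one cannot simply send $\lambda\to0$ after fixing $p$; instead one must choose $p$ and $\lambda$ \emph{together}. The resolution: by Theorem~\ref{thm:vanishing}, $p_u(\lambda)\to0$, so for each fixed small $\lambda$ we may take $p$ just above $p_u(\lambda)$, in particular $p\le p_u(\lambda)+\lambda$; then $\mathbb E[\deg_{\mathcal H(Y_0)}(o)]\le (p_u(\lambda)+\lambda)\cdot O(\lambda^{-\alpha-1})$, which still need not vanish. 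The honest fix, and the real content of the argument, is that one does \emph{not} keep all Delaunay edges inside the black cluster but only enough to connect it: having a unique infinite black cluster, run an isometry-equivariant thinning (e.g.\ using the $\mathrm{Unif}[0,1]$ marks on $Y$ to build an equivariant spanning connected subgraph of each black cluster, cf.\ the reductions of \cite{HP20,AM22} and the ``keeping all edges between black points'' remark in the introduction adapted to the continuum) whose expected root degree is $\le p\cdot(1+o_\lambda(1))\cdot(\text{const})$ — more precisely one uses that a connected subgraph of a cluster of $Y$-points in $\omega_p^{(\lambda)}$ can be taken to have expected degree at most, say, $3$, so $\mathbb E[\deg_{\mathcal H(Y_0)}(o)]\le 3p\le 3(p_u(\lambda)+\lambda)$. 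Choosing $\lambda$ small enough that $3(p_u(\lambda)+\lambda)\le\eps$, using Theorem~\ref{thm:vanishing}, completes the proof. I expect the bulk of the write-up to be the measurable/equivariant construction of such a bounded-degree connected factor subgraph of the unique infinite cluster — the percolation input is entirely supplied by Theorem~\ref{thm:vanishing}, Theorem~\ref{thm:LRO} and Corollary~\ref{cor:Phases}, and the only genuinely new work is the equivariant sparsification, which is standard in spirit but must be carried out in the continuum Delaunay setting.
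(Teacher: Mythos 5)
Your proposal starts out on a reasonable track (choose $\lambda$ small via Theorem~\ref{thm:vanishing}, choose $p>p_u(\lambda)$ so that $\omega_p^{(\lambda)}$ has a unique unbounded cluster), and you correctly diagnose the obstruction to the naive idea: if you build $\mathcal H$ by pulling back the Delaunay graph of the low-intensity process $\mathbf Y^{(\lambda)}$ through the host-cell map, the degree at the root is governed by $\lambda^{-1}\cdot\mathbb E[\deg_{\mathrm{Delaunay}(Y^{(\lambda)})}]$, which blows up as $\lambda\to 0$, and one cannot decouple $p$ from $\lambda$ to rescue this. However, your proposed fix is a genuine gap. The claim that one can build, as an isometry-equivariant factor of the marks, a connected spanning subgraph of the $Y$-points of each black cluster with expected root degree bounded by an absolute constant, is not a ``standard sparsification'': for an infinite non-amenable cluster this is precisely the kind of cost-one/FIID-sparse-spanning statement the whole paper is engineered to obtain. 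Using it here would be circular, or at best require a substantial new argument (e.g.\ one-endedness/hyperfiniteness of the appropriate cluster relation in the continuum) that you have not supplied. As written, the step ``a connected subgraph of a cluster of $Y$-points can be taken to have expected degree at most $3$'' is an assertion, not a proof.

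The paper avoids this difficulty by choosing a different underlying graph. It does \emph{not} pull back the Delaunay graph of $\mathbf Y^{(\lambda)}$. Instead, $\mathcal H$ is a subgraph of the Delaunay graph $\mathcal G(Y)$ of the intensity-$1$ process $Y$ itself, whose expected root degree $\mathbb E[\deg_{\mathcal G(Y_0)}(\rho)]$ is finite (Lemma~\ref{lm:FinExpDegree}) and independent of $\lambda$. An edge $[x,y]$ of $\mathcal G(Y)$ is retained in $\mathcal H$ exactly when \emph{both} cells $C_x$ and $C_y$ of $\mathrm{Vor}(Y)$ intersect the unique unbounded cluster $C_{p,\infty}^{(\lambda)}$ of $\omega_p^{(\lambda)}$. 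Connectivity of the resulting infinite cluster then comes for free from path-connectedness of $C_{p,\infty}^{(\lambda)}\cup C_x\cup C_y$: following the $Y$-cells traversed by any continuous path from $x$ to $y$ inside this union yields a path in $\mathcal H$. The degree bound is obtained with no spanning construction at all: $\deg_{\mathcal H(Y_0)}(o)\le\deg_{\mathcal G(Y_0)}(\rho)\cdot\mathbf 1\{\mathcal B_R(o)\cap\omega_p^{(\lambda)}\neq\emptyset\}$ up to a small error term from $\{C_o\not\subset\mathcal B_R(o)\}$, and $\mathbb P(\mathcal B_R(o)\cap\omega_p^{(\lambda)}\neq\emptyset)$ is made small by combining Proposition~\ref{pr:UpperBoundBall} (few $Y^{(\lambda)}$-cells meet $\mathcal B_R(o)$ when $\lambda$ is small) with the choice of $p$ small, and independence of $Y_0$ and $\omega_p^{(\lambda)}$. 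This is the key structural idea your plan is missing: keep the bounded-degree graph fixed and let the \emph{event} of intersecting the percolation cluster become rare, rather than trying to thin an unbounded-degree graph a posteriori.
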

\begin{proof}
    Let $\eps>0$ be fixed. To define the factor graph $\mathcal H$, we will use Poisson-Voronoi percolation with suitable parameters $\lambda$ and $p$, which are obtained as follows. By Lemma \ref{lm:FinExpDegree}, $\mathbb E [{\rm deg}_{\mathcal G(Y_0)}(\rho)]<\infty,$ 
where $\mathcal G(Y_0)$ denotes the Delaunay graph of $Y_0$ and $\rho=C_o$ denotes the cell of the origin. In particular, by Lemma~\ref{lm:BasicVoronoi}, there exists $R>0$ such that 
\begin{equation} \label{assumption:R}
\mathbb E \big[{\rm deg}_{\mathcal G(Y_0)}(\rho) \, \mathbf 1\{ C_o\not\subset \mathcal B_R(o) \} \, \big] \le \eps/3
\end{equation}
By Proposition \ref{pr:UpperBoundBall}, there exist $n_0\in \mathbb{N}$ and $\lambda_0>0$ such that 
\begin{equation} \label{assumption:lambda}
\sup_{0<\lambda<\lambda_0} \mathbb{P}^{(\lambda)}(\# \text{ Voronoi cells intersecting } \mathcal{B}_R(o)\ge n_0 ) < \frac{\eps}{3 \, \mathbb E [{\rm deg}_{\mathcal G(Y_0)}(\rho)]}.
\end{equation}
Let $n_0\in\mathbb N$ and $\lambda_0>0$ be as above. Choose $p\in(0,1)$ such that
\begin{equation} \label{assumption:p}
\sum_{i=1}^{n_0-1} 1-(1-p)^i \le \frac{\eps}{3 \, \mathbb E [{\rm deg}_{\mathcal G(Y_0)}(\rho)]}.
\end{equation}
Finally, using Theorem \ref{thm:vanishing}, choose $\lambda<\lambda_0$ such that $p_u(\lambda)<p$. Note that with these choices of parameters, $(\lambda,p)$-Poisson-Voronoi percolation has a unique unbounded cluster almost surely.

Fix $\lambda$ and $p$ be as above. Let $\mathbf Y^{(\lambda)}$ be an independent Poisson point process $Y^{(\lambda)}$ on $X$ of intensity $\lambda$ together with iid ${\rm Unif}[0,1]$ marks and let $\omega_p^{(\lambda)}$ denote the configuration of Poisson-Voronoi percolation on~$Y^{(\lambda)}$ obtained as previously using the marks. Define the graph $\mathcal H$ on $Y$ as follows. For $x,y\in Y$, let $[x,y]$ be an edge in $\mathcal H$ if and only if both of the following conditions are satisfied:
\begin{itemize}
\item[(i)] $C_x$ and $C_y$ are neighbors in $\mathcal G(Y)$,
\item[(ii)] $C_x$ and $C_y$ intersect the unique unbounded cluster $C_{p,\infty}^{(\lambda)}$ of $\omega_p^{(\lambda)}$.
\end{itemize}
Note that $\mathcal H$ is locally finite because ${\rm deg}_{\mathcal H}(y)\le {\rm deg}_{\mathcal G(Y)}(y)<\infty$ for every $y\in Y$. Moreover, it follows from the definition that $\mathcal H$ is an isometry-equivariant factor of $(\mathbf Y,\mathbf Y^{(\lambda)})$.

\begin{claim} The graph $\mathcal H$ has a unique infinite cluster.
\end{claim}
\begin{proof} Every $y\in Y$ such that $C_y$ does not intersect $C_{p,\infty}^{(\lambda)}$ is an isolated vertex of $\mathcal H$. On the other hand, note that the set of vertices $y \in Y$ such that $C_y$ intersects $C_{p,\infty}^{(\lambda)}$ is infinite because $C_{p,\infty}^{(\lambda)}$ is unbounded and every Voronoi cell is bounded. We claim that it is also connected. Indeed, let $x,y\in Y$ such that  $C_x$ and $C_y$ intersect $C_{p,\infty}^{(\lambda)}$. Consider any path joining $x$ to $y$ in $C_{p,\infty}^{(\lambda)} \cup C_x\cup C_y$.
Following the nuclei in $Y$ corresponding to the cells traversed when going along this path yields a path connecting $x$ to $y$ in $\mathcal H$.
\end{proof}

It remains to show that $\mathbb E\big[ {\rm deg}_{\mathcal H(Y_0)}(o) \big]\le\eps$. To see this, first observe that it follows from the definition of $\mathcal H$, \eqref{assumption:R} and independence of $Y_0$ and $\omega_p^{(\lambda)}$ that
\begin{align*}
\mathbb E\big[ {\rm deg}_{\mathcal H(Y_0)}(o) \big] & = \mathbb E\big[ {\rm deg}_{\mathcal H(Y_0)}(o)  \, \mathbf 1\{ C_o\not\subset \mathcal B_R(o) \} \, \big] + \mathbb E\big[ {\rm deg}_{\mathcal H(Y_0)}(o)  \, \mathbf 1\{ C_o\subset \mathcal B_R(o) \} \, \big] \\
& \le \eps/3 + \mathbb E \big[{\rm deg}_{\mathcal G(Y_0)}(\rho) \, \mathbf 1\{ C_o\subset \mathcal B_R(o) \} \, \mathbf 1\{\mathcal B_R(o)\cap \omega_p^{(\lambda)}\ne \emptyset \} \,\big] \\
& = \eps/3 + \mathbb E \big[{\rm deg}_{\mathcal G(Y_0)}(\rho) \, \mathbf 1\{ C_o\subset \mathcal B_R(o) \} \big] \mathbb P\big( \mathcal B_R(o)\cap \omega_p^{(\lambda)}\ne \emptyset\big) \\
& \le \eps/3 + \mathbb E \big[{\rm deg}_{\mathcal G(Y_0)}(\rho) \big] \mathbb P\big( \mathcal B_R(o)\cap \omega_p^{(\lambda)}\ne \emptyset\big).
\end{align*}

Let $N_R^{(\lambda)}(o)$ denote the number of Voronoi cells of $Y^{(\lambda)}$ intersecting $\mathcal B_R(o)$. By \eqref{assumption:lambda} and \eqref{assumption:p} 
\begin{align*}
\mathbb P\big( \mathcal B_R(o)\cap \omega_p^{(\lambda)}\ne \emptyset \big) & = \mathbb P\big( \mathcal B_R(o)\cap \omega_p^{(\lambda)}\ne \emptyset, N_R^{(\lambda)}(o) \ge n_0 \big) + \mathbb P\big( \mathcal B_R(o)\cap \omega_p^{(\lambda)}\ne \emptyset, N_R^{(\lambda)}(o) < n_0 \big) \\
& \le  \frac{\eps}{3 \, \mathbb E [{\rm deg}_{\mathcal G(Y_0)}(\rho)]} + \sum_{i=1}^{n_0-1} \mathbb P\big( N_R^{(\lambda)}(o) = i \big)\big(1-(1-p)^i) \\
& \le \frac{2\eps}{3 \, \mathbb E [{\rm deg}_{\mathcal G(Y_0)}(\rho)]}.
\end{align*}
Combining the above two inequalities proves that $\mathbb E\big[ {\rm deg}_{\mathcal H(Y_0)}(o) \big] \le \eps$, which finishes the proof of the lemma.
\end{proof}

\begin{remark}\label{rm:DegreeAsymp}
    In the setting of Lemma \ref{lm:sparseX}, an arguably even more intuitive approach would be to first connect all points of $Y$ falling into the same black Voronoi cell of $Y^{(\lambda)}$ by a minimal path and then connect neighboring black Voronoi cells by an edge with endpoints chosen uniformly among its $Y$-points. For this strategy to work, we would need that the expected number of neighbors of each cell is comparable to the expected volume. This leads to the following question which we leave open: Does \eqref{equ:DegreeAsymp} hold with $\alpha=1$?
\end{remark}

We are now in a position to prove our main application of Theorem \ref{thm:vanishing} to the study of factor graphs of the Poisson point process on $G$. Recall that $1_G$ denotes the identity element on $G$.

\begin{corollary}[FIID sparse unique infinite clusters] \label{cor:FIIDsparseUSectionEight} Let $G$ be a connected higher rank semisimple real Lie group with property (T). Let~$\Pi$ be the Poisson point process on $G$ of intensity $1$ equipped with iid ${\rm Unif}[0,1]$ marks. Then, for every $\eps>0$, there is a $G$-equivariant factor graph $\mathcal H$ of $\Pi$ with a unique infinite cluster and $\mathbb E\big[ {\rm deg}_{\mathcal H(\Pi_0)}(1_G) \big]\le\eps$ for $\Pi_0=\Pi\cup\{1_G\}$.
\end{corollary}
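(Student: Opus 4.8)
The plan is to transfer the statement of Lemma~\ref{lm:sparseX}, which lives on the symmetric space $X=G/K$, to the group $G$ itself. The key geometric fact is that the projection $p\colon G\to X$ is a proper map with compact fibers (the fibers are cosets of the compact group $K$), so objects living ``near the identity'' on $G$ correspond to objects living ``near the origin'' on $X$, up to the bounded ambiguity given by $K$. First I would fix $\eps>0$ and apply Lemma~\ref{lm:sparseX} with parameter $\eps/C$ for a suitable constant $C$ depending only on $K$ (in fact $C$ will essentially be $m_G(K)$ or a related normalization constant): this produces an intensity $\lambda>0$, a survival parameter $p$ with $p_u(\lambda)<p$, an independent pair of marked Poisson processes $(\mathbf Y,\mathbf Y^{(\lambda)})$ on $X$, and an isometry-equivariant factor graph $\mathcal H_X$ of $(\mathbf Y,\mathbf Y^{(\lambda)})$ on $Y$ with a unique infinite cluster and $\mathbb E[{\rm deg}_{\mathcal H_X(Y_0)}(o)]\le\eps/C$.

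Next I would realize $\Pi$, the intensity-$1$ marked Poisson process on $G$, on the same probability space: by the Mapping Theorem for Poisson processes, the pushforward $p_*\Pi$ is a Poisson process on $X$ whose intensity is a constant multiple of $\rm vol$ (the constant being determined by how $m_G$ disintegrates over $\rm vol$), and conversely $\Pi$ can be reconstructed from $p_*\Pi$ by placing, above each point $x$ of $p_*\Pi$, an independent $K$-distributed fiber coordinate. After an innocuous rescaling of intensities (absorbed into the choice of $\lambda$ and into $C$), we may arrange that $p_*\Pi$ has intensity $1$, so it plays the role of $Y$; we also need an auxiliary intensity-$\lambda$ process on $X$, which we extract from an independent portion of the marks or from an auxiliary independent Poisson process on $G$ and project down — either way this is a legitimate $G$-equivariant (hence $K$-equivariant) operation. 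Now define the factor graph $\mathcal H$ on $\Pi$ by declaring $[g,h]$ an edge of $\mathcal H$ if and only if $[p(g),p(h)]$ is an edge of $\mathcal H_X$. Since $\mathcal H_X$ is ${\rm Isom}(X)$-equivariant and the $G$-action on $X$ factors through ${\rm Isom}(X)$, the graph $\mathcal H$ is $G$-equivariant; its connected components are the $p$-preimages of the components of $\mathcal H_X$, so it has a unique infinite cluster (the preimage of the unique infinite cluster of $\mathcal H_X$, which is infinite since $p$ has compact, hence bounded but nonempty, fibers and $X$ is proper).

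The degree bound is the remaining computation. Adding $1_G$ to $\Pi$ corresponds, under $p$, to adding $o=p(1_G)$ to $p_*\Pi$, so $\Pi_0$ pushes forward to $Y_0$. The point $1_G$ lies in the fiber over $o$, and the number of $\Pi_0$-points in that fiber is a.s.\ $1$ (two independent $K$-coordinates coincide with probability zero), so in fact $1_G$ is the unique $\Pi_0$-point above $o$; likewise, above each point $x$ of $Y_0$ there is a.s.\ exactly one point of $\Pi_0$. Hence the map $g\mapsto p(g)$ restricts to a bijection between $\Pi_0$ and $Y_0$, and ${\rm deg}_{\mathcal H(\Pi_0)}(1_G)={\rm deg}_{\mathcal H_X(Y_0)}(o)$ pointwise; taking expectations gives $\mathbb E[{\rm deg}_{\mathcal H(\Pi_0)}(1_G)]=\mathbb E[{\rm deg}_{\mathcal H_X(Y_0)}(o)]\le\eps/C\le\eps$ (here the rescaling constant $C$ was chosen so that the intensity-$1$ normalization on $G$ matches the intensity-$1$ normalization on $X$ after disintegration; if the disintegration constant is $1$ one may simply take $C=1$). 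I expect the main technical obstacle to be bookkeeping the normalizations: one must verify carefully that the pushforward of the intensity-$1$ Haar-Poisson process on $G$ is, after the chosen identifications, exactly (or a fixed constant multiple of) the intensity-$1$ volume-Poisson process on $X$, and that the auxiliary low-intensity process can be produced equivariantly on $G$ — but none of this is deep, it is the standard Mapping/Marking-theorem dictionary between $G$ and $G/K$, already used implicitly in \cite[Section~2.2]{FMW23} and \cite{AM22}.
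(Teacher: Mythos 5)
Your proposal is correct and follows essentially the same route as the paper: push $\Pi$ down to $X$ via the projection $p$, obtain the auxiliary intensity-$\lambda$ marked Poisson process as a $G$-equivariant factor of $\Pi$ (via the iid marks, cf.~\cite[Proposition 5.1]{AM22} and \cite[Theorem 2.25]{FMW23}), apply Lemma~\ref{lm:sparseX}, pull the resulting graph back to $\Pi$ by putting an edge $[g,h]$ whenever $[p(g),p(h)]$ is an edge, and observe that since $m_G(K)=0$ the restriction of $p$ to $\Pi_0$ is almost surely a bijection onto $Y_0$, so degrees match. Two minor imprecisions: the normalization constant $C$ you introduce is unnecessary (the paper defines $\vol$ as the push-forward of $m_G$, so the intensity-$1$ process on $G$ projects directly to the intensity-$1$ process on $X$), and your alternative suggestion of using ``an auxiliary independent Poisson process on $G$'' would not yield a factor of $\Pi$ alone unless, as your first option correctly proposes, that auxiliary process is itself produced from the marks of $\Pi$.
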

\begin{proof} Fix $\eps>0$. Choose $\lambda>0$ as in Lemma \ref{lm:sparseX}. As observed for instance in the proof of \cite[Theorem 2.25]{FMW23}, the Poisson point process $\Pi$ admits $(\Pi,\Pi^{(\lambda)})$ as a $G$-equivariant factor, where $\Pi^{(\lambda)}$ is an independent Poisson point process on $G$ of intensity $\lambda$ equipped with iid ${\rm Unif}[0,1]$-marks (this follows from the construction in \cite[Proposition 5.1]{AM22}). Let $(\mathbf Y,\mathbf Y^{(\lambda)})$ denote the image of $(\Pi,\Pi^{(\lambda)})$ under the canonical projection $p:  G\to X$. Then $\mathbf Y$ is a Poisson point process $Y$ on $X$ of intensity $1$ together with iid ${\rm Unif}[0,1]$ marks and $\mathbf Y^{(\lambda)}$ is an independent Poisson point process on $X$ of intensity $\lambda$ equipped with iid ${\rm Unif}[0,1]$ marks. By choice of $\lambda$, there is a graph $\mathcal H'$ on $Y$ defined as an isometry-equivariant factor of $(\mathbf Y,\mathbf Y^{(\lambda)})$ with a unique infinite cluster and $\mathbb E\big[ {\rm deg}_{\mathcal H'(Y_0)}(o) \big]\le\eps$, where $Y_0:=Y\cup\{o\}$. We now define a graph $\mathcal H$ on $\Pi$ as follows. For $g,h\in \Pi$, let $[g,h]$ be an edge in $\mathcal H$ if and only if $[p(g),p(h)]$ is an edge in $\mathcal H'$. Then $\mathcal H$ defines a $G$-equivariant factor graph of $\Pi$. Since $m_G(K)=0$, there are almost surely no multiple points $g\in \Pi$ which project to the same point in $\mathbf Y$. Note also that $p(1_G)=o\in X$. It follows that $\mathcal H$ has a unique infinite cluster and $\mathbb E\big[{\rm deg}_{\mathcal H(\Pi_0)}(1_G) \big]\le\eps$. 
 \end{proof}

\section{FIID sparse unique infinite clusters} \label{sec:Lattices}

We show that any Cayley graph of a co-compact lattice $\Gamma$ in a connected higher rank semisimple real Lie group $G$ with property (T) gives a positive answer to Question \ref{q:FIIDSparseUnique}.
Recall that a \emph{co-compact lattice} is a discrete subgroup with the property that there is a compact set $B\subseteq G$ such that $G=\bigcup_{\gamma\in \Gamma} B\gamma$ (or equivalently $G=\bigcup_{\gamma\in \Gamma} \gamma B$).
The fact that co-compact lattices indeed exist, for example, in $\group$ for $n\ge 3$, follows from the classical result of Borel and Harish-Chandra \cite{BHC62}. By the \v{S}varc--Milnor lemma every co-compact lattice $\Gamma$ is finitely generated.
We recall that if $S$ is a finite symmetric generating set of $\Gamma$, then the \emph{(right) Cayley graph} ${\rm Cay}(\Gamma,S)$ is a graph on $\Gamma$, where $\gamma_0,\gamma_1\in \Gamma$ form an edge if there is $s\in S$ such that $\gamma_0s=\gamma_1$.

\begin{theorem}[Cayley graphs with the FIID sparse unique infinite cluster property]  \label{thm:FIIDSUICP}
    Let $\Gamma\subset G$ be a co-compact lattice in a connected higher rank semisimple real Lie group $G$ with property~(T) and let ${\rm Cay}(\Gamma,S)$ be the Cayley graph of $\Gamma$ with respect to a finite symmetric generating set $S$.
    Then, for every $\eps>0$, there is a $\Gamma$-equivariant FIID bond percolation $\omega$ on ${\rm Cay}(\Gamma,S)$ with a unique infinite cluster and $\mathbb E\big[ {\rm deg}_{\omega}(1_G) \big]\le\eps$.
\end{theorem}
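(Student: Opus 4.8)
The plan is to transfer the factor graph on the symmetric space $X$ constructed in Lemma~\ref{lm:sparseX}, or rather the factor graph on $G$ from Corollary~\ref{cor:FIIDsparseUSectionEight}, down to the co-compact lattice $\Gamma$ via a standard ``induction/co-induction'' comparison between $G$-invariant point processes on $G$ and $\Gamma$-invariant processes on $\Gamma$, using that $\Gamma\backslash G$ is compact. The key point is that a co-compact lattice is a ``uniform'' lattice, so a fundamental domain $B\subseteq G$ for the right $\Gamma$-action has finite Haar measure, and one can simulate a Poisson point process on $G$ as a factor of iid data on $\Gamma$ by placing, for each $\gamma\in\Gamma$, an independent Poisson sample on $\gamma B$ read off from the iid label at $\gamma$.

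Concretely, first I would fix $\eps>0$ and apply Corollary~\ref{cor:FIIDsparseUSectionEight} with parameter $\eps'=\eps/D$, where $D$ depends only on the geometry of $B$ (the number of $\Gamma$-translates of a bounded ball meeting $B$, times the generating-set size), to obtain a $G$-equivariant factor graph $\mathcal H$ of the unit-intensity marked Poisson process $\Pi$ on $G$ with a unique infinite cluster and $\mathbb E[\deg_{\mathcal H(\Pi_0)}(1_G)]\le \eps'$. Next I would realize $\Pi$ together with its $\mathrm{Unif}[0,1]$-marks as a $\Gamma$-equivariant factor of an iid process indexed by $\Gamma$: choose a Borel fundamental domain $B$ for $\Gamma\backslash G$ with $m_G(B)<\infty$ (possible since $\Gamma$ is co-compact and $G$ is unimodular by Theorem~\ref{thm:BasicSymmetricSpace}~(1)), and from the iid label at $\gamma\in\Gamma$ — which can be taken to encode a $\sigma$-finite amount of randomness — sample an independent unit-intensity marked Poisson process on the cell $\gamma B$; the superposition over all $\gamma$ is a unit-intensity marked Poisson process on $G$ and the construction is manifestly $\Gamma$-equivariant. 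Composing, $\mathcal H$ becomes a $\Gamma$-equivariant factor of iid on $\Gamma$.

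Then I would push $\mathcal H$ from $\Pi$ (a point set in $G$) to a bond percolation on $\mathrm{Cay}(\Gamma,S)$. There is a standard $\Gamma$-equivariant identification of $\Pi\cup\{1_G\}$-indexed vertices with $\Gamma$: each point $g\in\Pi$ lies in a unique $\Gamma$-translate $\gamma(g) B$, and within $\gamma B$ one can label the (a.s.~finitely many, in each bounded region, hence) points of $\Pi\cap\gamma B$ by their mark order; more robustly, I would instead declare an edge $\{\gamma_0,\gamma_1\}\in\mathrm{Cay}(\Gamma,S)$ to be open iff there exist $g\in\Pi\cap\gamma_0 B$ and $h\in\Pi\cap\gamma_1 B$ that are $\mathcal H$-adjacent and lie in the unique infinite $\mathcal H$-cluster, and additionally open a spanning set of edges inside each ``$\Gamma$-cell'' to tie together all $\Pi$-points of a given $\gamma B$ that meet the infinite cluster. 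Unboundedness of the infinite $\mathcal H$-cluster plus co-compactness forces this bond percolation $\omega$ to have an infinite cluster, and uniqueness transfers because distinct infinite $\omega$-clusters would project to distinct infinite $\mathcal H$-clusters. The degree bound follows from the mass-transport/Mecke-type computation: $\mathbb E[\deg_\omega(1_\Gamma)]$ is controlled by $\mathbb E[\#\{g\in\Pi\cap B\}\cdot\deg_{\mathcal H}(g)]$ together with $|S|$ and the number of $\Gamma$-neighbors of $B$, which by the choice $\eps'=\eps/D$ and the Palm/Mecke formula for $\Pi$ is at most $\eps$.

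The main obstacle I expect is the bookkeeping in the last step: making the passage from a factor graph on the point set $\Pi$ (whose vertices are not canonically indexed by $\Gamma$) to a bond percolation on the fixed graph $\mathrm{Cay}(\Gamma,S)$ genuinely $\Gamma$-equivariant and FIID, while keeping the expected degree estimate clean. One has to be careful that (i) the fundamental domain $B$ can be chosen Borel with compact closure and $m_G(B)<\infty$ and that $\{\gamma:\gamma B\cap (1_\Gamma B\cdot\text{ball})\neq\emptyset\}$ is finite, (ii) the ``spanning edges inside each $\Gamma$-cell'' do not blow up the degree — here one uses that $\Pi\cap\gamma B$ is a.s.~finite with finite expected degree in $\mathcal H$, so a bounded-degree spanning tree inside the cell costs only $O(\mathbb E[\#(\Pi\cap B)])$ extra expected edges per vertex, absorbed into the constant $D$ — and (iii) the iid-on-$\Gamma$ input really does produce an honest Poisson process on $G$ as a factor, which is the by-now routine ``Poisson thickening from iid'' fact and can be cited from \cite{AM22} as in the proof of \cite[Theorem 2.25]{FMW23}. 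Everything else — uniqueness, existence of an infinite cluster, property~(T) being inherited — is either immediate or already packaged in Corollary~\ref{cor:FIIDsparseUSectionEight}.
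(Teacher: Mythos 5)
Your proposal takes a genuinely different route than the paper, but it has a gap in the final transfer step that I don't think is mere bookkeeping. You start from Corollary~\ref{cor:FIIDsparseUSectionEight}, which gives a sparse factor graph $\mathcal H$ on the \emph{random} Poisson process $\Pi\subset G$, and try to push $\mathcal H$ to a bond percolation on ${\rm Cay}(\Gamma,S)$ via fundamental-domain cells $\gamma B$. The problem is that the edges of $\mathcal H$ — coming ultimately from Delaunay adjacencies in the Voronoi diagram of the unit-intensity Poisson process $Y$ on $X$ — connect $\Pi$-points that can be arbitrarily far apart, hence lie in fundamental-domain cells $\gamma_0 B$, $\gamma_1 B$ with $\gamma_0,\gamma_1$ at unbounded distance in ${\rm Cay}(\Gamma,S)$. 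Your construction declares a Cayley edge $\{\gamma_0,\gamma_1\}$ open when the corresponding cells contain $\mathcal H$-adjacent infinite-cluster points; but this only captures $\mathcal H$-edges whose endpoints happen to sit in $\Gamma$-adjacent cells. Long $\mathcal H$-edges are simply discarded, and since the infinite $\mathcal H$-cluster might be connected \emph{only} through such long edges, the resulting $\omega$ need not have an infinite cluster at all, let alone a unique one. So the claim that ``uniqueness transfers because distinct infinite $\omega$-clusters would project to distinct infinite $\mathcal H$-clusters'' is not justified — the issue is in the other direction.

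The natural fix — replacing each $\mathcal H$-edge by a geodesic path in ${\rm Cay}(\Gamma,S)$ between the corresponding lattice elements — runs into a second problem: the expected degree of $\omega$ at $1_\Gamma$ is then controlled (via a mass-transport computation) by the expected \emph{total length} of $\mathcal H$-edges meeting a fundamental domain, not just by the expected \emph{degree} that Corollary~\ref{cor:FIIDsparseUSectionEight} supplies. Finiteness of expected degree does not imply finiteness of expected degree-times-length for the Delaunay graph of a Poisson process, so you would need a new estimate. The paper avoids both obstacles by \emph{not} going through Corollary~\ref{cor:FIIDsparseUSectionEight}: instead (Lemma~\ref{lm:sparseLattice} and Lemma~\ref{lm:CocompactLattice}) it builds a graph directly on $\Gamma$ using the \emph{deterministic} Voronoi diagram $\{V_\gamma\}_{\gamma\in\Gamma}$ of the projected lattice $p_\Gamma(\Gamma)\subset X$, whose cells have uniformly bounded diameter and a uniformly bounded number of neighbors; the randomness only decides which of these fixed, bounded-length adjacencies to keep (namely those whose cells both meet the unique unbounded Poisson-Voronoi cluster). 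Because the underlying adjacency structure has deterministically bounded degree and edge length, the translation to paths of length $\le M$ in ${\rm Cay}(\Gamma,S)$ is clean and the degree bound is immediate. This is the key structural idea your proposal is missing.
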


We remark that this result and all the results in this section hold under the weaker assumption that the connected higher rank semisimple real Lie group $G$ satisfies $p_u(\lambda)\to 0$ as $\lambda\to 0$.

Theorem~\ref{thm:FIIDSUICP} will be a consequence of the following lemma about factor graphs on the lattice.

\begin{lemma}\label{lm:sparseLattice} Let $\Gamma\subset G$ be a co-compact lattice in a connected higher rank semisimple real Lie group $G$ with property (T). 
Then, for every $\eps>0$, there is a $\Gamma$-equivariant factor of iid bounded degree graph $\mathcal H$ on $\Gamma$ with a unique infinite cluster and $\mathbb E\big[ {\rm deg}_{\mathcal H}(1_G) \big]\le\eps$.
\end{lemma}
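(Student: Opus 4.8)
The plan is to run Poisson--Voronoi percolation directly on the symmetric space $X$, but viewed equivariantly from the lattice, and then to discretize it to a graph on $\Gamma$ through a fundamental domain for the geometric action $\Gamma\curvearrowright X$. Since $\Gamma$ is a co-compact lattice in $G$ and $G\curvearrowright X$ is proper, transitive and by isometries by Theorem~\ref{thm:BasicSymmetricSpace}~(1), the restricted action $\Gamma\curvearrowright X$ is properly discontinuous and co-compact. First I would fix a relatively compact Borel fundamental domain $D\subseteq X$, so $X=\bigsqcup_{\gamma\in\Gamma}\gamma D$, choose $r_0>0$ with $\overline D\subseteq\mathcal B_{r_0}(o)$, put $L:=2r_0+1$, and set $M:=\#\{\eta\in\Gamma:d_X(\eta\cdot o,o)\le L\}$, which is finite because $\Gamma$ is discrete and $\{g\in G:d_X(g\cdot o,o)\le L\}$ is relatively compact (the action $G\curvearrowright X$ being proper). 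The number $M$ will be the uniform degree bound, so co-compactness is what ultimately gives bounded degree almost for free.

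Next I would choose the parameters. Given $\eps>0$, apply Proposition~\ref{pr:UpperBoundBall} with $R=r_0$ and error $\eps/(2M)$ to obtain $n\in\mathbb N$ and $\lambda_0>0$; then pick $p\in(0,1)$ with $(n-1)p\le\eps/(2M)$; and finally, using Theorem~\ref{thm:vanishing}, pick $\lambda<\lambda_0$ with $p_u(\lambda)<p$. With these choices $\omega_p^{(\lambda)}$ has a unique unbounded cluster a.s.\ by Corollary~\ref{cor:Phases}~(3), and, splitting on whether at least $n$ Voronoi cells meet $\mathcal B_{r_0}(o)$, one gets $\mathbb P\big(\mathcal B_{r_0}(o)\cap\omega_p^{(\lambda)}\ne\emptyset\big)\le \eps/(2M)+(n-1)p\le\eps/M$.

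I would then realize everything as a factor of iid on $\Gamma$: on each translate $\gamma D$ place, from the iid label at $\gamma$, an independent marked Poisson process of intensity $\lambda\,\vol|_{\gamma D}$; the superposition is a marked Poisson process $\mathbf Y^{(\lambda)}$ of intensity $\lambda$ on $X$, manifestly a $\Gamma$-equivariant factor of the iid process, so $\omega_p^{(\lambda)}$ and its a.s.\ unique unbounded cluster $C_\infty$ are $\Gamma$-equivariant measurable functions of it (on the null event with no unique unbounded cluster set $C_\infty=\emptyset$). Define a graph $\mathcal H$ on $\Gamma$ by joining $\gamma_1\ne\gamma_2$ precisely when $d_X(\gamma_1\cdot o,\gamma_2\cdot o)\le L$ and both $\gamma_1 D$ and $\gamma_2 D$ meet $C_\infty$. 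This is a $\Gamma$-equivariant factor of iid and $\deg_{\mathcal H}(\gamma)\le M$ deterministically.

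Finally I would verify the two quantitative claims. An edge at $1_G$ forces $D\cap C_\infty\ne\emptyset$, hence $\mathbb E[\deg_{\mathcal H}(1_G)]\le M\cdot\mathbb P(D\cap C_\infty\ne\emptyset)\le M\cdot\mathbb P(\mathcal B_{r_0}(o)\cap\omega_p^{(\lambda)}\ne\emptyset)\le\eps$. The non-isolated vertices of $\mathcal H$ are exactly $\{\gamma:\gamma D\cap C_\infty\ne\emptyset\}$, an infinite set since $C_\infty$ is unbounded while each $\gamma D$ is bounded. The step needing the most care — the main obstacle — is showing these vertices form a single $\mathcal H$-cluster; here I would use that $C_\infty$ is path-connected by Lemma~\ref{lm:closed}: given $q_i\in\gamma_i D\cap C_\infty$, take a path in $C_\infty$ from $q_1$ to $q_2$ and, by uniform continuity, subdivide it into $q_1=a_0,\dots,a_m=q_2$ with $d_X(a_j,a_{j+1})\le 1$; the translates of $D$ containing consecutive $a_j$'s have base points within $2r_0+1=L$ of each other and each meets $C_\infty$, hence are $\mathcal H$-adjacent, so concatenation gives an $\mathcal H$-path from $\gamma_1$ to $\gamma_2$; the same argument starting from a single $q\in\gamma D\cap C_\infty$ and moving out along $C_\infty$ shows that no such $\gamma$ is isolated. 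Thus $\mathcal H$ has exactly one non-trivial cluster and it is infinite. The remaining points (that $\mathbf Y^{(\lambda)}$ is genuinely a $\Gamma$-equivariant factor of iid, and the bookkeeping of null events) are routine; I note that one could instead try to transfer Corollary~\ref{cor:FIIDsparseUSectionEight} from $G$ to $\Gamma$, but since the factor graph produced there may have edges of unbounded length, getting the uniform degree bound that way is less direct, so I would prefer the route above.
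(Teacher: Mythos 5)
Your proposal is correct and follows the same overall strategy as the paper's proof: choose $\lambda$ small so that $p_u(\lambda)<p$ where $p$ is small, realize $\omega_p^{(\lambda)}$ as a $\Gamma$-equivariant factor of iid, pick a $\Gamma$-indexed bounded cell decomposition of $X$, declare two lattice elements adjacent when their cells are close and both meet the unique unbounded cluster, and then bound the expected degree via Proposition~\ref{pr:UpperBoundBall}. The difference is the choice of decomposition: the paper works with the $\Gamma$-equivariant Voronoi diagram $\{V_\gamma\}_{\gamma\in\Gamma}$ of the projected lattice $p_\Gamma(\Gamma)\subset X$, supplied by Lemma~\ref{lm:CocompactLattice}, and uses Voronoi adjacency; you work with the translates $\{\gamma D\}_{\gamma\in\Gamma}$ of a bounded Borel fundamental domain and a fixed-radius adjacency rule $d_X(\gamma_1 o,\gamma_2 o)\le L$. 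Your route is arguably more elementary: it sidesteps the $k$-to-$1$ multiplicity and multi-set bookkeeping that Lemma~\ref{lm:CocompactLattice} has to deal with because $p_\Gamma$ need not be injective, and the bounded-degree and unique-infinite-cluster arguments (tracing a path in $C_\infty$ through the tiles) are the same in spirit and correctly carried out.

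One small point you should patch: the equality $X=\bigsqcup_{\gamma\in\Gamma}\gamma D$ with strict disjointness is only literally available when $\Gamma$ acts freely on $X$, i.e.\ when $\Gamma$ is torsion-free, and a co-compact lattice need not be torsion-free (the paper explicitly accounts for this via the $k$-to-$1$ phenomenon in Lemma~\ref{lm:CocompactLattice}(1)). In general the translates $\gamma D$ of a Borel transversal cover $X$ but may overlap along fixed-point sets of nontrivial torsion elements; these overlaps are $\vol$-null, so your superposition still produces a Poisson process of intensity $\lambda\vol$ and the factor-of-iid construction goes through. Alternatively, as the paper does in Lemma~\ref{lm:CocompactLattice}(3), one can perform the construction on $G$, where $\Gamma$ acts freely by left translations, and then push forward along $p\colon G\to X$. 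Either way this is routine, but it should be said rather than left implicit behind a $\bigsqcup$. Your closing remark about why transferring Corollary~\ref{cor:FIIDsparseUSectionEight} directly does not give a uniform degree bound is accurate and is precisely the reason the paper also takes the "discretize on $\Gamma$" route in this section.
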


Before proving Lemma \ref{lm:sparseLattice}, we need the following preparation.

\begin{lemma}\label{lm:CocompactLattice}
    Let $G$ be a connected semisimple real Lie group, $X$ be its symmetric space  and $\Gamma\subseteq G$ be a discrete subgroup.
    Then the following holds.
    \begin{enumerate}
        \item[{\rm(1)}] There is $k\in \mathbb{N}$ such that the restriction $p_\Gamma$ of the projection $p:G\to X$ to $\Gamma$ is $k$-to-$1$.
        Moreover, if $\Gamma$ is torsion-free, then $p_\Gamma$ is injective.
        \item[{\rm(2)}] The set $p_\Gamma(\Gamma)\subset X$ induces a $\Gamma$-equivariant Voronoi diagram $\{V_\gamma\}_{\gamma\in \Gamma}$ as a multi-set.
        Moreover, if $\Gamma$ is co-compact, then there is $R>0$ and $d\in \mathbb{N}$ with the property that $V_\gamma\subseteq \mathcal{B}_R(\gamma o)$ for every $\gamma\in \Gamma$ and every cell intersect $k(d+1)-1$ many other cells (where $k\in \mathbb{N}$ is from {\rm(1)}).
        \item[{\rm(3)}] Let $\mathbf Z=(Z(\gamma))_{\gamma\in\Gamma}$ be a collection of iid ${\rm Unif}[0,1]$ random variables and $\lambda>0$ and $p\in(0,1]$.
        Then $\omega_p^{(\lambda)}$ may be realized as a $\Gamma$-equivariant factor of $\mathbf Z$.        
    \end{enumerate}
\end{lemma}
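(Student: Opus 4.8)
\textbf{Proof plan for Lemma \ref{lm:CocompactLattice}.}

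The plan is to prove the three items in order, using standard facts about symmetric spaces and the properness of the $G$-action established in Theorem \ref{thm:BasicSymmetricSpace}~(1). For item (1), recall that $p:G\to X=G/K$ is the quotient map by the compact subgroup $K$. The fibers of $p$ are the left cosets $gK$, so $p_\Gamma^{-1}(p(\gamma))=\Gamma\cap \gamma K$. Since $\Gamma$ is discrete and $K$ is compact, $\gamma K\cap\Gamma$ is a discrete subset of a compact set, hence finite; moreover $|\Gamma\cap\gamma K|=|\gamma^{-1}\Gamma\cap K|$ is constant in $\gamma$ by left multiplication, so set $k:=|\Gamma\cap K|$. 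If $\Gamma$ is torsion-free, then $\Gamma\cap K$ is a finite subgroup of a torsion-free group, hence trivial, so $k=1$ and $p_\Gamma$ is injective.

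For item (2), the key point is $\Gamma$-equivariance: for $\gamma,\gamma'\in\Gamma$ one has $p(\gamma'\gamma)=\gamma'\cdot p(\gamma)$, so the multiset $p_\Gamma(\Gamma)$ is invariant under the left $\Gamma$-action on $X$, and hence so is the induced Voronoi diagram $\{V_\gamma\}_{\gamma\in\Gamma}$ (where $V_\gamma$ is the cell of the point $p(\gamma)$, with multiplicity). Now suppose $\Gamma$ is co-compact, so $X=\bigcup_{\gamma\in\Gamma}\gamma B o$ for a compact $B\subseteq G$; let $R_0:=\sup_{b\in B}d_X(o,bo)<\infty$, so that every point of $X$ is within distance $R_0$ of some $p(\gamma)$. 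This bounds the cell diameters: any $x\in V_\gamma$ satisfies $d_X(x,p(\gamma))\le d_X(x,p_\Gamma(\Gamma))\le R_0$, so $V_\gamma\subseteq\overline{\mathcal B}_{R_0}(\gamma o)$; take $R$ slightly larger. For the bound on the number of neighboring cells, I would note that if $V_\gamma\cap V_{\gamma'}\neq\emptyset$ then $d_X(\gamma o,\gamma' o)\le 2R$, equivalently $d_X(o,\gamma^{-1}\gamma' o)\le 2R$; by properness of the $G$-action and discreteness of $\Gamma$, the set $\{\delta\in\Gamma:d_X(o,\delta o)\le 2R\}$ is finite, of some cardinality $N$ independent of $\gamma$ by left-translating. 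Counting with multiplicity (each geometric point of $p_\Gamma(\Gamma)$ carries $k$ labels) and excluding $\gamma$ itself, one gets a fixed number $k(d+1)-1$ of neighbors, where $d$ is chosen so that $kd+k=N$.

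For item (3), the idea is that $\omega_p^{(\lambda)}$ on $X$ is built from a marked Poisson point process $\mathbf Y^{(\lambda)}$, and the plan is to simulate this Poisson process, together with its $\mathrm{Unif}[0,1]$ cell-labels, as a factor of the iid family $\mathbf Z=(Z(\gamma))_{\gamma\in\Gamma}$ indexed by $\Gamma$. Using the fundamental domain $D:=\bigsqcup_{\gamma}\gamma\cdot F$ for the $\Gamma$-action on $X$ (where $F$ is a Borel fundamental domain of bounded diameter, which exists by co-compactness), one can split the single $\mathrm{Unif}[0,1]$ variable $Z(\gamma)$ into countably many independent $\mathrm{Unif}[0,1]$ variables by a measurable bijection $[0,1]\to[0,1]^{\mathbb N}$, and use these to generate a Poisson point process of intensity $\lambda\,\mathrm{vol}$ on $\gamma F$ equipped with iid uniform marks, in a way that is equivariant under $\Gamma$. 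Concatenating over $\gamma\in\Gamma$ yields a marked Poisson point process on all of $X$ with the correct intensity, realized as a $\Gamma$-equivariant factor of $\mathbf Z$; applying the (deterministic, equivariant) Poisson--Voronoi percolation construction of Section \ref{section PVP} then gives $\omega_p^{(\lambda)}$ as a $\Gamma$-equivariant factor of $\mathbf Z$. The main obstacle here is the bookkeeping needed to make the simulation genuinely $\Gamma$-equivariant — one must fix the fundamental domain and the splitting map once and for all and check that translating $\mathbf Z$ by $\delta\in\Gamma$ translates the constructed point process by $\delta$ — but this is a routine (if slightly tedious) exercise in the standard ``iid $\Rightarrow$ Poisson as factor'' technique, and no deep input beyond co-compactness and measurable selection is required.
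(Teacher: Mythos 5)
Your arguments for items (1) and (2) are correct and essentially match the paper's, with one pleasant simplification: for compactness of the cells in (2), you use co-compactness directly to show that every point of $X$ lies within distance $R_0:=\sup_{b\in B}d_X(o,bo)$ of some orbit point, whereas the paper argues by contradiction via a convergent subsequence. Both are fine, and your neighbor-count via the triangle inequality (``$V_\gamma\cap V_{\gamma'}\neq\emptyset \Rightarrow d_X(\gamma o,\gamma' o)\le 2R$'') is an elementary substitute for the paper's appeal to properness of the $G$-action. (Tiny nit: when $x\in V_\gamma$, the correct chain is $d_X(x,p(\gamma))=d_X(x,p_\Gamma(\Gamma))\le R_0$, with equality rather than $\le$ in the first step.)

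For item (3), however, there is a genuine gap. You propose to choose a Borel fundamental domain $F\subseteq X$ for $\Gamma\curvearrowright X$ and index the tiles $\{\gamma F\}$ by $\gamma\in\Gamma$, using $Z(\gamma)$ to generate the Poisson points in $\gamma F$. But the action $\Gamma\curvearrowright X$ need not be free: any non-trivial element of $\Gamma$ lying in the kernel of $G\to\operatorname{Isom}(X)$ (e.g.~$-I\in\operatorname{SL}_2(\mathbb Z)$ acting on $\mathbb H^2$) acts trivially on $X$, so the translates $\gamma F$ cannot be pairwise disjoint as $\gamma$ ranges over all of $\Gamma$ — they only tile $X$ when indexed by a quotient of $\Gamma$. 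In that case the assignment $\gamma\mapsto\gamma F$ collapses several generators $Z(\gamma)$ onto the same tile, and a $\Gamma$-equivariant choice of which $Z(\gamma)$ to use is not available. The paper avoids this by running the construction on $G$ itself, where $\Gamma$ always acts freely by left translation: one takes the $\Gamma$-equivariant Voronoi tiling $\{W_\gamma\}_{\gamma\in\Gamma}$ of $G$, uses $Z(\gamma)$ to produce a marked Poisson configuration restricted to $W_\gamma$ (noting $m_G(\partial W_\gamma)=0$ so boundaries contribute nothing), and only then pushes forward via $p\colon G\to X$. If you replace your fundamental domain on $X$ by one on $G$ in exactly this way, the rest of your argument — splitting each $Z(\gamma)$ into countably many iid uniforms and checking equivariance — goes through.
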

\begin{proof}
    (1). As $X=G/K$, where $K$ is a compact subgroup and $\Gamma$ is discrete, we see that the group $H=K\cap \Gamma$ must be finite.
    In particular, if $\Gamma$ is torsion free, we have that $H=\{1_G\}$.
    The rest follows from the fact that $p^{-1}_{\Gamma}(\gamma K)=\{\gamma h:h\in H\}$ for every $\gamma\in \Gamma$.

    (2). Let $\gamma_0,\theta\in \Gamma$ and $x\in X$.
    Then we have that
    $$d_X(x,p_\Gamma(\gamma_0))=d_X(\theta \cdot x,\theta\cdot p_\Gamma(\gamma_0))=d_X(\theta \cdot x,p_\Gamma(\theta\gamma_0)).$$
    It follows that $\theta \cdot V_{\gamma_0}= V_{\theta\gamma_0}$.
    In particular, if $p_\Gamma(\gamma)=p_\Gamma(\gamma')$ for some $\gamma,\gamma'\in \Gamma$, then $V_{\theta\gamma}=\theta\cdot V_{_\gamma}=\theta\cdot V_{\gamma'}$ for every $\theta\in \Gamma$.

    Suppose that $\Gamma$ is co-compact.
    We show that $V_o:=V_{1_G}$ is compact, which implies the existence of the desired $R>0$ by the fact that $\{V_\gamma\}_{\gamma\in \Gamma}$ is $\Gamma$-equivariant.
    Assume for a contradiction that $V_o$ is not compact.
    Then there is a sequence $\{x_n\}_{n\in \mathbb{N}}\subseteq V_o$ such that $d_X(o,x_n)\to \infty$.
    Fix a sequence $\{g_n\}_{n\in \mathbb{N}}\subseteq G$ such that $p(g_n)=g_n o=x_n$ for every $n\in \mathbb{N}$.
    As $\Gamma$ is co-compact, there is a compact set $B\subseteq G$ such that $\bigcup \gamma\cdot B=G$.
    Consequently, after passing to a subsequence if necessary, there are $g\in B$ and $\{\gamma_n\}_{n\in \mathbb{N}}\subseteq \Gamma$ such that $\gamma_ng_n\to g$ in $G$.

    Let $x=go$.
    Then $d_X(o,x)\in [0,\infty)$ and $\gamma_n\cdot x_n\to x$ in $X$ by continuity of the projection.
    By left-invariance of the metric, we have that
    $$d_X(\gamma_n^{-1}o,x_n)=d_X(o,\gamma_n\cdot x_n) =d_X(o,\gamma_ng_no) \to d_X(o,x).$$
    In particular, there is $S>0$ such that $d_X(\gamma_n^{-1}o,x_n)<S$ for every $n\in \mathbb{N}$.
    Since $d_X(o,x_n)\to \infty$, this contradicts $\{x_n\}_{n\in \mathbb{N}}\subseteq V_o$. We obtain that $V_0$ is compact. 

    Finally, we claim that $V_o$ intersects finitely many cells $V_\gamma$, where $\gamma\in \Gamma$.
    Indeed, as $G\curvearrowright X$ is proper by Theorem~\ref{thm:BasicSymmetricSpace}~(1), we have that $\{g\in G:g\cdot V_o\cap V_o\not=\emptyset\}$ is compact.
    In particular, the set $\{\gamma\in \Gamma: \gamma\cdot V_o\cap V_o\not=\emptyset\}$ is finite as $\Gamma$ is discrete.
    The claim now follows from the fact that $\gamma\cdot V_o=V_{\gamma}$ for every $\gamma\in \Gamma$.
    The existence of $d\in \mathbb{N}$ then follows from the $\Gamma$-equivariance of $\{V_\gamma\}_{\gamma\in \Gamma}$.

    (3).
    This can be done as in \cite[Proposition 5.1]{AM22}. We provide the details for completeness.
    Namely, let $\{W_\gamma\}_{\gamma\in \Gamma}$ be the Voronoi diagram of $\Gamma$ in $G$ (defined with respect to the canonical left-invariant metric on $G$).
    It can be verified, as in the previous paragraph, that $\{W_\gamma\}_{\gamma\in \Gamma}$ is $\Gamma$-equivariant and consists of compact cells.
    Let $f\colon[0,1]\to \mathbf M(X)$ be a measurable map such that if $Z\sim{\rm Unif}[0,1|$, then $f(Z)$ is a Poisson point process with intensity $\lambda m_G$ on $G$ with independent ${\rm Unif}[0,1]$ marks.
    Recall that the diagonal action of $G$ on marked configurations shifts the points together with their iid marks. 
    Define a point process
    $$\Theta(\mathbf{Z})=\bigcup_{\gamma\in \Gamma} \gamma\cdot \left(f(Z_{\gamma})\cap W_{1_\Gamma}\right)=\bigcup_{\gamma\in \Gamma} (\gamma\cdot f(Z_{\gamma}))\cap W_{\gamma}.$$
    It can be easily verified, noting that $m_G(\partial(W_\gamma))=0$ for every $\gamma\in \Gamma$ (see e.g.~\cite[Section 3.3]{AM22}), 
    that $\Theta(\mathbf{Z})$ is a Poisson point process with intensity $\lambda m_G$ and independent ${\rm Unif}[0,1]$ marks. Moreover, it is a $\Gamma$-equivariant factor of $\mathbf Z$.

    Finally, we apply the projection $p:G\to X$ to $\Theta(\mathbf{Z})$.
    To be more concrete, we apply $p$ to the first coordinate of the marked points in $\Theta(\mathbf{Z})$ and leave the marks intact.
    Then $\mathbf{Y}^{(\lambda)}$ and $p(\Theta(\mathbf{Z}))$ have the same distribution.
    The rest follows from the fact that the projection $p$, and hence $\omega^{(\lambda)}_p$ as a factor of $p(\Theta(\mathbf{Z}))$, is $\Gamma$-equivariant.
\end{proof}

We now provide the proof of Lemma \ref{lm:sparseLattice}. The proof is similar to that of Lemma \ref{lm:sparseX} given above.

\begin{proof}[Proof of Lemma \ref{lm:sparseLattice}]
By Lemma \ref{lm:CocompactLattice}~(1), there is $k\in \mathbb{N}$ such that $p_\Gamma:\Gamma\to X$ is $k$-to-$1$. 
Fix $\eps>0$. By Lemma \ref{lm:CocompactLattice}~(2), there exist $R>0$ and $d\in \mathbb{N}$ such that all cells of the Voronoi diagram $\{V_\gamma\}_{\gamma\in \Gamma}$ induced by $p_\Gamma(\Gamma)$ have $k(d+1)-1$ many neighbors and are contained in the ball of radius $R>0$ around their nucleus.
Set $c=k(d+1)-1$.
By Proposition~\ref{pr:UpperBoundBall}, there exist $n_0\in\mathbb N$ and $\lambda_0>0$ such that 
$$
\sup_{0<\lambda<\lambda_0} \mathbb{P}^{(\lambda)}(\# \text{ cells intersecting } \mathcal{B}_R(o)\ge n_0 ) < \eps/(2c).
$$
Choose $p\in(0,1)$ such that
$$
\sum_{i=1}^{n_0-1} 1-(1-p)^i \le \eps/(2c).
$$
Finally, using Theorem \ref{thm:vanishing}, choose $\lambda<\lambda_0$ such that $p_u(\lambda)<p$. For the rest of this proof, fix these parameters $\lambda$ and $p$.
By Lemma~\ref{lm:CocompactLattice}~(3), $\omega_p^{(\lambda)}$ may be realized as a $\Gamma$-equivariant factor of~$\mathbf Z$.
We now define the graph $\mathcal H$ on $\Gamma$ as follows. For $g,h\in\Gamma$, let $[g,h]$ be an edge in $\mathcal H$ if and only if both of the following conditions are satisfied: 
\begin{itemize}
\item[(i)] $V_{g}$ and $V_{h}$ are neighbors in $\{V_\gamma\}_{\gamma\in \Gamma}$,
\item[(ii)] $V_{g}$ and $V_{h}$ intersect the unique unbounded cluster $C_{p,\infty}^{(\lambda)}$ of $\omega_p^{(\lambda)}$.
\end{itemize}
Then $\mathcal H$ has bounded degrees because ${\rm deg}_{\mathcal H}(g)\le c$ for every $g\in \Gamma$. Moreover, $\mathcal H$ is a $\Gamma$-equivariant factor of $\mathbf Z$. The fact that $\mathcal H$ has a unique infinite cluster follows as in Claim \ref{cl:unique}. Finally, letting again $N_R^{(\lambda)}(o)$ denote the number of Voronoi cells of $Y^{(\lambda)}$ intersecting $\mathcal B_R(o)$, we have that 
$$
\mathbb E\big[ {\rm deg}_\mathcal{H}(1_G) \big]  \le \mathbb E\big[ c \, \mathbf 1\{N_R^{(\lambda)}(o)\ge n_0\} \big] + \mathbb E\big[ c \, \mathbf 1\{N_R^{(\lambda)}(o)< n_0\} \mathbf 1\{\mathcal B_R(o)\cap \omega_p^{(\lambda)}\} \ne \emptyset \big] \le \eps
$$
by choice of $\lambda$ and $p$.
\end{proof}

\begin{proof}[Proof of Theorem \ref{thm:FIIDSUICP}]

    By Lemma \ref{lm:CocompactLattice}~(2), there exists $d\in\mathbb{N}$ such that all cells in the Voronoi diagram $\{V_\gamma\}_{\gamma\in \Gamma}$ have $d$ neighbors.
    For every $g\in\Gamma$ such that $V_g \cap V_{1_G} \ne \emptyset$, choose a shortest path $\mathcal P(g)$ connecting $1_G$ to $g$ in ${\rm Cay}(\Gamma,S)$ in such a way that $\mathcal P(g^{-1})$ is the inverse path of $\mathcal P(g)$ in the following sense: if $s_1,s_2,\ldots,s_n$ denote the generators associated to the edges traversed in $\mathcal P(g)$, then $s_1^{-1},s_2^{-1},\ldots,s_n^{-1}$ are the generators associated to the edges traversed in $\mathcal P(g^{-1})$. Let $M>0$ be an upper bound on the length of $\mathcal{P}(g)$ over all $g\in\Gamma$ with $V_g \cap V_{1_G} \ne \emptyset$. 

    Let $\epsilon>0$. Let $c:= |\mathcal B_{M}(1_G)|$ be the size of the ball of radius $M$ in ${\rm Cay(\Gamma,S)}$. By Lemma~\ref{lm:sparseLattice}, there exists a $\Gamma$-equivariant factor of iid graph $\mathcal H$ on $\Gamma$ with a unique infinite cluster and $\mathbb E\big[ {\rm deg}_{\mathcal H}(1_G) \big]\le\eps/(2c)$. Define a bond percolation $\omega$ on ${\rm Cay}(\Gamma,S)$ as follows. For every edge $[g,h]$ in $\mathcal{H}$, include the edges of the path $\mathcal{P}(g^{-1}h)$ starting at $g$ (that is, the path $g\mathcal{P}(g^{-1}h$) in the configuration $\omega$. Then $\omega$ has a unique infinite cluster and, as a $\Gamma$-equivariant factor of~$\mathcal H$, may be realized as an FIID process. Finally, note that if $[1_G,g]$ is an edge in $\omega$, then there exists an edge $[h_1,h_2]$ in $\mathcal H$ such that $[1_G,g]\in h_1 \mathcal P(h_1^{-1}h_2)$. Clearly, this implies $h_1,h_2\in\mathcal B_M(1_G)$. Note also that since $h_1 \mathcal P(h_1^{-1}h_2)$ is a shortest path in ${\rm Cay}(\Gamma,S)$, it can add at most $2$ edges to $\omega$ which are incident to $1_G$. Combining these observations, we obtain that
    $$
    \mathbb E \big[{\rm deg}_\omega(1_G)\big] \le 2 \, \mathbb E \bigg[ \sum_{g \in \mathcal B_M(1_G)} {\rm deg}_{\mathcal H}(g) \bigg] = 2\, \big|\mathcal B_M(1_G)\big|\, \mathbb E\big[ {\rm deg}_{\mathcal H}(1_G) \big] \le \eps.
    $$
    The proof of Theorem \ref{thm:FIIDSUICP} is thus complete.
\end{proof}

Let us conclude this section with the following corollary, which shows that the FIID sparse unique infinite cluster property is a group property in the sense that it does not depend on the choice of Cayley graph. It may be proved using a modification of the proof of Theorem \ref{thm:FIIDSUICP} given above.

\begin{corollary}
    Let $\Gamma$ be a finitely generated group. Let ${\rm Cay}(\Gamma,S)$ and ${\rm Cay}(\Gamma,T)$ be Cayley graphs of $\Gamma$ with respect to finite symmetric generating sets $S$ and $T$. If, for every $\eps>0$, there is a $\Gamma$-equivariant FIID bond percolation $\omega_S$ on ${\rm Cay}(\Gamma,S)$ with a unique infinite cluster and $\mathbb E\big[ {\rm deg}_{\omega_S}(1_G) \big]\le\eps$, then also the following holds: for every $\eps>0$ there is a $\Gamma$-equivariant FIID bond percolation $\omega_T$ on ${\rm Cay}(\Gamma,T)$ with a unique infinite cluster and $\mathbb E\big[ {\rm deg}_{\omega_T}(1_G) \big]\le\eps$.
\end{corollary}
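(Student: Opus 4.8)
The plan is to transfer an FIID bond percolation from $\mathrm{Cay}(\Gamma,S)$ to $\mathrm{Cay}(\Gamma,T)$ by replacing each kept $S$-edge with a fixed shortest $T$-path between its endpoints, exactly mimicking the mechanism used in the proof of Theorem~\ref{thm:FIIDSUICP}. First I would record the elementary geometric fact that, since $S$ and $T$ are both finite symmetric generating sets of $\Gamma$, the word metrics $d_S$ and $d_T$ are bi-Lipschitz equivalent; in particular there is a constant $L\ge 1$ with $d_T(1_G,s)\le L$ for every $s\in S$. For each $s\in S$ fix once and for all a $T$-geodesic path $\mathcal P(s)$ from $1_G$ to $s$ of length at most $L$, chosen so that $\mathcal P(s^{-1})$ is the reverse of $\mathcal P(s)$ in the same sense as in the proof of Theorem~\ref{thm:FIIDSUICP} (i.e.\ the associated generator word is reversed and inverted). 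Then for a general $S$-edge $[g,h]$, so that $h=gs$ for some $s\in S$, let the associated $T$-path be $g\mathcal P(s)$; the reversal condition guarantees this assignment is well defined independently of which of the two orderings of the edge we picked.

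The construction of $\omega_T$ from $\omega_S$ is then: given the FIID $\Gamma$-equivariant bond percolation $\omega_S$ on $\mathrm{Cay}(\Gamma,S)$, put an edge of $\mathrm{Cay}(\Gamma,T)$ into $\omega_T$ if and only if it lies on $g\mathcal P(s)$ for some $S$-edge $[g,gs]$ that is open in $\omega_S$. Since $\omega_S$ is a $\Gamma$-equivariant factor of an iid process and the replacement rule is $\Gamma$-equivariant and local (it only looks at the status of $S$-edges incident to vertices within bounded $S$-distance, hence bounded $T$-distance, of a given vertex), $\omega_T$ is again a $\Gamma$-equivariant FIID bond percolation. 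Uniqueness of the infinite cluster is immediate: a vertex $v$ lies in the infinite $\omega_S$-cluster iff it lies in the infinite $\omega_T$-cluster, because each open $S$-edge is replaced by a $T$-path joining the same pair of vertices and, conversely, the only $T$-edges present come from such paths, so $\omega_S$-connectivity and $\omega_T$-connectivity between vertices of $\Gamma$ coincide; thus $\omega_T$ has exactly one infinite cluster a.s.

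Finally I would estimate the expected degree. An edge of $\mathrm{Cay}(\Gamma,T)$ incident to $1_G$ can only be inserted into $\omega_T$ because it lies on $g\mathcal P(s)$ for some open $S$-edge $[g,gs]$; this forces $g$ to lie in the $d_T$-ball of radius $L$ around $1_G$, a set of size at most $c:=|\mathcal B^T_L(1_G)|$ depending only on $T$ and $L$. Each such $g$ is the endpoint of $|S|$ potential $S$-edges, and each open one contributes at most $2$ edges of $\mathcal P$ incident to the single vertex $1_G$ (since the path is a $T$-geodesic, it passes through $1_G$ at most once). Hence
\begin{equation*}
\mathbb E\big[{\rm deg}_{\omega_T}(1_G)\big] \le 2\,\mathbb E\bigg[\sum_{g\in \mathcal B^T_L(1_G)} {\rm deg}_{\omega_S}(g)\bigg] = 2\,c\,\mathbb E\big[{\rm deg}_{\omega_S}(1_G)\big],
\end{equation*}
where the equality uses $\Gamma$-invariance (unimodularity / the mass-transport principle) of $\omega_S$. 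Choosing $\omega_S$ with $\mathbb E[{\rm deg}_{\omega_S}(1_G)]\le \eps/(2c)$ by hypothesis yields $\mathbb E[{\rm deg}_{\omega_T}(1_G)]\le\eps$, completing the proof. I do not expect a genuine obstacle here; the only points requiring mild care are the consistent choice of reversible geodesic paths $\mathcal P(s)$ (so the replacement is well defined on unoriented edges), the verification that the resulting process is honestly a factor of iid rather than merely $\Gamma$-invariant (which follows since $\omega_S$ already is and the recoding is local and equivariant), and keeping the degree bookkeeping correct so that the bounded multiplicative loss $2c$ depends only on the two generating sets.
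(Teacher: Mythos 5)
Your construction is the right one and matches what the paper has in mind when it says the corollary ``may be proved using a modification of the proof of Theorem \ref{thm:FIIDSUICP}'': fix reversible $T$-geodesics $\mathcal P(s)$ for each $s\in S$, replace each open $S$-edge $[g,gs]$ by $g\mathcal P(s)$, and bound the expected degree via the mass-transport/$\Gamma$-invariance argument exactly as in that theorem. The degree bookkeeping and the FIID/equivariance verifications are fine.

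However, your justification of uniqueness has a genuine gap. You claim that ``$\omega_S$-connectivity and $\omega_T$-connectivity between vertices of $\Gamma$ coincide.'' This is not true. An \emph{intermediate} vertex $v$ of a path $g\mathcal P(s)$ can also lie on a second path $g'\mathcal P(s')$ coming from a different open $S$-edge $[g',g's']$; in $\omega_T$ this merges the $\omega_S$-clusters of $g$ and of $g'$, even if they are different clusters of $\omega_S$. So $\omega_T$-connectivity is only an upper bound on $\omega_S$-connectivity, not an equality. Since the hypothesis allows $\omega_S$ to have arbitrarily many finite clusters in addition to its unique infinite one, nothing in your argument rules out that a chain of finite $\omega_S$-clusters gets merged by such intersections into a \emph{new} infinite $\omega_T$-cluster disjoint from the one containing the infinite $\omega_S$-cluster. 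The ``iff'' in your first sentence is also false at a more basic level: an intermediate vertex of a path may be isolated in $\omega_S$ yet belong to the infinite $\omega_T$-cluster.

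The fix is small but necessary, and it is precisely what makes the corresponding step painless in Theorem \ref{thm:FIIDSUICP}: there the intermediate graph $\mathcal H$ has, by construction, exactly one non-trivial cluster (the infinite one), so all inserted paths start and end inside a single connected set and the replaced configuration stays connected. Here you should first pass from $\omega_S$ to $\omega_S'$, the restriction of $\omega_S$ to edges with both endpoints in the unique infinite cluster. Since the infinite cluster is a.s.\ unique, $\omega_S'$ is again a $\Gamma$-equivariant FIID bond percolation, $\mathbb E[\mathrm{deg}_{\omega_S'}(1_G)]\le\mathbb E[\mathrm{deg}_{\omega_S}(1_G)]$, and $\omega_S'$ has exactly one non-trivial cluster. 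Running your replacement construction on $\omega_S'$ then produces an $\omega_T$ whose non-isolated vertices all lie on paths rooted in a single connected set, so $\omega_T$ has exactly one non-trivial cluster and it is infinite. The remainder of your argument (expected degree bound of the form $\mathbb E[\mathrm{deg}_{\omega_T}(1_G)]\le 2\,|\mathcal B^T_L(1_G)|\,\mathbb E[\mathrm{deg}_{\omega_S'}(1_G)]$ and choosing $\omega_S$ sparse enough) then closes the proof.
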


\section{Closing remarks} \label{sec:closing}

Theorem \ref{maintheorem} establishes the first examples of symmetric spaces with vanishing uniqueness thresholds. Let us thus conclude by highlighting open questions inspired by this new phenomenon. First, it would be very interesting to know whether it appears in the rank one setting.

\begin{question} Consider real hyperbolic space $\mathbb H^d$, $d\ge3$, equipped with its volume measure. Is it true that $p_u(\lambda)\to0$ as $\lambda\to0$?
\end{question}

In a related direction, convergence of Poisson-Voronoi tessellations to a unique limiting tessellation whose cells pairwise share an unbounded border has been recently established for $\mathcal X:=(\mathbb H^2\times\mathbb H^2, L^1)$, i.e.~the product of two hyperbolic planes equipped with the $L^1$-metric, by D'Achille \cite{D24}.
As noted in \cite{D24}, the space $(\mathbb H^2\times\mathbb H^2, L^2)$ endowed with its natural Riemannian structure is a higher rank symmetric space whose isometry group coincides with the isometry group of $\mathcal{X}$.
Thus, this group does not have property (T).

\begin{question}
    Does $p_u(\lambda)\to0$ as $\lambda\to0$ hold for $\mathcal X$?
\end{question}

The following extension was pointed out to us by Matteo D'Achille (private communication): Does the vanishing uniqueness threshold phenomenon hold for $\mathcal X_p:=(\mathbb H^2\times\mathbb H^2, L^p)$, $p\ge 1$? Since these spaces are quasi-isometric, this question fits into the broader goal to understand the interplay between quasi-isometric metric spaces, their isometry groups and their IPVTs. Due to the fact that \cite{D24} treats $p=1$ in detail, we have singled out this important test case.

Also a related question is whether Theorem \ref{maintheorem} extends to products of rank one symmetric spaces, or even to non-amenable products of non-compact lcsc groups. The relevance of this question stems from the fact that an affirmative answer implies fixed price $1$ by the approach in the present paper. For more information as well as recent progress, we refer to \cite{AM22,M23} and the references therein.

Another direction pertains to finitely generated groups, where we are interested in either Poisson-Voronoi or Bernoulli-Voronoi percolation on the associated Cayley graph \cite{B19,DCELU23}.

\begin{question} \label{q:discrete} Are there non-amenable Cayley graphs such that $p_u(\lambda)\to0$ as $\lambda\to0$? 
\end{question}

An affirmative answer would be especially interesting because it implies an affirmative answer to Question \ref{q:FIIDSparseUnique} {\em directly using} Poisson/Bernoulli-Voronoi percolation. We believe this to be a promising strategy towards finding examples of non-amenable Cayley graphs with the FIID sparse unique infinite cluster property beyond the ones obtained in Theorem \ref{thm:FIIDSUICP}.

Let us also single out the following concrete version of our previous question because of the important connection to the fixed price problem (Question \ref{q:FixedPrice}).

\begin{question} Does $p_u(\lambda)\to0$ as $\lambda\to0$ hold for {\em all} Cayley graphs of 
\begin{itemize}
\item[{\rm1.}] groups with property (T)?
\item[{\rm2.}] non-amenable products?
\end{itemize}
\end{question}

More generally, it remains open whether these classes of groups satisfy the sparse FIID unique infinite cluster property, i.e.~whether Question \ref{q:FIIDSparseUnique} has an affirmative answer for groups in one of these special classes. In this vein, let us mention a consequence of \cite[Theorem 1.2]{MR23}.

\begin{proposition} For every Cayley graph of a countable group with property (T), we have that $\sup_{\lambda} p_u(\lambda) < 1.$
\end{proposition}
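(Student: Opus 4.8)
The plan is to produce, for the given Cayley graph $\mathrm{Cay}(\Gamma,S)$, a single $p_0<1$ such that Poisson--Voronoi (and, in the same way, Bernoulli--Voronoi) percolation $\omega_p^{(\lambda)}$ on $\mathrm{Cay}(\Gamma,S)$ has a unique infinite cluster almost surely for \emph{every} intensity $\lambda>0$ and every $p\in[p_0,1)$. This gives $p_u(\lambda)\le p_0$ for all $\lambda>0$, hence $\sup_\lambda p_u(\lambda)\le p_0<1$. The two ingredients are: (i) the long-range order threshold for groups with property~(T), i.e.\ the discrete counterpart of Theorem~\ref{thm:threshold} recorded in \cite[Theorem 1.2]{MR23}, which yields $R\in\mathbb N$ and $p^*<1$ (depending only on $\Gamma$ and $S$) such that any $\Gamma$-invariant percolation $\eta$ on $\mathrm{Cay}(\Gamma,S)$ with $\inf_{x,y\in\mathcal B_R(o)}\mathbb P\big(x\overset{\eta}{\longleftrightarrow}y\big)>p^*$ exhibits long-range order; and (ii) the fact that long-range order forces a unique infinite cluster, i.e.\ the discrete analogue of Theorem~\ref{thm:LRO}, going back to Lyons and Schramm \cite{LS99}. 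Here $o$ denotes the identity of $\Gamma$.

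The only place where uniformity in $\lambda$ is needed is a ball-filling estimate. Conditionally on the nuclei, the finite ball $\mathcal B_R(o)$ is covered by at most $m:=\#\mathcal B_R(o)$ Voronoi cells (each cell meeting the ball contains one of its vertices, ties being resolved by the iid labels), and each of these cells is coloured black independently with probability $p$; hence $\mathbb P\big(\mathcal B_R(o)\subseteq\omega_p^{(\lambda)}\big)\ge p^{m}$ for every $\lambda>0$ and every $p$. Since the subgraph induced on $\mathcal B_R(o)$ is connected, on this event all vertices of $\mathcal B_R(o)$ lie in one cluster, so $\inf_{x,y\in\mathcal B_R(o)}\mathbb P\big(x\overset{\omega_p^{(\lambda)}}{\longleftrightarrow}y\big)\ge p^{m}$, uniformly in $\lambda$.

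To finish, I would pick $p_0\in(0,1)$ with $p_0^{m}>p^*$; then for all $\lambda>0$ and all $p\in[p_0,1)$ the two-point function of $\omega_p^{(\lambda)}$ on $\mathcal B_R(o)$ exceeds $p^*$, so $\omega_p^{(\lambda)}$ exhibits long-range order by~(i) and has a unique infinite cluster almost surely by~(ii), whence $p_u(\lambda)\le p_0$ and $\sup_\lambda p_u(\lambda)\le p_0<1$. The technical points to verify are that $\omega_p^{(\lambda)}$ is an admissible input to \cite{LS99,MR23}: it is a $\Gamma$-invariant percolation whose cells are almost surely finite — which follows, exactly as in the compactness part of the proof of Lemma~\ref{lm:BasicVoronoi}, from the exponential volume growth of the non-amenable Cayley graph outrunning the exponential decay of the probability of a large empty ball — so the Lyons--Schramm machinery applies and, under long-range order, rules out both $0$ and infinitely many infinite clusters. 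The main (mild) obstacle is simply to align the precise hypotheses of \cite[Theorem 1.2]{MR23} with the present formulation; if that result is stated in terms of the one-point marginal, one uses instead $\mathbb P\big(v\in\omega_p^{(\lambda)}\big)\ge p$ for every vertex $v$, uniformly in $\lambda$, and the argument is unchanged. The same reasoning applies verbatim to Bernoulli--Voronoi percolation, where again each vertex lies in a cell that is black with probability $p$.
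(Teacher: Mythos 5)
Your proof is correct and, as far as one can tell, takes the route the authors intend, since the paper itself offers no proof beyond citing \cite[Theorem 1.2]{MR23}. The decisive observation — that the finite ball $\mathcal B_R(o)$ in a Cayley graph meets at most $m:=|\mathcal B_R(o)|$ Voronoi cells regardless of the intensity, so that $\mathbb P(\mathcal B_R(o)\subseteq\omega_p^{(\lambda)})\ge p^m$ uniformly in $\lambda$ — is exactly what makes the discrete case trivially uniform in $\lambda$, and is precisely the ingredient whose absence in the continuum forces the paper to prove Theorems~\ref{thm:A1} and~\ref{thm:A2}. Combining this with the property~(T) long-range-order threshold and the LRO--implies--uniqueness implication gives the claim, with $p_0$ any value satisfying $p_0^m>p^*$.

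Two small caveats, neither of which undermines the argument but which are worth flagging. First, the fallback sentence about a hypothetical ``one-point marginal'' form of \cite[Theorem 1.2]{MR23} would not actually work on its own: a percolation can have marginal exactly $p$ and yet no long-range order (Bernoulli below $p_c$), so the two-point function on a ball (or the expected degree $\mathbb E[\deg_{\omega}(o)]$) is the right hypothesis, and your primary argument is the one that goes through. Second, the step ``long-range order implies a unique unbounded cluster'' for Voronoi percolation on a Cayley graph is attributed to Lyons--Schramm somewhat loosely: their indistinguishability theorem is for insertion-tolerant bond percolations on transitive graphs, whereas Voronoi percolation is a site percolation whose correlations are governed by the cells; the clean route is through the Delaunay graph and either Aldous--Lyons indistinguishability or the cluster-frequency argument, exactly as the paper develops in Section~\ref{sec:LRO} for the continuum. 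This is a discrete (and simpler) instance of Theorem~\ref{thm:LRO}, but it is not literally in Lyons--Schramm; the paper equally elides this point, so your level of detail is consistent with the original remark.
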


This result provides a first hint at interesting behavior of Poisson/Bernoulli-Voronoi percolation on Cayley graphs of groups with property (T). In light of this fact, the following could be a starting point.

\begin{question}
Are there non-amenable Cayley graphs with $0<\inf_\lambda p_u(\lambda)\leq\sup_\lambda p_u(\lambda)<1$?
\end{question}

Finally, several natural questions about Poisson-Voronoi percolation both in the setting explored in this paper as well as in the setting of Cayley graphs remain open. In particular, describing the number of unbounded clusters at $p_u$ would be very interesting.

\begin{question}
    Let $G$ be a connected higher rank semisimple real Lie group and $(X,d_X)$ its symmetric space. Let $\lambda>0$. Is it true that $\omega_{p_u(\lambda)}^{(\lambda)}$ does not have a unique unbounded cluster?
\end{question}

Let us also ask about an analogue of a famous conjecture about Bernoulli percolation due to Benjamini and Schramm \cite{BS96} for Poisson-Voronoi percolation.

\begin{question}
    For every non-amenable Cayley graph, resp.~every symmetric space of a non-compact connected semisimple real Lie group, and $\lambda>0$, do we have that $p_c(\lambda)<p_u(\lambda)$?
\end{question}

Finally, we point out that in the opposite regime ''$\lambda\to\infty$'', which has been considered in the very recent preprint \cite{BDRS25}, Euclidean behavior arises in contrast to the results in this paper.

\end{document}